\DeclareMathOperator*{\sgn}{sgn}
\newcommand{\e}{\epsilon}
\newcommand{\im}{\mathrm{i}\,}
\newcommand{\molt}[2]{\left(#1\,,\,#2\right)}
\newcommand{\lie}[2]{\left[#1\,,\, #2\right]}
\newcommand\restr[2]{{
  \left.\kern-\nulldelimiterspace 
  #1 
  \vphantom{\big|} 
  \right|_{#2} 
  }}  
  \newcommand{\BVe}[4]{\left({\cal B}_{\mu,\e}\, \tf^{#1}_{{#2}} \ , \tf^{#3}_{#4} \right) }
\theoremstyle{plain}
\newtheorem{lem}{Lemma}
\newtheorem{teo}[lem]{Theorem}
\newtheorem{prop}[lem]{Proposition}
\theoremstyle{definition}
\newtheorem{sia}[lem]{Definition}
\newtheorem{rmk}[lem]{Remark}
\renewcommand{\bar}{\overline}
\newcommand{\vet}[2]{\begin{bmatrix}#1 \\ #2 \end{bmatrix}}
\newcommand{\uno}{\mathrm{Id}}
\newcommand{\bR}{\mathbb{R}}
\newcommand{\bT}{\mathbb{T}}
\newcommand{\bZ}{\mathbb{Z}}
\newcommand{\bN}{\mathbb{N}}
\newcommand{\bC}{\mathbb{C}}
\newcommand{\tf}{\mathtt{f}}
\newcommand{\cL}{\mathcal{L}}
\newcommand{\cO}{\mathcal{O}}
\newcommand{\cJ}{\mathcal{J}}
\newcommand{\cB}{\mathcal{B}}
\newcommand{\cV}{\mathcal{V}}
\newcommand{\tB}{\mathtt{B}}
\newcommand{\tJ}{\mathtt{J}}
\newcommand{\tL}{\mathtt{L}}
\newcommand{\de}{\mathrm{d}}
\newcommand{\pa}{\partial}
\newcommand{\cH}{\mathcal{H}}
\newcommand{\cU}{\mathcal{U}}
\newcommand{\cW}{\mathcal{W}}
\newcommand{\off}{\varnothing}
\newcommand{\bro}{\bar\rho}
\newcommand{\Gc}{\mathfrak{c}}
\newcommand{\sL}{\mathscr{L}}
\newcommand{\ta}{{\mathtt{a}}}
\newcommand{\tb}{{\mathtt{b}}}
\newcommand{\tc}{{\mathtt{c}}}
\numberwithin{equation}{section}
\title{\bf Full description of  Benjamin-Feir instability \\
of Stokes waves in deep water
}
\begin{document}

 \author{Massimiliano Berti, Alberto Maspero, Paolo Ventura\footnote{
International School for Advanced Studies (SISSA), Via Bonomea 265, 34136, Trieste, Italy. 
 \textit{Emails: } \texttt{berti@sissa.it},  \texttt{alberto.maspero@sissa.it}, \texttt{paolo.ventura@sissa.it}
 }}

\date{}

\maketitle
\begin{abstract}
Small-amplitude,  traveling, space periodic  solutions --called Stokes waves-- 
of  the 2 dimensional gravity water waves equations in deep water are 
linearly unstable with respect to long-wave perturbations,
as predicted by Benjamin and Feir in 1967. 
We completely  describe the behavior of 
 the four eigenvalues close to zero  of the linearized equations 
 at the Stokes wave, 
 as  the Floquet exponent is turned on.
 We prove in particular the conjecture that  a pair of non-purely imaginary eigenvalues
depicts a closed figure ``8'', parameterized by the Floquet exponent, in full agreement with numerical simulations. 
Our new spectral approach to the Benjamin-Feir instability phenomenon 
uses a symplectic version of Kato's theory of similarity transformation to  
reduce the  problem to determine the eigenvalues of a $ 4 \times 4 $ complex 
Hamiltonian and reversible 
matrix.  Applying a procedure inspired by KAM theory,
we block-diagonalize such matrix  into 
a pair of $2 \times 2  $ Hamiltonian and reversible 
matrices, thus obtaining 
the full description 
of its eigenvalues. 
\end{abstract}

\tableofcontents

\section{Introduction}

Since the pioneering work of Stokes \cite{stokes} in 1847, 
a huge literature has established the existence of  steady space periodic traveling  waves,
namely solutions 
 which look stationary in a moving frame.  Such solutions are called Stokes waves. 
 A problem of fundamental importance in fluid mechanics regards their stability/instability  subject to long space periodic  perturbations.
 In 1967 Benjamin and Feir \cite{Benjamin,BF} discovered, with heuristic arguments, 
 that a long-wave 
perturbation of a small amplitude space periodic Stokes wave is unstable; see also the 
the independent results by Lighthill   \cite{Li} and Zakharov \cite{Z0,ZK} and the survey \cite{ZO} for an historical overview. 
This phenomenon is nowadays called 
``Benjamin-Feir" --or modulational-- instability, and it is supported by an enormous  amount of  physical observations and numerical simulations, see e.g. \cite{DO,Ak,KDZ,CDT} and references therein. 

It took almost thirty years to get the first  rigorous  proof of the Benjamin-Feir instability for the  water waves equations in two dimensions, obtained   by 
Bridges-Mielke \cite{BrM}  in finite depth,  
and  fifty-five years for  the  infinite depth case, 
proved last year by   Nguyen-Strauss \cite{NS}.  

\smallskip 
The problem is mathematically formulated 
 as follows.  Consider the pure gravity water waves equations  for a bidimensional fluid in  deep water
 and a $2\pi$-periodic Stokes wave solution with amplitude $0< \e \ll 1$. 
The linearized  water waves equations at the Stokes waves
are, in the inertial reference frame moving with 
the  speed $ c $ of the Stokes wave, a linear time independent system of the form
$ h_t = \mathcal{L}_{\e}  h  $ where $ \mathcal{L}_{\e}  $
is a linear operator  with $ 2 \pi $-periodic coefficients,  
see \eqref{cLepsilon}\footnote{The operator $ \mathcal{L}_{\e}  $ 
in \eqref{cLepsilon} is actually  obtained 
conjugating the linearized water waves equations in the Zakharov  formulation 
via the  ``good unknown of Alinhac" \eqref{Alin} and the 
Levi-Civita \eqref{LC} invertible transformations.}.
The operator 
$ \mathcal{L}_{\e} $ possesses the eigenvalue $ 0 $ 
with  algebraic multiplicity four 
 due to  symmetries of the water waves equations 
(that we describe in the next section).  
The problem is to prove that  
 $ h_t = \mathcal{L}_{\e}  h  $  
has  solutions  of the form $h(t,x) = \text{Re}\left(e^{\lambda t} e^{\im \mu x} v(x)\right)$
where $v(x)$ is a  $2\pi$-periodic function, $\mu$ in $ \bR$ (called Floquet exponent) 
and $\lambda$ has positive real part, thus $h(t,x)$ grows exponentially in time.
By Bloch-Floquet theory, such $\lambda$ is an  eigenvalue 
  of the operator  $ \mathcal{L}_{\mu,\e} 
:= e^{-\im \mu x } \,\mathcal{L}_{\e} \, e^{\im \mu x } $
acting on $2\pi$-periodic functions. 

\smallskip
 The main result of this paper provides the full description of the four 
 eigenvalues  close to 
zero of the operator $ \mathcal{L}_{\mu,\e} $ when  $ \e $ and $ \mu $ are small enough, see Theorem \ref{TeoremoneFinale},
thus concluding the analysis started in 1967 by Benjamin-Feir.
We first state the following result
which focuses on the 
Benjamin-Feir {\it unstable} eigenvalues.

Along  the paper we denote by $r(\e^{m_1} \mu^{n_1}, \ldots, \e^{m_p} \mu^{n_p})$
a real analytic function fulfilling  for some $C >0$ and $\e, \mu$ sufficiently small, the estimate  $| r(\e^{m_1} \mu^{n_1}, \ldots, \e^{m_p} \mu^{n_p}) | \leq 
C \sum_{j=1}^p
 |\e|^{m_j} |\mu|^{n_j}
$.

 \begin{teo}\label{thm:simpler}
There exist $ \e_1, \mu_0 > 0 $  and 
an analytic  function $\underline \mu: [0,\e_1)\to [0,\mu_0)$, 
 of the form $ \underline  \mu(\e)  =   2\sqrt{2} \e(1+r(\e)) $, such that, 
 for any  $  \e \in [0, \e_1)  $, the 
 operator  $\cL_{\mu,\e}$ has two eigenvalues
  $\lambda^\pm_1 (\mu,\e)$ of the form 
\begin{equation}\label{eigelemu}
 \begin{cases}
  \frac12\im\mu+\im r(\mu\e^2,\mu^2\e,\mu^3)\pm
  \frac{\mu}{8}\sqrt{8\e^2\big(1+r_0(\e,\mu)\big)-\mu^2\big(1+r_0'(\e,\mu)\big)} \, ,  & \forall   \mu \in [0, \underline \mu (\e)) \, , \\
 \frac12\im \underline \mu (\e)+\im r(\e^3) \, ,  & \mu= \underline \mu  (\e) \, , \\
 \frac12\im\mu+\im r(\mu\e^2,\mu^2\e,\mu^3)\pm\im\frac{\mu}{8}\sqrt{\mu^2\big(1+r_0'(\e,\mu)\big)-8\e^2\big(1+r_0(\e,\mu)\big)} \, ,  & \forall \mu \in ( \underline \mu  (\e), \mu_0) \, .
\end{cases}
\end{equation}
The function $ 8\e^2\big(1+r_0(\e,\mu)\big)-\mu^2\big(1+r_0'(\e,\mu) )  $  is $>0$, respectively $<0$,  provided $0<\mu < \underline{\mu}(\e)$,  respectively $\mu > \underline{\mu}(\e)$.
\end{teo}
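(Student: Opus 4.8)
The plan is to reduce, via the similarity transformations announced in the abstract, the spectral problem for $\cL_{\mu,\e}$ near zero to an eigenvalue problem for an explicit $4\times 4$ matrix $\tL(\mu,\e)$ that inherits a Hamiltonian and reversible structure from the water waves equations, and then to block-diagonalize it. Concretely, I would first apply Kato's symplectic similarity theory to construct the four-dimensional $\cL_{\mu,\e}$-invariant spectral subspace associated with the eigenvalues bifurcating from $0$, depending analytically on $(\mu,\e)$ and carrying a natural symplectic form; restricting $\cL_{\mu,\e}$ to a well-chosen analytic basis of this subspace produces the matrix $\tL(\mu,\e)$. The Hamiltonian/reversible constraints force many entries to vanish or to have prescribed parity in $\mu$, and a careful expansion in $(\e,\mu)$ — using the known expansions of the Stokes wave and of $\cL_\e$ — pins down the leading coefficients, in particular the $8\e^2$ and $-\mu^2$ terms that will appear under the square root.

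Next I would perform the KAM-type block-diagonalization: look for a (symplectic, reversibility-preserving) change of basis that conjugates $\tL(\mu,\e)$ to $\mathrm{diag}(\tL_1,\tL_2)$ with each $\tL_j$ a $2\times 2$ Hamiltonian and reversible matrix. The relevant $2\times 2$ block governing the Benjamin-Feir pair has, after the normalizations, the schematic form $\frac12\im\mu\, \uno + \im r(\mu\e^2,\mu^2\e,\mu^3)\,\uno + M(\mu,\e)$ where $M$ is a traceless $2\times 2$ matrix whose square is a scalar: $M^2 = \big(\tfrac{\mu}{8}\big)^2\big(8\e^2(1+r_0)-\mu^2(1+r_0')\big)\uno$. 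Diagonalizing such an $M$ gives eigenvalues $\pm\frac{\mu}{8}\sqrt{8\e^2(1+r_0)-\mu^2(1+r_0')}$, which are real (hyperbolic, giving instability) exactly when the discriminant $\Delta(\mu,\e):=8\e^2(1+r_0(\e,\mu))-\mu^2(1+r_0'(\e,\mu))$ is positive, and purely imaginary when it is negative; this reproduces the three cases of \eqref{eigelemu}, with the $\im\frac{\mu}{8}\sqrt{\mu^2(1+r_0')-8\e^2(1+r_0)}$ branch for $\mu>\underline\mu(\e)$.

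It then remains to analyze the sign of $\Delta(\mu,\e)$ and extract the threshold curve $\underline\mu(\e)$. For fixed small $\e$, write $\Delta(\mu,\e)=0$; since $\Delta>0$ at $\mu=0$ (where it equals $8\e^2(1+r_0(\e,0))>0$) and $\Delta<0$ for $\mu$ of order $\e$ but with $\mu^2$ dominating, the intermediate value theorem yields a zero, and I would solve for it by the implicit function theorem applied to $G(\mu,\e):=8\e^2(1+r_0)-\mu^2(1+r_0')=0$, or equivalently, setting $\mu=\e\sigma$, to $\tilde G(\sigma,\e):=8(1+r_0(\e,\e\sigma))-\sigma^2(1+r_0'(\e,\e\sigma))=0$; at $\e=0$ this gives $\sigma^2=8$, i.e. $\sigma=2\sqrt2$, and $\partial_\sigma\tilde G(2\sqrt2,0)=-2\cdot 2\sqrt2\neq 0$, so the implicit function theorem produces an analytic $\sigma(\e)=2\sqrt2(1+r(\e))$, hence $\underline\mu(\e)=2\sqrt2\,\e(1+r(\e))$. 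Monotonicity of $\Delta$ in $\mu$ near the threshold (again visible from $\partial_\mu\Delta = -2\mu(1+r_0') + O(\e^2\mu,\mu^3)<0$ there) gives that $\Delta>0$ for $0<\mu<\underline\mu(\e)$ and $\Delta<0$ for $\mu>\underline\mu(\e)$, as claimed. I expect the main obstacle to be the block-diagonalization step: controlling the KAM-type iteration so that it converges, preserves both the Hamiltonian and reversible structures simultaneously, and yields a $2\times 2$ block whose off-diagonal/diagonal entries have exactly the parities and orders in $(\e,\mu)$ needed to factor $M^2$ as a scalar multiple of the identity with the precise constants $8\e^2$ and $-\mu^2$; the sign analysis and the implicit function argument for $\underline\mu(\e)$ are comparatively routine once that normal form is in hand.
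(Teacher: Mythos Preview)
Your proposal is correct and follows essentially the same approach as the paper: Kato's symplectic similarity theory to reduce to a $4\times4$ Hamiltonian and reversible matrix, a block-diagonalization preserving these structures, and then an implicit function argument for the threshold $\underline\mu(\e)$. One minor difference worth noting: the paper carries out the block-diagonalization in finitely many explicit steps (a preliminary conjugation to kill the dangerous $\cO(\e^3)$ off-diagonal entry, one KAM-type step that solves a Sylvester equation directly---without diagonalizing the $2\times2$ blocks, which is crucial for analyticity near $\mu\sim 2\sqrt2\,\e$---and a final implicit function argument) rather than an infinite KAM iteration, so the convergence concern you raise does not actually arise.
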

Let us make some comments on  the result.
\\[1mm]
\indent 1.
According to \eqref{eigelemu}, 
 for values of the Floquet parameter $ 0<\mu <  \underline \mu (\e) $ the eigenvalues 
$\lambda^\pm_1 (\mu, \epsilon) $ have opposite non-zero real part.
  As $ \mu $ tends to $  \underline  \mu (\e)$, the two eigenvalues $\lambda^\pm_1 (\mu,\epsilon) $ 
 collide on the imaginary axis {\it far} from $ 0 $ (in the upper semiplane $ \text{Im} (\lambda) > 0 $), 
along which they  keep moving  for $ \mu > \underline  \mu (\e) $,
see Figure \ref{figure-eigth}. For $ \mu < 0 $ the operator $ {\mathcal L}_{\mu,\e} $
possesses the symmetric eigenvalues 
$ \overline{\lambda_1^{\pm} (-\mu,\e)} $ in the semiplane $ \text{Im} (\lambda) < 0 $. 
\\[1mm]
\indent 2. Theorem \ref{thm:simpler} proves the long-standing conjecture that the
unstable eigenvalues $\lambda^\pm_1 (\mu,\epsilon )$ depict a complete  figure ``8'' as $\mu$ varies in the interval $[ - \underline \mu(\e) , \underline \mu(\e)]$, 
see Figure \ref{figure-eigth}. For $ \mu \in [0, \underline \mu(\e)]$  
we obtain the upper part of  
the figure  ``8'', which is 
well approximated by the curves 
$\mu \mapsto (\pm\frac{\mu}{8}\sqrt{8\e^2 -\mu^2}, \frac12 \mu)$, 
in complete accordance with the  numerical simulations by Deconinck-Oliveras  \cite{DO}.
For $ \mu \in [\underline \mu(\e), \mu_0 ]$ the purely imaginary
eigenvalues are approximated 
by 
$ \im \frac{\mu}{2} ( 1 \pm \frac{1}{4}\sqrt{\mu^2 - 8\e^2})$.   
The higher order corrections of the eigenvalues $ \lambda_1^\pm (\mu,\e ) $ in \eqref{eigelemu}, 
provided by the analytic functions $ r_0(\e,\mu), r_0'(\e,\mu) $,  
are explicitly computable. 
Theorem \ref{thm:simpler} is the 
first rigorous proof of  the ``Benjamin-Feir figure 8'', not only for the water waves equations, but also  in any 
model exhibiting modulational instability, 
that we quote at the end of this introduction
(for the focusing 
$ 1d$ NLS
equation Deconinck-Upsal \cite{DU} 
showed the presence of a figure ``8''  for elliptic  solutions,
exploiting the integrable structure of the equation).   
\begin{figure}[h]
\centering
\includegraphics[width=5.5cm]{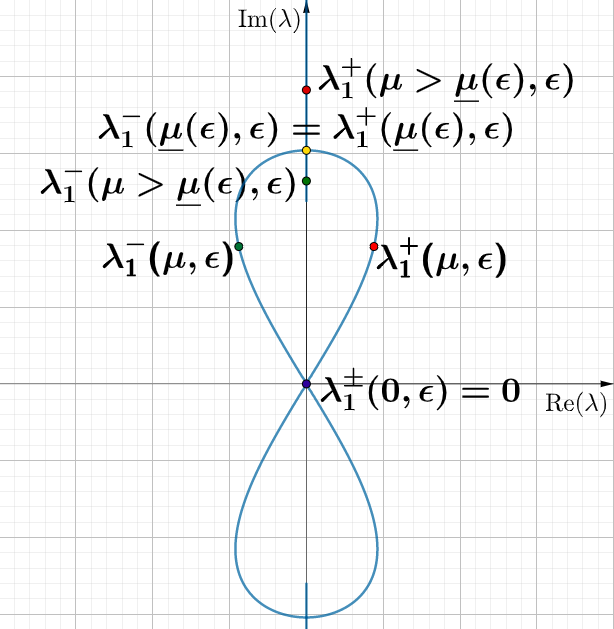}
\caption{Traces of the  eigenvalues $\lambda^\pm_1 (\mu,\epsilon )$ in the complex $\lambda$-plane at fixed $|\e| \ll 1 $ as $\mu$ varies. For $ \mu \in (0, \underline{\mu}(\e))  $ the eigenvalues fill 
the portion of the $ 8 $ in $ \{ \text{Im} (\lambda) > 0 \} $ and for $ \mu \in ( - \underline{\mu}(\e),0 ) $ the symmetric portion in 
$ \{ \text{Im} (\lambda) < 0 \} $.}
\label{figure-eigth}
\end{figure} 

3.  Nguyen-Strauss result in \cite{NS} describes  the portion of  unstable eigenvalues very close to the origin, namely the cross amid the ``8''. 
Formula  \eqref{eigelemu} prolongs these local  branches of eigenvalues  far from the bifurcation, until they collide again on the imaginary axis. 
Note that as $  0<\mu  \ll \e  $ the eigenvalues $ \lambda^\pm_1 (\mu, \epsilon) $ in \eqref{eigelemu} 
have the same asymptotic expansion given  in Theorem 1.1 of    \cite{NS}.
\\[1mm]
\indent 4. 
The eigenvalues \eqref{eigelemu} are {\it not analytic}  in $(\mu, \epsilon)$
close to the
value $(\underline{\mu}(\epsilon),\epsilon)$ where  $  \lambda^\pm_1 (\mu, \epsilon) $
 collide  at the top of the 
figure ``8'' far from $0$ (clearly they are continuous).
 In previous approaches the eigenvalues are a priori supposed to 
be  analytic in $(\mu, \epsilon)$, and that restricts their validity
 to suitable regimes. 
We remark that  \eqref{eigelemu} are the eigenvalues of the 
$ 2 \times 2 $ matrix ${ \mathtt U}$ given in Theorem \ref{TeoremoneFinale},   
which {\it is analytic} in $(\mu, \epsilon)$. 
\\[1mm]
\indent 5.  In Theorem \ref{TeoremoneFinale} we actually prove the expansion of  the unstable eigenvalues of $ \cL_{\mu,\e} $  for any value 
of the parameters $(\mu,\e)$ in a rectangle $ [0,\mu_0) \times [0,\e_0 )$. The analytic curve
$ \underline  \mu(\e)  =   2\sqrt{2} \e(1+r(\e)) $, tangent at $ \e = 0 $ to the straight line 
$ \mu = 2\sqrt{2} \e $ divides such rectangle in the 
``unstable" region where there exist  eigenvalues of 
$ {\cL}_{\mu,\e}$ with non-trivial real part, from the ``stable" one 
where all the eigenvalues of $ {\cL}_{\mu,\e}$ are purely imaginary, see Figure \ref{fig2}.
\begin{figure}[h]
\centering
\includegraphics[width=10.5cm]{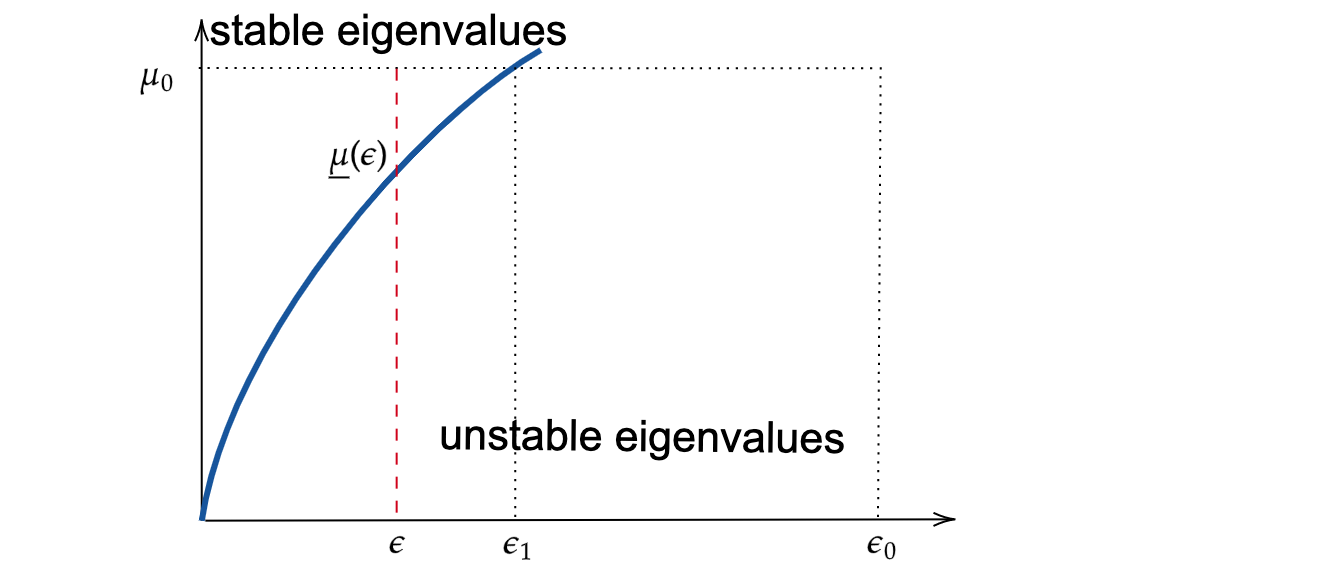}
\caption{The blue line is the analytic curve defined implicitly by  $8\e^2\big(1+r_0(\e,\mu)\big)-\mu^2\big(1+r_0'(\e,\mu) )  = 0$. For values of $\mu$ below this curve, the two eigenvalues $\lambda^\pm_1(\mu,\e)$ have opposite real part. For $\mu$ above the curve, $\lambda^\pm_1(\mu,\e)$ are purely imaginary.}
\label{fig2}
\end{figure} 
\\[1mm]
\indent 6. For  larger values  of the Floquet parameter
 $  \mu $, due to Hamiltonian reasons,   the eigenvalues 
 will remain on the imaginary axis until 
  the Floquet exponent $ \mu $  
  reaches  values close to the next ``collision" between
   two other eigenvalues of $ {\mathcal L}_{0,\mu} $.
   For water waves  in infinite  depth 
   this value is close to $ \mu = 1 / 4 $ and corresponds to 
   eigenvalues close to $ \im 3 / 4 $. 
   These unstable eigenvalues depict
   ellipse-shaped curves, called islands,  that have been described 
   numerically in \cite{DO} and supported by formal expansions   in $ \e $ in \cite{CDT}, see also \cite{Ak}.
\\[1mm]
\indent 7.  In Theorem \ref{thm:simpler} we have described just the two unstable eigenvalues of $\cL_{\mu,\e}$ close to zero.
 There are also two larger 
purely imaginary eigenvalues of order $ \cO(\sqrt{\mu}) $, see Theorem \ref{TeoremoneFinale}.  
We remark that our approach describes 
{\it all} the eigenvalues of $ {\mathcal L}_{\mu,\epsilon} $ close to $0 $ (which are $ 4 $).

\smallskip

Any rigorous 
proof of the Benjamin-Feir instability has to face the difficulty that the  perturbed  eigenvalues bifurcate from  the {\it defective} eigenvalue zero. 
Both Bridges-Mielke \cite{BrM} (see also 
the preprint by Hur-Yang \cite{HY} in finite depth) and 
Nguyen-Strauss \cite{NS},   reduce 
the spectral  problem to a  finite dimensional one, here a $4\times 4$ matrix, 
and, in a suitable regime of values of $ (\mu, \e)$,  
prove the existence of eigenvalues with non-zero real part.
The paper \cite{BrM}, dealing with water waves in finite depth, bases its analysis on 
spatial dynamics and a Hamiltonian  center manifold reduction, as \cite{HY}. 
Such approach fails 
 in infinite depth  (we quote however 
 \cite{IK} for an analogue in infinite depth which carries most of the properties of a center manifold). The proof in \cite{NS} is  based on a Lyapunov-Schmidt decomposition and applies also to the infinite depth case.

Our approach is completely different.
Postponing its detailed description after 
the statement of Theorem \ref{TeoremoneFinale},  we only anticipate some  of its main ingredients. 
The first one is   Kato's theory of similarity transformations  \cite[II-\textsection 4]{Kato}. 
This method is perfectly suited to study splitting of multiple isolated
eigenvalues, for which regular perturbation theory might fail. It  has been used, in a similar context, in the   study  of  infinite dimensional integrable systems   \cite{K, KP, toda, Ma_tame}.

In this paper we
 implement Kato's theory for the  complex operators $\cL_{\mu,\e}$ which have an   Hamiltonian and reversible structure, inherited by  the 
 Hamiltonian \cite{Zak1,CS} and reversible \cite{BM,BBHM,BFM} nature 
of the water waves equations.
We show how Kato's theory  can be used  to  prolong, in an analytic way,  a 
symplectic and reversible basis of the  generalized eigenspace of the unperturbed operator 
$ {\mathcal L}_{0,0} $ into a ($\mu,\e$)-dependent 
symplectic and reversible basis of the corresponding invariant subspace of 
$ {\mathcal L}_{\mu,\e} $.
Thus the restriction of the canonical complex symplectic form to this subspace, 
is  represented, in this symplectic basis,  
by the {\it constant}  symplectic matrix $ {\mathtt J}_4 $ defined in 
\eqref{Lform}, which is {\it independent} of $ (\mu,\e)$. This feature simplifies considerably 
perturbation theory. 

In this way  the spectral problem is reduced to determine the eigenvalues of a   
$4 \times 4$ 
matrix, which  depends  analytically in 
$ \mu, \epsilon $ and it is Hamiltonian and reversible. 
These properties imply strong algebraic features on the  matrix entries, for which we provide  detailed  expansions. 
Next, inspired by KAM  ideas, instead of looking for  zeros of the characteristic polynomial 
of the reduced matrix 
 (as in the periodic Evans function  approach \cite{BJ, HY} or  in  \cite{HJ, NS}), 
we
conjugate it to a block-diagonal matrix whose  $ 2 \times 2  $
diagonal blocks are  Hamiltonian and reversible.
One of these two blocks has  the eigenvalues given by \eqref{eigelemu}, proving the Benjamin-Feir instability figure ``8'' phenomenon.

\smallskip

Let us mention that 
modulational instability has been studied also  for a variety of approximate water waves models, such as KdV, gKdV, NLS and the Whitham equation by, for instance, Whitham \cite{Wh}, Segur, Henderson, Carter and Hammack \cite{SHCH}, Gallay and Haragus \cite{GH}, Haragus and Kapitula \cite{HK}, Bronski and Johnson \cite{BJ}, Johnson \cite{J}, Hur and Johnson \cite{HJ}, Bronski, Hur and Johnson \cite{BHJ},  Hur and Pandey \cite{HP},   Leisman,  Bronski,   Johnson   and
 Marangell \cite{LBJM}.
Also in these approximate models numerical simulations predict 
a figure ``8'' similar to that in  
Figure \ref{figure-eigth} for the bifurcation of the unstable eigenvalues close to zero. 
However, 
in none of  these approximate models (except for the integrable NLS in \cite{DU})
the complete picture of the Benjamin-Feir
instability has  been rigorously proved so far. 
We expect that the approach developed in this paper could be
applicable 
for such equations as well, and also to include the effects of surface tension  in water waves equations (see e.g. \cite{Ak}).

To conclude this introduction,  we  
mention the  nonlinear modulational instability 
result of Jin, Liao, and Lin  \cite{JLL}  for 
several  approximate water waves
models and the preprint by Chen and Su \cite{ChS} for Stokes waves in deep water.
For nonlinear instability results of traveling 
solitary water waves  decaying at infinity on $ \bR $  (not periodic)  we quote \cite{RT} and reference therein.
\\[1mm]{\bf Acknowledgments.}
We thank Bernard Deconinck, Walter Strauss, Huy Nguyen  and Vera Hur   for 
several useful discussions  
that introduced   us to  the fascinating problem of Benjamin-Feir instability.
We also thank David Nicholls, John Toland, Pavel Plotnikov and Erik Wahl\'en
for pointing us some references.

\section{The full water waves Benjamin-Feir  spectrum}\label{sec:2}

In order to give the complete statement  of our  spectral
result, we begin with recapitulating some well known facts about the pure  gravity water waves equations. 
\\[1mm]{\bf The water waves equations and the Stokes waves.}
We consider the Euler equations for a 2-dimensional incompressible,
inviscid, irrotational fluid under the action of  gravity. The fluid fills the
region $
{ \mathcal D}_\eta := \left\{ (x,y)\in \bT\times \bR\;:\; y< \eta(t,x)\right\} $, $ \bT :=\bR/2\pi\bZ $,   with infinite depth
and  space periodic boundary conditions. 
The irrotational velocity field is the gradient  
of a harmonic scalar potential $\Phi=\Phi(t,x,y) $  
determined by its trace $ \psi(t,x)=\Phi(t,x,\eta(t,x)) $ at the free surface
$ y = \eta (t, x ) $.
Actually $\Phi$ is the unique solution of the elliptic equation
$$
    \Delta \Phi = 0 \  \text{ in }\, {\mathcal D}_\eta, \quad
     \Phi(t,x,\eta(t,x)) = \psi(t,x) \, ,  \quad
   \Phi_y(t,x,y) \to 0 \ \text{ as }y\to - \infty \, . 
$$
The time evolution of the fluid is determined by two boundary conditions at the free surface. 
The first is that the fluid particles  remain, along the evolution, on the free surface   (kinematic
boundary condition), and the second one is that the pressure of the fluid  
is equal, at the free surface, to the constant atmospheric pressure  (dynamic boundary condition). Then, as shown by Zakharov \cite{Zak1} and Craig-Sulem \cite{CS}, 
the time evolution of the fluid is determined by the 
following equations for the unknowns $ (\eta (t,x), \psi (t,x)) $,  
\begin{equation}\label{WWeq}
 \eta_t  = G(\eta)\psi \, , \quad 
  \psi_t  =  
- g \eta - \dfrac{\psi_x^2}{2} + \dfrac{1}{2(1+\eta_x^2)} \big( G(\eta) \psi + \eta_x \psi_x \big)^2 \, , 
\end{equation}
where $g > 0 $ is the gravity constant and $G(\eta)$ denotes 
the Dirichlet-Neumann operator $
 [G(\eta)\psi](x) := \Phi_y(x,\eta(x)) -  \Phi_x(x,\eta(x)) \eta _x(x)$. 
It results that $ G(\eta) [\psi] $ has zero average. 
 With no loss of generality we set the gravity constant $g=1$.
The equations \eqref{WWeq} are the Hamiltonian system
\begin{equation}\label{PoissonTensor}
 \pa_t \vet{\eta}{\psi} = \cJ \vet{\nabla_\eta \mathcal{H}}{\nabla_\psi \mathcal{H}}, \quad \quad \cJ:=\begin{bmatrix} 0 & \uno \\ -\uno & 0 \end{bmatrix},
 \end{equation}
 where $ \nabla $ denote the $ L^2$-gradient, and the Hamiltonian
$  \mathcal{H}(\eta,\psi) :=  \frac12 \int_{\mathbb{T}} \left( \psi \,G(\eta)\psi +\eta^2 \right) \de x
$
is the sum of the kinetic and potential energy of the fluid. 
The associated symplectic $ 2$-form is 
\begin{equation} \label{sympl-form-st}
{\cal W}  
\left( \begin{pmatrix}
\eta_1 \\
\psi_1
\end{pmatrix}, 
\begin{pmatrix}
\eta_2 \\
\psi_2
\end{pmatrix}
\right) 
= (  -  \psi_1 , \eta_2 )_{L^2} + (\eta_1  , \psi_2 )_{L^2}  \, . 
\end{equation}
In addition of being Hamiltonian, the water waves 
system \eqref{WWeq} possesses other important symmetries.
First of all it is time reversible with respect to the involution 
\begin{equation}\label{revrho}
\rho\vet{\eta(x)}{\psi(x)} := \vet{\eta(-x)}{-\psi(-x)}, \quad \text{i.e. }
\mathcal{H} \circ \rho = \mathcal{H} \, ,
\end{equation}
or equivalently the water waves vector field $ X(\eta, \psi) $ 
anticommutes with $ \rho $,  i.e. $ X \circ \rho = - \rho \circ X $.
This property follows noting that the Dirichlet-Neumann operator satisfies 
(see e.g. \cite{BFM}) 
\begin{equation}\label{DN-rev}
G(  \eta^\vee ) [ \psi^\vee ] = \left( G(\eta) [\psi ] \right)^\vee  \quad
\text{where} \quad  f^\vee (x) := f (-  x)  \, . 
\end{equation}
Noteworthy solutions of \eqref{WWeq} are the so-called traveling Stokes waves, namely 
solutions of the form $\eta(t,x)=\breve \eta(x-ct)$ and $\psi(t,x)=\breve \psi(x-ct)$ for some real  $c$   and  $2\pi$-periodic functions  $(\breve \eta (x), \breve \psi (x)) $.
In a reference frame in translational motion with constant speed $c$,  the water waves equations \eqref{WWeq} then become
\begin{equation}\label{travelingWW}
\eta_t  = c\eta_x+G(\eta)\psi \, , \quad 
 \psi_t  = c\psi_x - g \eta - \dfrac{\psi_x^2}{2} + \dfrac{1}{2(1+\eta_x^2)} \big( G(\eta) \psi + \eta_x \psi_x \big)^2  
\end{equation}
and the Stokes waves $(\breve \eta, \breve \psi)$ are  equilibrium 
steady solutions 
of \eqref{travelingWW}. 

The rigorous existence proof of the bifurcation of small amplitude Stokes waves for pure 
  gravity water waves goes back to the works of Levi-Civita \cite{LC}, 
 Nekrasov \cite{Nek},  and Struik \cite{Struik}.
 We denote by $B(r):= \{ x \in \bR \colon \  |x| < r\}$ the real ball with center 0 and radius $r$. 
 \begin{teo}\label{LeviCivita}
{\bf (Stokes waves)} There exist $\e_0 >0$ and a unique family  of real analytic 
 solutions $(\eta_\e(x), \psi_\e(x), c_\e)$, parameterized by the amplitude $|\e| \leq \e_0$, of 
\begin{equation}\label{travelingWWstokes}
c \, \eta_x+G(\eta)\psi = 0 \, , \quad 
c \, \psi_x - g \eta - \dfrac{\psi_x^2}{2} + 
\dfrac{1}{2(1+\eta_x^2)} \big( G(\eta) \psi + \eta_x \psi_x \big)^2  = 0 \, , 
\end{equation}
  such that
 $ \eta_\e (x), \psi_\e (x) $ are $2\pi$-periodic;  $\eta_\e (x) $ is even
and $\psi_\e (x) $ is odd. They have  the expansion 
 \begin{equation}\label{exp:Sto}
 \begin{aligned}
  \eta_\e (x) &=  \e \cos (x) + \frac{\e^2}{2} \cos (2x) + 
  \cO(\e^3)\, , \quad 
  \psi_\e (x) = \e \sin (x) + \frac{\e^2}{2} \sin (2x)   
  +\cO(\e^3) \, ,  \\
  c_\e &= 1 + \frac{1}{2} \e^2+\cO(\e^3) \, .
 \end{aligned}
 \end{equation}
More precisely for any  $ \sigma \geq  0 $ and $ s > \frac52 $, there exists $ \e_0>0 $ such that
the map $\e \mapsto (\eta_\e, \psi_\e, c_\e)$ is analytic from $B(\e_0) \to H^{\sigma,s} (\bT)\times H^{\sigma,s}(\bT)\times \bR$, where 
$ H^{\sigma,s}(\bT) $ is the space of $ 2 \pi $-periodic analytic functions 
$ u(x) = \sum_{k \in \mathbb{Z}} u_k e^{\im k x} $
with $ \| u \|_{\sigma,s}^2 := \sum_{k \in \mathbb{Z}} |u_k|^2 \langle k \rangle^{2s} 
e^{2 \sigma |k|} < + \infty$.
\end{teo}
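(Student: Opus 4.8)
The plan is to realise \eqref{travelingWWstokes} as a bifurcation from the trivial solution $(\eta,\psi)=(0,0)$, which solves \eqref{travelingWWstokes} for every speed $c$, and to apply the analytic Crandall--Rabinowitz bifurcation theorem (equivalently, a Lyapunov--Schmidt reduction together with the analytic implicit function theorem). First I would eliminate $\psi$: the Dirichlet--Neumann operator $G(0)=|D|$ is invertible on the codimension-one subspace of zero-average functions, and $\eta_x$ has zero average, so for $\eta$ small the first equation in \eqref{travelingWWstokes} can be solved as $\psi=\Psi(\eta,c):=-c\,G(\eta)^{-1}[\eta_x]$, an analytic function of $(\eta,c)$ because $\eta\mapsto G(\eta)$ is analytic near $0$ with $G(0)=|D|$. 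Substituting into the second equation of \eqref{travelingWWstokes} yields a single scalar equation $\cF(\eta,c)=0$ with $\cF(0,c)\equiv 0$. I would work on the closed subspace of \emph{even} functions $\eta$, which by \eqref{DN-rev} forces $\Psi(\eta,c)$ to be odd, thereby removing the translation invariance and making the kernel of the linearisation one dimensional.

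Linearising $\cF$ at $(\eta,c)=(0,1)$ and using $G(0)=|D|$, one finds that on the $k$-th Fourier mode the linearised operator acts as multiplication by a nonzero constant times $c^2|k|-g$, the deep-water dispersion relation; with $g=1$, $c=1$ this vanishes exactly at $|k|=1$, so in the even class the kernel is spanned by $\cos x$ while all other modes are boundedly invertible. Hence the linearised operator is Fredholm of index zero with simple kernel (say, from even $H^{\sigma,s}$ to even $H^{\sigma,s-1}$), and the Crandall--Rabinowitz transversality condition reduces to $\pa_c\big(c^2|k|-1\big)\big|_{k=1,\,c=1}=2\neq 0$. The analytic bifurcation theorem then yields $\e_0>0$ and a locally unique analytic branch $\e\mapsto(\eta_\e,c_\e)$, $|\e|\le\e_0$, with $\eta_\e=\e\cos x+\cO(\e^2)$ even and $c_\e=1+\cO(\e)$; setting $\psi_\e:=\Psi(\eta_\e,c_\e)$ produces an odd analytic solution. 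Since $x\mapsto x+\pi$ sends the solution with parameter $\e$ to the one with parameter $-\e$ while preserving the speed (equivalently, by the reversibility \eqref{revrho}), one has $c_{-\e}=c_\e$, so $c_\e$ is even in $\e$ and $c_\e=1+\cO(\e^2)$.

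To extract the expansion \eqref{exp:Sto} I would insert $\eta_\e=\e\cos x+\e^2\eta_2+\cO(\e^3)$ and $c_\e=1+\e^2 c_2+\cO(\e^3)$ into $\cF(\eta_\e,c_\e)=0$ and match powers of $\e$, using the Taylor expansion $G(\eta)=|D|+G_1(\eta)+G_2(\eta)+\cdots$ with $G_1(\eta)h=-|D|\big(\eta\,|D|h\big)-\pa_x\big(\eta\,\pa_x h\big)$. The $\cO(\e)$ balance is the kernel relation; the $\cO(\e^2)$ balance is solvable on the range of the linearisation and forces $\eta_2=\tfrac12\cos 2x$, whence $\psi_\e=\e\sin x+\tfrac12\e^2\sin 2x+\cO(\e^3)$; the Fredholm solvability condition at order $\cO(\e^3)$, namely the vanishing of the $\cos x$-component there, fixes $c_2=\tfrac12$, giving $c_\e=1+\tfrac12\e^2+\cO(\e^3)$.

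There remains the analytic regularity in $H^{\sigma,s}$ for every $\sigma\ge 0$ and $s>\tfrac52$. The key point is that $\eta\mapsto G(\eta)$, its inverse on zero-average functions, and hence $(\eta,c)\mapsto\cF(\eta,c)$, are analytic from a neighbourhood of $0$ in $H^{\sigma,s}(\bT)$ into $H^{\sigma,s-1}(\bT)$, i.e.\ they preserve the analytic class carrying the exponential weight $\langle k\rangle^{2s}e^{2\sigma|k|}$; this follows from the analyticity of the harmonic lift on a strip together with Cauchy/elliptic estimates, uniformly for $\sigma$ in compact sets (the admissible $\e_0$ then depending on $\sigma,s$). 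Consequently the whole construction above runs verbatim in $H^{\sigma,s}$, and by local uniqueness the branches obtained for different $(\sigma,s)$ coincide on their common interval of definition, which yields the claimed analyticity of $\e\mapsto(\eta_\e,\psi_\e,c_\e)\in H^{\sigma,s}(\bT)\times H^{\sigma,s}(\bT)\times\bR$. I expect this last step --- the tame/analytic mapping estimates for the Dirichlet--Neumann operator in the weighted spaces $H^{\sigma,s}$ rather than in ordinary Sobolev spaces --- to be the only genuine difficulty; the bifurcation analysis and the expansion are then routine, and are in essence classical (Levi-Civita \cite{LC}, Nekrasov \cite{Nek}, Struik \cite{Struik}).
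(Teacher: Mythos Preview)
Your proposal is correct and follows exactly the strategy the paper indicates: the paper does not give a self-contained proof but states that existence follows from the analytic Crandall--Rabinowitz theorem (citing \cite{BuT}), that the expansion \eqref{exp:Sto} is taken from \cite[Proposition~2.2]{NS}, and that analyticity of the profiles goes back to Lewy \cite{Lewy} and Nicholls--Reitich \cite{NR}. The one minor difference is in the last step: you propose to run the bifurcation argument directly in the weighted analytic spaces $H^{\sigma,s}$, which requires tame analytic estimates for $G(\eta)$ there, whereas the paper's references suggest the alternative of first bifurcating in ordinary Sobolev spaces and then invoking the a posteriori analytic-regularity results of \cite{Lewy,NR} to upgrade to $H^{\sigma,s}$; the latter route sidesteps precisely the difficulty you flagged.
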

The existence of solutions of \eqref{travelingWWstokes}
can  nowadays
be   deduced by the  analytic 
Crandall-Rabinowitz 
bifurcation theorem from a simple eigenvalue, see e.g. \cite{BuT}.  Since Lewy \cite{Lewy} it is known that $C^1$
 traveling waves are actually real analytic, see also Nicholls-Reitich \cite{NR}.
The expansion \eqref{exp:Sto} is given for example in \cite[Proposition 2.2]{NS}.
The analyticity result of Theorem \ref{LeviCivita} is explicitely proved in \cite{BMV}. 
We also mention that   more general time quasi-periodic traveling Stokes waves have been recently proved 
for \eqref{WWeq} in \cite{BFM2} in finite depth (actually for any constant vorticity), in \cite{FG} in infinite depth,
and in \cite{BFM} for gravity-capillary water waves with constant vorticity in any depth.  
\\[1mm]{\bf Linearization at the Stokes waves.}\label{goodunknown}
In order to determine the stability/instability of the Stokes  waves given by Theorem \ref{LeviCivita}, 
we linearize  the water waves equations \eqref{travelingWW} with $ c = c_\e $ at  $(\eta_\epsilon(x), \psi_\epsilon(x))$. 
In the sequel we  follow closely   \cite{NS}, but, as in \cite{BFM2,BBHM}, we emphasize the 
Hamiltonian and reversible structures of the linearized equations, 
since these properties  play a crucial role in our proof of the instability result.

By using the shape derivative formula
for the differential $ \de_\eta G(\eta)[\hat \eta ]$ of the Dirichlet-Neumann operator (see e.g. formula (3.4) in  \cite{NS}), one obtains the autonomous real linear system
\begin{equation}\label{linearWW}
 \vet{\hat \eta_t}{\hat \psi_t}
 = \begin{bmatrix} -G(\eta_\e)B-\pa_x \circ (V-c_\e) & G(\eta_\e) \\ -g+B(V-c_\e)\pa_x - B \pa_x \circ (V-c_\e) - BG(\eta_\e)\circ B  & - (V-c_\e)\pa_x + BG(\eta_\e) \end{bmatrix}\vet{\hat \eta}{\hat \psi}
 \end{equation}
 where
$$
V := V(x) := -B (\eta_\e)_x + (\psi_\e)_x \, , \ \  
 B := B(x) := \frac{G(\eta_\e)\psi_\e + (\psi_\e)_x (\eta_\e)_x}{1+(\eta_\e)_x^2}
 = \frac{ (\psi_\e)_x- c_\e}{1+(\eta_\e)_x^2}(\eta_\e)_x
  \, . 
$$
The functions $(V,B)$ are the horizontal and vertical components of the velocity field
$ (\Phi_x, \Phi_y) $ at the free surface. 
Moreover $\e \mapsto (V,B)$ is  analytic as a map 
$B(\e_0) \to H^{\sigma, s-1}(\bT)\times H^{\sigma,s-1}(\bT)$.

The real system \eqref{linearWW} is Hamiltonian, i.e. of the form $ \cJ \mathcal A $
for a symmetric operator  $ \mathcal A = \mathcal A^\top $,
where $\mathcal A^\top$ is the transposed operator with respect the standard real  scalar product of $L^2(\bT, \bR)\times L^2(\bT, \bR)$.

Moreover, since $ \eta_\e $ is
even in $x$ and $ \psi_\e $ is 
odd in $x$, then  the functions $ (V, B) $ are respectively 
even and odd in $ x $. 
Using  also \eqref{DN-rev},  the 
linear operator  in \eqref{linearWW} is   
 reversible, i.e. it anti-commutes with the involution $ \rho $ in \eqref{revrho}. 

Under the time-independent  
``good unknown of Alinhac" linear transformation
\begin{equation}\label{Alin}
 \vet{\hat \eta}{\hat \psi} := Z \vet{u}{v} \, , \qquad  Z = \begin{bmatrix}  1 & 0 \\ B & 1\end{bmatrix}, \quad Z^{-1} = \begin{bmatrix}  1 & 0 \\ -B & 1\end{bmatrix},
\end{equation}
the system \eqref{linearWW} assumes the 
simpler form 
\begin{equation}\label{linearWW2}
 \vet{u_t}{v_t} =\begin{bmatrix} -\pa_x\circ (V-c_\e) & G(\eta_\e) \\ -g - ((V-c_\e) B_x)  & - (V-c_\e)\pa_x \end{bmatrix}\vet{u}{v}.
\end{equation}
Note that, since the transformation $ Z $ is symplectic, i.e.
$ Z^\top \cJ Z = \cJ $, 
and reversibility
preserving, i.e. $ Z \circ \rho = \rho \circ Z $, the linear system \eqref{linearWW2}
is Hamiltonian and reversible as \eqref{linearWW}. 

Next, following Levi-Civita \cite{LC}, 
we perform a conformal change of variables to flatten 
the water surface. By \cite[Prop. 3.3]{NS}, 
or \cite[section 2.4]{BM},  there exists a diffeomorphism of $\mathbb{T}$,
 $ x\mapsto x+\mathfrak{p}(x)$, with a small $2\pi$-periodic  function $\mathfrak{p}(x)$, such that, by defining the associated composition operator $ (\mathfrak{P}u)(x) := u(x+\mathfrak{p}(x))$, the Dirichlet-Neumann operator writes as
$$
 G(\eta) = \pa_x \circ \mathfrak{P}^{-1} \circ {\mathcal H} \circ \mathfrak{P} \, , 
$$
where $ {\mathcal H} $ is the Hilbert transform. 
The function $\mathfrak p(x)$ is  determined as a fixed point  of  
$\mathfrak{p}  =  \cH[\eta_\e\circ(\text{Id} + \mathfrak{p})]$, see e.g. \cite[Proposition 3.3.]{NS} or \cite[formula (2.125)]{BM}.
By the analyticity of the map $\e \to \eta_\e \in H^{\sigma,s}$, $\sigma >0$, $s > 1/2$,  
the 
analytic implicit function theorem\footnote{We use that the composition operator  $ p  \mapsto \eta (x + p(x) ) $ induced by  
an analytic 
function $ \eta (x) $ is analytic on $ H^s (\mathbb T )$ for $s>1/2$.} implies the existence of a solution   $\e \mapsto \mathfrak{p}(x):=\mathfrak{p}_\e(x) $  analytic
as a map 
$B(\e_0) \to H^{s}(\bT)$.
Moreover, since $\eta_\e$ is even,  the function $\mathfrak p(x)$ is odd.

Under the symplectic  and reversibility-preserving map 
\begin{equation}\label{LC}
 \mathcal{P} := \begin{bmatrix}(1+\mathfrak{p}_x)\mathfrak{P} & 0 \\ 0 & \mathfrak{P} \end{bmatrix} \, , 
\end{equation}
($ \mathcal{P} $ preserves the symplectic 2-form 
 in \eqref{sympl-form-st}
by inspection, and  commutes with $ \rho $ being $ \mathfrak{p}(x) $ odd), 
the system \eqref{linearWW2} transforms into
the linear system $ h_t = \cL_\e h $
 where  $ \cL_\e $ is the Hamiltonian and reversible real operator
\begin{align}\label{cLepsilon}
\cL_\e= \begin{bmatrix} \pa_x \circ (1+p_\e(x)) & |D| \\ - (1+a_\e(x)) &   (1+p_\e(x))\pa_x \end{bmatrix}= \cJ \begin{bmatrix}   1+a_\e(x) &   -(1+p_\e)(x)\pa_x \\ \pa_x \circ (1+p_\e(x)) & |D| \end{bmatrix} 
\end{align}
where 
\begin{equation}\label{def:pa}
 1+p_\e(x) :=  \displaystyle{\frac{ c_\e-V(x+\mathfrak{p}(x))}{ 1+\mathfrak{p}_x(x)}} \, , \quad 1+a_\e(x):=   \displaystyle{\frac{1+ (V(x + \mathfrak{p}(x)) - c_\e)
 B_x(x + \mathfrak{p}(x))  }{1+\mathfrak{p}_x(x)}} \, .
\end{equation}
The functions $p_\e (x) $ and $a_\e (x) $  are even in $ x $ and, 
by the expansion \eqref{exp:Sto} of the Stokes wave,  it results  \cite[Lemma 3.7]{NS}
\begin{align}
p_\e (x) & = - 2 \e \cos (x) + \e^2  \big( \frac32 - 2 \cos (2x) \big) + \cO(\e^3)
= \e p_1 (x) + \e^2 p_2 (x)  + \cO(\e^3) \, ,  \label{SN1}\\
a_\e (x) & = - 2 \e \cos (x) +  \e^2  \big( 2 - 2 \cos (2x) \big) + \cO(\e^3)
= \e a_1(x) +\e^2 a_2 (x) + \cO(\e^3) \, . \label{SN2}
\end{align}  
In addition, by the analiticity results of the functions $ V, B, \mathfrak{p}(x) $ given above,  
the functions   $p_\e$ and $a_\e$   are analytic in $\e$ as maps $B(\e_0)\to H^{s} (\mathbb T)$.
\\[1mm]{\bf Bloch-Floquet expansion.}\label{Bloch-Floquet}
The operator $\cL_\e$ in \eqref{cLepsilon} has $2\pi$-periodic coefficients, so its spectrum on $L^2(\bR, \bC^2)$ is most conveniently described by Bloch-Floquet theory (see e.g. \cite{J} and references therein).
This theory guarantees that  
$$
\sigma_{L^2(\bR)} (\cL_\e ) = \bigcup_{\mu\in [- \frac12, \frac12)} \sigma_{L^2(\bT)}  (\cL_{\mu, \e}) \, ,  
\qquad \cL_{\mu,\e}:= e^{- \im \mu x} \, \cL_\e \, e^{\im \mu x}  \ . 
$$
This 
reduces the problem to study the spectrum of  $\cL_{\mu, \e}$ acting on $L^2(\bT, \bC^2)$ for different values of $\mu$. 
In particular, if $\lambda$ is an eigenvalue of $\cL_{\mu,\e}$ with eigenvector $v(x)$, then  $h (t,x) = e^{\lambda t} e^{\im \mu x} v(x)$ solves $h_t = \cL_{\e} h$. We remark that:
\\[1mm]
\indent 1.
If $A = \mathrm{Op}(a) $  
is a pseudo-differential  operator with   
 symbol $ a(x, \xi ) $, which is $2\pi$ periodic in the $x$-variable, 
then 
$  A_\mu := e^{- \im \mu x}A  e^{ \im \mu x}  =  \mathrm{Op} (a(x, \xi + \mu )) $
is a pseudo-differential  operator with   symbol $ a(x, \xi + \mu ) $
(which can be proved  e.g. following   Lemma 3.5 of \cite{NS}). 
\\[1mm]
\indent 2.
If $ A$ is a real operator then 
$ \overline{ A_\mu} = A_{- \mu } $. 
As a consequence the spectrum  
\begin{equation}\label{propmu-mu}
 \sigma (A_{-\mu}) = \overline{  \sigma (A_{\mu}) } \, .
\end{equation}
Then we can study $  \sigma (A_{\mu}) $ just for $ \mu > 0 $.
Furthermore $\sigma(A_{\mu})$ is a 1-periodic set with respect to $\mu$, so   one can restrict  to  $\mu \in [0, \frac12)$.

By the previous remarks the Floquet operator  associated with the real operator $\cL_\e$ in \eqref{cLepsilon} is  the complex  \emph{Hamiltonian} and \emph{reversible} operator 
(see Definition \ref{def:compl.ham} below) 
\begin{align}\label{WW}
 \cL_{\mu,\e} :&= \begin{bmatrix} (\pa_x+\im\mu)\circ (1+p_\e(x)) & |D+\mu| \\ -(1+a_\e(x)) & (1+p_\e(x))(\pa_x+\im \mu) \end{bmatrix} \\ 
 &= \underbrace{\begin{bmatrix} 0 & \uno\\ -\uno & 0 \end{bmatrix}}_{\displaystyle{=\cJ}} \underbrace{\begin{bmatrix} 1+a_\e(x) & -(1+p_\e(x))(\pa_x+\im \mu) \\ (\pa_x+\im\mu)\circ (1+p_\e(x)) & |D+\mu| \end{bmatrix}}_{\displaystyle{=:\cB_{\mu,\e}}} \, . \notag 
\end{align}
We regard $  \cL_{\mu,\e} $ as an operator with 
domain $H^1(\bT):= H^1(\mathbb{T},\bC^2)$ 
and range $L^2(\bT):=L^2(\mathbb{T},\bC^2)$, equipped with  
the complex scalar product 
\begin{equation}\label{scalar}
(f,g) := \frac{1}{2\pi} \int_{0}^{2\pi} \left( f_1 \bar{g_1} + f_2 \bar{g_2} \right) \, \text{d} x  \, , 
\quad
\forall f= \vet{f_1}{f_2}, \ \  g= \vet{g_1}{g_2} \in  L^2(\bT, \bC^2) \, .
\end{equation} We also  denote $ \| f \|^2 = (f,f) $.

The complex operator $\cL_{\mu,\e}$ in \eqref{WW}  is \emph{Hamiltonian} 
and \emph{Reversible},  according to the 
following definition.

\begin{sia} {\bf (Complex Hamiltonian/Reversible operator)} \label{def:compl.ham}
A complex operator $\cL : H^1(\bT,\bC^2) \to L^2(\bT,\bC^2) $ 
is 
\\[1mm] 
($i$) \emph{Hamiltonian}, if 
$\cL = \cJ \cB $ where $ \cB  $ is a self-adjoint operator, namely 
$ \cB = \cB^*   $, 
where  $\cB^*$ (with domain $H^1(\bT)$)  is the adjoint with respect to the complex scalar product \eqref{scalar} of $L^2(\mathbb{T})$.
\\[1mm] 
($ii$) \emph{Reversible},  if
\begin{equation}\label{Reversible}
 \cL \circ \bro =- \bro \circ \cL \, ,
\end{equation}
where $\bro$ is the complex involution (cfr. \eqref{revrho})
\begin{equation}\label{reversibilityappears}
 \bro \vet{\eta(x)}{\psi(x)} := \vet{\bar\eta(-x)}{-\bar\psi(-x)} \, .
\end{equation}
\end{sia}
The property \eqref{Reversible} for $ \cL_{\mu,\epsilon} $ 
follows  because 
$ \cL_\e $ is a real  operator 
which is reversible with respect to the involution $ \rho $ in \eqref{revrho}.
Equivalently, since $\cJ \circ \bro = -\bro \circ \cJ$, a complex Hamiltonian operator
$ \cL = \cJ \cB $ is reversible, 
if the self-adjoint operator $\cB $ is \emph{reversibility-preserving},  i.e. 
\begin{equation}
\label{B.rev.pres}
\cB \circ \bro = \bro \circ \cB \, .
\end{equation}
We shall deeply exploit these algebraic properties in the proof of Theorem \ref{TeoremoneFinale}.

\smallskip

In addition $(\mu, \e) \to \cL_{\mu,\e} \in \cL(H^1(\bT), L^2(\bT))$ is analytic, 
since  the functions $\e \mapsto a_\e$, $p_\e$  defined in \eqref{SN1}, \eqref{SN2} are analytic as maps $B(\e_0) \to H^1(\bT)$ 
and  ${\mathcal L}_{\mu,\e}$ is linear in $\mu$.
Indeed 
the Fourier multiplier operator $|D+\mu|  $   can be written,  for  any $ \mu \in [-\frac12, \frac12) $,  
as $|D+\mu| = |D| + \mu \sgn(D)+ |\mu | \, \Pi_0 $ and thus  
(see \cite[Section 5.1]{NS})
\begin{equation}\label{|D+mu|}
|D+\mu| =  |D| + \mu (\sgn(D)+\Pi_0) \, , \quad \forall \mu > 0 \, , 
\end{equation}
where $\sgn(D)$ is the Fourier multiplier operator, 
acting on $2\pi$-periodic functions,  
 with symbol 
\begin{equation}\label{def:segno}
 \sgn(k) := 1\  \forall k > 0 \, , \quad \sgn(0):=0 \, ,\quad \sgn(k) := -1 \ \forall k < 0 \, , 
\end{equation}
and $\Pi_0$ is the projector operator on the zero mode, 
$\Pi_0f(x) := \frac{1}{2\pi} \int_\bT f(x)\de x. $

\smallskip

Our aim is to prove the existence of eigenvalues of $  \cL_{\mu,\e}  $ 
with non zero real part. 
We remark that the Hamiltonian structure of $\cL_{\mu,\e}$ implies that eigenvalues with non zero real part may arise only from multiple
  eigenvalues of $\cL_{\mu,0}$, because   if $\lambda$ is an eigenvalue of $\cL_{\mu,\e}$ then  also $-\bar \lambda$ is. 
In particular simple purely imaginary eigenvalues of $\cL_{\mu,0}$ remain on the imaginary axis under perturbation.
We now carefully describe the spectrum of $\cL_{\mu,0}$.
\\[1mm]{\bf The spectrum of $\cL_{\mu,0}$.}\label{initialspectrum}
The spectrum of the Fourier multiplier matrix operator 
\begin{equation}\label{cLmu}
 \cL_{\mu,0} = \begin{bmatrix} \pa_x+\im\mu & |D+\mu| \\ -1 & \pa_x+\im \mu \end{bmatrix}
\end{equation}
consists of the purely imaginary eigenvalues $\{\lambda_k^\pm(\mu)\;,\; k\in \bZ \} $, where
\begin{align} \label{omeghino}
\lambda_k^\pm(\mu):= \im \big( {\pm} k+\mu \mp \sqrt{|k{\pm}\mu|} \big)  \,  .  
\end{align}
It is easily verified (see e.g. \cite{Nic})
that the eigenvalues  $\lambda_k^\pm(\mu)$ in \eqref{omeghino} may ``collide" only for $\mu=0$ or $\mu=\frac14$. For $\mu=0$ the {real} operator $\cL_{0,0}$ possesses the  eigenvalue $0$ with algebraic multiplicity $4$, 
$$
\lambda_0^+(0) = \lambda_0^-(0) = \lambda_1^+(0) = \lambda_{{1}}^-(0)=0 \, ,
$$
 and geometric multiplicity $3$. A real  basis of the 
 Kernel of $\cL_{0,0}$ is
\begin{align}\label{basestart}
 f_1^+ := \vet{\cos(x)}{\sin(x)}, \quad f_1^{-} := \vet{-\sin(x)}{\cos(x)},\qquad f_0^-:=\vet{0}{1} \, ,
\end{align}
 together with the generalized eigenvector 
\begin{align}\label{basestartadd} 
f_0^+:=\vet{1}{0} , \qquad \cL_{0,0}f_0^+ =-f_0^- \, . 
\end{align} 
Furthermore $0$ is an isolated eigenvalue for $\cL_{0,0}$, namely the 
 spectrum   $\sigma\left(\cL_{0,0}\right)  $ decomposes in two separated parts
\begin{equation}
\label{spettrodiviso0}
\sigma\left(\cL_{0,0}\right) = \sigma'\left(\cL_{0,0}\right) \cup \sigma''\left(\cL_{0,0}\right)
\quad \text{where} \quad  \sigma'(\cL_{0,0}):=\{0\} 
\end{equation}
and 
$$
 \sigma''(\cL_{0,0}):= \big\{ \lambda_k^\sigma(0),\ {k\neq 0,1 ,\ \sigma=\pm} \big\} ,
$$
Note that  $ \sigma''(\cL_{0,0})$  
is contained in $\{ \lambda \in \im \bR \, : \, |\lambda| \geq 2-\sqrt{2}\}$.

We shall also use that, as proved in Theorem 4.1 in \cite{NS},
the operator $ {\mathcal L}_{0,\e} $ 
possesses, for any sufficiently small $\e \neq 0$, 
 the eigenvalue $ 0 $
with a four 
dimensional 
generalized  Kernel,   
spanned 
by  $ \e $-dependent 
vectors $ U_1, \tilde U_2, U_3, U_4 $ satisfying, for some real constant $ \alpha_\e $, 
 \begin{equation}\label{genespace} 
 {\mathcal L}_{0,\e} U_1 = 0 \, , \  \ 
 {\mathcal L}_{0,\e} \tilde U_2 = 0 \, ,  \  \  {\mathcal L}_{0,\e}  U_3 =  \alpha_\e \,  \tilde U_2 \, , \ 
 \  {\mathcal L}_{0,\e}  U_4 =  - U_1 \, , \quad U_1 = \vet{0}{1} \, . 
 \end{equation}
 By Kato's perturbation theory (see  Lemma \ref{lem:Kato1} below)
for any $\mu, \e \neq 0$  sufficiently small, the perturbed spectrum
$\sigma\left(\cL_{\mu,\e}\right) $ admits a disjoint decomposition as 
\begin{equation}\label{SSE}
\sigma\left(\cL_{\mu,\e}\right) = \sigma'\left(\cL_{\mu,\e}\right) \cup \sigma''\left(\cL_{\mu,\e}\right) \, ,
\end{equation}
where $ \sigma'\left(\cL_{\mu,\e}\right)$  consists of 4 eigenvalues close to 0. 
We denote by $\cV_{\mu, \e}$   the spectral subspace associated with  $\sigma'\left(\cL_{\mu,\e}\right) $, which   has  dimension 4 and it is  invariant by $\cL_{\mu, \e}$.
Our goal is to prove that,  for $ \e $ small, for values of the Floquet
exponent $ \mu $ in an interval of order $ \e $,  the $4\times 4$ matrix 
which represents
 the operator  $ \cL_{\mu,\e} : \mathcal{V}_{\mu,\e} \to  \mathcal{V}_{\mu,\e} $
 possesses a pair of eigenvalues close to zero 
with opposite non zero real parts.

Before stating our main result, let us introduce a notation we shall use through all the paper:

\begin{itemize}\label{notation}
\item[$\bullet$] {\bf  Notation:}
we denote by  $\cO(\mu^{m_1}\e^{n_1},\dots,\mu^{m_p}\e^{n_p})$, $ m_j, n_j \in \bN  $, 
analytic functions of $(\mu,\e)$ with values in a Banach space $X$ which satisfy, for some $ C > 0 $, the bound
 $\|\cO(\mu^{m_j}\e^{n_j})\|_X \leq C \sum_{j = 1}^p |\mu|^{m_j}|\e|^{n_j}$ 
 for small values of $(\mu, \e)$. 
We denote $r_k (\mu^{m_1}\e^{n_1},\dots,\mu^{m_p}\e^{n_p}) $
scalar  functions  $\cO(\mu^{m_1}\e^{n_1},\dots,\mu^{m_p}\e^{n_p})$ which are  also {\it real} analytic.
\end{itemize}

Our complete  spectral result is the following: 
\begin{teo}\label{TeoremoneFinale}
{\bf (Complete Benjamin-Feir spectrum)}
There exist $ \e_0, \mu_0>0 $  such that, 
for any  $ 0\leq \mu < \mu_0 $ and $ 0\leq \e < \e_0 $, 
 the operator $ \cL_{\mu,\e} : \mathcal{V}_{\mu,\e} \to  \mathcal{V}_{\mu,\e} $ 
 can be represented by a $4\times 4$ matrix of the form 
 \begin{equation} \label{matricefinae}
  \begin{pmatrix} \mathtt{U} & \vline & 0 \\ \hline 0 & \vline & \mathtt{S} \end{pmatrix},
 \end{equation}
 where $ \mathtt{U} $ and $ \mathtt{S} $ are  $ 2 \times 2 $
matrices of the form 
 \begin{align}\label{UU}
& \mathtt{U} := 
\begin{pmatrix} 
\im \big( \frac12\mu+ r(\mu\e^2,\mu^2\e,\mu^3) \big) & -\frac{\mu^2}{8}(1+r_5(\e,\mu))
  \\
 \frac{\mu^2}{8}(1+r_1(\e,\mu)) -\e^2(1+r_1'(\e,\mu\e^2)) & \im \big( \frac12\mu+ r(\mu\e^2,\mu^2\e,\mu^3) \big)\end{pmatrix}, \\
 &\label{S}  \mathtt{S} := 
 \begin{pmatrix} 
  \im \mu \big( 1+ r_9(\e^2,\mu\e,\mu^2)\big) & \mu+ r_{10}(\mu^2\e,\mu^3) \\
  -1- r_8(\e^2,\mu^2\e,\mu^3) &  \im\mu  \big( 1+ r_9(\e^2,\mu\e,\mu^2)\big) 
 \end{pmatrix},
\end{align}
where in each of the two matrices the diagonal entries are identical. 
The eigenvalues of the matrix $ \mathtt{U} $ are  given by
$$
\lambda_1^\pm(\mu,\e) =   \frac12\im\mu+\im r(\mu\e^2,\mu^2\e,\mu^3)\pm \frac{\mu}{8}\sqrt{8\e^2\big(1+r_0(\e,\mu)\big)-\mu^2\big(1+r_0'(\e,\mu)\big)}  \ . 
$$
Note that if $8\e^2 (1+r_0(\e,\mu))-\mu^2 (1+r_0'(\e,\mu)) > 0 $, respectively $<0$, the 
 eigenvalues $\lambda^\pm_1(\mu,\e)$ have a nontrivial real part, respectively are purely imaginary. 
 
The eigenvalues of the matrix $ \mathtt{S} $ are a pair of purely imaginary eigenvalues of the form 
$$
 \lambda_0^\pm (\mu, \e) = {\mp} \im\sqrt{\mu}\big(1+ r'(\e^2,\mu \e,\mu^2 )\big)+ \im\mu \big(1+r_9(\e^2,\mu\e,\mu^2)\big)\, . 
$$
For $ \e = 0$ the eigenvalues $ \lambda_1^\pm(\mu,0),  \lambda_0^\pm (\mu,0) $ 
coincide with those in \eqref{omeghino}.
\end{teo}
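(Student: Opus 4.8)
The plan is to follow the roadmap announced in the introduction: first use Kato's similarity theory to obtain an analytic basis of the four--dimensional invariant subspace $\cV_{\mu,\e}$, then compute the $4\times4$ matrix representing $\cL_{\mu,\e}$ in this basis, exploiting the Hamiltonian and reversible structures to pin down the algebraic form of the entries, and finally block--diagonalize by a KAM--type iteration. More precisely, I would start from the real basis $f_1^\pm, f_0^\pm$ of the generalized kernel of $\cL_{0,0}$ in \eqref{basestart}--\eqref{basestartadd} and apply the Kato transformation operators $U_{\mu,\e}$ (the transformation operators of \cite[II-\S4]{Kato}) to transport it to a basis of $\cV_{\mu,\e}$; the point, which I expect to verify by a short computation using $\cJ\circ\bro=-\bro\circ\cJ$ and $\cB_{\mu,\e}^*=\cB_{\mu,\e}$, is that when the operator is Hamiltonian and reversibility--preserving these Kato operators are \emph{symplectic} and commute with $\bro$, so the resulting $4\times4$ matrix $\mathsf{L}(\mu,\e)$ is Hamiltonian and reversible. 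Then I would Taylor--expand $\cL_{\mu,\e}$ in $(\mu,\e)$ using the expansions \eqref{SN1}--\eqref{SN2} of $p_\e,a_\e$ and the linearity in $\mu$ together with formula \eqref{|D+mu|} for $|D+\mu|$, and compute the entries of $\mathsf{L}(\mu,\e)$ to the orders displayed in \eqref{UU}--\eqref{S} by evaluating scalar products $(\cL_{\mu,\e} f_i^{\sigma},f_j^{\tau})$ against a dual basis; the Hamiltonian/reversible constraints force many entries to vanish or to be real/imaginary, cutting the number of independent quantities to be expanded.

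The second half is the reduction of the $4\times4$ Hamiltonian and reversible matrix $\mathsf{L}(\mu,\e)$ to the block--diagonal form \eqref{matricefinae}. I would first perform a preliminary (explicit, $\mu$- and $\e$-independent or low--order) change of basis to bring $\mathsf{L}(0,0)$ into a normal form adapted to the splitting $\{f_1^\pm\}$ versus $\{f_0^\pm\}$, so that the off--diagonal $2\times2$ blocks of $\mathsf{L}(\mu,\e)$ are $\cO(\mu,\e)$ small. Then I would set up an iterative scheme: look for a symplectic, reversibility--preserving conjugation $e^{\mathsf{X}}$ with $\mathsf{X}$ block--off--diagonal and small, solving at each step a Sylvester--type homological equation $[\mathsf{D},\mathsf{X}]=-(\text{off--diagonal part})$, where $\mathsf{D}$ is the current block--diagonal part. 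The key quantitative input is that the two diagonal blocks of $\mathsf{L}(0,0)$ have well--separated spectra --- one block sitting near $0$ (the Benjamin--Feir block) and the other near the larger eigenvalues $\lambda_{\pm1}^{\mp}(\mu)$ of size $\cO(\sqrt\mu)$, cf.\ \eqref{omeghino} --- so the Sylvester operator is invertible with a quantitatively controlled inverse, the scheme converges, and analyticity in $(\mu,\e)$ is preserved throughout. Because the conjugations are symplectic and commute with $\bro$, the limiting blocks $\mathtt{U}$ and $\mathtt{S}$ are again $2\times2$ Hamiltonian and reversible, hence of the form \eqref{UU}, \eqref{S} with equal diagonal entries.

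Once the block $\mathtt{U}=\begin{pmatrix} \alpha & \beta \\ \gamma & \alpha\end{pmatrix}$ is obtained, its eigenvalues are simply $\alpha\pm\sqrt{\beta\gamma}$, and plugging in the computed expansions $\alpha=\im(\tfrac12\mu+r(\mu\e^2,\mu^2\e,\mu^3))$, $\beta=-\tfrac{\mu^2}{8}(1+r_5)$, $\gamma=\tfrac{\mu^2}{8}(1+r_1)-\e^2(1+r_1')$ yields $\beta\gamma=\tfrac{\mu^2}{8}\big(\tfrac{\mu^2}{8}+r-\e^2+\dots\big)$, i.e.\ $\sqrt{\beta\gamma}=\tfrac{\mu}{8}\sqrt{8\e^2(1+r_0)-\mu^2(1+r_0')}$, giving exactly the claimed $\lambda_1^\pm(\mu,\e)$; the sign of $8\e^2(1+r_0)-\mu^2(1+r_0')$ then controls whether these eigenvalues are real--part--nontrivial or purely imaginary, and the implicit function theorem applied to this analytic function of $(\mu,\e)$ produces the curve $\underline\mu(\e)=2\sqrt2\,\e(1+r(\e))$. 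Likewise the eigenvalues of $\mathtt{S}=\begin{pmatrix}\im\mu(1+r_9) & \mu+r_{10}\\ -1-r_8 & \im\mu(1+r_9)\end{pmatrix}$ are $\im\mu(1+r_9)\pm\sqrt{-(1+r_8)(\mu+r_{10})}=\im\mu(1+r_9)\pm\im\sqrt\mu(1+r')$, which are purely imaginary, giving $\lambda_0^\pm(\mu,\e)$. The main obstacle I anticipate is twofold: first, carrying out the Kato construction while keeping track of \emph{which} algebraic structure each operator inherits (symplectic versus reversibility--preserving) and then extracting the precise leading--order coefficients of the matrix entries --- this is where the sign $-\e^2$ in $\gamma$, responsible for the instability, must be computed exactly from \eqref{SN1}--\eqref{SN2}; and second, ensuring that the block--diagonalizing iteration is compatible with the non--analyticity of the eigenvalues at $\mu=\underline\mu(\e)$ --- the resolution being that it is the \emph{matrix} $\mathtt{U}$, not its eigenvalues, that stays analytic, so the KAM scheme is run at the level of matrices and the square root is taken only at the very end.
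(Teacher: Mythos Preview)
Your outline matches the paper's strategy closely, but there are three concrete points where the sketch, as written, would not go through.

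\textbf{1. The spectral gap is too small for a direct KAM iteration.} You correctly identify that the two diagonal $2\times2$ blocks have spectra separated by $\cO(\sqrt{\mu})$. But the off-diagonal block $F$ is \emph{not} $\cO(\mu,\e)$ in the sense you need: its $(1,1)$ entry is $F_{11}=r(\e^3,\mu\e^2,\mu^2\e,\mu^3)$, so at $\mu=0$ it is of size $\cO(\e^3)$. The smallness parameter for your Sylvester iteration is $\|F\|/\sqrt{\mu}$, and $\e^3/\sqrt{\mu}$ is \emph{not} small uniformly on the rectangle $[0,\mu_0)\times[0,\e_0)$ (it blows up as $\mu\to0$ for fixed $\e$). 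The paper handles this by a preliminary explicit symplectic transformation (Section~\ref{sec:omue}) designed specifically to kill $F_{11}$; only after this step is the remaining off-diagonal part $F^{(1)}=\cO(\mu\e,\mu^3)$, which \emph{is} small compared to $\sqrt{\mu}$, and the Sylvester/KAM step can proceed.

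\textbf{2. The naive Kato basis gives the wrong $(2,2)$ entry in $E$.} In order that $E_{22}=-\tfrac{\mu^2}{8}(1+r_5(\e,\mu))$ with \emph{no} term of order $\mu\e^k$, one must use a basis in which $g_1^-(0,\e)$ has zero space average. The first Kato basis $f_1^-(0,\e)$ does not have this property, and applying ${\cal B}^\flat$ (which contains the projector $\Pi_0$) to it would produce a spurious $\cO(\mu\e^k)$ contribution to $E_{22}$; this would ruin the sign analysis of the off-diagonal entries of $\mathtt{U}$. The paper corrects this by passing to a second basis $\mathcal G$ (equation~\eqref{basisG}), subtracting off the $f_0^-$ component so that \eqref{zeroaverage} holds; see Remark~\ref{rem:newbasis}.

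\textbf{3. The entry $E_{11}=\e^2(1+r)$ and the vanishing of $E_{22}$ at $\mu=0$ are not consequences of a finite Taylor expansion.} You propose computing entries by Taylor expansion in $(\mu,\e)$, but the fact that $E_{22}\big|_{\mu=0}=0$ \emph{to all orders in $\e$} (and not merely $\cO(\e^N)$ for some $N$) uses the structural information \eqref{genespace} about the full generalized kernel of $\cL_{0,\e}$ for every $\e$: one shows the matrix $\tL_\e(0)$ satisfies $\tL_\e(0)^2=0$, forcing the dichotomy \eqref{Jordancondition}, and then the explicit computation $\ta=\e^2+r(\e^3)\neq0$ rules out the bad branch. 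Without this, you would only know $E_{22}\big|_{\mu=0}=\cO(\e^M)$ for whatever $M$ you expand to, which is not enough to conclude $E_{22}=-\tfrac{\mu^2}{8}(1+r(\e,\mu))$.

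Apart from these points, your plan---Kato basis, symplectic/reversible structure propagation, block-diagonalization, then reading off the eigenvalues of each $2\times2$ block---is exactly what the paper does, and your closing remark that analyticity is preserved at the matrix level (with the square root taken only at the end) is precisely the mechanism the paper uses to handle the collision at $\mu=\underline\mu(\e)$.
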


We conclude this section describing in detail our  approach.
\\[1mm]
{\bf Ideas  and scheme of proof.} 
 We first write the opetator $\cL_{\mu,\e} = \im \mu + {\sL}_{\mu,\e} $ as in \eqref{calL} 
and we aim to  construct a  basis of  $\cV_{\mu,\e}$
 to represent ${\sL}_{\mu,\e}\vert_{\cV_{\mu,\e}}$ as a convenient  $ 4\times 4$ matrix. 
The unperturbed operator $ {\sL}_{0,0}\vert_{\cV_{0,0}}$ 
possesses $ 0 $ as  isolated  eigenvalue with algebraic multiplicity 4  and  generalized kernel 
$\cV_{0,0}$  spanned  by the vectors $\{f_1^\pm, f_0^\pm\}$  in \eqref{basestart}, \eqref{basestartadd}. 

Exploiting  Kato's theory of  similarity transformations for separated 
eigenvalues we prolong 
the unperturbed symplectic basis
$\{f_1^\pm, f_0^\pm\}$ of $\cV_{0,0}$ 
into a symplectic basis of $\cV_{\mu,\e}$ (cfr. Definition \ref{def:SR}),  depending 
analytically on $\mu, \e $. In Lemma \ref{lem:Kato1}
we  construct the transformation  operator $U_{\mu,\e}$, see \eqref{OperatorU}, 
which is invertible and analytic in  $\mu,\e$, and maps isomorphically $\cV_{0,0}$ into $\cV_{\mu,\e}$.
Furthermore,  since $ {\sL}_{\mu,\e}$ is Hamiltonian and reversible, we prove in Lemma
\ref{propPU} that  the operator $U_{\mu,\e}$ is symplectic and reversibility preserving.
This implies that the vectors
$ f^\sigma_k(\mu,\e) := U_{\mu,\e}f_k^\sigma$, $ k = 0,1$, $\sigma = \pm$, 
form a symplectic and reversible basis of   $\cV_{\mu,\e} $, 
according to Definition \ref{def:SR}. 

This  construction has the  following interpretation 
in the setting of complex symplectic structures, cfr. \cite{Arn,EM}.
The complex symplectic form
\eqref{ses} restricted to the symplectic subspace  $ \mathcal{V}_{\mu,\e} $ is represented, in   
the $ (\mu, \e )$-dependent  symplectic basis $ f^\sigma_k(\mu,\e)$, 
by the {\it constant} antisymmetric matrix $ \tJ_4 $ defined in \eqref{Lform},
for {\it any} value of $ (\mu, \e )$. In this sense $ U_{\mu,\e} $
is acting as a ``Darboux transformation''.  
Consequently, the 
 Hamiltonian and reversible operator  $ {\sL}_{\mu,\e}\vert_{\cV_{\mu,\e}}$ is represented, in 
the symplectic basis $ f^\sigma_k(\mu,\e)$, 
by a $4\times 4$ matrix 
 of the form $\tJ_4 \tB_{\mu,\e}$ with $\tB_{\mu,\e}$ selfadjoint, see  Lemma \ref{lem:B.mat}.
This  property simplifies considerably the perturbation theory of the spectrum 
(we refer to \cite{Olver} for a discussion, in a different context, 
of the difficulties raised by parameter-dependent 
symplectic forms).

We then modify the  basis $\{ f^\sigma_k(\mu,\e)\} $ to construct a new  symplectic and reversible basis $\{g_k^\sigma(\mu,\e)\} $
of $\cV_{\mu,\e}$, still depending analytically on $\mu,\e$, 
with the additional property that $g_1^-(0,\e)$ has zero space average;
 this  property  plays  a crucial role in the expansion obtained in 
Lemma \ref{lem:B2}, 
necessary to exhibit the Benjamin-Feir instability phenomenon, see Remark \ref{rem:newbasis}.
 By construction,
the eigenvalues of the $4\times 4$ matrix $\tL_{\mu,\e}$, 
representing the action of the operator $ {\sL}_{\mu,\e }$ on the basis $ \{g_k^\sigma(\mu,\e)\} $,  coincide with the portion of the spectrum $\sigma'({\sL}_{\mu,\e})$ close to zero, defined in 
\eqref{SSE}.
In  Proposition \ref{BexpG} we prove that the  $4\times 4$ Hamiltonian and reversible matrix $\tL_{\mu,\e}$  
has the form
\begin{equation}
\label{L.form}
\tL_{\mu,\e}= 
\tJ_4 \begin{pmatrix} 
 E &  F \\ 
 F^* &  G 
\end{pmatrix}
=
\begin{pmatrix} 
\tJ_2 E & \tJ_2 F \\ 
\tJ_2 F^* &\tJ_2 G 
\end{pmatrix} ,
\end{equation}
where $\tJ_2 = \begin{psmallmatrix} 0 & 1 \\ -1 & 0 \end{psmallmatrix}$ and $E = E^*$, $G = G^*$ and $F$ are $2 \times 2$ matrices having the expansions
\eqref{BinG1}-\eqref{BinG3}.
To compute these  expansions --from which the Benjamin-Feir instability will emerge-- we use two ingredients.
First we Taylor expand   $(\mu, \e) \mapsto U_{\mu,\e}$ in  Lemma \ref{lem:U.expansion}. 
The  Taylor expansion of $U_{\mu,\e}$  is
not a symplectic operator, but this is no longer important to
compute the expansions \eqref{BinG1}-\eqref{BinG3} of the  
matrix $\tL_{\mu,\e}$. 
We  used that $U_{\mu,\e}$  is symplectic 
to prove the Hamiltonian structure  \eqref{L.form} of $\tL_{\mu,\e}$.
The second ingredient is a careful  analysis of $ \tL_{0,\e}$  and 
$\partial_\mu  \tL_{\mu,\e}\vert_{\mu = 0}$.
In particular we prove that 
the $(2,2)$-entry of the matrix $E$ in \eqref{BinG1} does not have any term 
 $\cO ( \e^m )$ nor $ \cO ( \mu \e^m ) $ {\em for any} $ m \in \bN$.
These terms would be dangerous because they might change the sign of the 
entry $(2,2)$ of the matrix $E$ in  \eqref{BinG1} which instead is always negative.
This is crucial to prove the Benjamin-Feir instability, as we explain below.
We show the absence of  terms  $\cO(\e^m)$, $ m \in \bN$,  
 fully exploiting  the  structural information \eqref{genespace}
concerning the four dimensional generalized Kernel of 
 the operator $\cL_{0,\e}$ for any $\e >0$, see Lemma \ref{lem:B1}.
The absence of terms   $\cO(\mu \e^m)$, $ m \in \bN $, is due to the properties of the   basis $\{ g_k^\sigma(\mu,\e)\}$ (see Remark \ref{rem:newbasis}) and it is the motivation for modifying the original basis $\{ f^\sigma_k(\mu,\e)\}$.

Thanks to this analysis,  the  $2 \times 2$ matrix 
\begin{equation}\label{J2E}
\tJ_2 E =  \begin{pmatrix} 
- \im \big( \frac{\mu}{2}+ r_2(\mu\e^2,\mu^2\e,\mu^3) \big)  & -\frac{\mu^2}{8}(1+r_5(\e,\mu))\\
- \e^2(1+r_1'(\e,\mu\e^2))+\frac{\mu^2}{8}(1+r_1''(\e,\mu))  & - \im \big( \frac{\mu}{2}+ r_2(\mu\e^2,\mu^2\e,\mu^3) \big)   \\
 \end{pmatrix}
\end{equation}
possesses two eigenvalues with non-zero real part
--we say that it exhibits the Benjamin-Feir phenomenon-- as long as the two off-diagonal elements have the same sign,
which happens for $ 0 < \mu < \bar \mu (\e)  $ with $ \bar \mu (\e) \sim 2 \sqrt{2} \e $.  
On the other hand the  $ 2 \times 2 $  matrix  $\tJ_2 G$ 
 has  purely imaginary eigenvalues for $ \mu > 0 $ of order $\cO(\sqrt{\mu})$.
In order to prove that the complete $ 4 \times 4 $ matrix  
$ \tL_{\mu,\e} $ in \eqref{L.form} possesses 
 Benjamin-Feir unstable eigenvalues as well, our aim is to eliminate 
 the  coupling term $  \tJ_2 F $.
This is done in Section \ref{sec:block} by a  block diagonalization procedure, inspired by KAM theory. 
This is a singular perturbation problem because the 
spectrum of the matrices $\tJ_2 E $ and $\tJ_2 G$ tends to $ 0 $
as $ \mu \to 0 $. 
We construct a symplectic and reversibility preserving
 block-diagonalization transformation in three steps: 
\\[1mm]
\indent 1. \emph{First step of block-diagonalization 
(Section \ref{sec:omue})}.
Note that the  spectral gap between the $ 2 $ block 
matrices $ \tJ_2 E $ and $ \tJ_2 G $ is of order $ \cO (\sqrt{\mu} )$,
whereas the   entry  $ F_{11} $ of the matrix $ F $  has  size $ \cO(\e^3) $.
In Section  \ref{sec:omue}  we perform a symplectic and reversibility-preserving change of  coordinates  removing $F_{11}$ and  conjugating  $ \tL_{\mu,\e}$ to a new Hamiltonian and reversible matrix  $\tL^{(1)}_{\mu,\e}$ whose block-off-diagonal matrix $\tJ_2 F^{(1)}$ has size $\cO(\mu \e, \mu^3)$ and $\tJ_2 E^{(1)} $ has  the same form \eqref{J2E}, and therefore
possesses  Benjamin-Feir unstable eigenvalues as well. 
This transformation is inspired   by the Jordan normal form of  $ \tL_{0,\e}$. 
\\[1mm]
\indent 2. \emph{Second step of block-diagonalization
 (Section \ref{sec:5.2})}.
We next perform a step of block-diagonalization to decrease further the size of the off-diagonal blocks: by applying a procedure inspired by KAM theory we obtain (at least) a  $ \cO( \mu^2 ) $ factor  in each entries of 
$ F^{(2)} $ in \eqref{Bsylvy}  (by contrast note the presence of $ \cO(\mu \e) $ entries   in $F^{(1)}$).
To achieve this, we construct 
 a linear change of variables  that conjugates
 the matrix $\tL^{(1)}_{\mu,\e}$ to the new 
Hamiltonian and reversible
matrix $ \tL_{\mu,\e}^{(2)}  $ in \eqref{sylvydec}, 
where the new off-diagonal matrix $\tJ_2 F^{(2)}$  is much smaller  than $\tJ_2 F^{(1)}$.
The delicate point, for which we
perform  Step 2 separately than Step 3, is to estimate the new block-diagonal matrices after
the conjugation, and prove that
$\tJ_2 E^{(2)}$ 
has still the form \eqref{J2E} -- thus possessing Benjamin-Feir unstable eigenvalues. Let us elaborate on this.
In order to reduce the size of $\tJ_2 F^{(1)} $, 
we   conjugate  $\tL_{\mu,\e}^{(1)}$ by 
the symplectic  matrix $\exp(S^{(1)})$, where $S^{(1)}$  is a Hamiltonian matrix
with  the same form  of 
$ \tJ_2 F^{(1)} $, see \eqref{formaS}.  
The transformed  matrix $\tL_{\mu,\e}^{(2)} = 
\exp(S^{(1)}) \tL_{\mu,\e}^{(1)}\exp(-S^{(1)})
$  has the 
Lie expansion\footnote{recall that   $\exp(S) L \exp(-S) = \sum_{n \geq 0} \frac{1}{n!} \textup{ad}_S^n(L)$, where
$\textup{ad}_S^0(L) := L$,   $\textup{ad}_S^n(L) = [S,  \textup{ad}_S^{n-1}(L)]$ for $n \geq 1$.}
\begin{equation}\label{L2.lie.expl}
\begin{aligned}
\tL_{\mu,\e}^{(2)} 
& =
\begin{psmallmatrix} 
\tJ_2 E^{(1)} & 0 \\ 
0 &\tJ_2  G^{(1)}
\end{psmallmatrix} \\
& \quad + 
\begin{psmallmatrix} 
0 & \tJ_2 F^{(1)} \\ 
\tJ_2 [F^{(1)}]^* & 0 
\end{psmallmatrix}
 +\lie{S^{(1)}}{\begin{psmallmatrix} 
\tJ_2 E^{(1)} & 0 \\ 
0 &\tJ_2  G^{(1)}
\end{psmallmatrix} }\\
& \quad 
+ \frac12 \Big[ S^{(1)}, \Big[ S^{(1)}, \begin{psmallmatrix} 
\tJ_2 E^{(1)} & 0 \\ 
0 &\tJ_2  G^{(1)}
\end{psmallmatrix} \Big] \Big] 
   + \Big[ S^{(1)}, \begin{psmallmatrix} 
0 & \tJ_2 F^{(1)} \\ 
\tJ_2 [F^{(1)}]^* & 0 
\end{psmallmatrix} \Big]  + \mbox{h.o.t.}
\end{aligned}
\end{equation}
The first line in the right hand side of \eqref{L2.lie.expl} is the original 
block-diagonal matrix, the second line of \eqref{L2.lie.expl} is a purely off-diagonal matrix and the third line is the sum of two  block-diagonal matrices  and ``h.o.t."  collects terms of much smaller size.
We determine $S^{(1)}$ in such a way that 
the second line of \eqref{L2.lie.expl} vanishes 
 (this equation would be referred to as the ``homological equation'' in the context of KAM theory).  
In this way the  remaining off-diagonal matrices (appearing in the h.o.t. remainder) are much smaller in size. 
We then compute 
the block-diagonal corrections in the third line of \eqref{L2.lie.expl} and show that the new block-diagonal matrix $
\tJ_2 E^{(2)} $  has again the form \eqref{J2E} (clearly with different remainders, but of the same order) and thus displays Benjamin-Feir instability.
This last step is delicate because  $S^{(1)} = \cO(\e, \mu^2)$ and $\tJ_2 F^{(1)} = \cO ( \mu \epsilon, \mu^3 )$ and so the matrix in the third line
of \eqref{L2.lie.expl} could a priori have 
elements of size  $\cO(\mu \e^2)$. 
Adding a term of size $\cO(\mu \e^2)$ 
to the (1,2)-entry of the matrix $\tJ_2 E^{(1)}$, which has 
the form $ -\frac{\mu^2}{8}(1+r_5(\e,\mu)) $ 
as 
in \eqref{J2E},  could make it positive. In such a case 
the  eigenvalues of $\tJ_2 E^{(2)}$ would be purely imaginary, 
and the Benjamin-Feir instability would disappear. 
Actually,  estimating individually
each components, we
show that  no contribution of size $\cO(\mu \e^2)$ appears in the (1,2)-entry.

One further comment is needed. We solve the required homological equation without 
diagonalizing $\tJ_2 E^{(1)}$ and  $\tJ_2 G^{(1)}$ (as done typically in  KAM theory).
Note that diagonalization is not even possible at $\mu \sim 2 \sqrt{2}\e$ 
where 
$\tJ_2 E^{(1)}$ becomes a Jordan block (here its eigenvalues fail to be analytic).
We use a direct linear algebra argument that  enables    
to preserve the analyticity in $\mu, \e$ of 
 the  transformed 
$4\times 4$ matrix $\tL^{(2)}_{\mu,\e}$.
\\[1mm]
\indent 3. \emph{Complete block-diagonalization 
 (Section \ref{section34})}.
As a last step in Lemma \ref{ultimate}
we perform, by means of a standard implicit function theorem,  
a symplectic and reversibility preserving transformation
that block-diagonalize $\tL^{(2)}_{\mu,\e}$ completely. 
The invertibility properties and estimates required to apply the implicit function theorem
rely on the solution of the homological equation obtained in Step 2.
 The off-diagonal matrix $\tJ_2 F^{(2)}$ is small enough 
to directly 
prove that the block-diagonal matrix  $
\tJ_2 E^{(3)} $ has the same form of $
\tJ_2 E^{(2)} $, thus possesses Benjamin-Feir unstable eigenvalues
(without distinguishing the entries as we do in Step 2).

In conclusion, the original matrix $\tL_{\mu,\e}$ in \eqref{L.form} has been conjugated to the Hamiltonian and reversible matrix \eqref{matricefinae}.
 This proves 
Theorem \ref{TeoremoneFinale} and Theorem \ref{thm:simpler}.

\section{Perturbative approach to the separated eigenvalues}\label{Katoapp}

In this section we apply  Kato's similarity transformation 
theory \cite[I-\textsection 4-6, II-\textsection 4]{Kato}  to study the splitting of the eigenvalues of 
$ \cL_{\mu,\e} $ close to $ 0 $ for small values of $ \mu $ and $ \e $.
First of all it is convenient to decompose the operator $ \cL_{\mu,\e}$ in \eqref{WW} as 
  \begin{equation}\label{calL}
 \cL_{\mu,\e}  = \im \mu + {\sL}_{\mu,\e} \, , \qquad \mu > 0 \, ,  
\end{equation}
where, using also \eqref{|D+mu|}, 
\begin{equation}\label{calL2}
{\sL}_{\mu,\e}:= 
\begin{bmatrix} \pa_x\circ (1+p_\e(x)) + \im \mu \,  p_\e(x) &
 |D| + \mu (\sgn(D)+\Pi_0) \\ -(1+a_\e(x)) & (1+p_\e(x))\pa_x+\im \mu\,  p_\e(x) \end{bmatrix}  \, . 
\end{equation}
 The operator ${\sL}_{\mu,\e}$ is still  Hamiltonian, having the form 
\begin{equation}\label{calL.ham}
{\sL}_{\mu,\e} = 
 \cJ \, {\cal B}_{\mu, \e} \, , \quad
{\cal B}_{\mu, \e}
:= \begin{bmatrix} 1+a_\e(x) & -((1+p_\e(x))\pa_x-\im \mu\,  p_\e(x)  \\
 \pa_x\circ (1+p_\e(x)) + \im \mu \,  p_\e(x) &  |D| + \mu (\sgn(D)+\Pi_0)\end{bmatrix}
\end{equation}
with ${\cal B}_{\mu, \e}$ selfadjoint, and 
 it is also  reversible, namely it satisfies, by \eqref{Reversible}, 
 \begin{equation}\label{calL.rev}
{\sL}_{\mu,\e}\circ \bro =- \bro \circ {\sL}_{\mu,\e} \, , 
\qquad
\bro \mbox{ defined in }  \eqref{reversibilityappears} \, , 
 \end{equation}
 whereas ${\cal B}_{\mu,\e}$ is reversibility-preserving, i.e. fulfills \eqref{B.rev.pres}. Note also that ${\cal B}_{0,\e}$ is a real operator.

 The scalar operator $ \im \mu \equiv \im \mu \, \text{Id}$ 
just translates the spectrum of $ {\sL}_{\mu,\e}$ 
along the imaginary  axis of  the quantity $ \im \mu $, that is, in view of \eqref{calL}, 
$$ 
\sigma ({\mathcal L}_{\mu,\e}) = \im \mu + \sigma ({\sL}_{\mu,\e}) \, . 
$$ 
Thus in the sequel we focus on studying the spectrum of 
$ {\sL}_{\mu,\e}$.

Note also that ${\sL}_{0,\e} = \cL_{0,\e}$ for any $\e \geq 0$.
In particular ${\sL}_{0,0}$ has zero as isolated eigenvalue with algebraic multiplicity 4, geometric multiplicity 3 and generalized kernel spanned by the vectors  $\{f^+_1, f^-_1, f^+_0, f^-_0\}$ in \eqref{basestart}, \eqref{basestartadd}.
Furthermore its spectrum is separated as in \eqref{spettrodiviso0}.
For  any $\e \neq 0$ small, ${\sL}_{0,\e}$ has  zero as  isolated eigenvalue 
with geometric multiplicity $2$, and two generalized eigenvectors satisfying 
\eqref{genespace}. 

We also remark that, in view of \eqref{|D+mu|}, the operator
 ${\sL}_{\mu,\e}$ is linear in $\mu$.
We remind that $ {\sL}_{\mu,\e}  : Y \subset X \to X $   
has domain $Y:=H^1(\mathbb{T}):=H^1(\mathbb{T},\bC^2)$ and range $X:=L^2(\mathbb{T}):=L^2(\mathbb{T},\bC^2)$.

In the next lemma  we construct the  transformation operators which map isomorphically the  unperturbed spectral subspace into the perturbed ones.

\begin{lem}\label{lem:Kato1}
 Let $\Gamma$ be a closed, counterclockwise-oriented curve around $0$ in the complex plane separating $\sigma'\left({\sL}_{0,0}\right)=\{0\}$
  and the other part of the spectrum $\sigma''\left({\sL}_{0,0}\right)$ in \eqref{spettrodiviso0}.
There exist $\e_0, \mu_0>0$  such that for any $(\mu, \e) \in B(\mu_0)\times B(\e_0)$  the following statements hold:
\begin{enumerate}
\item  
The curve $\Gamma$ belongs to the resolvent set of 
the operator ${\sL}_{\mu,\e} : Y \subset X \to X $ defined in \eqref{calL2}.
\item  
The operators
\begin{equation}\label{Pproj}
 P_{\mu,\e} := -\frac{1}{2\pi\im}\oint_\Gamma ({\sL}_{\mu,\e}-\lambda)^{-1} \de\lambda : X \to Y 
\end{equation}  
are well defined projectors commuting  with ${\sL}_{\mu,\e}$,  i.e. 
\begin{align}\label{Pprop}
 P_{\mu,\e}^2 = P_{\mu,\e} \, ,
 \quad P_{\mu,\e}{\sL}_{\mu,\e} = {\sL}_{\mu,\e} P_{\mu,\e} \, . 
\end{align}
The map $(\mu, \epsilon)\mapsto P_{\mu,\epsilon}$ is  analytic from 
$B({\mu_0})\times B({\epsilon_0})$
 to $ \cL(X, Y)$.
\item \label{invitem}
The domain $Y$  of the operator ${\sL}_{\mu,\e}$ decomposes as  the direct sum
\begin{align}\label{Y.dec}
Y= \mathcal{V}_{\mu,\e} \oplus \text{Ker}(P_{\mu,\e}) \, , \quad \mathcal{V}_{\mu,\e}:=\text{Rg}(P_{\mu,\e})=\text{Ker}(\uno-P_{\mu,\e}) \, ,
\end{align}
of the  closed  subspaces $\mathcal{V}_{\mu,\e} $, $ \text{Ker}(P_{\mu,\e}) $  of $ Y $, 
which are invariant under ${\sL}_{\mu,\e}$,
$$
{\sL}_{\mu,\e} : \mathcal{V}_{\mu,\e} \to \mathcal{V}_{\mu,\e} \, , \qquad 
{\sL}_{\mu,\e} : \text{Ker}(P_{\mu,\e}) \to \text{Ker}(P_{\mu,\e}) \, . 
$$ 
Moreover 
\begin{equation}\label{dec.spectrum}
\begin{aligned}
&\sigma({\sL}_{\mu,\e})\cap \{ z \in \bC \mbox{ inside } \Gamma \} = \sigma({\sL}_{\mu,\e}\vert_{{\mathcal V}_{\mu,\e}} )  = \sigma'({\sL}_{\mu, \e}) , \\
&\sigma({\sL}_{\mu,\e})\cap \{ z \in \bC \mbox{ outside } \Gamma \} = \sigma({\sL}_{\mu,\e}\vert_{Ker(P_{\mu,\e})} )  = \sigma''( {\sL}_{\mu, \e}) \ ,
\end{aligned}
\end{equation}
proving the  ``semicontinuity property" \eqref{SSE} of separated parts of the spectrum.
\item \label{tutto} The projectors $P_{\mu,\e}$ 
are similar one to each other: the  transformation operators\footnote{
 The operator $(\uno-R)^{-\frac12} $ is defined, for any 
operator $ R $ satisfying $\|R\|_{{\cL}(Y)}<1 $,  by the power series
\begin{align}\label{rootexp}
 (\uno - R)^{-\frac12} :=  \sum_{k=0}^\infty {-1/2 \choose k}(-R)^k = \uno + \frac{1}{2}R + \frac{3}{8}R^2+\cO(R^3) \, .
\end{align}
}
\begin{equation} \label{OperatorU} 
U_{\mu,\e}   := 
\big( \uno-(P_{\mu,\e}-P_{0,0})^2 \big)^{-1/2} \big[ 
P_{\mu,\e}P_{0,0} + (\uno - P_{\mu,\e})(\uno-P_{0,0}) \big] 
\end{equation}
are bounded and  invertible in $ Y $ and in $ X $, with inverse
\begin{equation}
 \label{Uinv}
U_{\mu,\e}^{-1}  = 
 \big[ 
P_{0,0} P_{\mu,\e}+(\uno-P_{0,0}) (\uno - P_{\mu,\e}) \big] \big( \uno-(P_{\mu,\e}-P_{0,0})^2 \big)^{-1/2} \, , 
\end{equation}
 and 
\begin{equation}\label{U.conj}
U_{\mu,\e} P_{0,0}U_{\mu,\e}^{-1} =  P_{\mu,\e}   \, , 
\qquad U_{\mu,\e}^{-1} P_{\mu,\e}  U_{\mu,\e} = P_{0,0} \, .
\end{equation}
The map $(\mu, \epsilon)\mapsto  U_{\mu,\e}$ is analytic from  $B(\mu_0)\times B(\epsilon_0)$ to $\cL(Y)$.
\item \label{facile} The subspaces $\mathcal{V}_{\mu,\e}=\text{Rg}(P_{\mu,\e})$ are isomorphic one to each other: 
$
\mathcal{V}_{\mu,\e}=  U_{\mu,\e}\mathcal{V}_{0,0}.
$
 In particular $\dim \mathcal{V}_{\mu,\e} = \dim \mathcal{V}_{0,0}=4 $, for any 
 $(\mu, \e) \in B(\mu_0)\times B(\e_0)$.
\end{enumerate}
\end{lem}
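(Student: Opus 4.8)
The plan is to follow the standard Kato construction in \cite[II-\textsection 4]{Kato}, adapted to the analytic family $ {\cal L}_{\mu,\e} $, and to verify uniformity of all the estimates in the two-parameter domain $ B(\mu_0)\times B(\e_0) $.

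\emph{Step 1 (resolvent estimate along $\Gamma$).}
Since $ 0 $ is an isolated eigenvalue of $ {\cal L}_{0,0} $ and $ \Gamma $ is a compact curve contained in the resolvent set $ \rho({\cal L}_{0,0}) $, the resolvent $ ({\cal L}_{0,0}-\lambda)^{-1} $ is bounded on $ \Gamma $, say by $ M $, as an operator $ X \to Y $. I would then write
$$
{\cal L}_{\mu,\e} - \lambda = \big( \uno + ({\cal L}_{\mu,\e}-{\cal L}_{0,0})({\cal L}_{0,0}-\lambda)^{-1} \big) ({\cal L}_{0,0}-\lambda) \, ,
$$
and use that $ (\mu,\e)\mapsto {\cal L}_{\mu,\e}\in \cL(Y,X) $ is analytic (indeed linear in $\mu$, analytic in $\e$), hence $ \| {\cal L}_{\mu,\e}-{\cal L}_{0,0} \|_{\cL(Y,X)} \to 0 $ as $ (\mu,\e)\to 0 $. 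Choosing $ \mu_0,\e_0 $ small so that $ \| {\cal L}_{\mu,\e}-{\cal L}_{0,0} \|_{\cL(Y,X)} M < 1 $ uniformly on $ \Gamma $, a Neumann series gives that $ \Gamma \subset \rho({\cal L}_{\mu,\e}) $ and that $ ({\cal L}_{\mu,\e}-\lambda)^{-1} $ is analytic in $ (\mu,\e) $ and continuous (hence bounded) in $ \lambda\in\Gamma $. This proves item 1.

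\emph{Step 2 (the spectral projectors).}
With the resolvent under control on $ \Gamma $, the Riesz projector $ P_{\mu,\e} $ in \eqref{Pproj} is well defined as a norm-convergent contour integral of an analytic integrand, so $ (\mu,\e)\mapsto P_{\mu,\e} $ is analytic from $ B(\mu_0)\times B(\e_0) $ to $ \cL(X,Y) $. The identities $ P_{\mu,\e}^2=P_{\mu,\e} $ and $ P_{\mu,\e}{\cal L}_{\mu,\e}={\cal L}_{\mu,\e}P_{\mu,\e} $ are the classical ones (the first from the resolvent identity applied to two nested copies of $ \Gamma $, the second because $ ({\cal L}_{\mu,\e}-\lambda)^{-1} $ commutes with $ {\cal L}_{\mu,\e} $); this is item 2. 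The direct sum decomposition \eqref{Y.dec}, the invariance of $ \mathcal V_{\mu,\e} $ and $ \mathrm{Ker}(P_{\mu,\e}) $ under $ {\cal L}_{\mu,\e} $, and the localization of the spectrum \eqref{dec.spectrum} all follow from the commutation property together with the fact that, by deformation of $\Gamma$, $ ({\cal L}_{\mu,\e}\vert_{\mathcal V_{\mu,\e}}-\lambda) $ is invertible for $ \lambda $ outside $ \Gamma $ and $ ({\cal L}_{\mu,\e}\vert_{\mathrm{Ker}(P_{\mu,\e})}-\lambda) $ is invertible for $ \lambda $ inside $ \Gamma $. This is item 3, which is exactly the semicontinuity statement \eqref{SSE}.

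\emph{Steps 3--4 (the Kato transformation operator).}
For item 4 I would invoke the key norm continuity $ \| P_{\mu,\e}-P_{0,0}\|_{\cL(Y)}\to 0 $ (from Step 2, possibly after shrinking $\mu_0,\e_0$ so that this norm is $ <1 $), which makes $ R:=(P_{\mu,\e}-P_{0,0})^2 $ satisfy $ \|R\|_{\cL(Y)}<1 $, so that $ (\uno-R)^{-1/2} $ is given by the convergent series \eqref{rootexp} and is analytic in $ (\mu,\e) $. Then $ U_{\mu,\e} $ in \eqref{OperatorU} is a composition/product of analytic, bounded operators, hence analytic and bounded on $ Y $ (and on $ X $, by the same computation since $ P_{\mu,\e}\in\cL(X) $ too). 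One checks directly that the product in \eqref{Uinv} is a two-sided inverse: this is the standard algebraic identity, using $ P_{0,0}^2=P_{0,0} $, $ P_{\mu,\e}^2=P_{\mu,\e} $, and the fact that $ (P_{\mu,\e}-P_{0,0})^2 $ commutes with both $ P_{0,0} $ and $ P_{\mu,\e} $ (indeed $ (P_{\mu,\e}-P_{0,0})^2 = P_{\mu,\e}+P_{0,0}-P_{\mu,\e}P_{0,0}-P_{0,0}P_{\mu,\e} $ is symmetric in the two projectors, up to the easily verified intertwining relations $ P_{\mu,\e}(P_{\mu,\e}-P_{0,0})^2=(P_{\mu,\e}-P_{0,0})^2 P_{\mu,\e} $). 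The intertwining \eqref{U.conj}, $ U_{\mu,\e}P_{0,0}U_{\mu,\e}^{-1}=P_{\mu,\e} $, is the defining property of the Kato operator and follows from the same algebra, since $ [P_{\mu,\e}P_{0,0}+(\uno-P_{\mu,\e})(\uno-P_{0,0})]P_{0,0}=P_{\mu,\e}[P_{\mu,\e}P_{0,0}+(\uno-P_{\mu,\e})(\uno-P_{0,0})] $. Finally item 5 is immediate: $ \mathcal V_{\mu,\e}=\mathrm{Rg}(P_{\mu,\e})=\mathrm{Rg}(U_{\mu,\e}P_{0,0}U_{\mu,\e}^{-1})=U_{\mu,\e}\mathrm{Rg}(P_{0,0})=U_{\mu,\e}\mathcal V_{0,0} $, so the dimension is preserved and equals $ \dim\mathcal V_{0,0}=4 $ as computed from \eqref{basestart}--\eqref{basestartadd}.

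\emph{Main obstacle.}
The genuinely non-formal point is Step 1: establishing that the perturbation $ {\cal L}_{\mu,\e}-{\cal L}_{0,0} $ is small \emph{as an operator from $Y=H^1$ to $X=L^2$}, uniformly for $ \lambda\in\Gamma $, so that $ \Gamma $ stays in the resolvent set. Here one must use that the only $(\mu,\e)$-dependence in $ {\cal L}_{\mu,\e} $ enters through the multiplication operators $ p_\e, a_\e\in H^1(\mathbb T) $ (analytic in $\e$ with $ p_0=a_0=0 $) and through the bounded Fourier multiplier $ \mu(\sgn(D)+\Pi_0) $, which is $ \cO(\mu) $ in $ \cL(L^2) $; all of these map $ H^1 \to L^2 $ with norm $ \cO(\mu)+\cO(\e) $. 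Everything else — the projector algebra, the convergence of \eqref{rootexp}, the inverse formula \eqref{Uinv}, the intertwining \eqref{U.conj} — is the textbook Kato argument and requires only that $ \|P_{\mu,\e}-P_{0,0}\|_{\cL(Y)} $ be small, which is a consequence of Step 1 via the contour-integral representation.
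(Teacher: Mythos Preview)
Your proposal is correct and follows essentially the same route as the paper: a Neumann-series resolvent estimate on $\Gamma$ using that ${\cal L}_{\mu,\e}-{\cal L}_{0,0}\in\cL(Y,X)$ is $\cO(\mu,\e)$, then the standard Riesz-projector and Kato transformation-operator arguments, with analyticity inherited from the integrand. The only cosmetic differences are that the paper factors the resolvent on the other side, namely $({\cal L}_{\mu,\e}-\lambda)^{-1}=(\uno+({\cal L}_{0,0}-\lambda)^{-1}{\cal R}_{\mu,\e})^{-1}({\cal L}_{0,0}-\lambda)^{-1}$, and for the spectral localization \eqref{dec.spectrum} it writes down the explicit partial inverse $R_{\mu,\e}(\lambda_0)=-\frac{1}{2\pi\im}\oint_\Gamma\frac{1}{\lambda-\lambda_0}({\cal L}_{\mu,\e}-\lambda)^{-1}\,\de\lambda$ rather than appealing to contour deformation.
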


\begin{proof}
1. For any $ \lambda \in \bC $ we decompose 
${\sL}_{\mu,\e}-\lambda= {\sL}_{0,0}-\lambda + {\cal R}_{\mu,\e} $
where ${\sL}_{0,0} = \begin{bmatrix} \pa_x & |D| \\ -1 & \pa_x \end{bmatrix}$ and 
\begin{equation}\label{restmue}
{\cal R}_{\mu,\e}:={\sL}_{\mu,\e}-{\sL}_{0,0} = \begin{bmatrix}  (\pa_x +\im \mu) p_\e(x) & \mu g(D) \\ -a_\e(x) &  p_\e(x)(\pa_x  + \im \mu) \end{bmatrix}: Y \to X \, ,
\end{equation}
having used also  \eqref{|D+mu|} and setting 
 $g(D) := \sgn(D) + \Pi_0$. For any $\lambda \in \Gamma$, 
the operator  ${\sL}_{0,0}-\lambda$ is invertible and its inverse is the Fourier multiplier 
matrix operator  
$$
({\sL}_{0,0}-\lambda)^{-1} = \text{Op}\left( \frac{1}{(\im k-\lambda )^2 + |k|}
\begin{bmatrix} \im k - \lambda & -|k| \\ 1 & \im k - \lambda  \end{bmatrix} \right): X \to Y \, .
$$
Hence, for $|\e|<\e_0$ and  $|\mu|<\mu_0$ small enough, uniformly on the compact set $\Gamma$, the operator $({\sL}_{0,0}-\lambda)^{-1}{\cal R}_{\mu,\e}:Y\to Y$ is bounded,  with small operatorial norm. Then ${\sL}_{\mu,\e}-\lambda$ is invertible by Neumann series and
\begin{align}\label{L-lambda}
({\sL}_{\mu,\e}-\lambda)^{-1}  = \big( \uno + ({\sL}_{0,0}-\lambda )^{-1}
{\cal R}_{\mu,\e} \big)^{-1} ({\sL}_{0,0}-\lambda)^{-1}: X\to Y \, .
\end{align}
This proves that $\Gamma$ belongs to the resolvent set of ${\sL}_{\mu,\e}$.\\
2. By the previous point the operator   
$ P_{\mu,\e} $  is  well defined and  bounded  $ X \to Y $. 
It  clearly commutes with ${\sL}_{\mu,\e}$.
 The projection property $P_{\mu,\e}^2= P_{\mu,\e}$ is a classical result based on complex integration, see \cite{Kato}, and we omit it. 
The map $(\mu, \e) \to ({\sL}_{0,0}- \lambda)^{-1} {\cal R}_{\mu,\e} \in \cL(Y)$ is analytic.
 Since the map $T \mapsto (\text{Id} + T)^{-1}$ is analytic in $\cL(Y)$ (for $\| T \|_{\cL(Y)} < 1$) the operators 
$({\sL}_{\mu,\e}-\lambda)^{-1} $ 
in \eqref{L-lambda} 
 and $P_{\mu,\e}$ in $ \cL (X,Y) $
are analytic as well with  
 respect to  $(\mu,\e)$. \\
 3. The decomposition
 \eqref{Y.dec} 
 is a consequence of  $P_{\mu,\e}$ being a continuous projector in $\cL(Y)$.  The invariance  of the subspaces follows since $P_{\mu,\e}$ and ${\sL}_{\mu,\e}$ commute.
To prove  \eqref{dec.spectrum} define 
for an arbitrary $\lambda_0 \not \in \Gamma$  the operator
$$
R_{\mu,\e}(\lambda_0) := - \frac{1}{2\pi \im} \oint_\Gamma \frac{1}{\lambda - \lambda_0} \left( \cL_{\mu,\e} - \lambda \right)^{-1} \, \de \lambda  \ \colon \ X \to Y  \ . 
$$
If $\lambda_0$ is outside $\Gamma$, one has 
$R_{\mu,\e}(\lambda_0) ( {\sL}_{\mu,\e} - \lambda_0) = ({\sL}_{\mu,\e}- \lambda_0)R_{\mu,\e}(\lambda_0) = P_{\mu,\e}$ and thus $\lambda_0 \not\in \sigma({\sL}_{\mu,\e}\vert_{\cV_{\mu,\e}})$.
For  $\lambda_0$  inside $\Gamma$, 
$R_{\mu,\e}(\lambda_0) ( {\sL}_{\mu,\e} - \lambda_0) = ({\sL}_{\mu,\e}- \lambda_0)R_{\mu,\e}(\lambda_0) =  P_{\mu,\e}- \text{Id}$ and thus $\lambda_0 \not\in \sigma({\sL}_{\mu,\e}\vert_{Ker(P_{\mu,\e})})$. Then  \eqref{dec.spectrum} follows.
\\
4. By \eqref{Pproj}, the resolvent identity
 $ A^{-1} - B^{-1} = A^{-1} (B-A) B^{-1} $ and \eqref{restmue}, we write
 $$ 
 P_{\mu,\e} - P_{0,0} = 
 \frac{1}{2\pi\im}\oint_\Gamma ({\sL}_{\mu,\e}-\lambda)^{-1} {\mathcal R}_{\mu,\e} 
 ({\sL}_{0,0}-\lambda)^{-1} \de \lambda \, .
 $$
Then $ \| P_{\mu,\e} - P_{0,0} \|_{{\cL}(Y)}<1 $ for $ |\e| < \e_0 $, 
 $ |\mu| < \mu_0 $ small enough and the operators $ U_{\mu,\e } $ in \eqref{OperatorU}
are well defined in  $ \cL(Y)$ (actually $ U_{\mu,\e }  $ are also in $ \cL(X)$).  
The invertibility of $U_{\mu,\e}$ and  formula 
\eqref{U.conj} are  proved in \cite{Kato}, Chapter I, Section 4.6, for the pairs of projectors
$ Q = P_{\mu,\e} $ and $ P =  P_{0,0} $.
The analyticity of $(\mu,\e) \mapsto U_{\mu,\e}\in \cL(Y)$
follows by the analyticity  $(\mu,\e) \mapsto P_{\mu,\e} \in \cL(Y)$ and of the map $T \mapsto (\text{Id} - T)^{-\frac12}$ in $\cL(Y)$ for $\|T\|_{\cL(Y)} < 1$.   \\
5. It  follows from the conjugation formula \eqref{U.conj}.
\end{proof}

The Hamiltonian and reversible nature of
the operator $ {\sL}_{\mu,\e} $, see \eqref{calL.ham} and \eqref{calL.rev}, imply
additional   algebraic properties 
for   spectral projectors $P_{\mu,\e}$ and the transformation operators $U_{\mu,\e} $.

\begin{lem}\label{propPU}
 For any $(\mu, \epsilon)  \in B(\mu_0)\times  B(\epsilon_0)$, the following holds true:
\begin{itemize}
\item[(i)]
The projectors $P_{\mu,\e}$ defined in \eqref{Pproj} are  (complex) skew-Hamiltonian, 
namely $ \cJ P_{\mu,\e} $ are skew-Hermitian 
\begin{equation}\label{Pskew}
\cJ P_{\mu,\e}=P_{\mu,\e}^*\cJ \, , 
\end{equation}
and reversibility preserving, i.e. 
$ \bro P_{\mu,\e} = P_{\mu,\e}  \bro $.
\item[(ii)]  The transformation operators 
$ U_{\mu,\e} $ in \eqref{OperatorU} are symplectic, namely 
$$ 
U_{\mu,\e}^* \cJ U_{\mu,\e}= \cJ \, , 
$$ 
and reversibility preserving.
 \item[(iii)]  $P_{0,\e}$ and $U_{0,\e}$ are real operators, i.e. $\bar{P_{0,\e}}=P_{0,\e}$ and $\bar{U_{0,\e}}=U_{0,\e}$.
\end{itemize}
\end{lem}
\begin{rmk}
The term  (complex) skew-Hamiltonian is used in  \cite[Section 6]{FMMX} for matrices.
\end{rmk}

\begin{proof}
Let  $\gamma\colon [0,1] \to \bC$ be a counter-clockwise oriented  parametrization of   $\Gamma$. \\
$(i)$  Since ${\sL}_{\mu,\e}$ is Hamiltonian, it results  $ {\sL}_{\mu,\e} \cJ = - \cJ {\sL}_{\mu,\e}^* $ on $Y$. 
Then, for any scalar $ \lambda $ in the resolvent set of $ {\sL}_{\mu,\e} $, the number
$ - \lambda $ belongs to the resolvent of ${\sL}_{\mu,\e}^* $ and 
\begin{equation}\label{dallaHam}
\cJ ({\sL}_{\mu,\e} -\lambda)^{-1} = - ({\sL}_{\mu,\e}^*+\lambda)^{-1} \cJ \, . 
\end{equation}
Taking the adjoint of \eqref{Pproj}, we have 
\begin{equation}\label{propmue}
 P_{\mu,\e}^*  = \frac{1}{2\pi\im} \int_0^1 \left(\cL_{\mu,\e}^* -\bar\gamma(t)\right)^{-1} \dot{\bar{\gamma}}(t)\de t  = \frac{1}{2\pi\im} \oint_\Gamma \left(\cL_{\mu,\e}^* +\lambda\right)^{-1} \de \lambda \, ,
\end{equation}
because the path $-\bar\gamma (t) $ winds around  the origin clockwise. 
We conclude that 
$$
 {\cJ}P_{\mu,\e}\stackrel{\eqref{Pproj}}{=} -\frac{1}{2\pi \im}  \oint_\Gamma \cJ \left({\sL}_{\mu,\e} -\lambda\right)^{-1} \de \lambda 
 \stackrel{\eqref{dallaHam}}{=} \frac{1}{2\pi \im}  \oint_\Gamma \left({{\sL}}_{\mu,\e}^* +\lambda\right)^{-1}\cJ \de \lambda\ 
 \stackrel{\eqref{propmue}}{=} P_{\mu,\e}^* \cJ \, .
$$
Let us now prove that $P_{\mu,\e}$ is reversibility preserving.
By \eqref{calL.rev} one has 
$ ({\sL}_{\mu,\e} - \lambda) \bar\rho = \bar\rho  ( - {\sL}_{\mu,\e}  - \bar \lambda )$ 
and, for any scalar $ \lambda $ in the resolvent set of $  {\sL}_{\mu,\e}  $, we have 
$ \bar\rho ({\sL}_{\mu,\e} - \lambda)^{-1}  =  - ( {\sL}_{\mu,\e}  + \bar \lambda )^{-1} \bar\rho  $,  using also that $ (\bar\rho)^{-1} = \bar\rho $.
Thus, recalling \eqref{Pproj} and \eqref{reversibilityappears}, we have 
$$
\bar \rho P_{\mu,\e} = \frac{1}{2\pi \im} \int_0^1 -\left({\sL}_{\mu,\e} +\bar \gamma(t)\right)^{-1}\dot{\bar\gamma}(t) \de t \, \bar \rho = -\frac{1}{2\pi \im} \oint_\Gamma ({\sL}_{\mu,\e} - \lambda)^{-1}\de \lambda \, \bar \rho = P_{\mu,\e} \bar \rho \, ,
$$
because the path $-\bar\gamma (t) $ winds around  the origin clockwise. \\
$(ii)$ 
If an operator $A$ is skew-Hamiltonian then $A^k$, $k\in\bN$, is skew-Hamiltonian as well.  As a consequence, being the projectors $P_{\mu,\e}$, $P_{0,0}$ and their difference skew-Hamiltonian, 
the operator  $\big( \uno-(P_{\mu,\e}-P_{0,0})^2 \big)^{-1/2}$ defined as in \eqref{rootexp} is skew Hamiltonian as well. Hence, by \eqref{OperatorU}  we get 
\begin{align*}
\cJ U_{\mu, \e} 
& =\left[ \big( \uno-(P_{\mu,\e}-P_{0,0})^2 \big)^{-1/2} \right]^* \ 
 \big[ 
P_{0,0}P_{\mu,\e} +(\uno-P_{0,0})  (\uno - P_{\mu,\e})\big]^* \ \cJ  
\ \ \stackrel{\eqref{Uinv}}{=}\ \  U_{\mu,\e}^{-*} \cJ  
\end{align*}
and therefore $ U_{\mu,\e}^{*} \cJ U_{\mu, \e}  = \cJ $. 
Finally the operator $U_{\mu,\e}$ defined in  \eqref{OperatorU} 
is reversibility-preserving just as $\bro$ commutes with  $P_{\mu,\e}$ and $P_{0,0}$. 
\\
$(iii)$  By \eqref{Pproj} and since ${\sL}_{0,\e}$ is a real operator, we have
$$
\bar{P_{0,\e}} 
= \frac{1}{2\pi \im} \int_0^1 \left({\sL}_{0,\e} -\bar \gamma(t)\right)^{-1}\dot{\bar\gamma}(t) \de t = 
- \frac{1}{2\pi \im} \oint_\Gamma \left({\sL}_{0,\e} - \lambda  \right)^{-1}
 \de \lambda  = P_{0,\e} 
$$
because the path $\bar\gamma (t) $ winds around  the origin clockwise, proving 
that the operator $P_{0,\e}$ is real.   Then the operator $U_{0,\e}$ defined in \eqref{OperatorU} is real
as well.
\end{proof}

By the previous lemma, the linear involution $\bro $ 
commutes with the spectral projectors $P_{\mu,\e}$ and then 
$\bro $ leaves invariant the subspaces 
$ \mathcal{V}_{\mu,\e} = \text{Rg}(P_{\mu,\e}) $.  

\smallskip

Let us discuss  the implications of the previous lemma in the setting of
complex symplectic structures, presented for example in \cite{Arn,EM}. 
The infinite dimensional complex space $ L^2 (\bT, \bC^2) $, with scalar product \eqref{scalar}, is equipped with the {\it complex symplectic form}  
\begin{equation}\label{ses}
{\cal W}_c  \, \colon L^2 (\bT, \bC^2) \times L^2 (\bT, \bC^2) \to \bC \, , \quad  {\cal W}_c(f, g) := (\cJ f,g) \, ,
\end{equation}
which is sesquilinear,  skew-Hermitian and non-degenerate, 
cfr. Definition 1 in \cite{EM}. 
The skew-Hamiltonian property \eqref{Pskew} of the projector $ P_{\mu,\e} $ implies the following lemma. 

\begin{lem}\label{lem:simpl}
For any $ (\mu,\e)$, the  linear subspace $ \cV_{\mu,\e} = \text{Rg}(P_{\mu,\e}) $ 
is a  \text{complex symplectic}  subspace of $L^2 (\bT, \bC^2) $, 
namely  the 
 symplectic form $ {\cal W}_c$ in \eqref{ses}, restricted to 
 $ \cV_{\mu,\e} $, is non-degenerate. 
 \end{lem}
 
 \begin{proof}
Let 
 $ \tilde f \in \cV_{\mu,\e} $, thus $\tilde f = P_{\mu,\e} \tilde f$.
 Suppose that $ {\cal W}_c(\tilde f, \tilde g )  = 0 $
for any $ \tilde g = P_{\mu,\e} g \in \cV_{\mu,\e}  $,  
 $ g \in L^2 (\bT, \bC^2) $. Thus
$$
0 =  {\cal W}_c(\tilde f , \tilde g ) = (\cJ \tilde f , P_{\mu,\e} g )  = 
(P_{\mu,\e}^* \cJ \tilde f ,  g ) \stackrel{\eqref{Pskew} } =   ( \cJ P_{\mu,\e}\tilde  f ,  g ) =
( \cJ \tilde f ,  g )  \, . 
 $$
We deduce that $ \cJ \tilde f = 0 $ and then  $ \tilde f = 0 $.   
\end{proof}

\begin{rmk}\label{rem:U.W}
In view of Lemma \ref{propPU}-($ii$) 
the transformation operator $ U_{\mu,\e} $ is symplectic, namely preserves 
the symplectic form \eqref{ses}, i.e. $ {\cal W}_c(U_{\mu,\e} f,  U_{\mu,\e} g ) =  {\cal W}_c( f,  g )  $, for any 
$ f, g \in L^2 (\bT, \bC^2) $.
\end{rmk}

\noindent
{\bf Symplectic and reversible basis of $\mathcal{V}_{\mu,\e}$.}
It is convenient to represent the Hamiltonian and reversible operator
$ {\sL}_{\mu,\e} : \mathcal{V}_{\mu,\e} \to \mathcal{V}_{\mu,\e} $ in a 
basis which is symplectic  and reversible, according to the following definition. 
\begin{sia}\label{def:SR} 
{\bf (Symplectic and reversible basis)}
 A basis $\mathtt{F}:=\{\mathtt{f}^+_1,\,\mathtt{f}^-_1,\,\mathtt{f}^+_0,\,\mathtt{f}^-_0 \}$ of $\mathcal{V}_{\mu,\e}$ is 
 \begin{itemize}
 \item 
 \emph{symplectic} if, for any $ k, k' = 0,1 $,  
 \begin{equation}\label{symplecticbasis}
 \molt{\cJ \tf_k^-}{\tf_k^+} = 1 \, ,  \ \  
 \big( \cJ \tf_k^\sigma, \tf_k^\sigma \big) = 0 \, , \  \forall \sigma = \pm \, ;  
  \ \  \text{if} \ k \neq k' \  \text{then} \  
\big( \cJ \tf_k^\sigma, \tf_{k'}^{\sigma'} \big) = 
   0 \, , \ \forall \sigma, \sigma' = \pm  \, . 
\end{equation}
\item 
\emph{reversible} if
\begin{equation}
\label{reversiblebasis}
 \bar \rho \tf^+_1 =  \tf^+_1 , \quad   \bar \rho \tf^-_1 = - \tf^-_1 , \quad 
 \bar \rho \tf^+_0 =  \tf^+_0 , \quad   \bar \rho \tf^-_0 = - \tf^-_0, \quad \text{i.e. } \bro \tf_k^\sigma = \sigma \tf_k^\sigma \, , \ \forall \sigma = \pm, k = 0,1 \, .
\end{equation}
\end{itemize}
\end{sia}


\begin{rmk}\label{rem:U2}
By Remark \ref{rem:U.W}, the operator $U_{\mu,\e}$ maps a symplectic basis in a symplectic basis. 
\end{rmk}

In the next lemma we outline a property of a reversible basis. We use the following 
notation along the paper:  we denote by $even(x)$ a real $2\pi$-periodic function which is even in $x$, and by $odd(x)$ a real $2\pi$-periodic function which is odd in $x$.
\begin{lem}\label{lem:f.parity}
The real and imaginary parts of the elements of a reversible basis $\mathtt{F}=\{\mathtt{f}^\pm_k \}$, $k=0,1$, enjoy the following parity properties
\begin{equation}\label{reversiblebasisprop}
  \tf_k^+(x) = \vet{even(x)+\im odd(x)}{odd(x)+\im even(x)}, \quad
  \tf_k^-(x) = \vet{odd(x)+\im even(x)}{even(x)+\im odd(x)}.
\end{equation}
\end{lem}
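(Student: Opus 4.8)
The statement to prove is Lemma~\ref{lem:f.parity}, describing the parity structure of the real and imaginary parts of the elements of a reversible basis. The plan is to unfold the definition \eqref{reversiblebasis} of a reversible basis componentwise, using the explicit form \eqref{reversibilityappears} of the involution $\bro$, and then separate into real and imaginary parts.

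\medskip

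First I would write a generic vector $\tf = \begin{psmallmatrix} \tf^{(1)} \\ \tf^{(2)} \end{psmallmatrix}$ with complex-valued $2\pi$-periodic components and compute, directly from \eqref{reversibilityappears},
$$
\bro \tf (x) = \vet{\overline{\tf^{(1)}(-x)}}{-\overline{\tf^{(2)}(-x)}} \, .
$$
For $\tf^+_k$ the reversibility condition $\bro \tf^+_k = \tf^+_k$ then reads componentwise as $\overline{\tf^{(1)}(-x)} = \tf^{(1)}(x)$ and $-\overline{\tf^{(2)}(-x)} = \tf^{(2)}(x)$. Decomposing $\tf^{(1)} = a + \im b$ and $\tf^{(2)} = c + \im d$ with $a,b,c,d$ real-valued, the first identity becomes $a(-x) - \im b(-x) = a(x) + \im b(x)$, i.e. $a$ is even and $b$ is odd; the second becomes $-c(-x) + \im d(-x) = c(x) + \im d(x)$, i.e. $c$ is odd and $d$ is even. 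This gives exactly the claimed form of $\tf_k^+$ in \eqref{reversiblebasisprop}. For $\tf^-_k$ the condition $\bro \tf^-_k = -\tf^-_k$ flips the signs on the right-hand sides: $\overline{\tf^{(1)}(-x)} = -\tf^{(1)}(x)$ forces $a$ odd, $b$ even, while $-\overline{\tf^{(2)}(-x)} = -\tf^{(2)}(x)$ forces $c$ even, $d$ odd, yielding the stated form of $\tf_k^-$.

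\medskip

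There is no real obstacle here: the lemma is a direct computation, once one is careful with the complex conjugation and the sign conventions built into $\bro$. The only point requiring a little attention is keeping track of which component picks up the extra minus sign (the $\psi$-component), since that is what produces the asymmetry between the $\tf^{(1)}$ and $\tf^{(2)}$ rows in \eqref{reversiblebasisprop}, and the switch between the $+$ and $-$ cases amounts precisely to interchanging the roles of ``even'' and ``odd'' in each slot. I would present the argument for $\tf_k^+$ in full and note that the case of $\tf_k^-$ follows verbatim with the overall sign reversed.
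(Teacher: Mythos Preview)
Your proposal is correct and follows essentially the same approach as the paper: write the vector componentwise, apply the definition \eqref{reversibilityappears} of $\bro$, separate real and imaginary parts, and read off the parities; the paper too carries out the computation for $\tf_k^+$ and remarks that $\tf_k^-$ follows similarly.
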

\begin{proof} By the definition of the involution $\bro$ in \eqref{reversibilityappears}, we get 
$$
  \tf_k^+(x) = \vet{a(x)+\im b(x)}{c(x)+\im d(x)} = \bro \tf_k^+(x) = \vet{a(-x)-\im b(-x)}{-c(-x)+\im d(-x)} \implies a, d \text{ even}, \ b,c \text{ odd} \, .
$$
The properties of $\tf_k^-$ follow similarly.
\end{proof}
We now expand  a vector of $ \mathcal{V}_{\mu,\e} $ along a  symplectic basis.

 \begin{lem} \label{lem:simplbas}
 Let $\mathtt{F}= \{ \tf_{1}^+, \tf_{1}^-, \tf_{0}^+, \tf_{0 }^- \} $  be a  
 symplectic basis of  $ \mathcal{V}_{\mu,\e} $.
Then any $\tf $ in $ \mathcal{V}_{\mu,\e}$ has the expansion 
\begin{equation}\label{espansionegenerica}
\tf = - \molt{ \cJ  \tf}{\tf_{1}^-} \tf_{1}^+  
 + \molt{\cJ  \tf}{\tf_{1}^+} \tf_{1}^-
- 
 \molt{\cJ  \tf}{\tf_{0}^-} \tf_{0}^+  
 + \molt{\cJ  \tf}{\tf_{0}^+} \tf_{0}^- \, .  
\end{equation}
\end{lem}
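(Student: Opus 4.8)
The plan is to expand $\tf$ in the given basis with unknown coefficients and then recover each coefficient by pairing against the appropriate basis element via the symplectic form. Write $\tf = a\, \tf_1^+ + b\, \tf_1^- + c\, \tf_0^+ + d\, \tf_0^-$ for scalars $a,b,c,d \in \bC$, which is legitimate because $\mathtt{F}$ is a basis of $\mathcal{V}_{\mu,\e}$ and $\tf \in \mathcal{V}_{\mu,\e}$. The goal is to show $a = -\molt{\cJ\tf}{\tf_1^-}$, $b = \molt{\cJ\tf}{\tf_1^+}$, $c = -\molt{\cJ\tf}{\tf_0^-}$, $d = \molt{\cJ\tf}{\tf_0^+}$.

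First I would compute $\molt{\cJ\tf}{\tf_1^-}$ by linearity: it equals $a\molt{\cJ\tf_1^+}{\tf_1^-} + b\molt{\cJ\tf_1^-}{\tf_1^-} + c\molt{\cJ\tf_0^+}{\tf_1^-} + d\molt{\cJ\tf_0^-}{\tf_1^-}$. By the symplectic relations \eqref{symplecticbasis}: $\molt{\cJ\tf_1^-}{\tf_1^-}=0$, and the cross terms $\molt{\cJ\tf_0^+}{\tf_1^-} = \molt{\cJ\tf_0^-}{\tf_1^-}=0$ since the indices $0 \neq 1$ differ. It remains to handle $\molt{\cJ\tf_1^+}{\tf_1^-}$; from $\molt{\cJ\tf_1^-}{\tf_1^+}=1$ and the skew-symmetry of the form $f,g \mapsto \molt{\cJ f}{g}$ (which follows from $\cJ^* = -\cJ$, i.e. $\molt{\cJ f}{g} = \molt{f}{\cJ^* g} = -\molt{f}{\cJ g} = -\overline{\molt{\cJ g}{f}}$—care is needed since the scalar product is sesquilinear, but the basis vectors pair to real values here, or one simply notes $\molt{\cJ \tf_1^+}{\tf_1^-} = -\molt{\cJ\tf_1^-}{\tf_1^+} = -1$ by antisymmetry of $\cJ$). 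Hence $\molt{\cJ\tf}{\tf_1^-} = -a$, giving $a = -\molt{\cJ\tf}{\tf_1^-}$. The other three coefficients follow by the identical argument, pairing $\cJ\tf$ against $\tf_1^+$, $\tf_0^-$, $\tf_0^+$ respectively and using the corresponding normalization and orthogonality relations in \eqref{symplecticbasis}.

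There is no real obstacle here: this is a routine linear-algebra identity, the discrete analogue of expanding a vector in a symplectic basis. The only point requiring a moment's care is the sign and conjugation bookkeeping in the bilinear pairing $\molt{\cJ\,\cdot}{\cdot}$ — namely that $\molt{\cJ f}{g} = -\molt{\cJ g}{f}$ on the relevant vectors — which is immediate from $\cJ^\top = -\cJ$ together with the fact that the symplectic form evaluated on basis vectors is real-valued (cf. the definition \eqref{symplecticbasis}). Substituting the four recovered coefficients back into $\tf = a\,\tf_1^+ + b\,\tf_1^- + c\,\tf_0^+ + d\,\tf_0^-$ yields exactly \eqref{espansionegenerica}, completing the proof.
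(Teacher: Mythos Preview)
Your proof is correct and follows essentially the same approach as the paper: expand $\tf$ in the basis with unknown coefficients, apply $\cJ$, take scalar products against the basis vectors, and use the symplectic relations \eqref{symplecticbasis} together with $\molt{\cJ \tf_k^+}{\tf_k^-} = -1$ to solve for the coefficients. The paper's proof is just a more compressed version of exactly what you wrote.
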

\begin{proof}
We decompose 
$ \tf = \alpha_{1}^+ \tf_{1}^+  
 + \alpha_{1}^- \tf_{1}^-
+\alpha_{0}^+ \tf_{0}^+  
 + \alpha_{0}^- \tf_{0}^- $ for suitable coefficients $ \alpha_k^\sigma \in \bC $. 
By applying $\cJ$, taking the $L^2$ scalar products with the 
vectors $ \{ \tf_k^\sigma \}_{\sigma = \pm, k=0,1}$, 
using \eqref{symplecticbasis} 
 and noting that $ \molt{\cJ \tf_k^+}{\tf_k^-} = - 1  $,  we get 
the expression of the coefficients $\alpha_k^\sigma$ as in \eqref{espansionegenerica}.
\end{proof}

We now represent ${\sL}_{\mu,\e} :\mathcal{V}_{\mu,\e}\to\mathcal{V}_{\mu,\e}  $ with respect to a symplectic and reversible basis.

\begin{lem}\label{lem:B.mat}
The $ 4 \times 4 $ matrix that represents the Hamiltonian and reversible operator 
${\sL}_{\mu,\e}= \cJ {\cal B}_{\mu,\e}:\mathcal{V}_{\mu,\e}\to\mathcal{V}_{\mu,\e} $ with respect to a symplectic and reversible basis $\mathtt{F}=\{\tf_1^+,\tf_1^-,\tf_0^+,\tf_0^-\} $ of $\mathcal{V}_{\mu,\e}$ is 
\begin{align}\label{Lform}
 \tJ_4 \tB_{\mu,\e} \, ,\quad  \tJ_4 := 
 \begin{pmatrix} 
 \tJ_2& \vline & 0 \\
 \hline
0  & \vline & \tJ_2
\end{pmatrix}, \quad \tJ_2 := \begin{pmatrix} 
 0 & 1 \\
-1  & 0
\end{pmatrix}, \quad \text{where } \quad \tB_{\mu,\e}= \tB_{\mu,\e}^* \end{align}
is the self-adjoint matrix
\begin{equation}\label{matrix22} 
\tB_{\mu,\e} = 
\begin{pmatrix}
\BVe{+}{1}{+}{1} & \BVe{-}{1}{+}{1} & \BVe{+}{0}{+}{1} & \BVe{-}{0}{+}{1} \\
\BVe{+}{1}{-}{1} & \BVe{-}{1}{-}{1} & \BVe{+}{0}{-}{1} & \BVe{-}{0}{-}{1} \\
\BVe{+}{1}{+}{0} & \BVe{-}{1}{+}{0} & \BVe{+}{0}{+}{0} & \BVe{-}{0}{+}{0} \\
\BVe{+}{1}{-}{0} & \BVe{-}{1}{-}{0} & \BVe{+}{0}{-}{0} & \BVe{-}{0}{-}{0} \\
	\end{pmatrix}.
\end{equation}
 The entries of the matrix $\tB_{\mu,\e}$ are alternatively  real or purely imaginary: for any $ \sigma = \pm $, $ k = 0, 1 $, 
\begin{equation}\label{revprop}
 \molt{{\cal B}_{\mu,\e}  \, \tf^{\sigma}_{k}}{\tf^{\sigma}_{k'}} \text{ is real},\qquad 
   \molt{{\cal B}_{\mu,\e}  \, \tf^{\sigma}_{k}}{\tf^{-\sigma}_{k'}} \text{ is purely imaginary} \, .
\end{equation}
\end{lem}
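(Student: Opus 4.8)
The plan is to split the statement into two essentially independent parts: (a) the block-symplectic/Hamiltonian structure of the representing matrix, i.e. that it has the form $\tJ_4 \tB_{\mu,\e}$ with $\tB_{\mu,\e}$ self-adjoint; and (b) the reality/imaginarity pattern \eqref{revprop} of the entries of $\tB_{\mu,\e}$, which comes from reversibility. For part (a), I would start from the expansion formula \eqref{espansionegenerica} of Lemma \ref{lem:simplbas}. Since $\mathcal{V}_{\mu,\e}$ is invariant under ${\cal L}_{\mu,\e}$, for each basis vector $\tf_k^\sigma$ the image ${\cal L}_{\mu,\e}\tf_k^\sigma = \cJ{\cal B}_{\mu,\e}\tf_k^\sigma$ lies in $\mathcal{V}_{\mu,\e}$, so I apply \eqref{espansionegenerica} to $\tf = {\cal L}_{\mu,\e}\tf_k^\sigma$. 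The coefficients are then $\pm\big(\cJ {\cal L}_{\mu,\e}\tf_k^\sigma,\tf_{k'}^{\sigma'}\big) = \mp\big({\cal B}_{\mu,\e}\tf_k^\sigma,\tf_{k'}^{\sigma'}\big)$, using $\cJ^2 = -\uno$ and $\cJ^* = -\cJ$ together with ${\cal L}_{\mu,\e} = \cJ{\cal B}_{\mu,\e}$. Reading off these coefficients column by column and comparing with the sign pattern of $\tJ_2$ inside $\tJ_4$, one sees that the representing matrix is exactly $\tJ_4$ times the matrix with $(i,j)$ entry $\big({\cal B}_{\mu,\e}\tf_j,\tf_i\big)$ in the ordered basis $\{\tf_1^+,\tf_1^-,\tf_0^+,\tf_0^-\}$; this is precisely $\tB_{\mu,\e}$ as written in \eqref{matrix22}. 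Self-adjointness $\tB_{\mu,\e} = \tB_{\mu,\e}^*$ is then immediate from ${\cal B}_{\mu,\e} = {\cal B}_{\mu,\e}^*$ and the hermitian symmetry of the scalar product \eqref{scalar}: $\overline{\big({\cal B}_{\mu,\e}\tf_j,\tf_i\big)} = \big(\tf_i,{\cal B}_{\mu,\e}\tf_j\big) = \big({\cal B}_{\mu,\e}\tf_i,\tf_j\big)$.

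For part (b), I would exploit that ${\cal B}_{\mu,\e}$ is reversibility-preserving, $\bro\,{\cal B}_{\mu,\e} = {\cal B}_{\mu,\e}\,\bro$ (this is \eqref{B.rev.pres}), and that the basis is reversible, $\bro\,\tf_k^\sigma = \sigma\,\tf_k^\sigma$. The key elementary fact about the involution $\bro$ in \eqref{reversibilityappears} is that it is antilinear and satisfies $\big(\bro f,\bro g\big) = \overline{(f,g)}$ for all $f,g\in L^2(\bT,\bC^2)$ — this follows by the change of variables $x\mapsto -x$ and complex conjugation inside the integral \eqref{scalar}. Granting this, for any two basis vectors of signs $\sigma,\sigma'$ I compute
\[
\big({\cal B}_{\mu,\e}\tf_k^\sigma,\tf_{k'}^{\sigma'}\big)
= \sigma\sigma'\big({\cal B}_{\mu,\e}\,\bro\,\tf_k^\sigma,\bro\,\tf_{k'}^{\sigma'}\big)
= \sigma\sigma'\big(\bro\,{\cal B}_{\mu,\e}\,\tf_k^\sigma,\bro\,\tf_{k'}^{\sigma'}\big)
= \sigma\sigma'\,\overline{\big({\cal B}_{\mu,\e}\tf_k^\sigma,\tf_{k'}^{\sigma'}\big)}\,,
\]
using in order $\bro\tf_k^\sigma = \sigma\tf_k^\sigma$, then \eqref{B.rev.pres}, then $(\bro f,\bro g)=\overline{(f,g)}$. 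Hence the complex number $z := \big({\cal B}_{\mu,\e}\tf_k^\sigma,\tf_{k'}^{\sigma'}\big)$ satisfies $z = \sigma\sigma'\,\bar z$. If $\sigma = \sigma'$ this forces $z = \bar z$, i.e. $z$ is real; if $\sigma = -\sigma'$ it forces $z = -\bar z$, i.e. $z$ is purely imaginary. This is exactly \eqref{revprop}, and combined with part (a) it completes the proof.

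The only genuinely non-routine point is the auxiliary identity $\big(\bro f,\bro g\big) = \overline{(f,g)}$, which I would either state and verify in one line from the explicit formula for $\bro$ in \eqref{reversibilityappears}, or invoke as a known property of the reversing involution; everything else is bookkeeping with \eqref{espansionegenerica}, the definitions of $\tJ_4$ and $\tB_{\mu,\e}$, and the already-established structural facts ${\cal B}_{\mu,\e} = {\cal B}_{\mu,\e}^*$ and \eqref{B.rev.pres}. I do not expect any real obstacle; the main care needed is to match the index ordering of the basis with the placement of the $\tJ_2$ blocks inside $\tJ_4$ so that the signs in \eqref{Lform}–\eqref{matrix22} come out consistently with the coefficient signs produced by \eqref{espansionegenerica}.
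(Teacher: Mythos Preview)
Your proposal is correct and follows essentially the same approach as the paper: it applies the expansion \eqref{espansionegenerica} to ${\cal L}_{\mu,\e}\tf_k^\sigma$ to extract the matrix form $\tJ_4\tB_{\mu,\e}$, and then uses the identity $(\bro f,\bro g)=\overline{(f,g)}$ (which the paper states as \eqref{broprod}) together with \eqref{B.rev.pres} and \eqref{reversiblebasis} to derive the reality pattern \eqref{revprop}. The steps, auxiliary facts, and overall structure match the paper's proof almost exactly.
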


\begin{proof}
Lemma \ref{lem:simplbas} implies  that
$$
{\sL}_{\mu,\e} \tf^{\sigma}_{k} = -\sum_{\substack{k'=0,1, \sigma'=\pm}}\sigma' \big(\cJ {\sL}_{\mu,\e} \tf^{\sigma}_{k }, \tf_{k'}^{-\sigma'} \big) 
\tf^{\sigma'}_{k' } = \sum_{\substack{k'=0,1, \sigma'=\pm}}\sigma'
\big( {\cal B}_{\mu,\e} \tf^{\sigma}_{k }, \tf_{k'}^{-\sigma'} \big) \tf^{\sigma'}_{k'} \, .
$$
Then the matrix representing  the operator ${\sL}_{\mu,\e} :\mathcal{V}_{\mu,\e}\to\mathcal{V}_{\mu,\e} $ 
with respect to the basis $\mathtt{F}$ is 
given by $\tJ_4 \tB_{\mu,\e}$ with $\tB_{\mu,\e}$  in 
\eqref{matrix22}. 
The matrix $\tB_{\mu,\e}$ is selfadjoint because ${\cal B}_{\mu,\e}$ is a selfadjoint operator.
We now prove \eqref{revprop}. By  recalling \eqref{reversibilityappears} and \eqref{scalar} it results 
\begin{equation}\label{broprod}
 \molt{f}{g}= \bar{\molt{\bro f}{\bro g}} \, .
 \end{equation} 
 Then, by \eqref{broprod}, since ${\cal B}_{\mu,\e}$ is reversibility-preserving and  \eqref{reversiblebasis}, we get 
$$
\big( {\cal B}_{\mu,\e}  \, \tf^{\sigma}_{k}, \tf^{\sigma'}_{k' } \big) = \bar{\molt{\bar{\rho}{\cal B}_{\mu,\e}  \, \tf^{\sigma}_{k }}{\bar{\rho}\tf^{\sigma'}_{k'}} } =
\bar{\molt{{\cal B}_{\mu,\e} \bar{\rho} \, \tf^{\sigma}_{k}}{\bar{\rho}\tf^{\sigma'}_{k'}}}
 =
 \sigma \sigma' \,  
 \bar{\molt{{\cal B}_{\mu,\e} \, \tf^{\sigma}_{k}}{\tf^{\sigma'}_{k'}}} \, ,
$$
which proves \eqref{revprop}.
\end{proof}

\begin{rmk}
The complex symplectic form ${\cal W}_c$ in
\eqref{ses} 
 restricted to the symplectic subspace $\cV_{\mu,\e}$ is represented, in {\em any} symplectic
  basis (cfr. \eqref{symplecticbasis}), by the matrix $\tJ_4$ in \eqref{Lform}, acting in $\bC^4$ with the standard  complex scalar product.
\end{rmk}

\noindent 
{\bf Hamiltonian and reversible matrices.} 
 It is convenient to give a name to the matrices of the form obtained 
in Lemma \ref{lem:B.mat}.

\begin{sia}
A $ 2n \times 2n $, $ n = 1,2, $ matrix of the form 
$\tL=\tJ_{2n} \tB$ is 
\begin{enumerate}
\item \emph{Hamiltonian} if  $ \tB $ is a self-adjoint matrix, i.e.   $\tB=\tB^*$;
\item \emph{Reversible} if $\tB$ is reversibility-preserving,   i.e. $\rho_{2n}\circ \tB = \tB \circ \rho_{2n} $, where 
\begin{equation}\label{involutionrep}
 \rho_4 := \begin{pmatrix}\rho_2 & 0 \\ 0 & \rho_2\end{pmatrix}, \qquad \rho_2 := \begin{pmatrix} \mathfrak{c}  & 0 \\ 0 & - \mathfrak{c} \end{pmatrix},
\end{equation}
and $\Gc : z \mapsto \bar z $ is the conjugation of the complex plane.
Equivalently, $\rho_{2n} \circ \tL  = -  \tL \circ \rho_{2n}$.
\end{enumerate}
\end{sia}
In the sequel we shall mainly deal with $ 4 \times 4 $ Hamiltonian and reversible matrices.
The transformations 
preserving  the Hamiltonian structure  are called
  \emph{symplectic}, and  satisfy
\begin{align}\label{sympmatrix}
 Y^* \tJ_4 Y = \tJ_4 \, .
 \end{align}
 If $Y$ is symplectic then $Y^*$ and $Y^{-1}$  are symplectic as well. A Hamiltonian matrix $\tL=\tJ_4 \tB$, with $\tB=\tB^*$, is conjugated through $Y$ in the new Hamiltonian matrix
 \begin{equation}
 \tL_1 = Y^{-1}  \tL Y = Y^{-1}  \tJ_4 Y^{-*} Y^*  \tB Y = \tJ_4 \tB_1 \quad \text{where } \quad 
  \tB_1 := Y^* \tB Y = \tB_1^* \, . \label{sympchange}
 \end{equation}
Note that the matrix $ \rho_4 $ in \eqref{involutionrep} represents the action of the 
involution $\bar \rho : {\mathcal V}_{\mu,\e}
\to {\mathcal V}_{\mu,\e}  $ defined in \eqref{reversibilityappears} in a 
reversible basis (cfr. \eqref{reversiblebasis}).
A $ 4\times 4$ matrix $\tB=(\tB_{ij})_{i,j=1,\dots,4}$ is reversibility-preserving if and only if its entries are alternatively real and purely imaginary, namely $\tB_{ij}$ is real when $i+j$ is even and purely imaginary otherwise, as in \eqref{revprop}.
A $4\times 4$ complex matrix $\tL =(\tL_{ij})_{i,j=1, \ldots, 4}$ is reversible if and only if  $\tL_{ij}$ is purely imaginary when $i+j$ is even and real otherwise.
 
In the sequel we shall use that the flow of a Hamiltonian 
reversibility-preserving matrix  is symplectic
and reversibility-preserving. 
\begin{lem}\label{lem:S.conj}
 Let $\Sigma$ be a self-adjoint and reversible matrix, then
 $\exp(\tau \tJ_4 \Sigma)$,
 $ \tau \in \mathbb{R} $,  is a reversibility-preserving symplectic matrix. 
\end{lem}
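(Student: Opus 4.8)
The plan is to establish the two conclusions --- that $Y(\tau):=\exp(\tau\tJ_4\Sigma)$ is symplectic, and that it is reversibility preserving --- independently, each time reducing to an infinitesimal identity for the generator $M:=\tJ_4\Sigma$ and then propagating it to the flow.

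For symplecticity I would first record the elementary facts $\tJ_4^*=-\tJ_4$ and $\tJ_4^2=-\uno$, and check that the hypothesis $\Sigma=\Sigma^*$ yields the relation $M^*\tJ_4+\tJ_4 M=0$ (equivalently, $M$ is a Hamiltonian matrix): indeed $M^*\tJ_4=\Sigma^*\tJ_4^*\tJ_4=\Sigma$ while $\tJ_4 M=\tJ_4^2\Sigma=-\Sigma$. Then, using $\tau\in\bR$ so that $Y(\tau)^*=e^{\tau M^*}$, I would look at $g(\tau):=Y(\tau)^*\tJ_4\,Y(\tau)=e^{\tau M^*}\tJ_4\,e^{\tau M}$, note $g(0)=\tJ_4$ and compute $g'(\tau)=e^{\tau M^*}\big(M^*\tJ_4+\tJ_4 M\big)e^{\tau M}=0$ (using that $M^*$, resp. $M$, commutes with $e^{\tau M^*}$, resp. $e^{\tau M}$); hence $g\equiv\tJ_4$, which is exactly \eqref{sympmatrix}.

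For reversibility preservation, the point is that the hypothesis ``$\Sigma$ reversible'' gives the anticommutation $\rho_4\Sigma=-\Sigma\rho_4$ (recall that a reversible matrix $\tL$ is characterized by $\rho_4\tL=-\tL\rho_4$), while a direct computation from \eqref{involutionrep} --- exactly as for $\cJ$ and $\bro$, for which $\cJ\bro=-\bro\cJ$ --- yields $\rho_4\tJ_4=-\tJ_4\rho_4$. Combining these, $M=\tJ_4\Sigma$ \emph{commutes} with $\rho_4$, namely $\rho_4 M=\rho_4\tJ_4\Sigma=-\tJ_4\rho_4\Sigma=\tJ_4\Sigma\rho_4=M\rho_4$, and therefore $\rho_4 M^k=M^k\rho_4$ for every $k\in\bN$ by induction. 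It then remains to pass $\rho_4$ through the exponential series: since $\rho_4$ is additive and continuous, $\rho_4(c\,v)=\bar c\,\rho_4 v$, and $\tau\in\bR$ makes $\overline{\tau^k/k!}=\tau^k/k!$, one gets $\rho_4\,e^{\tau M}=\sum_{k\geq0}\frac{\tau^k}{k!}\rho_4 M^k=\sum_{k\geq0}\frac{\tau^k}{k!}M^k\rho_4=e^{\tau M}\rho_4$, i.e. $Y(\tau)$ is reversibility preserving.

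I expect the only genuinely delicate step to be this last one, where one must handle the antilinearity of $\rho_4$ with care: commuting $\rho_4$ past the scalar coefficients of $e^{\tau M}$ produces $\overline{\tau^k/k!}$, and it is precisely the hypothesis $\tau\in\bR$ that makes this harmless (for complex $\tau$ one would only obtain $\rho_4 e^{\tau M}=e^{\bar\tau M}\rho_4$, which is the ``flow of a reversible matrix'' identity rather than reversibility preservation). Everything else --- the identities $\tJ_4^*=-\tJ_4$, $\tJ_4^2=-\uno$, $\rho_4\tJ_4=-\tJ_4\rho_4$ and $M^*\tJ_4+\tJ_4 M=0$ --- is immediate from the explicit form of $\tJ_4$, $\rho_4$ and the self-adjointness of $\Sigma$.
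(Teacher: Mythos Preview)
Your proof is correct and follows essentially the same route as the paper's: symplecticity is obtained by differentiating $Y(\tau)^*\tJ_4 Y(\tau)$ and using the Hamiltonian identity $M^*\tJ_4+\tJ_4 M=0$, while reversibility preservation is obtained by showing each $(\tJ_4\Sigma)^n$ commutes with $\rho_4$ and passing to the exponential series. Your explicit handling of the antilinearity of $\rho_4$ (and the role of $\tau\in\bR$ when commuting $\rho_4$ past the scalar coefficients) is a point the paper leaves implicit, but the arguments are otherwise the same.
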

\begin{proof}
 The flow $\varphi(\tau) := \exp(\tau \tJ_4 \Sigma)$ 
 solves $ \frac{d}{d\tau}  \varphi(\tau) := \tJ_4 \Sigma \varphi(\tau)  $, 
 with 
 $\varphi(0) = \mathrm{Id}$.
 Then  $ \psi(\tau) := \varphi(\tau)^* \tJ_4 \varphi(\tau) -\tJ_4$ satisfies  
 $\psi(0)=0$ and 
 $\frac{d}{d\tau}  \psi (\tau)= \varphi(\tau)^*\tJ_4^*\tJ_4 \varphi(\tau)+ 
 \varphi(\tau)^* \tJ_4\tJ_4 \varphi(\tau) = 0 \, .
 $
Then  $ \psi (\tau) = 0 $ for any $ \tau $ and  
  $\varphi(\tau) $ is symplectic.
The matrix $\exp(\tau \tJ_4 \Sigma) = \sum_{n \geq 0} \frac{1}{n!} ( \tau \tJ_4 \Sigma)^n $  
is reversibility-preserving
  since each $(\tJ_4 \Sigma)^n$, $ n \geq 0 $, is reversibility-preserving.
\end{proof}

\section{Matrix representation of $ {\sL}_{\mu,\e}$ on $ \mathcal{V}_{\mu,\e}$}\label{sec:mr}

In this section we use the transformation operators $U_{\mu,\e}$ obtained  in the previous section to construct  a symplectic and reversible  basis of $\cV_{\mu,\e}$ and,
in Proposition \ref{BexpG}, we compute the $4\times 4$ Hamiltonian and reversible matrix representing  $ {\sL}_{\mu,\e}\colon \cV_{\mu,\e} \to \cV_{\mu,\e}$ on such basis. 
\\[1mm]{\bf First basis of $\mathcal{V}_{\mu,\e}$.}
In view of Lemma \ref{lem:Kato1}, 
the first basis of $\mathcal{V}_{\mu,\e}$ that we consider is 
\begin{equation}\label{basisF}
{\cal F} := \big\{ 
f_{1}^+(\mu,\e), \  f_{1}^- (\mu,\e), \   f_{0}^+(\mu,\e),\   f_{0}^-(\mu,\e) \big\} \, , \quad 
f_{k}^\sigma(\mu,\e) := U_{\mu,\e} f_{k}^\sigma \, , \ \sigma=\pm \, , \,k=0,1 \, , 
\end{equation}
obtained applying the transformation operators  $ U_{\mu,\e} $ in \eqref{OperatorU} 
to the vectors
\begin{equation}
\label{funperturbed}
f_1^+ = \vet{\cos (x)}{\sin (x)} , \quad
f_1^- = \vet{- \sin (x)}{\cos (x)} , \quad 
f_0^+ = \vet{1}{0}, \quad
f_0^- = \vet{0}{1} \, , 
\end{equation}
which form a basis of $ \mathcal{V}_{0,0} =\mathrm{Rg} (P_{0,0}) $, cfr. \eqref{basestart}-\eqref{basestartadd}. 
Note that the  real valued vectors $ \{ f_1^\pm, f_0^\pm \} $  are orthonormal
with respect to the scalar product  \eqref{scalar}, and 
satisfy 
\begin{align}\label{azioneJ}
 \cJ f^+_1 = - f^-_1 , \qquad  \cJ f^-_1 =  f^+_1 , \qquad 
  \cJ f^+_0 = - f^-_0 , \qquad  \cJ f^-_0 =  f^+_0 \, , 
\end{align}
thus  forming a  symplectic and reversible basis for 
$ \mathcal{V}_{0,0} $, according to Definition \ref{def:SR}.

In view of Remarks \ref{rem:U.W} and \ref{rem:U2}, the symplectic operators $ U_{\mu,\e} $ transform, for any $ (\mu, \e ) $ small,  the symplectic basis \eqref{funperturbed} 
of  $ \mathcal{V}_{0,0} $,  into the symplectic basis \eqref{basisF}:

\begin{lem}\label{base1symp}
The  basis $  {\cal F}  $ of $\mathcal{V}_{\mu,\e}$ defined in \eqref{basisF}, 
is symplectic and reversible, i.e. satisfies \eqref{symplecticbasis} 
and \eqref{reversiblebasis}. 
Each map $(\mu, \e) \mapsto f^\sigma_k(\mu, \e)$ is analytic as a map $B(\mu_0)\times B(\epsilon_0) \to H^1(\bT)$. 
\end{lem}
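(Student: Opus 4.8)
The plan is to verify the two defining properties of a symplectic and reversible basis directly, exploiting the structural results of Lemma \ref{propPU}. First I would address reversibility. By Lemma \ref{propPU}$(ii)$ the operator $U_{\mu,\e}$ is reversibility preserving, i.e. $\bro U_{\mu,\e} = U_{\mu,\e} \bro$. Since the unperturbed vectors in \eqref{funperturbed} satisfy $\bro f_k^\sigma = \sigma f_k^\sigma$ (immediate from \eqref{reversibilityappears} because $\cos$ is even and $\sin$ is odd), one gets at once
\[
\bro f_k^\sigma(\mu,\e) = \bro U_{\mu,\e} f_k^\sigma = U_{\mu,\e} \bro f_k^\sigma = \sigma\, U_{\mu,\e} f_k^\sigma = \sigma\, f_k^\sigma(\mu,\e) \, ,
\]
which is exactly \eqref{reversiblebasis}.

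Next I would check that the basis is symplectic. Here the key is Lemma \ref{propPU}$(ii)$: $U_{\mu,\e}$ is symplectic, $U_{\mu,\e}^* \cJ U_{\mu,\e} = \cJ$. Therefore, for any $k, k', \sigma, \sigma'$,
\[
\molt{\cJ f_k^\sigma(\mu,\e)}{f_{k'}^{\sigma'}(\mu,\e)} = \molt{\cJ U_{\mu,\e} f_k^\sigma}{U_{\mu,\e} f_{k'}^{\sigma'}} = \molt{U_{\mu,\e}^* \cJ U_{\mu,\e} f_k^\sigma}{f_{k'}^{\sigma'}} = \molt{\cJ f_k^\sigma}{f_{k'}^{\sigma'}} \, ,
\]
so all the symplectic pairings are preserved under $U_{\mu,\e}$ and it suffices to check \eqref{symplecticbasis} for the unperturbed vectors \eqref{funperturbed}. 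Using \eqref{azioneJ} and the orthonormality of $\{f_1^\pm, f_0^\pm\}$ with respect to the scalar product \eqref{scalar}, one computes directly $\molt{\cJ f_k^-}{f_k^+} = \molt{f_k^+}{f_k^+} = 1$, $\molt{\cJ f_k^\sigma}{f_k^\sigma} = \mp\molt{f_k^{-\sigma}}{f_k^\sigma} = 0$, and for $k \neq k'$ the vectors $f_k^\sigma$ and $f_{k'}^{\sigma'}$ are supported on orthogonal Fourier modes, so $\molt{\cJ f_k^\sigma}{f_{k'}^{\sigma'}} = 0$. This establishes \eqref{symplecticbasis}.

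Finally, the analyticity of each map $(\mu,\e) \mapsto f_k^\sigma(\mu,\e) = U_{\mu,\e} f_k^\sigma$ from $B(\mu_0) \times B(\e_0)$ to $H^1(\bT)$ is immediate from Lemma \ref{lem:Kato1}, item \ref{tutto}, which asserts that $(\mu,\e) \mapsto U_{\mu,\e}$ is analytic with values in $\cL(Y)$, $Y = H^1(\bT)$, composed with the fixed vector $f_k^\sigma \in H^1(\bT)$. I expect no real obstacle here: the proof is a short bookkeeping argument, and the only mild subtlety is to remember that $U_{\mu,\e}$ being symplectic and reversibility preserving — the content of Lemma \ref{propPU} — is precisely what transports the symplectic and reversible structure of the unperturbed basis onto $\cV_{\mu,\e}$; the genuine work was already done in establishing that lemma via the Hamiltonian and reversible nature of $\cL_{\mu,\e}$.
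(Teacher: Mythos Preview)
Your proposal is correct and follows essentially the same approach as the paper: both invoke Lemma \ref{propPU}$(ii)$ to transport the symplectic and reversible structure of the unperturbed basis $\{f_k^\sigma\}$ through $U_{\mu,\e}$, and both cite the analyticity of $U_{\mu,\e}$ from Lemma \ref{lem:Kato1} for the final claim. You have simply spelled out the verification for the unperturbed vectors more explicitly than the paper does.
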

\begin{proof}
Since by Lemma \ref{propPU}-($ii$) the maps $ U_{\mu,\e} $ are symplectic  and reversibility-preserving
the transformed vectors $ f_{1}^+(\mu,\e),\dots,f_{0}^-(\mu,\e) $ are symplectic orthogonals and reversible
as well as the unperturbed ones $ f_1^+,\dots, f_0^- $.
The analyticity of $f^\sigma_k(\mu, \e)$ follows from the analyticity property 
of   $U_{\mu, \e}$ proved in Lemma \ref{lem:Kato1}.
\end{proof}

In the next lemma we  provide a suitable  expansion of the vectors  $ f_k^\sigma(\mu,\e) $ in $ (\mu, \e ) $. 
 We denote by $even_0(x)$ a real, even, $2\pi$-periodic function with zero space average. 
In the sequel 
 $\cO(\mu^{m} \e^{n}) \footnotesize\vet{even(x)}{odd(x)}$ denotes an analytic map in $(\mu, \e)$ with values in  $ H^1(\bT, \bC^2) $,
 whose first component is $even(x)$ and the second one  $odd(x)$;  similar meaning 
for $\cO(\mu^{m} \e^{n}) \footnotesize\vet{odd(x)}{even(x)}$, 
etc...

\begin{lem}\label{expansion1} 
{\bf (Expansion of the basis $ {\cal F}$)} For small values of $(\mu, \e)$ 
the basis $ {\cal F}$ in \eqref{basisF}  has the following expansion
\begin{align}
 f^+_1(\mu, \e) & = \vet{\cos(x)}{\sin(x)} + \im \frac{\mu}{4} \vet{\sin(x)}{\cos(x)} + 
 \epsilon \vet{2 \cos(2x)}{\sin(2x)} \label{exf41}
 \\ &  +
\cO(\mu^2) \vet{even_0(x) + \im odd(x)}{odd(x) + \im even_0(x)}  + \cO(\e^2) \vet{even_0(x)}{odd(x)} + \im\mu \epsilon\vet{odd(x)}{even(x)} + \cO(\mu^2\e,\mu\e^2) \, , \notag \\
 f^-_1(\mu, \e) \label{exf42} &= \vet{-\sin(x)}{\cos(x)} + \im \frac{\mu}{4} \vet{\cos(x)}{-\sin(x)} + \epsilon \vet{-2 \sin(2x)}{\cos(2x)}\\ & + \cO(\mu^2) \vet{odd(x) + \im even_0(x)}{even_0(x) + \im odd(x)} + \cO(\e^2) \vet{odd(x)}{even(x)} 
+ \im\mu \epsilon\vet{even(x)}{odd(x)} + \cO(\mu^2\e,\mu\e^2) \, , \notag \\
 f^+_0(\mu, \e) \label{exf43} &= \vet{1}{0}+ \epsilon \vet{ \cos(x)}{-\sin(x)}
 + \cO(\e^2) \vet{even_0(x)}{odd(x)} + \im\mu \epsilon\vet{odd(x)}{even_0(x)}+ \cO(\mu^2\e,\mu\e^2) \, , \\
 f^-_0(\mu, \e) \label{exf44} & = \vet{0}{1}  + \mu\e \left(\vet{\sin(x)}{\cos(x)}+  \im   \vet{even_0(x)}{odd(x)} \right)+\cO(\mu^2\e,\mu\e^2) \, ,
\end{align}
where 
the remainders $\cO()$ are vectors in $H^1(\bT)$.
 For $\mu=0$ the basis $\{f_k^\pm(0,\e), k=0,1 \}  $ is real  and 
\begin{equation}\label{nonzeroaverage}
f^{+}_1 (0, \e) =  \vet{even_0(x)}{odd(x)},
\ 
f^{-}_1 (0, \e) =  \vet{odd(x)}{even(x)}, 
\  
f^{+}_0 (0, \e) = \vet{1}{0}+ \vet{even_0(x)}{odd(x)} \, , 
\   
f^{-}_0 (0, \e) =  \vet{0}{1} \, .
\end{equation}
\end{lem}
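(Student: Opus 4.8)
The plan is to expand the transformation operator $U_{\mu,\e}$ of Lemma \ref{lem:Kato1} and apply it to the explicit unperturbed vectors \eqref{funperturbed}. Writing $Q_{\mu,\e}:=P_{\mu,\e}-P_{0,0}$ and using the identity $P_{\mu,\e}P_{0,0}+(\uno-P_{\mu,\e})(\uno-P_{0,0})=\uno+Q_{\mu,\e}(2P_{0,0}-\uno)$ together with \eqref{OperatorU}--\eqref{rootexp}, one gets, for every $f\in\mathrm{Rg}(P_{0,0})$,
\begin{equation*}
U_{\mu,\e}f=(\uno-Q_{\mu,\e}^2)^{-1/2}\big(f+Q_{\mu,\e}f\big)=f+Q_{\mu,\e}f+\tfrac12 Q_{\mu,\e}^2 f+\cO\big((\mu,\e)^3\big)\,,
\end{equation*}
so it is enough to expand $Q_{\mu,\e}f^\sigma_k$ to second order in $(\mu,\e)$. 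By the resolvent identity, exactly as in the proof of Lemma \ref{lem:Kato1}(4), and the Neumann expansion \eqref{L-lambda},
\begin{equation*}
Q_{\mu,\e}=\frac{1}{2\pi\im}\oint_\Gamma ({\cal L}_{0,0}-\lambda)^{-1}{\cal R}_{\mu,\e}({\cal L}_{0,0}-\lambda)^{-1}\de\lambda-\frac{1}{2\pi\im}\oint_\Gamma \big(({\cal L}_{0,0}-\lambda)^{-1}{\cal R}_{\mu,\e}\big)^2({\cal L}_{0,0}-\lambda)^{-1}\de\lambda+\cO\big((\mu,\e)^3\big)\,,
\end{equation*}
where ${\cal R}_{\mu,\e}$ is given by \eqref{restmue} --- linear in $\mu$, its coefficients $p_\e,a_\e$ expanded by \eqref{SN1}--\eqref{SN2} --- and $({\cal L}_{0,0}-\lambda)^{-1}$ is the explicit $2\times2$ Fourier multiplier written in the proof of Lemma \ref{lem:Kato1}. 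Since the vectors \eqref{funperturbed} are supported on Fourier modes $k\in\{-1,0,1\}$, $p_1,a_1$ carry frequencies $\pm1$ and $p_2,a_2$ carry frequencies $0,\pm2$, each composition $({\cal L}_{0,0}-\lambda)^{-1}{\cal R}_{\mu,\e}$ only moves mass between finitely many modes, and each contour integral is evaluated by residues at $\lambda=0$, the unique pole inside $\Gamma$: simple on the modes $\pm1$ (denominator $\lambda(\lambda\mp2\im)$) and double on the mode $0$ (denominator $\lambda^2$), all other poles lying outside $\Gamma$. Collecting these residues mode by mode and sorting them by the monomials $\mu,\e,\mu^2,\mu\e,\e^2$ they produce yields the explicit terms of \eqref{exf41}--\eqref{exf44}.

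The parity structure of all the remainders is then fixed, with no further computation, by two symmetries. First, ${\cal F}$ is a reversible basis (Lemma \ref{base1symp}), so $\bro f^\sigma_k(\mu,\e)=\sigma f^\sigma_k(\mu,\e)$ and Lemma \ref{lem:f.parity} forces the component patterns \eqref{reversiblebasisprop}. Second, ${\cal L}_\e$ being a real operator gives $\overline{{\cal L}_{\mu,\e}}={\cal L}_{-\mu,\e}$, whence $\overline{P_{\mu,\e}}=P_{-\mu,\e}$, $\overline{U_{\mu,\e}}=U_{-\mu,\e}$ and, the $f^\sigma_k$ being real vectors, $\overline{f^\sigma_k(\mu,\e)}=f^\sigma_k(-\mu,\e)$; comparing Taylor coefficients in $\mu$ shows that the even powers of $\mu$ carry real coefficient functions and the odd ones purely imaginary coefficient functions (in particular all pure-$\e$ terms are real, which is also Lemma \ref{propPU}(iii)). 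Combining the two, in each component the $\mu$-even part is even (resp. odd) in $x$ and the $\mu$-odd part is $\im$ times odd (resp. even), which is precisely the form of the $\cO(\cdot)$ factors in \eqref{exf41}--\eqref{exf44}.

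For the special structure at $\mu=0$ in \eqref{nonzeroaverage} I would use one elementary remark: if $v\in\mathrm{Rg}(P_{0,0})\cap\mathrm{Rg}(P_{\mu,\e})$ then $Q_{\mu,\e}v=0$, hence the displayed formula gives $U_{\mu,\e}v=v$. Now $f^-_0=\vet{0}{1}$ lies in $\ker{\cal L}_{0,\e}\subset\cV_{0,\e}$ for every $\e$ (as $|D|$ and $\pa_x$ annihilate constants), so $f^-_0(0,\e)=U_{0,\e}\vet{0}{1}=\vet{0}{1}$, the last identity in \eqref{nonzeroaverage}; likewise $\mathrm{span}\{\vet{1}{0},\vet{0}{1}\}$ is ${\cal L}_{\mu,0}$-invariant with eigenvalues $\pm\im\sqrt{\mu}$, hence contained in $\cV_{\mu,0}$, so $f^-_0(\mu,0)=\vet{0}{1}$, which forces the remainder in \eqref{exf44} to vanish on $\{\mu=0\}\cup\{\e=0\}$ and thus to be $\cO(\mu\e)$. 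The zero-average assertions follow from the symplectic relations \eqref{symplecticbasis}: since $\cJ f^-_0(0,\e)=\vet{1}{0}$, the vanishing of $\molt{\cJ f^-_0(0,\e)}{f^\pm_1(0,\e)}$ says the first components of $f^\pm_1(0,\e)$ have zero average, while $\molt{\cJ f^-_0(0,\e)}{f^+_0(0,\e)}=1$ says the first component of $f^+_0(0,\e)$ has average exactly $1$; together with the reality and parity at $\mu=0$ already established, this yields \eqref{nonzeroaverage}.

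The hard part will be the residue bookkeeping in the first step: one must carry the $2\times2$ symbol of $({\cal L}_{0,0}-\lambda)^{-1}$ through up to three compositions with ${\cal R}_{\mu,\e}$, expand every entry near $\lambda=0$ to the order dictated by the simple or double pole there, and organise the resulting terms so that the cancellations producing the clean coefficients --- the $\tfrac{\mu}{4}$ in \eqref{exf41}--\eqref{exf42}, the vectors $\e\vet{2\cos(2x)}{\sin(2x)}$, $\e\vet{\cos(x)}{-\sin(x)}$, $\mu\e\vet{\sin(x)}{\cos(x)}$, and so on --- become transparent. The symmetry arguments of the second step are what make it unnecessary to track the precise shape of the higher-order remainders, reducing the proof to a finite, if lengthy, computation.
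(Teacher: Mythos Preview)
Your overall strategy --- Kato expansion of $U_{\mu,\e}$, residue calculus for the derivatives of $P_{\mu,\e}$, and invoking reversibility plus the symplectic relations for the structural properties --- is precisely the route the paper takes in its Appendix. The observation that $f_0^-\in\ker{\cal L}_{0,\e}$ forces $f_0^-(0,\e)=f_0^-$, the ${\cal L}_{\mu,0}$-invariance of $\mathrm{span}\{f_0^+,f_0^-\}$ giving $f_0^\pm(\mu,0)=f_0^\pm$, and the use of the pairings $\molt{\cJ f_0^-(0,\e)}{\,\cdot\,}$ to extract the zero averages all appear there essentially verbatim.

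There is, however, a genuine gap in your conjugation-symmetry shortcut. You assert $\overline{{\cal L}_{\mu,\e}}={\cal L}_{-\mu,\e}$, but the operator ${\cal L}_{\mu,\e}$ in \eqref{calL2} carries $\mu(\sgn(D)+\Pi_0)$ in its $(1,2)$ entry, and while $\sgn(D)$ is a purely imaginary operator, $\Pi_0$ is real. Hence complex conjugation sends $\mu(\sgn(D)+\Pi_0)$ to $\mu(-\sgn(D)+\Pi_0)$, not to $-\mu(\sgn(D)+\Pi_0)$, so $\overline{{\cal L}_{\mu,\e}}\neq{\cal L}_{-\mu,\e}$ and consequently $\overline{P_{\mu,\e}}\neq P_{-\mu,\e}$, $\overline{U_{\mu,\e}}\neq U_{-\mu,\e}$. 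Your conclusion that odd powers of $\mu$ carry purely imaginary coefficient functions is therefore false in general: inspect \eqref{exf44}, where the $\mu\e$ term of $f_0^-$ contains the \emph{real} vector $\vet{\sin(x)}{\cos(x)}$. This real piece arises exactly from $\Pi_0$ acting on the second component of $({\cal L}_{0,0}-\lambda)^{-1}f_0^-=-\lambda^{-1}f_0^-$. The paper handles this by splitting $\dot{\cal L}_{0,0}$ into its purely-imaginary part (the $\sgn(D)$ block) and its real part (the $\Pi_0$ block) and computing the two contributions to the mixed $\mu\e$ derivative separately. Reversibility alone does yield the $x$-parity patterns of Lemma \ref{lem:f.parity}, but the additional real/imaginary dichotomy you superimpose must be obtained by direct computation, not by symmetry; in particular the $even_0$ in the second component of the $\mu\e$ term of $f_0^+$, which you do not address, also requires an explicit argument.
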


\begin{proof}
The long calculations are given in Appendix \ref{ProofExpansion}. 
\end{proof}

\noindent 
{\bf Second basis of $\mathcal{V}_{\mu,\e}$.}
We now construct from the basis $ {\cal F } $ in \eqref{basisF} 
another symplectic and reversible
basis of $\mathcal{V}_{\mu,\e}$ with an additional property.
Note that  the second component of the vector $f_1^-(0,\e)$ is 
an even function whose space average is not 
 necessarily zero, cfr. \eqref{nonzeroaverage}. 
 Thus
we introduce the new symplectic and reversible basis of $\mathcal{V}_{\mu,\e}$
$$
\mathcal{G}:= \big\{ g_1^+(\mu,\e),\ g_1^-(\mu,\e),\ g_0^+(\mu,\e),\ g_0^-(\mu,\e) \big\} \, , 
$$
defined by
\begin{equation}\label{basisG}
\begin{aligned}
 & g^+_1(\mu,\e) :=   f^+_1(\mu,\e) \, , 
 \qquad 
 g^-_1(\mu,\e) :=  f^-_1(\mu,\e)-n(\mu,\e)f_0^-(\mu,\e) \, , \\ 
 & g^+_0(\mu,\e) := f^+_0(\mu,\e)+n(\mu,\e)f^+_1(\mu,\e) \, ,\qquad  g^-_0(\mu,\e) := f^-_0(\mu,\e) \, , 
 \end{aligned} 
 \end{equation}
with  
\begin{equation}\label{normaliz}
n(\mu,\e) :=  \displaystyle{\frac{\molt{f_1^-(\mu,\e)}{f_0^-(\mu,\e)}}{\|f_0^-(\mu,\e)\|^2}} \, . 
\end{equation}
Note that $n(\mu,\e)$ is real, because, in view of \eqref{broprod} and Lemma \ref{base1symp},
\begin{equation}\label{nuisreal}
n(\mu,\e) :=  \displaystyle{\frac{\bar{\molt{\bro f_1^-(\mu,\e)}{\bro f_0^-(\mu,\e)}}}{\|f_0^-(\mu,\e)\|^2}} = \displaystyle{\frac{\bar{\molt{f_1^-(\mu,\e)}{f_0^-(\mu,\e)}}}{\|f_0^-(\mu,\e)\|^2}} = \bar{n(\mu,\e)} \, .
\end{equation} 
This new basis has the property that  $g_1^-(0,\e)$ has zero average, see \eqref{zeroaverage}. We shall exploit this feature crucially in Lemma \ref{lem:B2}, see remark 
\ref{rem:newbasis}.
\begin{lem}
 The basis $\mathcal{G}$ in \eqref{basisG} is symplectic and reversible, i.e. it satisfies \eqref{symplecticbasis} and \eqref{reversiblebasis}.
Each map $(\mu, \e) \mapsto g^\sigma_k(\mu, \e)$ is analytic as a map $B(\mu_0)\times B(\epsilon_0) \to H^1(\bT, \bC^2)$.
\end{lem}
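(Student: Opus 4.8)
The plan is to verify the three assertions — analyticity, reversibility, symplecticity — by reducing each to the corresponding property of the basis $\mathcal{F}$ established in Lemma \ref{base1symp}, using crucially that the coefficient $n(\mu,\e)$ of \eqref{normaliz} is \emph{real}, as recorded in \eqref{nuisreal}.

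For analyticity, I would note that the numerator $\molt{f_1^-(\mu,\e)}{f_0^-(\mu,\e)}$ and the denominator $\|f_0^-(\mu,\e)\|^2$ of $n(\mu,\e)$ are scalar analytic functions of $(\mu,\e)$, being $L^2$-pairings of the analytic $H^1(\bT,\bC^2)$-valued maps $f_k^\sigma(\mu,\e)$. Since $f_0^-(0,0)=\vet{0}{1}$ has unit norm, continuity keeps $\|f_0^-(\mu,\e)\|^2$ bounded away from $0$ on a (possibly smaller) ball $B(\mu_0)\times B(\epsilon_0)$, so $n(\mu,\e)$ is analytic there; hence each $g_k^\sigma(\mu,\e)$ in \eqref{basisG}, an analytic-coefficient linear combination of the $f_k^\sigma(\mu,\e)$, is analytic into $H^1(\bT,\bC^2)$. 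For reversibility \eqref{reversiblebasis}, using that $\bro$ is antilinear, $\bro f_k^\sigma=\sigma f_k^\sigma$ and $n=\bar n$, one checks directly $\bro g_1^+ = g_1^+$, $\bro g_0^- = -g_0^-$, $\bro g_1^- = \bro f_1^- - \bar n\,\bro f_0^- = -f_1^- + n f_0^- = -g_1^-$, and $\bro g_0^+ = \bro f_0^+ + \bar n\,\bro f_1^+ = f_0^+ + n f_1^+ = g_0^+$.

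For symplecticity \eqref{symplecticbasis}, I would expand each pairing $\molt{\cJ g_k^\sigma}{g_{k'}^{\sigma'}}$ by (conjugate-)linearity, use $n=\bar n$, and substitute the relations of Lemma \ref{base1symp}: $\molt{\cJ f_k^-}{f_k^+}=1$, $\molt{\cJ f_k^\sigma}{f_k^\sigma}=0$, and $\molt{\cJ f_k^\sigma}{f_{k'}^{\sigma'}}=0$ for $k\neq k'$. For instance $\molt{\cJ g_1^-}{g_1^+}=\molt{\cJ f_1^-}{f_1^+}-n\molt{\cJ f_0^-}{f_1^+}=1$, and likewise $\molt{\cJ g_0^-}{g_0^+}=1$; each self-pairing $\molt{\cJ g_k^\sigma}{g_k^\sigma}$ is a combination of self-pairings and $k\neq k'$ cross terms of the $f$'s, hence vanishes; the mixed ($k\neq k'$) conditions reduce to combinations of $f$-pairings that either vanish term by term or cancel in pairs of the form $\molt{\cJ g_1^-}{g_0^+}=n\molt{\cJ f_1^-}{f_1^+}-n\molt{\cJ f_0^-}{f_0^+}=n-n=0$, the $k=0,\,k'=1$ pairs then following from $\molt{\cJ f}{g}=-\overline{\molt{\cJ g}{f}}$.

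I do not expect a genuine obstacle here: the lemma is essentially bookkeeping. The only point requiring a little care is the antilinearity of $\bro$ and the conjugate-linearity of the scalar product \eqref{scalar} in its second slot, both of which are harmless precisely because $n(\mu,\e)$ is real. The lemma is preparatory; its actual payoff — that $g_1^-(0,\e)$ has zero space average — is stated and exploited later, in Lemma \ref{lem:B2}.
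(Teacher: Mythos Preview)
Your proof is correct and follows essentially the same approach as the paper, which simply states that the properties of $\mathcal{G}$ follow from those of $\mathcal{F}$ together with the reality of $n(\mu,\e)$. You have just spelled out the verifications (analyticity of $n$, reversibility via antilinearity of $\bro$, and the symplectic pairings) that the paper leaves implicit.
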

\begin{proof}
The vectors $g_k^\pm(\mu,\e) $, $k=0,1$ satisfy \eqref{symplecticbasis} and \eqref{reversiblebasis} because $f_k^\pm(\mu,\e) $, $k=0,1$ satisfy  the same properties  as well, and $n(\mu,\e)$ is real. 
The analyticity of $g^\sigma_k(\mu, \e)$ follows from the corresponding property of  the basis ${\cal F}$.
\end{proof}

We now state the main result of this section.

\begin{prop}\label{BexpG}
The matrix that represents the Hamiltonian and reversible operator  
$ {\sL}_{\mu,\e} : \mathcal{V}_{\mu,\e} \to \mathcal{V}_{\mu,\e} $ in the symplectic and reversible basis $\mathcal{G}$ of $\mathcal{V}_{\mu,\e}$ defined in \eqref{basisG}, 
is a Hamiltonian matrix 
 $\tL_{\mu,\e}=\tJ_4 \tB_{\mu,\e}$, where 
 $\tB_{\mu,\e} $ is a self-adjoint and reversibility preserving
(i.e. satisfying \eqref{revprop})
 $ 4 \times 4$  matrix of the form 
  \begin{equation}\label{splitB}
\tB_{\mu,\e}=
\begin{pmatrix} 
E & F \\ 
F^* & G 
\end{pmatrix}, \qquad 
E  = E^* \, , \   \ G = G^* \, , 
\end{equation} 
  where $E, F, G$  are the $ 2 \times 2 $  matrices 
\begin{align}\label{BinG1}
& E := 
\begin{pmatrix} 
  \e^2(1+r_1'(\e,\mu\e^2))-\frac{\mu^2}{8}(1+r_1''(\e,\mu))  & \im \big( \frac12\mu+ r_2(\mu\e^2,\mu^2\e,\mu^3) \big)  \\
- \im \big( \frac12\mu+ r_2(\mu\e^2,\mu^2\e,\mu^3) \big) & -\frac{\mu^2}{8}(1+r_5(\e,\mu))
 \end{pmatrix} \\
 & \label{BinG2} G := 
 \begin{pmatrix} 
1+ r_8(\e^3,\mu^2\e, \mu \e^2, \mu^3) &   - \im r_9(\mu\e^2,\mu^2\e,\mu^3) \\
  \im  r_9(\mu\e^2, \mu^2\e,\mu^3)  & \mu+ r_{10}(\mu^2\e,\mu^3) 
 \end{pmatrix} \\
 & \label{BinG3}
 F = 
 \begin{pmatrix} 
 r_3(\e^3,\mu\e^2,\mu^2\e,\mu^3) & \im  r_4({\mu\e}, \mu^3)  \\
  \im   r_6(\mu\e, \mu^3)    & r_7(\mu^2\e,\mu^3) 
 \end{pmatrix} \, . 
 \end{align}
\end{prop}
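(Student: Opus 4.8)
The plan is to compute the matrix $\tL_{\mu,\e}=\tJ_4\tB_{\mu,\e}$ entrywise, exploiting Lemma \ref{lem:B.mat}: since $\mathcal G$ is a symplectic and reversible basis, the matrix automatically has the Hamiltonian form $\tJ_4\tB_{\mu,\e}$ with $\tB_{\mu,\e}$ self-adjoint, and by \eqref{revprop} each entry $\molt{\cB_{\mu,\e}\,g_k^\sigma}{g_{k'}^{\sigma'}}$ is real if $\sigma=\sigma'$ and purely imaginary otherwise. So the only real work is to establish the quantitative expansions of the ten independent entries of $\tB_{\mu,\e}$ (the self-adjointness and the parity reduce $16$ complex entries to $10$ real parameters), grouped into the blocks $E,G,F$ as in \eqref{splitB}. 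First I would express $g_k^\sigma(\mu,\e)$ in terms of the Kato basis $f_k^\sigma(\mu,\e)$ via \eqref{basisG}, reducing everything to the entries $\molt{\cB_{\mu,\e}f_k^\sigma}{f_{k'}^{\sigma'}}$ together with the scalar $n(\mu,\e)$; then I would insert the expansions \eqref{exf41}--\eqref{exf44} of the basis $\mathcal F$ from Lemma \ref{expansion1} and the explicit form \eqref{calL.ham} of $\cB_{\mu,\e}$, together with the Stokes expansions \eqref{SN1}--\eqref{SN2} of $p_\e,a_\e$.

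The computation naturally splits according to the value of $\mu$. At $\mu=0$ the operator $\cB_{0,\e}$ is real, the basis $\{f_k^\pm(0,\e)\}$ is real by Lemma \ref{expansion1}, so all entries with $\sigma\neq\sigma'$ vanish identically at $\mu=0$; this already forces the $\im\mu$ prefactors in the off-diagonal positions. The diagonal $\mu=0$ entries, however, are \emph{not} obtained from a naive Taylor expansion in $\e$: here I would invoke the structural information \eqref{genespace} about the four-dimensional generalized kernel of $\cL_{0,\e}$. Concretely, $\cL_{0,\e}$ annihilates $U_1=f_0^-$ and has the Jordan-type relations $\cL_{0,\e}\tilde U_2=0$, $\cL_{0,\e}U_3=\alpha\tilde U_2$, $\cL_{0,\e}U_4=-U_1$; translating these into the $\mathcal G$-basis shows that at $\mu=0$ the matrix $\tL_{0,\e}$ has a prescribed rank-two / nilpotent structure, which pins down the leading $\e^2$ coefficient of $E_{11}$ (the crucial $\e^2(1+r_1'(\e,\mu\e^2))$ term) and forces $E_{22}$, $F_{11}$, $F_{21}$, $G_{12}$ to be $o(1)$ or higher order at $\mu=0$, i.e. to carry the stated $\mu$-dependence. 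This is the content I expect to be imported from an earlier lemma (the excerpt refers to "Lemma \ref{lem:B1}"), so in the proof I would cite that computation and focus on assembling it with the $\mu$-dependent corrections.

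For the $\mu$-dependence I would Taylor-expand in $\mu$ around the already-understood $\mu=0$ situation, using that $\cL_{\mu,\e}$ is \emph{affine} in $\mu$ (by \eqref{|D+mu|} and \eqref{calL2}, the only $\mu$-terms are $\im\mu\,p_\e$ and $\mu(\sgn(D)+\Pi_0)$), so $\partial_\mu\cB_{\mu,\e}$ is an explicit, $\e$-small (for the $p_\e$ part) or order-one (for the $\sgn(D)+\Pi_0$ part) operator, and higher $\mu$-derivatives act only through the $\mu$-dependence of the basis vectors $f_k^\sigma(\mu,\e)$. Pairing $\partial_\mu\cB$ and the $\mu$-expansions of $f_k^\sigma$ in \eqref{exf41}--\eqref{exf44} against the basis vectors, and tracking which Fourier modes survive (e.g. $f_0^-=\binom{0}{1}$ up to $\cO(\mu\e)$ corrections, so $(\sgn(D)+\Pi_0)$ hitting it produces the $\Pi_0$-contribution giving $G_{22}=\mu+\dots$), yields the $\frac12\mu$ term in $E_{12}$, the $\mu^2/8$ terms in $E_{11}$ and $E_{22}$, the $\im\mu$ in $G_{11}$... wait, $G_{11}=1+r_8$, the $\mu$ in $G_{22}$, and the sizes $r_3(\e^3,\mu\e^2,\mu^2\e,\mu^3)$ etc. of the coupling block $F$. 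The remainders are real-analytic because $\cB_{\mu,\e}$, the projectors $P_{\mu,\e}$, and hence $U_{\mu,\e}$ and the basis $\mathcal G$ are analytic in $(\mu,\e)$ by Lemma \ref{lem:Kato1} and Lemma \ref{base1symp}, and real-analyticity of the $\sigma=\sigma'$ entries follows from \eqref{revprop} combined with Lemma \ref{propPU}(iii).

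The main obstacle is the diagonal entry $E_{11}$ at $\mu=0$: getting the exact coefficient $\e^2$ (rather than some other multiple of $\e^2$) is what ultimately produces the correct Benjamin-Feir threshold $\underline\mu(\e)\sim2\sqrt2\,\e$, and this requires using the generalized-kernel relations \eqref{genespace} together with a careful second-order computation — a plain perturbative expansion of $\cL_{\mu,\e}|_{\mathcal V_{\mu,\e}}$ in $\e$ at $\mu=0$ would see only a nilpotent matrix and miss this term, because the $\e^2$ contribution comes from the interaction of the first-order deformation of the basis with the first-order part of $\cB_{0,\e}$. A secondary difficulty is bookkeeping: one must verify that every claimed remainder genuinely has the stated joint order in $(\mu,\e)$ (e.g. that $F_{11}=r_3(\e^3,\dots)$ really starts at $\e^3$ and not $\e^2$), which relies on delicate cancellations among the order-$\e$ and order-$\mu$ pieces of the basis expansion — cancellations that are themselves consequences of the reversible structure and of the zero-average normalization built into $g_1^-(0,\e)$ via \eqref{basisG}--\eqref{normaliz}.
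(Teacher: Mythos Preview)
Your proposal is correct and follows essentially the same strategy as the paper: the paper organizes the computation by splitting the operator as ${\cal B}_{\mu,\e}={\cal B}_\e+{\cal B}^\flat+{\cal B}^\sharp$ (the $\mu=0$ part, the $\mu\,g(D)$ part, and the $\im\mu p_\e$ part) and devotes one lemma to each, but the substance---using \eqref{genespace} to force the nilpotent structure at $\mu=0$, direct second-order computation for the $\e^2$ in $E_{11}$, the zero-average normalization of $g_1^-$ to control $E_{22}$, and Taylor expansion in $\mu$ for the rest---is exactly what you describe. One small clarification: in the paper the Jordan relations \eqref{genespace} are used to conclude $E_{22}|_{\mu=0}=0$ (via the dichotomy ``either $\tb=0$ or $\ta=\alpha=0$''), while the value $E_{11}|_{\mu=0}=\e^2+r(\e^3)$ is obtained by the direct second-order pairing you mention, which then rules out the second alternative; your phrasing slightly conflates these two roles.
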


The rest of this section is devoted to the proof of 
Proposition \ref{BexpG}.  
The first step is to provide  the following 
expansion in $(\mu,\e)$ of the basis $\mathcal{G}$. 

\begin{lem}\label{espansioneg} {\bf (Expansion of the basis $\mathcal{G}$)}
For  small values of $(\mu,\e) $, the basis $ {\mathcal G}$ defined in \eqref{basisG} 
has the following expansion
\begin{align}
 g^+_1(\mu, \e) \label{exg41}& = \vet{\cos(x)}{\sin(x)} + \im \frac{\mu}{4} \vet{\sin(x)}{\cos(x)} + 
 \epsilon \vet{2 \cos(2x)}{\sin(2x)} \\
&  +
\cO(\mu^2) \vet{even_0(x) + \im odd(x)}{odd(x) + \im even_0(x)}  + \cO(\e^2) \vet{even_0(x)}{odd(x)} + \im \mu \epsilon\vet{odd(x)}{even(x)} + \cO(\mu^2\e,\mu\e^2) \, , \notag \\
 g^-_1(\mu, \e) \label{exg42} &= \vet{-\sin(x)}{\cos(x)} + \im \frac{\mu}{4} \vet{\cos(x)}{-\sin(x)} + \epsilon \vet{-2 \sin(2x)}{\cos(2x)} \\
& + \cO(\mu^2) \vet{odd(x) + \im even_0(x)}{even_0(x) + \im odd(x)} + \cO(\e^2) \vet{odd(x)}{even_0(x)}
+\im \mu \epsilon\vet{even(x)}{odd(x)}  + \cO(\mu^2\e,\mu\e^2) \, , \notag \\
 g^+_0(\mu, \e) &= \label{exg43} \vet{1}{0}+ \epsilon \vet{ \cos(x)}{-\sin(x)}
+ \cO(\e^2) \vet{even_0(x)}{odd(x)} + \im\mu \epsilon\vet{odd(x)}{even_0(x)}  + \cO(\mu^2\e,\mu\e^2) \, , \\
 g^-_0(\mu, \e) & = \label{exg44}\vet{0}{1}  + \mu\e \left(\vet{\sin(x)}{\cos(x)}+ \im   \vet{even_0(x)}{odd(x)} \right)+\cO(\mu^2\e,\mu\e^2) \, .
\end{align}
In particular, at $\mu=0$, the basis $ \{ g^\sigma_k (0,\e), \sigma= \pm, k =0 ,1 \} $ 
is real, 
\begin{equation}\label{muiszero}
g^{+}_1 (0, \e) =  \vet{even_0(x)}{odd(x)}, 
\, 
g^{-}_1 (0, \e) =  \vet{odd(x)}{even_0(x)},
\,
 g^{+}_0 (0, \e) = \vet{1}{0}+\vet{even_0(x)}{odd(x)}, 
 \, 
  g^{-}_0 (0, \e) = \vet{0}{1}, 
\end{equation}
and,  for any $\e$, 
\begin{equation}\label{zeroaverage}
\int_{\bT}g_1^-(0,\e) \, \de x  = 0 \, .
\end{equation}
\end{lem}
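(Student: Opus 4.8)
The plan is to derive the expansions \eqref{exg41}--\eqref{exg44} directly from the definition \eqref{basisG} of $\mathcal{G}$ in terms of $\mathcal{F}$, using as input the expansions \eqref{exf41}--\eqref{exf44} of the basis $\mathcal{F}$ already established in Lemma \ref{expansion1}. Since $g_1^+(\mu,\e) = f_1^+(\mu,\e)$ and $g_0^-(\mu,\e) = f_0^-(\mu,\e)$ by \eqref{basisG}, the expansions \eqref{exg41} and \eqref{exg44} are literally \eqref{exf41} and \eqref{exf44}, with nothing to prove; likewise the corresponding lines of \eqref{muiszero} restate the first and last entries of \eqref{nonzeroaverage}. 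The content of the lemma is therefore entirely about the two modified vectors $g_1^-$ and $g_0^+$, and about the zero-average property \eqref{zeroaverage}.

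First I would estimate the scalar coefficient $n(\mu,\e)$ defined in \eqref{normaliz}. Using \eqref{exf42} and \eqref{exf44}, the inner product $\molt{f_1^-(\mu,\e)}{f_0^-(\mu,\e)}$ is obtained by pairing the second components (recall the scalar product \eqref{scalar}); at $\mu=0$ the leading term of $f_1^-(0,\e)$ has second component $\cos(x)+\cO(\e)$ of the form $even(x)$, whose average against the second component $1$ of $f_0^-(0,\e)$ picks out precisely the space mean of that $even$ function. From \eqref{exf42} one reads that the second component of $f_1^-(\mu,\e)$ is $\cos(x)+\im\frac{\mu}{4}(-\sin(x))+\e\cos(2x)+\dots$, so its mean is $\cO(\e^2,\mu\e,\dots)$ — in fact, tracking the structure in \eqref{exf42}, $n(\mu,\e)=\cO(\mu\e,\e^2,\mu^2,\dots)$, and $n$ is real by \eqref{nuisreal}. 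Moreover $\|f_0^-(\mu,\e)\|^2 = 1+\cO(\mu\e)$ by \eqref{exf44}, so it only contributes higher-order corrections. I would also record the parity type of $n(\mu,\e)$-weighted terms: since $n$ is a scalar, multiplying a reversible vector by $n$ preserves all the parity structure in \eqref{reversiblebasisprop}.

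Next I would substitute into $g_1^-(\mu,\e) = f_1^-(\mu,\e) - n(\mu,\e) f_0^-(\mu,\e)$ and $g_0^+(\mu,\e) = f_0^+(\mu,\e) + n(\mu,\e) f_1^+(\mu,\e)$. Because $f_0^-(\mu,\e) = \vet{0}{1}+\cO(\mu\e)$ and $n = \cO(\mu\e,\e^2,\mu^2,\dots)$, the correction $-n f_0^-$ only affects terms already absorbed into the $\cO(\e^2)\vet{odd(x)}{even_0(x)}$, $\cO(\mu^2)(\cdots)$, $\im\mu\e(\cdots)$ and $\cO(\mu^2\e,\mu\e^2)$ remainders in \eqref{exg42}; the point of the construction is that subtracting $n f_0^-$ removes exactly the nonzero mean of the second component, so the $even$ function there becomes $even_0$. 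That is the only discrepancy between \eqref{exf42} and \eqref{exg42}. Similarly $n f_1^+ = \cO(\mu\e,\e^2,\dots)\vet{\cos(x)+\cdots}{\sin(x)+\cdots}$ contributes only to the remainder terms of \eqref{exg43}, so \eqref{exg43} coincides with \eqref{exf43}. Setting $\mu=0$, where by \eqref{propPU}(iii) the basis $\mathcal{F}$ is real and $n(0,\e)$ is real, gives \eqref{muiszero}. Finally, \eqref{zeroaverage} is immediate from the very definition of $n(0,\e)$: by \eqref{normaliz} and since $f_0^-(0,\e)=\vet{0}{1}$ with $\|f_0^-(0,\e)\|^2=1$, we have $n(0,\e)=\molt{f_1^-(0,\e)}{f_0^-(0,\e)} = \frac{1}{2\pi}\int_\bT (\text{second component of }f_1^-(0,\e))\,\de x$, whence $\int_\bT g_1^-(0,\e)\,\de x = \int_\bT f_1^-(0,\e)\,\de x - n(0,\e)\int_\bT \vet{0}{1}\,\de x$; the first component of $f_1^-(0,\e)$ is $odd$ by \eqref{nonzeroaverage} so has zero mean, and the second-component means cancel by the choice of $n(0,\e)$.

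The only mildly delicate point is bookkeeping: one must check that every term produced by $n(\mu,\e)$ times a basis vector of $\mathcal{F}$ is genuinely of an order already present in the claimed remainders of \eqref{exg42}--\eqref{exg43} (e.g. that no term of order $\e^2$ or $\mu$ is generated outside the displayed ones, and that parities match the $even$/$odd$/$even_0$ labels). This is routine given the precise form of \eqref{exf41}--\eqref{exf44} and the estimate $n(\mu,\e)=\cO(\mu\e,\e^2,\mu^2,\mu^2\e,\mu\e^2)$; I expect no real obstacle, only careful tracking of which scalar monomials multiply which parity-typed vectors.
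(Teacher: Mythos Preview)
Your overall approach is the same as the paper's: deduce the expansions of the $g_k^\sigma$ from those of the $f_k^\sigma$ via \eqref{basisG}, after first estimating the scalar $n(\mu,\e)$. The only substantive gap is in your estimate of $n(\mu,\e)$: you assert $n(\mu,\e)=\cO(\mu\e,\e^2,\mu^2,\dots)$, but this is too weak to yield \eqref{exg42}. Since $n$ is real by \eqref{nuisreal}, a nonzero $\mu\e$ coefficient would produce a \emph{real} contribution $-c\,\mu\e\,\vet{0}{1}$ to $g_1^-$, which cannot be absorbed into the purely imaginary $\im\mu\e\vet{even(x)}{odd(x)}$ term of \eqref{exg42}; likewise a nonzero $\mu^2$ coefficient would put a nonzero constant into the second component at order $\mu^2$, contradicting the $even_0$ structure claimed there. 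So your sentence ``the correction $-n f_0^-$ only affects terms already absorbed into \ldots'' is not justified by the estimate you give.

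The fix is exactly what the paper does: compute that both coefficients actually vanish, so that in fact $n(\mu,\e)=r(\e^2,\mu^2\e,\mu\e^2)$. For the $\mu\e$ coefficient, pairing the leading parts from \eqref{exf42} and \eqref{exf44} gives $\big(\vet{-\sin x}{\cos x},\vet{\sin x}{\cos x}\big)=\frac{1}{2\pi}\int_\bT\cos(2x)\,\de x=0$. For the pure $\mu^2$ coefficient, the $\cO(\mu^2)$ part of $f_1^-$ in \eqref{exf42} has second component $even_0(x)+\im\,odd(x)$, which has zero mean when paired with the leading $\vet{0}{1}$ of $f_0^-$; and $f_0^-$ itself has no pure $\mu$ or $\mu^2$ terms by \eqref{exf44}, so no cross terms arise. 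With $n=r(\e^2,\mu^2\e,\mu\e^2)$ in hand, the rest of your argument goes through verbatim.
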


\begin{proof}
First note that, 
 by 
\eqref{nonzeroaverage},   $ \footnotesize  f_0^-(0,\e) = \vet{0}{1}  $, and thus 
$g_1^-(0,\e) $ in \eqref{basisG}  reduces to  
\begin{equation*}
g_1^-(0,\e) = f_1^-(0,\e) - \Big( f_1^-(0,\e), \vet{0}{1} \Big)\vet{0}{1} \, ,
\end{equation*} 
which satisfies \eqref{zeroaverage},  recalling also  that  the  first component 
of $  f_1^-(0,\e) $ is odd. 
In order to prove \eqref{exg41}-\eqref{exg44} we 
note that  $n(\mu,\e)$ in \eqref{normaliz} is real  by \eqref{nuisreal}, 
 and satisfies,  
by  \eqref{exf42}, \eqref{exf44},
$$
n(\mu,\e) = \frac{1}{1+r(\mu^2\e, \mu \e^2)}\Big[r(\e^2) +\mu\e \Big(\vet{-\sin(x)}{\cos(x)},\vet{\sin(x)}{\cos(x)} \Big)+r(\mu^2\e,\mu\e^2)\Big]  =  r(\e^2, \mu^2\e,\mu\e^2) \, .
$$
Hence, in view of \eqref{exf41}-\eqref{exf44}, 
 the vectors $ g^\sigma_k (\mu,\e) $
satisfy the expansion \eqref{exg41}-\eqref{exg44}.  
Finally at $\mu=0$ 
the vectors $g^\pm_k(0,\e)$, $k = 0,1$, are real being  real linear combinations of real vectors.
\end{proof}

We start now the proof of Proposition \ref{BexpG}.
It is useful to  decompose $ {\cal B}_{\mu,\e}$ in \eqref{calL.ham} as 
  \begin{equation*}
   {\cal B}_{\mu,\e}  =
 {\cal B}_\e  + {\cal B}^\flat +  {\cal B}^\sharp  \, , 
\end{equation*}
where $ {\cal B}_\e $, $  {\cal B}^\flat $, $  {\cal B}^\sharp  $ 
are the self-adjoint and reversibility preserving operators
\begin{align}
 \label{cBe}
&  {\cal B}_\e  := {\cal B}_{0,\e} := \left[
\begin{array}{cc}
1+a_\e(x) &   - (1+p_\e(x)) \partial_x   \\
\partial_x \circ (1+p_\e(x)) &    |D|
\end{array}
\right],  \\ \label{cBflat}
& {\cal B}^\flat :=   
\mu \begin{bmatrix}
0 & 0 \\
0 & g(D)
\end{bmatrix} , \qquad g(D) = {\sgn}(D) + \Pi_0 \, , \\
& \label{cBsharp} {\cal B}^\sharp :=  
\mu \begin{bmatrix}
0 &  -\im  p_{\e} \\
 \im p_{\e} & 0 
\end{bmatrix} \, .  
\end{align}
Note that the operators $ {\cal B}^\flat $, $  {\cal B}^\sharp  $ are linear in $ \mu $. 
In order to prove \eqref{splitB}-
\eqref{BinG3} we  exploit the representation Lemma  \ref{lem:B.mat} and compute perturbatively the 
$ 4 \times 4 $ matrices,   associated, as in \eqref{matrix22}, to the 
self-adjoint and reversibility preserving operators 
  $ {\cal B}_\e$, $ {\cal B}^\flat$ and $ {\cal B}^\sharp$, 
in the basis $\mathcal{G} $.

\begin{lem}\label{lem:B1} {\bf (Expansion of $\mathtt{B}_\e$)} 
The self-adjoint  and reversibility preserving matrix  
$\tB_\e:= \tB_\e(\mu)$ associated, as in \eqref{matrix22}, with the self-adjoint and reversibility preserving operator $ {\cal B}_\e$, defined in \eqref{cBe}, with respect to the basis 
$\mathcal{G} $ of $ {\mathcal V}_{\mu,\e} $ 
in \eqref{basisG},  expands as
\begin{align}\label{expB1}
 \mathtt{B}_\e = \begin{pmatrix} 
 \e^2+\frac{\mu^2}{8}+r_1(\e^3, \mu\e^4) & \im r_2(\mu\e^3) & \vline & r_3(\e^3, \mu\e^2) & \im r_4(\mu\e^3) \\
 -\im r_2(\mu\e^3) & \frac{\mu^2}{8} & \vline & \im r_6(\mu\e) & 0 \\
 \hline
 r_3(\e^3, \mu\e^2) & -\im r_6(\mu\e) & \vline & 1+r_8(\e^3, \mu\e^2) & \im r_9(\mu\e^2)\\
 -\im r_4(\mu\e^3) & 0 & \vline & -\im r_9(\mu\e^2) & 0 \\
 \end{pmatrix}+\cO(\mu^2\e, \mu^3) \, .
\end{align}
\end{lem}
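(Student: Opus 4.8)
The plan is to compute the Hermitian $4\times4$ matrix $\tB_\e=\tB_\e(\mu)$ entrywise from its definition in Lemma~\ref{lem:B.mat}: ordering the index set as $(k,\sigma)\in\{(1,+),(1,-),(0,+),(0,-)\}$, the $(i,j)$ entry is the $L^2$-pairing $\molt{\cB_\e\, g^{\sigma_j}_{k_j}(\mu,\e)}{g^{\sigma_i}_{k_i}(\mu,\e)}$, with $\cB_\e=\cB_{0,\e}$ as in \eqref{cBe}. Since $\cB_\e$ is self-adjoint the matrix is Hermitian, and since $\cB_\e$ is reversibility preserving while $\mathcal G$ is a reversible basis, \eqref{revprop} shows that its entries are alternately real and purely imaginary --- which already accounts for the real/$\im$ alternation displayed in \eqref{expB1}. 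Because $\cB_\e$ does not depend on $\mu$ whereas each $g^\sigma_k(\mu,\e)$ is analytic in $(\mu,\e)$ (Lemma~\ref{espansioneg}), I will Taylor expand every entry in $\mu$ at $\mu=0$,
\[
\molt{\cB_\e\, g^{\sigma}_{k}(\mu,\e)}{g^{\sigma'}_{k'}(\mu,\e)}
=\molt{\cB_\e\, g^{\sigma}_{k}(0,\e)}{g^{\sigma'}_{k'}(0,\e)}
+\mu\,\frac{\partial}{\partial\mu}\big|_{\mu=0}\molt{\cB_\e\, g^{\sigma}_{k}(\mu,\e)}{g^{\sigma'}_{k'}(\mu,\e)}
+\cO(\mu^2) ,
\]
the remainder being uniform in $\e$ because $\cB_\e\in\cL(H^1(\bT),L^2(\bT))$ is bounded uniformly in $\e$ and the basis is analytic. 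So the statement reduces to evaluating (i) the slice at $\mu=0$, (ii) the first $\mu$-derivative at $\mu=0$, and (iii) the explicit $\mu^2$-coefficients of the $(1,1)$ and $(2,2)$ entries.

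Step (i) is the core, and here a plain Taylor expansion of the basis in $\e$ is too lossy: the decisive input is the structure \eqref{genespace} of the generalized kernel $\cV_{0,\e}$ of $\cL_{0,\e}=\cJ\cB_\e$. First, $\tB_\e(0)$ is a \emph{real} symmetric matrix, since $\cB_{0,\e}$ is a real operator and, by \eqref{muiszero}, the basis $\{g^\sigma_k(0,\e)\}$ is real; combined with the reversibility-preserving property this forces all entries at odd positions $i+j$ to vanish. Next, by \eqref{muiszero} one has $g^-_0(0,\e)=\vet{0}{1}=U_1$, and \eqref{genespace} gives $\cB_\e U_1=\cJ^{-1}\cL_{0,\e}U_1=0$; hence the whole fourth row and column of $\tB_\e(0)$ vanish. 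Finally, by Lemma~\ref{lem:B.mat} the matrix $\tJ_4\tB_\e(0)$ represents $\cL_{0,\e}|_{\cV_{0,\e}}$, and \eqref{genespace} implies $\cL_{0,\e}^2|_{\cV_{0,\e}}=0$, so $(\tJ_4\tB_\e(0))^2=0$, i.e.\ $\tB_\e(0)\tJ_4\tB_\e(0)=0$; feeding in the zero pattern already obtained together with the fact that the $(1,1)$ entry is nonzero forces the $(2,2)$ entry to vanish as well. Thus $\tB_\e(0)$ is supported on the positions $(1,1),(1,3),(3,1),(3,3)$, a real symmetric $2\times2$ block in the plane of $g^+_1(0,\e),g^+_0(0,\e)$. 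The sizes of its three entries are then obtained by a direct expansion: using \eqref{SN1}--\eqref{SN2} for $a_\e,p_\e$, the expansions \eqref{exg41}--\eqref{exg44} for the basis, the identities $\cB_\e U_3=-\alpha\,\cJ\tilde U_2$, $\cB_\e U_4=\cJ U_1$ from \eqref{genespace} together with $\alpha=\cO(\e^2)$ and the $\e$-expansions of $U_1,\tilde U_2,U_3,U_4$ from \cite[Theorem~4.1]{NS}, and exploiting the vanishing of the averages of the first components of $g^+_1(0,\e),g^+_0(0,\e)$ (see \eqref{muiszero}), one checks that the contributions of orders $\e^0$ and $\e^1$ cancel and obtains $\molt{\cB_\e g^+_1(0,\e)}{g^+_1(0,\e)}=\e^2+r(\e^3)$, $\molt{\cB_\e g^+_0(0,\e)}{g^+_0(0,\e)}=1+r(\e^3)$, $\molt{\cB_\e g^+_1(0,\e)}{g^+_0(0,\e)}=r(\e^3)$, matching the $\mu=0$ restriction of \eqref{expB1}.

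For step (ii) the first $\mu$-derivative of the $(i,j)$ entry at $\mu=0$ equals $\molt{\cB_\e\,\partial_\mu g^{\sigma}_k|_{\mu=0}}{g^{\sigma'}_{k'}(0,\e)}+\molt{\cB_\e\, g^{\sigma}_k(0,\e)}{\partial_\mu g^{\sigma'}_{k'}|_{\mu=0}}$, with $\partial_\mu g^\sigma_k|_{\mu=0}$ read off from the $\im\mu(\cdots)$ terms in \eqref{exg41}--\eqref{exg44}; moving $\cB_\e$ onto the $\mu$-independent argument by self-adjointness, each summand is a pairing against $\cB_\e g^{\sigma}_k(0,\e)$ or $\cB_\e g^{\sigma'}_{k'}(0,\e)$, which by step (i) and \eqref{genespace} is $\cO(\e^2)$-small and, moreover, orthogonal to the leading parts of the $\mu$-corrections of the basis; carrying this out produces exactly the $\mu\e$-type remainders $r_2(\mu\e^3),r_4(\mu\e^3),r_6(\mu\e),r_9(\mu\e^2)$ and the $\mu\e^4$-part of $r_1$. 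For step (iii), \eqref{exg41}--\eqref{exg42} give $\partial_\mu g^+_1|_{\mu=\e=0}=\tfrac{\im}{4}\vet{\sin x}{\cos x}$ and $\partial_\mu g^-_1|_{\mu=\e=0}=\tfrac{\im}{4}\vet{\cos x}{-\sin x}$, while $f_1^\pm\in\ker\cL_{0,0}=\ker\cB_{0,0}$ by \eqref{basestart}; hence the $\mu^2$-coefficient of the $(1,1)$ and $(2,2)$ entries reduces to $\molt{\cB_{0,0}\,\partial_\mu g^{\pm}_1|_{\mu=\e=0}}{\partial_\mu g^{\pm}_1|_{\mu=\e=0}}$, a one-line computation with the Fourier-multiplier operator $\cB_{0,0}$ giving $\tfrac18$ in both cases (all $\e$-dependent corrections being absorbed into $\cO(\mu^2\e)$), whence the term $\tfrac{\mu^2}8$. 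Assembling (i)--(iii) yields \eqref{expB1}.

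The main obstacle is step (i): one must show that every entry of $\tB_\e(0)$ is either $0$, of size $\e^2$, or $1+\cO(\e^3)$ --- rather than the generic $\cO(\e)$ that a naive Taylor expansion of the basis would suggest --- and this relies essentially on the exact relations \eqref{genespace} (valid for all small $\e$), on the smallness $\alpha=\cO(\e^2)$, and on the symplectic, reversible and zero-average structure of the basis $\mathcal G$. It is precisely the resulting $\e^2$-size of the $(1,1)$ entry that will make the Benjamin--Feir mechanism visible in the reduced matrix.
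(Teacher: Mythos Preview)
Your proposal follows the same three--step scheme as the paper's proof (Taylor expand in $\mu$; analyze the slice $\tB_\e(0)$ via the nilpotency of $\tJ_4\tB_\e(0)$ coming from \eqref{genespace}; compute the $\mu$-- and $\mu^2$--coefficients), and the key structural ingredients --- reality at $\mu=0$, $\cB_\e g_0^-(0,\e)=0$, $(\tJ_4\tB_\e(0))^2=0$, and $\cB_0 f_1^\pm=0$ for the $\mu^2/8$ term --- are all the right ones. Two points deserve tightening.

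First, in step~(i) you propose to compute the three nonzero entries of $\tB_\e(0)$ by importing the expansions of the vectors $U_1,\tilde U_2,U_3,U_4$ from \cite[Theorem~4.1]{NS}. This is a detour: the $U_j$ are a \emph{different} basis of $\cV_{0,\e}$ from the $g_k^\sigma(0,\e)$, and passing between the two requires knowing the change--of--basis matrix. The paper avoids this entirely by expanding $\cB_\e=\cB_0+\e\cB_1+\e^2\cB_2+\cO(\e^3)$ (cf.~\eqref{pezziB}) and the basis vectors $g_k^+(0,\e)$ directly via \eqref{exg41}, \eqref{exg43}, then computing the scalar products; this is both shorter and self--contained.

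Second, in step~(ii) your assertion that $\cB_\e g_k^\sigma(0,\e)$ is ``$\cO(\e^2)$--small'' is false for $(k,\sigma)=(0,+)$: indeed $\cB_\e g_0^+(0,\e)=\vet{1}{0}+\cO(\e^3)$. What actually produces the claimed sizes in \eqref{expB1} is a combination of two facts the paper spells out in \eqref{reuse}--\eqref{reuse2}: on the one hand $\cB_\e g_1^-(0,\e)=\cB_\e g_0^-(0,\e)=0$ identically (killing two columns of the matrix $X$ of first $\mu$--derivatives), and on the other hand the surviving pairings of $\cB_\e g_0^+(0,\e)\approx\vet{1}{0}$ against the $\mu$--corrections of the basis vanish to further order by parity and the zero--average structure of those corrections. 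Your hedge ``orthogonal to the leading parts'' points in the right direction but should replace, not supplement, the incorrect size claim.
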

\begin{proof}
We expand the matrix $ \tB_\e(\mu) $ as 
\begin{equation}\label{TaylorexpBemu}
\tB_\e(\mu) = \tB_\e(0) + \mu (\pa_\mu \tB_\e)(0) + \frac{\mu^2 }{2} (\pa_\mu^2 \tB_0)(0) + \cO(\mu^2\e,\mu^3) \, . 
 \end{equation}
 To simplify notation, during this proof we often identify a matrix  with its matrix elements.
 \\[1mm]
{\bf The matrix $\tB_\e(0)$.} The main result of this long paragraph 
is to prove that  the matrix $\tB_\e(0)$ has the expansion \eqref{Bsoloeps}. 
The matrix $\tB_\e(0)$ is real, because 
the operator $ {\cal B}_{\e}$ is real and the basis $ \{ g_k^\pm(0,\e) \}_{k=0,1}$ is  real.
Consequently, by \eqref{revprop},  its  matrix elements $(\tB_\e(0))_{i,j}$ are real  whenever  $i+j$ is even and vanish for $i+j$ odd.
In addition $g^-_0(0,\e)  = \footnotesize  \vet{0}{1}$ by \eqref{muiszero}, 
and, by \eqref{cBe}, we get 
$ {\cal B}_{\epsilon} g^-_0(0, \e)  = 0 $, for any $ \epsilon $.  
We deduce that the self-adjoint matrix $ \tB_\e(0) $ has the form 
 \begin{equation}\label{tBe}\tB_\e(0) =
\left( 
{\cal B}_\epsilon  \, g^\sigma_k(0, \e) , \, g^{\sigma '}_{k'}(0, \e)
\right)_{k, k'=0,1, \sigma, \sigma'  = \pm} = 
\begin{pmatrix}
  \begin{matrix}
\ta  &   0 \\
0 &  \tb  \\
  \end{matrix}
  & \vline &  
    \begin{matrix}
\alpha   &  0  \\
 0  &  0   \\
  \end{matrix}
   \\
\hline
    \begin{matrix}
\alpha  & 0  \\
  0  &  0  \\
  \end{matrix}
 & \vline &
  \begin{matrix}
\tc &   0\\
  0 &   0
  \end{matrix}
\end{pmatrix},
\end{equation}
with $\ta$, $\tb$, $\tc$, $\alpha$ real numbers depending on $\e$.
  We claim that
$ \tb = 0 $ for any $ \e $. As a first step we prove that
\begin{equation}\label{Jordancondition}
\text{ either } \ \tb=0 \, ,  \qquad\text{ or } \ \tb\neq 0 \ \text{ and } \ \ta=0=\alpha \, .
\end{equation}
Indeed, by Theorem 4.1 in \cite{NS}, 
the operator $ {\sL}_{0,\e} \equiv  {\mathcal L}_{0,\e}$ 
possesses, for any sufficiently small $\e \neq 0$, 
 the eigenvalue $ 0 $
with a four  dimensional 
generalized  Kernel  $
 \cW_\e := \text{span} \{ U_1, \tilde U_2, U_3, U_4 \} $, spanned 
by  $ \e$-dependent 
vectors $ U_1, \tilde U_2, U_3, U_4 $ satisfying 
	\eqref{genespace}. Note that $ U_1, \tilde U_2 $ are eigenvectors,  and 
	$ U_3, U_4$ generalized eigenvectors, of $ {\sL}_{0,\e}$ with eigenvalue $ 0 $. 
 By Lemma \ref{lem:Kato1} it 
 results that $ \cW_\e = {\mathcal V}_{0,\e} = \text{Rg}(P_{0,\e} )$
 and by \eqref{genespace} we have $ {\sL}_{0,\e}^2 = 0 $ on $ \mathcal{V}_{0,\e} $. 
 Thus the matrix 
 \begin{equation}\label{formaLep}
\tL_\e(0):=\tJ_4 \tB_\e(0) = 
\begin{pmatrix}
  \begin{matrix}
0  &   \tb \\
-\ta &   0  \\
  \end{matrix}
  & \vline &  
    \begin{matrix}
0  &  0  \\
 -\alpha  &  0   \\
  \end{matrix}
   \\
\hline
    \begin{matrix}
0 & 0  \\
 - \alpha   &  0  \\
  \end{matrix} 
 & \vline &
  \begin{matrix}
0 &   0\\
  -\tc &   0
  \end{matrix}
\end{pmatrix},
\end{equation}
which represents $ {\sL}_{0,\e}:\mathcal{V}_{0,\e}\to\mathcal{V}_{0,\e}$, 
satisfies $ \tL^2_\epsilon(0) = 0 $, namely 
$$ 
\tL^2_\epsilon(0) = \begin{pmatrix}
  \begin{matrix}
-\ta \tb  &   0 \\
0 &   -\ta \tb \\
  \end{matrix}
  & \vline &  
    \begin{matrix}
-\alpha  \tb  &  0  \\
 0 &  0   \\
  \end{matrix}
   \\
\hline
    \begin{matrix}
0 & 0  \\
 0  &  -\alpha \tb  \\
  \end{matrix}
 & \vline &
  \begin{matrix}
0 &   0\\
  0 &   0
  \end{matrix}
\end{pmatrix} =  0 \, .
 $$
This implies  \eqref{Jordancondition}. We now prove that the matrix $\tB_\e(0)$ defined in \eqref{tBe} expands as
\begin{equation}
\label{Bsoloeps}\tB_\e(0)  = 
\begin{pmatrix}
  \begin{matrix}
\ta  &   0 \\
0 &   \tb  \\
  \end{matrix}
  & \vline &  
    \begin{matrix}
\alpha   &  0  \\
 0  &  0   \\
  \end{matrix}
   \\
\hline
    \begin{matrix}
\alpha  & 0  \\
  0  &  0  \\
  \end{matrix}
 & \vline &
  \begin{matrix}
\tc &   0\\
  0 &   0
  \end{matrix}
\end{pmatrix}= \begin{pmatrix}\e^2+ {r(\e^3)} & 0 & \vline & r(\e^3) & 0 \\ 
 0& 0 & \vline & 0 & 0 \\
 \hline 
  r(\e^3)  & 0 & \vline & 1+ r(\e^3) & 0 \\
 0 & 0 & \vline & 0 & 0\end{pmatrix}.
\end{equation}
 We expand  the operator $ {\cal B}_\e$ in \eqref{cBe} as
\begin{equation}\label{pezziB}
{\cal B}_\e= {\cal B}_0+\e {\cal B}_1+ \e^2 {\cal B}_2 + \cO(\e^3) , 
\ 
 {\cal B}_0 := \begin{bmatrix} 1 & -\pa_x \\ \pa_x & |D| \end{bmatrix} \, ,\  {\cal B}_j := \begin{bmatrix} a_j(x) & -p_j(x)\pa_x \\ \pa_x\circ p_j(x) & 0 \end{bmatrix} \,,  \   j = 1,2 \, , 
\end{equation}
where the remainder term  $\cO(\e^3) \in \cL(Y, X)$ and,  by \eqref{SN1}-\eqref{SN2},
\begin{equation}\label{pezziap}
a_1(x) =p_1 (x)  =-2\cos(x) \, , \quad 
a_2(x)  =2-2\cos(2x) \, , \ \  \ p_2(x)  =\frac32-2\cos(2x) \, .
\end{equation}

\noindent {$ \bullet$  \it Expansion of $\ta=\e^2+r(\e^3)$. }
 By \eqref{exg41}  we split the real function $g_1^+(0,\e)$ as 
 \begin{equation}\label{pezzig1p}
g_1^+(0,\e) = f_1^+ + \e g_{1_1}^+ + \e^2 g_{1_2}^+ + \cO(\e^3),\  \ \ f_1^+ = \vet{\cos(x)}{\sin(x)},\  \ g_{1_1}^+ := \vet{2\cos(2x)}{\sin(2x)}, 
\ \ 
g_{1_2}^+ := \vet{even_0(x)}{odd(x)}, 
 \end{equation}
 where both $g_{1_2}^+$ and $\cO(\e^3)$ are vectors in  $H^1(\bT)$.
 Since $ {\cal B}_0f_1^+=\cJ^{-1} {\sL}_{0,0}f_1^+= 0$, and both $ {\cal B}_0$, $ {\cal B}_1$ are self-adjoint real operators, it results 
\begin{align}
 \ta&=\molt{{\cal B}_\e g^+_1(0,\e)}{ g^+_1(0,\e)} \notag \\
 &= \e \molt{{\cal B}_1 f_1^+}{f_1^+} + \e^2  \left[ \molt{{\cal B}_2 f_1^+}{f_1^+}+2\molt{{\cal B}_1 f_1^+}{g_{1_1}^+} + \molt{{\cal B}_0g_{1_1}^+}{g_{1_1}^+} \right]+\cO(\e^3) \, .
 \label{expa0}
\end{align}
By  \eqref{pezziB} one has
\begin{equation}\label{Bon1p}
{\cal B}_1f_1^+ = \vet{0}{2\sin(2x)}, \quad {\cal B}_2f_1^+=\vet{\frac12\cos(x)}{3\sin(3x)-\frac12 \sin(x)}, \quad {\cal B}_0g_{1_1}^+=\vet{0}{-2\sin(2x)}=- {\cal B}_1f_1^+.
\end{equation}
Then the  $\e^2$-term of $\ta$ is  $\molt{{\cal B}_2f_1^+}{f_1^+}+\molt{{\cal B}_1f_1^+}{g_{1_1}^+} $ and, by \eqref{expa0}, \eqref{Bon1p}, \eqref{pezzig1p}, a direct computation gives 
$\ta= \e^2 +r(\e^3)$ as stated in  \eqref{Bsoloeps}.

In particular, for $\e \neq 0$  sufficiently small, one has  $\ta \neq 0$ and the  second alternative in \eqref{Jordancondition} is  ruled out, implying   $\tb = 0$.\\
{ $ \bullet$  \it  Expansion of $\tc=1+r(\e^3)$. }
 By \eqref{exg43}  we split the real-valued function $g_0^+(0,\e)$ as
\begin{equation}\label{pezzig0p}
g_0^+(0,\e) = f_0^+ + \e g_{0_1}^+ + \e^2 g_{0_2}^+ + \cO(\e^3) \, , \  \  
f_0^+ = \vet10,
\   g_{0_1}^+:= \vet{\cos(x)}{-\sin(x)} \, , 
\   g_{0_2}^+:= \vet{even_0(x)}{odd(x)} \, .
\end{equation} 
Since, by \eqref{basestart} and  
\eqref{pezziB}, 
 ${\cal B}_0f_0^+= f_0^+$, and both ${\cal B}_0$, ${\cal B}_1$ are self-adjoint real operators,  
\begin{align}
 \tc&=\molt{{\cal B}_\e g^+_0(0,\e)}{ g^+_0(0,\e)} \notag \ \\
 &=1+ \e \molt{{\cal B}_1 f_0^+}{f_0^+} + \e^2  \left[ \molt{{\cal B}_2 f_0^+}{f_0^+}+2\molt{{\cal B}_1 f_0^+}{g_{0_1}^+} + \molt{{\cal B}_0g_{0_1}^+}{g_{0_1}^+} \right]+r(\e^3) \, ,
 \label{expc0}
\end{align}
where we  also used $\|f_0^+\| = 1$ and $ (f_0^+ , g_{0_1}^+ )= (f_0^+, g_{0_2}^+ ) =0$.
By  \eqref{pezziB}, \eqref{pezziap}  one has
\begin{equation}\label{Bon0p}
{\cal B}_1f_0^+ = 2\vet{-\cos(x)}{\sin(x)}, \quad {\cal B}_2f_0^+=\vet{2-2\cos(2x)}{4\sin(2x)}, \quad {\cal B}_0g_{0_1}^+=2\vet{\cos(x)}{-\sin(x)}=- {\cal B}_1f_0^+.
\end{equation}
Then the  $\e^2$-term of $\tc$ is  $\molt{{\cal B}_2f_0^+}{f_0^+}+\molt{{\cal B}_1f_0^+}{g_{0_1}^+} $ and, by  \eqref{pezzig0p}-\eqref{Bon0p},
we conclude that 
$\tc= 1 +r(\e^3)$ as stated in \eqref{Bsoloeps}.\\
{$ \bullet$  \it  Expansion of $\alpha=\cO(\e^3)$. }
By  \eqref{pezzig1p}, \eqref{pezzig0p} and since
${\cal B}_0, {\cal B}_1$ are self-adjoint and real  we have 
\begin{align*}
 &\alpha =\molt{{\cal B}_\e g^+_1(0,\e)}{ g^+_0(0,\e)}
 = \molt{{\cal B}_0 f_1^+}{f_0^+} + \e \left[ \molt{{\cal B}_1 f_1^+}{f_0^+} + \molt{{\cal B}_0 f_1^+ }{g_{0_1}^+} + \molt{{\cal B}_0 g_{1_1}^+}{ f_0^+ }\right] +  \notag  \\ 
 &  \e^2  \big[ \molt{{\cal B}_2 f_1^+}{f_0^+}+\molt{{\cal B}_1 f_1^+}{g_{0_1}^+} +\molt{{\cal B}_1 f_0^+}{g_{1_1}^+} +\molt{{\cal B}_0 g_{1_2}^+}{f_0^+} + \molt{{\cal B}_0g_{1_1}^+}{g_{0_1}^+} + \molt{{\cal B}_0f_1^+}{g_{0_2}^+} \big]+r(\e^3)  \, . 
 \end{align*}
Recalling that ${\cal B}_0 f_1^+ = 0$ and  ${\cal B}_0 f_0^+ = f_0^+$, we arrive at 
  \begin{align}
 \alpha  & = \e \left[ \molt{{\cal B}_1 f_1^+}{f_0^+} + \molt{g_{1_1}^+}{ f_0^+ }\right] 
 \notag \\ 
 \notag
 & \quad +  \e^2  \big[ \molt{{\cal B}_2 f_1^+}{f_0^+}+\molt{{\cal B}_1 f_1^+}{g_{0_1}^+} +\molt{{\cal B}_1 f_0^+}{g_{1_1}^+} +\molt{g_{1_2}^+}{f_0^+} + \molt{{\cal B}_0g_{1_1}^+}{g_{0_1}^+}  \big]+r(\e^3)=r(\e^3) \, , 
\end{align}
using that,  by   \eqref{pezzig1p}, \eqref{Bon1p}, \eqref{pezzig0p} \eqref{Bon0p}, all the scalar products in the formula vanish.
 
 We have proved  the expansion \eqref{Bsoloeps}. 
\\[1mm]
{\bf Linear terms in $ \mu $.}
We now compute the terms  of $\tB_\e(\mu)$  that are linear in $\mu$. It results 
\begin{equation}\label{MatrixX}
\pa_\mu \tB_\e(0) = X + X^* 
\qquad \text{where} \qquad X :=
 \big( {\cal B}_\e g_k^\sigma(0,\e), (\pa_\mu g^{\sigma'}_{k'})(0,\e) \big)_{k,k'=0,1,  \sigma,\sigma'=\pm} \, .  
 \end{equation}
We now prove that 
\begin{equation}\label{matX}
   X = \begin{pmatrix} 
 \cO(\e^4) & 0 & \vline & \cO(\e^2) & 0 \\ 
 \cO(\e^3)  & 0 & \vline & \cO( \e) & 0 \\
 \hline
 \cO(\e^4) & 0 & \vline & \cO(\e^2) & 0 \\
 \cO(\e^3) & 0 & \vline & \cO(\e^2) & 0
 \end{pmatrix}.
 \end{equation}
The matrix $ \tL_\e (0) $ in \eqref{formaLep} where $\tb=0$, represents the action 
of the operator $\cL_{0,\e}:\mathcal{V}_{0,\e}\to\mathcal{V}_{0,\e}$ in the basis
$ \{ g^{\sigma}_k (0,\e) \} $  and then we deduce that  
 $ \cL_{0,\e} g_1^-(0,\e) = 0 $,  $ \cL_{0,\e} g_0^-(0,\e) = 0 $. 
Thus 
also $ {\cal B}_\e  g_1^-(0,\e) = 0 $, $ {\cal B}_\e  g_0^-(0,\e)  = 0 $, for every $\e$,
and the second and the fourth column of the matrix $X$ in \eqref{matX} are zero. 
In order to compute the other two columns we use the expansion of the derivatives, 
where denoting with a dot the derivative w.r.t. $\mu$,
\begin{align}\label{reuse}
 &\dot g^{+}_{1}(0,\e) = \frac\im4 	\vet{\sin(x)}{\cos(x)}+\im\e \vet{odd(x)}{even(x)}+\cO(\e^2) 	\, , \qquad  \dot g^{+}_{0}(0,\e) = \im\e \vet{odd(x)}{even_0(x)}+\cO(\e^2) \, ,\\  
\notag 
 &\dot g^{-}_{1}(0,\e) = \frac\im4\vet{\cos(x)}{-\sin(x)}+\im\e \vet{even(x)}{odd(x)}+\cO(\e^2) \, ,\quad \dot g^{-}_{0}(0,\e) =\e \Big(\vet{\sin(x)}{\cos(x)}+\im\vet{even_0(x)}{odd(x)} \Big)+\cO(\e^2) 
\end{align}
that follow by  \eqref{exg41}-\eqref{exg44}. 
In view of \eqref{azioneJ}, \eqref{exg41}-\eqref{exg44},  \eqref{formaLep}  and since $ {\cal B}_\e g_k^\sigma(0,\e)=-\cJ \cL_\e g_k^\sigma(0,\e) $, we have
\begin{align}
{\cal B}_\e g_1^+(0,\e) &= \big(\e^2+r(\e^3)\big)\,  \cJ g_1^-(0,\e) + r(\e^3)\, \cJ f_0^- =\e^2 \vet{\cos (x)}{\sin (x)} + r(\e^3)\Big(\vet{1}{0} + \vet{even_0(x)}{odd(x)} \Big) \, , \notag \\
{\cal B}_\e g_0^+(0,\e) &= r(\e^3)\cJ g_1^-(0,\e) +\big(1+r(\e^3)\big)\cJ f_0^-= \vet{1}{0}+r(\e^3)\Big(\vet{1}{0} + \vet{even_0(x)}{odd(x)} \Big) \, .\label{reuse2}
\end{align}
 The other two columns of the matrix $X$  in \eqref{MatrixX} have the expansion  \eqref{matX},   by  \eqref{reuse} and \eqref{reuse2}.
\\[1mm]
{\bf Quadratic terms in $ \mu $.}
By denoting with a double dot the double derivative w.r.t. $\mu$, we have 
\begin{equation}\label{dersec}
\pa_\mu^2 \tB_0(0) = \molt{{\cal B}_0 f_k^\sigma}{\ddot g_{k'}^{\sigma'}(0,0)}+\molt{\ddot g_{k}^{\sigma}(0,0)}{{\cal B}_0 f_{k}^{\sigma'}}+2\molt{{\cal B}_0\dot g_k^\sigma(0,0)}{\dot g_{k'}^{\sigma'}(0,0)}=:Y+Y^*+2Z \, .
\end{equation}
We claim that $Y = 0 $. Indeed, its first, second and fourth column are zero, since ${\cal B}_0f_k^\sigma=0$ for $f_k^\sigma \in \{ f_1^+,f_1^-,f_0^- \} $.
 The third column is also zero 
by noting  that $ {\cal B}_0 f_0^+ = f_0^+ $ and
$$
\ddot g_{1}^{+}(0,0) = \vet{even_0(x)+\im odd(x)}{odd(x)  +\im  even_0(x)}, \ \ \ddot g_{1}^{-}(0,0) = \vet{odd(x)  +\im  even_0(x)}{even_0(x)+\im odd(x)}, \ \ \ddot g_{0}^{+}(0,0)=\ddot 
g_{0}^{-}(0,0)=0 \, .
$$
We claim that 
\begin{align}\label{matZ}
 Z = \molt{{\cal B}_0\dot g_k^\sigma(0,0)}{\dot g_{k'}^{\sigma'}(0,0)}_{\substack{k,k'=0,1,\\\sigma,\sigma'=\pm}} = \begin{pmatrix} 
 \frac{1}{8} & 0 & \vline & 0 & 0 \\
 0 & \frac{1}{8} & \vline & 0 & 0 \\
 \hline
 0 & 0 & \vline & 0 & 0 \\
 0 & 0 & \vline & 0 & 0 \\
 \end{pmatrix} \, .
\end{align}
Indeed,  by \eqref{reuse}, we have  $\dot g^+_0(0,0)=\dot g^-_0(0,0)= 0$. 
Therefore the last two columns of $Z$, and by self-adjointness  the last two rows, are zero. 
By \eqref{reuse}, 
$\dot g^+_1(0,0) = \frac{\im}{4}\footnotesize \vet{\sin (x)}{\cos (x)}$
and 
$\dot g^-_1(0,0) = \frac{\im}{4} \footnotesize\vet{\cos (x)}{-\sin (x)}$,
so that 
$ \footnotesize {\cal B}_0  \dot g^+_1(0,0) = \frac{\im}{2} \vet{\sin (x)}{\cos (x)} $  and 
$ \footnotesize {\cal B}_0 \dot g^-_1(0,0) = \frac{\im}{2} \vet{\cos (x)}{-\sin (x)} $, 
and we obtain the matrix \eqref{matZ} computing the scalar products.

In conclusion \eqref{TaylorexpBemu}, \eqref{MatrixX}, \eqref{matX}, \eqref{dersec}, the fact that $Y=0$ and \eqref{matZ} imply \eqref{expB1}, using 
also the selfadjointness of 
$\tB_\e$
 and 
\eqref{revprop}.
\end{proof}

We now consider $ {\cal B}^\flat $. 

\begin{lem}\label{lem:B2}
{\bf (Expansion of $\mathtt{B}^\flat$)} 
The self-adjoint and reversibility-preserving matrix 
$\tB^\flat$ associated, as in \eqref{matrix22}, to the self-adjoint 
and reversibility-preserving operator $  {\cal B}^\flat$, defined in \eqref{cBflat}, with respect to the basis $\mathcal{G}$  of $ {\mathcal V}_{\mu,\e} $
in \eqref{basisG}, admits  the expansion
\begin{equation}\label{EBflat}
\mathtt{B}^\flat=  \begin{pmatrix} 
-\frac{\mu^2}{4} & \im (\frac{\mu}{2}+r_2(\mu \e^2)) & \vline & 0 & 0 \\         
-\im (\frac{\mu}{2}+r_2(\mu \e^2)) & -\frac{\mu^2}{4} & \vline & 
\im r_6(\mu \e) & 0 \\ \hline
0 & - \im r_6(\mu \e)   & \vline & 0 & 0 \\
0 & 0 & \vline & 0 & \mu
\end{pmatrix}+\cO(\mu^2\e,\mu^3)\,. 
\end{equation}
\end{lem}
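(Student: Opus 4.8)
The plan is to exploit that the operator $\mathcal{B}^\flat$ in \eqref{cBflat} depends on $\mu$ only through the scalar prefactor: writing $\mathcal{B}^\flat = \mu\,\mathcal{C}$ with the $\mu$--independent, bounded, self-adjoint and reversibility-preserving operator $\mathcal{C} := \begin{bmatrix} 0 & 0 \\ 0 & g(D)\end{bmatrix}$, $g(D)=\sgn(D)+\Pi_0$, the associated $4\times4$ matrix in the basis $\mathcal{G}$ of \eqref{basisG} factors as $\mathtt{B}^\flat(\mu,\e)=\mu\,\mathtt{C}(\mu,\e)$, where by the representation Lemma \ref{lem:B.mat} the matrix $\mathtt{C}(\mu,\e)$ is self-adjoint, reversibility-preserving (so its entries alternate real and purely imaginary as in \eqref{revprop}), with entries $\molt{\mathcal{C}\,g_k^\sigma(\mu,\e)}{g_{k'}^{\sigma'}(\mu,\e)}$. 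Hence it suffices to expand $\mathtt{C}(\mu,\e)$ up to $\cO(\mu\e,\mu^2)$, and by \eqref{revprop} only entries on and above the diagonal need be computed. Throughout I would use the elementary identities $g(D)1=1$, $g(D)\cos(kx)=\im\sin(kx)$, $g(D)\sin(kx)=-\im\cos(kx)$ for $k\ge1$; equivalently, on a real $2\pi$--periodic $f$ one has $g(D)f=\Pi_0 f+\im\,\phi$ with $\phi$ real and of zero average.

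First I would compute $\mathtt{C}(0,\e)$. By \eqref{muiszero} the basis $\mathcal{G}$ is real at $\mu=0$, the second components of $g_1^+(0,\e)$, $g_1^-(0,\e)$, $g_0^+(0,\e)$ have zero average --- for $g_1^-(0,\e)$ this uses crucially the normalization \eqref{zeroaverage}, the very reason the basis $\mathcal{G}$ was introduced --- while $g_0^-(0,\e)=\vet{0}{1}$. Thus $\mathcal{C}g_k^\sigma(0,\e)$ has a purely imaginary second component for $(k,\sigma)\neq(0,-)$, and $\mathcal{C}g_0^-(0,\e)=\vet{0}{1}$; consequently $\molt{\mathcal{C}g_k^\sigma(0,\e)}{g_{k'}^{\sigma'}(0,\e)}$ is purely imaginary when $(k,\sigma),(k',\sigma')\neq(0,-)$ and hence vanishes when its index sum is even by \eqref{revprop}, while the products against $g_0^-(0,\e)$ reduce to the average of a zero-average function and vanish too, except $\molt{\mathcal{C}g_0^-(0,\e)}{g_0^-(0,\e)}=1$. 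Inserting the $\e$--expansions \eqref{exg41}--\eqref{exg44} into the two surviving off-diagonal entries gives $\molt{\mathcal{C}g_1^-(0,\e)}{g_1^+(0,\e)}=\im(\tfrac12+r(\e^2))$ and $\molt{\mathcal{C}g_0^+(0,\e)}{g_1^-(0,\e)}=\im r_6(\e)$, so $\mathtt{C}(0,\e)$ is the self-adjoint matrix with these entries in positions $(1,2)$ and $(2,3)$, entry $1$ in position $(4,4)$, and $0$ elsewhere.

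Next I would compute the $\mu$--linear part by differentiating the entries at $(0,0)$ using the expansions \eqref{reuse} of $\partial_\mu g_k^\sigma(0,0)$; since $\partial_\mu g_0^\pm(0,0)=0$ and $\mathcal{C}f_0^+=0$, the only surviving contributions come from $g(D)$ acting on the $\im\tfrac\mu4$--corrections of $g_1^\pm$, producing $\partial_\mu\mathtt{C}_{11}(0,0)=\partial_\mu\mathtt{C}_{22}(0,0)=-\tfrac14$ and all other first $\mu$--derivatives zero (the even-index-sum ones also being forced to vanish by combining \eqref{revprop} with the reality of $\mathcal{G}$ at $\mu=0$). Collecting the two steps,
\begin{equation*}
\mathtt{C}(\mu,\e)=
\begin{pmatrix}
-\tfrac{\mu}{4} & \im\big(\tfrac12+r(\e^2)\big) & 0 & 0 \\
-\im\big(\tfrac12+r(\e^2)\big) & -\tfrac{\mu}{4} & \im r_6(\e) & 0 \\
0 & -\im r_6(\e) & 0 & 0 \\
0 & 0 & 0 & 1
\end{pmatrix}+\cO(\mu\e,\mu^2)\,,
\end{equation*}
and multiplying by $\mu$ yields \eqref{EBflat}, the $\e^2$--part of the $(1,2)$--entry becoming $r_2(\mu\e^2)$, $\mu\, r_6(\e)$ becoming $r_6(\mu\e)$, and $\mu\cdot\cO(\mu\e,\mu^2)$ being absorbed into $\cO(\mu^2\e,\mu^3)$.

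The step deserving the most care is verifying that the $(1,2)$--entry of $\mathtt{C}$ carries no $\cO(\e)$--correction beyond $\tfrac\im2$, so that an $r(\mu\e)$ term does not appear in $\mathtt{B}^\flat$ (only $r_2(\mu\e^2)$): this amounts to checking that at $\e$--linear order one pairs $\mathcal{C}f_1^-=\vet{0}{\im\sin x}$ against the $\e$--profile $\vet{2\cos(2x)}{\sin(2x)}$ of $g_1^+$, and $\mathcal{C}\vet{-2\sin(2x)}{\cos(2x)}$ against $f_1^+$, both integrals vanishing by orthogonality of $\sin x$ and $\sin(2x)$. The rest is routine bookkeeping of trigonometric integrals and of the multi-degrees of the remainders, for which the self-adjointness and the real/imaginary alternation \eqref{revprop} serve as a consistency check at every step.
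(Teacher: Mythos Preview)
Your proof is correct and follows essentially the same route as the paper's: both compute the matrix entries $\big(\mathcal{B}^\flat g_k^\sigma,g_{k'}^{\sigma'}\big)$ by combining the expansions \eqref{exg41}--\eqref{exg44} of the basis $\mathcal{G}$ with the action of $g(D)$ on trigonometric monomials, and both single out the zero-average property \eqref{zeroaverage} as the key ingredient forcing the $(2,2)$ entry to start at order $\mu^2$. Your device of writing $\mathcal{B}^\flat=\mu\,\mathcal{C}$ and Taylor-expanding $\mathtt{C}(\mu,\e)$ first in $\e$ at $\mu=0$ and then linearly in $\mu$ is a mild reorganization of the paper's direct computation of $\mathcal{B}^\flat g_k^\sigma(\mu,\e)$, but the substance is identical. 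One small caveat: your parenthetical claim that the even-index-sum first $\mu$--derivatives are ``forced to vanish by combining \eqref{revprop} with the reality of $\mathcal{G}$ at $\mu=0$'' is not a self-contained argument (that symmetry alone does not kill those terms); however, the direct verification you outline---using $\partial_\mu g_0^\pm(0,0)=0$ and $\mathcal{C}f_0^+=0$---does the job, so the conclusion stands.
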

\begin{proof}
We have to compute the 
expansion of the matrix entries $ ( {\cal B}^\flat g^\sigma_k(\mu,\e), g^{\sigma'}_{k'}(\mu,\e)) $. 
The operator $  {\cal B}^\flat $ in 
 \eqref{cBflat} is linear in $\mu$ and by \eqref{exg41}, \eqref{exg42},   \eqref{zeroaverage} 
and the identities 
$ \sgn(D) \sin(kx)  = - \im\cos(kx) $ and  
$ \sgn(D)\cos(kx) = \im \sin(kx) $ for any $ k \in \bN $,  
 we have 
\begin{align*}
 {\cal B}^\flat g^+_1(\mu,\e) &= -\im\mu\vet{0}{\cos(x)}-\frac{\mu^2}{4} \vet{0}{\sin(x)}  - \im\mu\e \vet{0}{\cos(2x)}+\im \cO(\mu\e^2) \vet{0}{even_0(x)} +\cO(\mu^2\e,\mu^3) \, ,\\
 {\cal B}^\flat g^-_1(\mu,\e) &= \im\mu\vet{0}{\sin(x)} -\frac{\mu^2}{4} \vet{0}{\cos(x)}+\im\mu\e \vet{0}{\sin(2x)}+\im  \cO(\mu\e^2) \vet{0}{ odd(x)}+\cO(\mu^2\e,\mu^3) \, .
 \end{align*}
Note that 
 $\mu 
\footnotesize\begin{bmatrix}
0 & 0 \\
0 & \Pi_0
\end{bmatrix}g^-_1(\mu,\e) = \cO(\mu^3\e, \mu^2\e^2)$  thanks to the property \eqref{zeroaverage} of the basis $\mathcal{G}$. \\
 In addition, by  \eqref{exg43}-\eqref{exg44},  we get that 
$$
 {\cal B}^\flat g^+_0(\mu,\e) = \im\mu\e \vet{0}{\cos(x)}+ \im \cO(\mu\e^2) \vet{0}{even_0(x)}+ \cO(\mu^2\e) \,  , \quad  {\cal B}^\flat g^-_0(\mu,\e) = \vet{0}{\mu}+\cO(\mu^2\e) \, .
$$
Taking the scalar products of the above expansions of 
 $ {\cal B}^\flat g^\sigma_k (\mu,\e) $
 with the functions $g^{\sigma'}_{k'}(\mu,\e) $ expanded as in 
 \eqref{exg41}-\eqref{exg44}
 we deduce \eqref{EBflat}. 
\end{proof}

\begin{rmk}\label{rem:newbasis}
The $(2,2)$ entry in the matrix $ \mathtt{B}^\flat $ in \eqref{EBflat} has no terms  $ \cO (\mu\e^k ) $, thanks to property \eqref{zeroaverage}. This property is fundamental in order to 
verify that the $(2,2)$ entry of the matrix $ E $ in \eqref{BinG1} starts with 
$ - \frac{\mu^2}{8} $ and therefore it is negative for $ \mu $ small. 
Such  property does not hold for the first basis ${\cal F}$ defined in 
\eqref{basisF}, and this motivates the use  of the second basis ${\mathcal G}$.
\end{rmk}

Finally we consider ${\cal B}^\sharp$.

\begin{lem}\label{lem:B3}
{\bf (Expansion of  $\mathtt{B}^\sharp$)}
The self-adjoint and reversibility-preserving matrix 
$\tB^\sharp$ associated, as in \eqref{matrix22}, to the self-adjoint 
and reversibility-preserving operators $ {\cal B}^\sharp$, defined in  \eqref{cBsharp}, with respect to the basis $\mathcal{G}$  of $ {\mathcal V}_{\mu,\e} $
in \eqref{basisG}, admits the expansion
\begin{equation}\label{tBsharp}
 \mathtt{B}^\sharp = \begin{pmatrix} 
 0 & \im  r_2(\mu\e^2) & \vline & 0 & \im  r_4(\mu\e) \\ 
 -\im r_2(\mu\e^2)& 0 & \vline & -\im r_6(\mu\e) & 0 \\
 \hline
 0 & \im r_6(\mu\e) & \vline & 0 & - \im r_9(\mu\e^2) \\
 -\im r_4(\mu\e) & 0 & \vline & \im r_9(\mu\e^2) & 0
 \end{pmatrix}+\cO(\mu^2\e) \, .
\end{equation}
\end{lem}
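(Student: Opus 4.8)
The goal is to compute the entries of the self-adjoint, reversibility-preserving $4\times4$ matrix $\mathtt{B}^\sharp$, which by \eqref{matrix22} are the scalar products $\molt{{\cal B}^\sharp g_{k}^{\sigma}(\mu,\e)}{g_{k'}^{\sigma'}(\mu,\e)}$, directly from the expansions \eqref{exg41}--\eqref{exg44} of the basis $\mathcal{G}$. The plan rests on three elementary features of ${\cal B}^\sharp$ in \eqref{cBsharp}: it is linear in $\mu$, so ${\cal B}^\sharp=\mu\,{\cal C}_\e$ with ${\cal C}_\e$ independent of $\mu$; it vanishes to first order in $\e$, i.e. ${\cal C}_\e=\cO(\e)$ as a bounded operator, since $p_\e=\e p_1+\e^2 p_2+\cO(\e^3)$ with $p_1(x)=-2\cos(x)$ and $p_2(x)=\frac32-2\cos(2x)$ (see \eqref{SN1}, \eqref{pezziap}); and, for any $f,g\in L^2(\bT)$ with components $f_1,f_2$ and $g_1,g_2$,
\begin{equation*}
\molt{{\cal C}_\e f}{g}=\frac{\im}{2\pi}\int_{\bT}p_\e(x)\,\big(f_1\,\overline{g_2}-f_2\,\overline{g_1}\big)\,\de x \, ,
\end{equation*}
which is purely imaginary whenever $f,g$ are real-valued. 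Recall in addition that by \eqref{revprop} the entry $\mathtt{B}^\sharp_{ij}$ is real when $i+j$ is even and purely imaginary when $i+j$ is odd, so only the first non-vanishing order of each entry has to be located.

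First I would dispose of the entries with $i+j$ even, i.e. the positions $(1,1),(2,2),(3,3),(4,4)$ and $(1,3),(3,1),(2,4),(4,2)$, claimed to be $\cO(\mu^2\e)$ — the zeros in \eqref{tBsharp}. Since ${\cal B}^\sharp=\mu\,{\cal C}_\e$ with ${\cal C}_\e$ independent of $\mu$, and the basis vectors are real at $\mu=0$ by \eqref{muiszero}, the coefficient of $\mu^1$ in such an entry is $\molt{{\cal C}_\e g_{k}^{\sigma}(0,\e)}{g_{k'}^{\sigma'}(0,\e)}$, which by the displayed formula is purely imaginary while the entry itself is real; hence this coefficient vanishes identically. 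The same conclusion follows by parity: by \eqref{muiszero} the vectors $g_{k}^{+}(0,\e)$ have components $\big(even(x),\,odd(x)\big)$ and the vectors $g_{k}^{-}(0,\e)$ components $\big(odd(x),\,even(x)\big)$, so for a same-sign pair the function $f_1g_2-f_2g_1$ is \emph{odd} in $x$ and integrates to zero against the even function $p_\e$. Consequently such an entry is $\cO(\mu^2)$, and since its $\mu^2$-coefficient involves ${\cal C}_\e=\cO(\e)$, it is in fact $\cO(\mu^2\e)$, which is absorbed into the remainder.

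It remains to find the leading term of the four entries with $i+j$ odd, namely $(1,2),(1,4),(2,3),(3,4)$; the three lower-triangular ones are then fixed by self-adjointness together with \eqref{revprop}. Here the coefficient of $\mu^1$ equals $\frac{\im}{2\pi}\int_{\bT}p_\e(x)\,\Phi_\e(x)\,\de x$, where $\Phi_\e=\Phi_0+\cO(\e)$ and $\Phi_0$ is the bilinear combination $f_1g_2-f_2g_1$ of the two relevant unperturbed vectors in \eqref{funperturbed}. For the pairs $\{f_1^+,f_0^-\}$ and $\{f_1^-,f_0^+\}$ one computes $\Phi_0=\pm\cos(x)$, and since $\int_{\bT}p_1(x)\cos(x)\,\de x\neq0$ the $(1,4)$ and $(2,3)$ entries carry a genuine $\mu\e$ term, equal to $\im r_4(\mu\e)$ and $-\im r_6(\mu\e)$. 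For the pairs $\{f_1^+,f_1^-\}$ and $\{f_0^+,f_0^-\}$ instead $\Phi_0$ is a nonzero \emph{constant}; since $\int_{\bT}p_1\,\de x=0$ and $p_\e=\e p_1+\cO(\e^2)$, the integral $\int_{\bT}p_\e\,(\Phi_0+\cO(\e))\,\de x$ is $\cO(\e^2)$, so the $\mu^1$-coefficient of the $(1,2)$ and $(3,4)$ entries is already $\cO(\e^2)$; hence these entries are $\im r_2(\mu\e^2)$ and $-\im r_9(\mu\e^2)$ modulo the global $\cO(\mu^2\e)$ remainder. Assembling all entries and using $\mathtt{B}^\sharp=(\mathtt{B}^\sharp)^*$ gives \eqref{tBsharp}.

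The computation is short once the expansion of $\mathcal{G}$ is available, and the reversibility rule \eqref{revprop} halves the work by pinning down the real or imaginary character of each entry. The step I expect to require the most care — though it is hardly an obstacle — is recognizing the two cancellations that make certain entries smaller than their a priori $\cO(\mu\e)$ size: the odd-integrand cancellation for the same-sign pairs, improving those entries to $\cO(\mu^2\e)$, and the cancellation due to $\int_{\bT}p_1\,\de x=0$ for the $(1,2)$ and $(3,4)$ entries, improving them to $\cO(\mu\e^2)$. Everything else reduces to a handful of elementary trigonometric integrals.
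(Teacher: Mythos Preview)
Your proposal is correct and follows essentially the same route as the paper's proof: reduce to $\mu=0$ using that ${\cal B}^\sharp$ is linear in $\mu$ and $\cO(\mu\e)$, kill the same-sign entries at order $\mu^1$ by the ``real vs.\ purely imaginary'' (equivalently, parity) argument, and then compute the leading coefficients of the four opposite-sign entries. The only cosmetic difference is that the paper expands ${\cal B}^\sharp=\im\mu\e\,{\cal B}_1^\sharp+\cO(\mu\e^2)$ and evaluates ${\cal B}_1^\sharp f_1^-$, ${\cal B}_1^\sharp f_0^-$ explicitly before taking scalar products, whereas you package the same computation via the bilinear form $\Phi_0=f_1g_2-f_2g_1$ and isolate the cancellation for the $(1,2)$ and $(3,4)$ entries as ``$\Phi_0$ is constant and $\int_\bT p_1=0$''.
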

\begin{proof}
Since ${\cal B}^\sharp  = -\im \mu p_\e \cJ$ and  
$p_\e=\cO(\e)$
  by \eqref{SN1},   we have the expansion
\begin{equation}\label{TaylorexpBsharpemu}
 \big( {\cal B}^\sharp g_k^\sigma(\mu,\e), g_{k'}^{\sigma'}(\mu,\e) \big) = 
\big({\cal B}^\sharp g_k^\sigma(0,\e), g_{k'}^{\sigma'}(0,\e) \big) + \cO(\mu^2\e) \, .
\end{equation}
We claim that the matrix entries
$ ( {\cal B}^\sharp g^{\sigma}_k (0, \e) , g^{\sigma}_{k'} (0, \e) )   $, $  k, k' =  0,1 $ are zero. Indeed they are real by \eqref{revprop}, and also 
purely imaginary, since  the operator ${\cal B}^\sharp$
 is purely imaginary\footnote{An operator $\mathcal{A}$ is \emph{purely imaginary} if $\bar{\mathcal{A}}=-\mathcal{A}$. A purely imaginary operator sends real functions into purely imaginary ones.}
  and the basis $ \{ g_k^\pm(0,\e) \}_{k=0,1}$ is  real. 
 Hence $\tB^\sharp$ has the form 
 \begin{equation}\label{formatBsharp}
 \mathtt{B}^\sharp = \begin{pmatrix} 
 0 & \im\beta & \vline & 0 & \im\delta \\ 
 -\im\beta & 0 & \vline & -\im\gamma & 0 \\
 \hline
 0 & \im\gamma & \vline & 0 & \im\eta \\
 -\im\delta & 0 & \vline & -\im\eta & 0
 \end{pmatrix}+\cO(\mu^2\e) 
 \quad \text{where} \quad 
 \left\{\begin{matrix}\molt{{\cal B}^\sharp g_1^-(0,\e)}{g_1^+(0,\e)}=:\im\beta \, , \\   \molt{{\cal B}^\sharp g_1^-(0,\e)}{g_0^+(0,\e)}=:\im\gamma \, , \\
\molt{{\cal B}^\sharp g_0^-(0,\e)}{g_1^+(0,\e)}=:\im\delta \, , 
\\ \molt{{\cal B}^\sharp g_0^-(0,\e)}{g_0^+(0,\e)}=:\im\eta \, ,\end{matrix}\right.
 \end{equation}
and $\alpha $, $ \beta $, $ \gamma $, $ \delta$ are real numbers.
As ${\cal B}^\sharp  = \cO(\mu \e)$ in $\cL(Y)$, we get immediately 
that $ \gamma =r( \mu\e ) $ and 
$   \delta =  r(\mu\e) $. 
Next we  compute the expansion of $\beta$ and $\eta$.
We 
 split the operator $ {\cal B}^\sharp$ in \eqref{cBsharp} as
\begin{equation}\label{pezziBsharp}
{\cal B}^\sharp = \im\mu \e{\cal B}_1^\sharp+  \cO(\mu\e^2) \, ,
\qquad {\cal B}_1^\sharp :=   -p_1(x) \cJ \, , 
\end{equation}
with $p_1(x)$ in  \eqref{pezziap} and $\cO(\mu \e^2) \in \cL(Y)$.
By \eqref{pezziBsharp} and the expansion  \eqref{exg41}-\eqref{exg44},
$g_1^+(0,\e) = f_1^+ + \cO(\e)$, 
$g_1^-(0,\e)=f_1^-+\cO(\e)$,  
$g_0^+(0,\e)=f_0^+  +\cO(\e)$, 
$g_0^-(0,\e) = \footnotesize \vet{0}{1}$
we obtain  
$$
 \beta = \mu\e \molt{{\cal B}_1^\sharp f_1^-}{f_1^+} +r(\mu\e^2) \, ,  \ \ \   
   \eta =  \mu\e \molt{{\cal B}_1^\sharp f_0^-}{f_0^+} +r(\mu\e^2)  \, . 
$$
Computing
$ \footnotesize  {\cal B}_1^\sharp f_1^- =  \vet{1+\cos(2x)}{\sin(2x)} $,
$ \footnotesize  {\cal B}_1^\sharp f_0^- =  \vet{2\cos(x)}{0} $
and the various scalar products with the vectors $f^\sigma_k$ in \eqref{funperturbed},  we  get
$ \beta =r(\mu\e^2) $,  
 $   \eta = r(\mu\e^2) $. 
Using also  \eqref{TaylorexpBsharpemu} and \eqref{formatBsharp},   one gets  \eqref{tBsharp}.
 \end{proof}

Lemmata \ref{lem:B1}, \ref{lem:B2} and  \ref{lem:B3} imply  Proposition \ref{BexpG}.

\section{Block-decoupling}\label{sec:block}

The $ 4 \times 4 $ Hamiltonian and reversible 
matrix $\tL_{\mu,\e} = \tJ_4 \tB_{\mu,\e} $ obtained 
in Proposition \ref{BexpG}, has the form 
\begin{equation}\label{Lepmu}
\tL_{\mu,\e}= 
\tJ_4 \begin{pmatrix} 
 E &  F \\ 
 F^* &  G 
\end{pmatrix}
=
\begin{pmatrix} 
\tJ_2 E & \tJ_2 F \\ 
\tJ_2 F^* &\tJ_2 G 
\end{pmatrix} ,
\end{equation}
where $ E, G, F $ are the $ 2 \times 2 $ matrices in \eqref{BinG1}-\eqref{BinG3}. 
In particular $\tJ_2 E$ has the form 
\begin{equation}\label{J2E.1}
\tJ_2 E =
 \begin{pmatrix} 
- \im \big( \frac{\mu}{2}+ r_2(\mu\e^2,\mu^2\e,\mu^3) \big)  & -\frac{\mu^2}{8}(1+r_5(\e,\mu))\\
- \e^2(1+r_1'(\e,\mu\e^2))+\frac{\mu^2}{8}(1+r_1''(\e,\mu))  & - \im \big( \frac{\mu}{2}+ r_2(\mu\e^2,\mu^2\e,\mu^3) \big)   \\
 \end{pmatrix}
\end{equation}
 and therefore possesses 
 two eigenvalues with non-zero real part (``Benjamin-Feir" eigenvalues), as long as its two off-diagonal entries have the  same sign, see the discussion below \eqref{J2E}.
In order to prove that also the full $ 4 \times 4 $ matrix 
$ \tL_{\mu,\e} $  in \eqref{Lepmu} possesses Benjamin-Feir unstable eigenvalues, 
we aim to eliminate the coupling term $  \tJ_2 F $  by a  change of variables.
More precisely in this section we conjugate the matrix $\tL_{\mu,\e}$ in \eqref{Lepmu} 
 to the  Hamiltonian and reversible
{\it block-diagonal} matrix
$\tL_{\mu,\e}^{(3)}$ in \eqref{L3.fin}, 
$$
\tL_{\mu,\e}^{(3)} = 
\begin{pmatrix} 
\tJ_2 E^{(3)} & 0 \\ 
0 &\tJ_2  G^{(3)}
\end{pmatrix} ,
$$
where $\tJ_2 E^{(3)}$ is a $2 \times 2$ matrix with the same form as
\eqref{J2E.1} (clearly with different remainders, but of the same order).
The {spectrum of the} $ 4 \times 4 $ matrix $ \tL_{\mu,\e}^{(3)}$, which coincides with that of $ \tL_{\mu,\e} $, contains the 
Benjamin-Feir unstable eigenvalues of the $ 2 \times 2 $ matrix $\tJ_2 E^{(3)} $
(it turns out that the two  eigenvalues of $ \tJ_2  G^{(3)}$ are purely imaginary). 
This will prove Theorem \ref{TeoremoneFinale}. 

The block-diagonalization  of  $ \tL_{\mu,\e} $ is achieved in 
three steps, in Lemma \ref{decoupling1}, Lemma \ref{decoupling2}, and finally Lemma \ref{ultimate}. 
Motivations and goals of each step 
were 
described  at the end of Section \ref{sec:2}.

\subsection{First step of Block-decoupling} \label{sec:omue}
We write  the matrices $ E, F, G $ in \eqref{splitB} as 
\begin{equation}\label{splitEFG}
E = 
\begin{pmatrix} 
E_{11} & \im E_{12} \\ 
- \im E_{12} & E_{22} 
\end{pmatrix}\, , \quad
F = 
\begin{pmatrix} 
F_{11} & \im F_{12} \\ 
\im F_{21} & F_{22} 
\end{pmatrix} \, , \quad
G = 
\begin{pmatrix} 
G_{11} & \im G_{12} \\ 
- \im G_{12} & G_{22} 
\end{pmatrix} 
\end{equation} 
where the real numbers $ E_{ij}, F_{ij}, G_{ij} $, $ i , j = 1,2 $, have the expansion 
given in  \eqref{BinG1}-\eqref{BinG3}. 

\begin{lem}\label{decoupling1}
 Conjugating the Hamiltonian and reversible matrix $\tL_{\mu,\e} = \tJ_4 \tB_{\mu,\e} $ 
obtained in Proposition \ref{BexpG} through the symplectic and reversibility-preserving 
 $ 4 \times 4 $-matrix 
 \begin{align}\label{Ychange}
 Y = \uno_4 + m \begin{pmatrix} 0 & - P \\ Q & 0 \end{pmatrix}
 \  \text{with} \  \ 
 Q:=\begin{pmatrix} 1 & 0 \\ 0 & 0\end{pmatrix}  \, , 
 \   P:=\begin{pmatrix} 0 & 0 \\ 0 & 1\end{pmatrix} \, , 
 \ m := m(\mu,\e):=-\frac{F_{11}(\mu,\e)}{G_{11}(\mu,\e)},  
\end{align} 
where $ m =  r(\e^3, \mu \e^2, \mu^2\e, \mu^3 ) $ is a real number, 
we obtain the Hamiltonian and reversible matrix 
\begin{align}
&\tL_{\mu,\e}^{(1)} := Y^{-1} \tL_{\mu,\e} Y = \tJ_4\tB^{(1)}_{\mu,\e}  =
\begin{pmatrix}   \tJ_2 E^{(1)}  & \tJ_2  F^{(1)}  \\  
\tJ_2 [F^{(1)}]^*  &  \tJ_2 G^{(1)}  \end{pmatrix}  \label{LinH}
 \end{align}
where $ \tB_{\mu,\e}^{(1)} $ is a self-adjoint and reversibility-preserving 
 $ 4 \times 4$  matrix 
  \begin{equation}\label{splitB1}
\tB_{\mu,\e}^{(1)} =
\begin{pmatrix} 
E^{(1)} & F^{(1)} \\ 
[F^{(1)}]^* & G^{(1)} 
\end{pmatrix}, \quad 
E^{(1)}  = [E^{(1)}]^* \, , \ G^{(1)} = [G^{(1)}]^* \, , 
\end{equation} 
 where the $ 2 \times 2 $  matrices  $E^{(1)} $, $ G^{(1)} $  have the same expansion
 \eqref{BinG1}-\eqref{BinG2}   of $ E, G $ and 
\begin{equation}\label{BinH}
 F^{(1)} = 
 \begin{pmatrix} 
 0 & \im  r_4(\mu\e,\mu^3)  \\
  \im  r_6(\mu\e, \mu^3)  & r_7(\mu^2\e,\mu^3) 
 \end{pmatrix}\, . 
 \end{equation}
 Note that  the entry $ F^{(1)}_{11} $ is $ 0 $,
the other entries of $ F^{(1)}$ have  the same size as for $ F $ in \eqref{BinG3}.  
\end{lem}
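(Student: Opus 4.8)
The plan is to verify directly that the matrix $Y$ in \eqref{Ychange} is symplectic and reversibility-preserving, and then to read off the conjugate $\tL^{(1)}_{\mu,\e}=Y^{-1}\tL_{\mu,\e}Y$ from \eqref{sympchange} block by block; the point is that the specific choice $m=-F_{11}/G_{11}$ annihilates the $(1,1)$ entry of the new coupling block while changing everything else only at higher order in $(\mu,\e)$. Writing $Y=\uno_4+mN$ with $N:=\begin{psmallmatrix}0 & -P\\ Q & 0\end{psmallmatrix}$, one has $PQ=QP=0$, hence $N^2=0$, so $Y$ is invertible with $Y^{-1}=\uno_4-mN$. The scalar $m$ is real and analytic in $(\mu,\e)$, since $F_{11},G_{11}$ are real (entries of $\tB_{\mu,\e}$ in positions with $i+j$ even, by \eqref{revprop}) and $G_{11}=1+r_8\neq0$ for small $(\mu,\e)$; from $F_{11}=r_3(\e^3,\mu\e^2,\mu^2\e,\mu^3)$ and $G_{11}^{-1}=1+\cO(\e^3,\mu^2\e,\mu\e^2,\mu^3)$ one gets $m=r(\e^3,\mu\e^2,\mu^2\e,\mu^3)$. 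A short computation using $\tJ_2 Q=P\tJ_2$ shows that $\tJ_4 N$ is self-adjoint, equivalently $N^*\tJ_4=-\tJ_4 N$; together with $N^*\tJ_4 N=-\tJ_4 N^2=0$ and $m\in\bR$ this gives $Y^*\tJ_4 Y=\tJ_4+(m-\bar m)\tJ_4 N+|m|^2 N^*\tJ_4 N=\tJ_4$, so $Y$ is symplectic. Finally the only nonzero entries of $N$, namely $N_{31}$ and $N_{24}$, sit in positions with $i+j$ even and are real, so $N$, hence $Y$, commutes with $\rho_4$, i.e. is reversibility-preserving.

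By \eqref{sympchange}, $\tL^{(1)}_{\mu,\e}:=Y^{-1}\tL_{\mu,\e}Y=\tJ_4\tB^{(1)}_{\mu,\e}$ with $\tB^{(1)}_{\mu,\e}=Y^*\tB_{\mu,\e}Y=\tB_{\mu,\e}+m(N^*\tB_{\mu,\e}+\tB_{\mu,\e}N)+m^2 N^*\tB_{\mu,\e}N$, which is again self-adjoint and reversibility-preserving, so $\tL^{(1)}_{\mu,\e}$ is Hamiltonian and reversible. Writing $\tB_{\mu,\e}$ in the $2\times2$ block form \eqref{splitB} and using $P=\mathrm{diag}(0,1)$ and $Q=\mathrm{diag}(1,0)$, a direct multiplication yields
\begin{align*}
E^{(1)}&=E+m(QF^*+FQ)+m^2 QGQ \, , \\
F^{(1)}&=F+m(QG-EP)-m^2 QF^*P \, , \\
G^{(1)}&=G-m(PF+F^*P)+m^2 PEP \, .
\end{align*}
In particular $F^{(1)}_{11}=F_{11}+mG_{11}=0$ by the very definition of $m$, which is the purpose of the transformation.

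It then remains to check that all the remaining new terms are subordinate to the entries they correct. Using $m=\cO(\e^3,\mu\e^2,\mu^2\e,\mu^3)$, $E_{12}=\cO(\mu)$ and the fact that $F_{ij},G_{12},E_{22}$ are $\cO(\mu\e,\mu^3)$ or smaller (from \eqref{splitEFG} and \eqref{BinG1}--\eqref{BinG3}), one finds that the corrections to $E$ are $\cO(\e^6,\mu\e^4,\dots)$, so $E^{(1)}$ retains the expansion \eqref{BinG1}; the corrections to $G$ are $\cO(\mu\e^4,\dots)$, so $G^{(1)}$ retains \eqref{BinG2}; and the remaining three entries of $F^{(1)}$ differ from those of $F$ only by higher-order terms, namely $0$, $\im\big(m(G_{12}-E_{12})+m^2 F_{21}\big)$ and $-mE_{22}$ respectively, so $F^{(1)}$ has the form \eqref{BinH}. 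The only genuine difficulty here is the order bookkeeping: one must repeatedly verify that a product $\e^a\mu^b\cdot\e^c\mu^d$ is dominated, for small $(\mu,\e)$, by one of the monomials allowed in the target remainder (e.g. $\mu^2\e^2\le\mu\e$, $\mu\e^4\le\mu\e^2$, $\mu^4\e\le\mu^3$), and this works precisely because $F_{11}$, hence $m$, starts at order $\e^3$ rather than $\e$ — exactly the reason this preliminary step is performed before the KAM-type block-decoupling of the next sections.
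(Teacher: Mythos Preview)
Your proof is correct and follows essentially the same approach as the paper's own argument: both verify that $Y$ is symplectic and reversibility-preserving, compute $\tB_{\mu,\e}^{(1)}=Y^*\tB_{\mu,\e}Y$ block by block to obtain the identical formulas $E^{(1)}=E+m(QF^*+FQ)+m^2QGQ$, $F^{(1)}=F+m(QG-EP)-m^2QF^*P$, $G^{(1)}=G-m(PF+F^*P)+m^2PEP$, observe that $F^{(1)}_{11}=F_{11}+mG_{11}=0$ by the choice of $m$, and then check that the remaining corrections are of higher order. Your write-up is in fact more explicit than the paper's on the symplecticity verification (via $N^2=0$ and $\tJ_4N=(\tJ_4N)^*$) and on the order bookkeeping, but the substance is the same.
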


\begin{proof}
The matrix $Y $ is symplectic, i.e. \eqref{sympmatrix} holds,
and since $m$ is real, it is reversibility preserving, i.e. 
satisfies  \eqref{revprop}.  
By \eqref{sympchange},  
\begin{equation}\label{B1forma}
   \tB_{\mu,\e}^{(1)} = Y^* \tB_{\mu,\e} Y = \begin{pmatrix}  E^{(1)} & F^{(1)} \\ [F^{(1)}]^* &  
   G^{(1)}  \end{pmatrix},
\end{equation}
where, by \eqref{Ychange} and  \eqref{splitEFG},  the self-adjoint matrices 
$E^{(1)}, G^{(1)}  $ are 
\begin{equation}\label{E1G1}
\begin{aligned}
& E^{(1)} := E+ m ( QF^* + FQ) +m^2 QGQ = E +
\begin{pmatrix}  2 m F_{11} + m^2 G_{11} & - \im m F_{21} \\ \im m F_{21} &  
0  \end{pmatrix}   \, , \\ 
& G^{(1)} := G-m  (PF +  F^*P) +m^2 PEP = G + 
\begin{pmatrix}  0 &  \im m F_{21}  \\ 
- \im m F_{21}  &   - 2m F_{22} +m^2 E_{22}  \end{pmatrix} \, .
\end{aligned}
\end{equation}
Similarly,  the off-diagonal $ 2 \times 2 $ matrix $F^{(1)} $ is 
\begin{equation}\label{F1}
F^{(1)} := F + m (QG -  EP) - m^2 QF^*P =
 \begin{pmatrix}  0 &  \im (F_{12} + mG_{12} - mE_{12} + m^2 F_{21}) \\ 
 \im F_{21}  &  F_{22} - m E_{22}  \end{pmatrix}  \, , 
 \end{equation}
where we have used that the first entry  of this matrix is $F_{11} + m G_{11} = 0 $, 
by the definition of $ m $ in \eqref{Ychange}. 
By \eqref{B1forma}-\eqref{F1} and \eqref{BinG1}-\eqref{BinG3}
we deduce the expansion of $ \tB_{\mu,\e}^{(1)} $ in \eqref{BinH}, \eqref{splitB1} and consequently that of \eqref{LinH}. 
\end{proof}

\subsection{Second step of Block-decoupling} \label{sec:5.2}

We now perform 
a further step of block decoupling, obtaining the new Hamiltonian and reversible 
matrix $ \tL_{\mu,\e}^{(2)}  $ in \eqref{sylvydec}  
where the $2 \times 2 $ matrix $ \tJ_2 E^{(2)} $
has still  the Benjamin-Feir unstable eigenvalues
and the size of the new coupling matrix $\tJ_2 F^{(2)}$ 
is  much smaller  than $\tJ_2 F^{(1)}$.
In particular note that  the entries of 
$  F^{(2)} $ in \eqref{Bsylvy} have size 
$ \cO (\mu^2 \e^3, \mu^3 \e^2, \mu^5 \e, \mu^7) $ whereas those
of $  F^{(1)} $ in \eqref{BinH} are $ \cO (\mu \e^3, \mu^3)$.

\begin{lem} {\bf (Step of block-decoupling)}\label{decoupling2}
There exists a $2\times 2$ reversibility-preserving matrix $ X $,
analytic in $ (\mu, \e) $, of the form
\begin{equation}\label{Xsylvy}
X=\begin{pmatrix} x_{11} & \im x_{12} \\ \im x_{21} & x_{22} \end{pmatrix}
=
\begin{pmatrix}
   r_{11}(\mu^2, \mu \e) & \im\, r_{12}(\mu^3, \mu \e) \\
  \im r_{21}(\e, \mu^2) &   r_{22}(\mu^3, \mu \e)\end{pmatrix}, 
  \quad x_{11},\, x_{12},\, x_{21},\, x_{22} \in \bR \, , 
\end{equation}
such that, by conjugating the Hamiltonian and reversible matrix 
$\tL_{\mu,\e}^{(1)}$, defined in \eqref{LinH}, with the symplectic and reversibility-preserving $4\times 4$ matrix 
\begin{equation}\label{formaS}
\exp\left(S^{(1)} \right) \, , 
\quad \text{ where } 
\qquad S^{(1)} := \tJ_4 \begin{pmatrix} 0 & \Sigma \\ \Sigma^* & 0 \end{pmatrix} \, , \qquad \Sigma:= \tJ_2 X \, , 
\end{equation}
we get the Hamiltonian and reversible matrix  
\begin{equation}\label{sylvydec}
 \tL_{\mu,\e}^{(2)} := \exp\left(S^{(1)} \right)\tL_{\mu,\e}^{(1)} \exp\left(-S^{(1)} \right)= \tJ_4 \tB_{\mu,\e}^{(2)} =
\begin{pmatrix}   \tJ_2 E^{(2)}  & \tJ_2  F^{(2)}  \\  
\tJ_2 [F^{(2)}]^*  &  \tJ_2 G^{(2)}  \end{pmatrix},
\end{equation}
 where the $ 2 \times 2 $ self-adjoint and reversibility-preserving  matrices  $E^{(2)} $, $ G^{(2)} $  have the same expansion
 of $ E^{(1)} $, $ G^{(1)} $, namely of 
 $ E, G $, given in  \eqref{BinG1}-\eqref{BinG2},  and 
\begin{equation}\label{Bsylvy}
 F^{(2)} = \begin{pmatrix} F^{(2)}_{11} & \im F^{(2)}_{12} \\ \im F^{(2)}_{21} & F^{(2)}_{22}  \end{pmatrix}=
 \begin{pmatrix} 
  r_3(\mu^2 \e^3, \mu^3 \e^2, \mu^5 \e, \mu^7) 
  &
  \im  r_4(\mu^2\e^3, \mu^4 \epsilon^2, \mu^5 \epsilon,  \mu^7)  \\
  \im  r_6(\mu^2 \epsilon^3, \mu^4 \e^2, \mu^5 \epsilon, \mu^7) 
  &
    r_7(\mu^3 \epsilon^3, \mu^4\e^2, \mu^6 \epsilon, \mu^8)
 \end{pmatrix}\, . 
 \end{equation}
\end{lem}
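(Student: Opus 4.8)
The plan is to realize the conjugation through a Hamiltonian, reversibility-preserving generator $S^{(1)}$ that solves a homological equation, and then to estimate the transformed matrix block by block. First I would observe that $\tJ_4^{-1}S^{(1)}=\begin{psmallmatrix} 0 & \Sigma \\ \Sigma^* & 0\end{psmallmatrix}$ is self-adjoint and, since $\Sigma=\tJ_2 X$ with $X$ of the reversibility-preserving form \eqref{Xsylvy}, that $S^{(1)}$ commutes with $\bro$; hence by Lemma \ref{lem:S.conj} the matrices $\exp(\pm S^{(1)})$ are symplectic and reversibility-preserving, so that $\tL^{(2)}_{\mu,\e}:=\exp(S^{(1)})\tL^{(1)}_{\mu,\e}\exp(-S^{(1)})=\tJ_4\tB^{(2)}_{\mu,\e}$ is automatically Hamiltonian and reversible, i.e.\ $\tB^{(2)}_{\mu,\e}$ is self-adjoint and reversibility-preserving, so $E^{(2)},G^{(2)}$ are self-adjoint and $F^{(2)}$ is reversibility-preserving (the form displayed in \eqref{Bsylvy}). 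Writing $\tL^{(1)}_{\mu,\e}=\mathcal D+\mathcal R$ with $\mathcal D=\mathrm{diag}(\tJ_2 E^{(1)},\tJ_2 G^{(1)})$ and $\mathcal R$ the block-off-diagonal part carrying $\tJ_2 F^{(1)}$, and using that $S^{(1)}$ is block-off-diagonal (so that $\mathrm{ad}_{S^{(1)}}:=[S^{(1)},\cdot\,]$ swaps block-diagonal and block-off-diagonal matrices), the block-off-diagonal part of $\tL^{(2)}_{\mu,\e}=\sum_{n\ge0}\tfrac1{n!}\mathrm{ad}^n_{S^{(1)}}\tL^{(1)}_{\mu,\e}$ is $\mathcal R+[S^{(1)},\mathcal D]$ plus terms of order $\ge 2$ in $S^{(1)}$. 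Imposing the homological equation $[S^{(1)},\mathcal D]=-\mathcal R$ and unwinding the $2\times2$ blocks (noting $\tJ_2\Sigma=-X$) reduces to the Sylvester equation $\tJ_2 E^{(1)}X-X\,\tJ_2 G^{(1)}=-\tJ_2 F^{(1)}$ for $X$, the bottom-left block giving the adjoint equation, which is solved simultaneously.

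The core of the argument — and the main obstacle — is to solve this Sylvester equation keeping analyticity in $(\mu,\e)$. Identifying a reversibility-preserving $2\times2$ matrix with the real vector $(x_{11},x_{12},x_{21},x_{22})$, the equation becomes a real $4\times4$ system $M(\mu,\e)\vec x=\vec b(\mu,\e)$ with $M,\vec b$ analytic and with explicit leading coefficients read off from \eqref{BinG1}--\eqref{BinG3}; note $\vec b(0,\e)=0$ since $F^{(1)}(0,\e)=0$. At $\mu=0$ both $\tJ_2 E^{(1)}(0,\e)$ and $\tJ_2 G^{(1)}(0,\e)$ are nilpotent with $0$ in their spectrum, so $M(0,\e)$ is singular: one can neither invert the Sylvester operator uniformly in $\mu$ (its separation degenerates as $\mu\to0$) nor use spectral/eigenvector formulas, since along the curve $\mu\sim2\sqrt2\,\e$ the matrix $\tJ_2 E^{(1)}$ is a genuine Jordan block with non-analytic eigenvalues. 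I would handle this by direct linear algebra: a computation from the leading expansions gives $\det M(\mu,\e)=\mu^2 g(\mu,\e)$ with $g$ analytic and $g(0,0)=1$, so $\det M$ is nonzero for small $(\mu,\e)$ with $\mu>0$ and vanishes to order exactly two at $\mu=0$; evaluating the four minors $\det M^{(ij)}$ from the same expansions shows each is divisible by $\mu^2$, so that Cramer's rule $x_{ij}=\det M^{(ij)}/\det M$ defines functions analytic in $(\mu,\e)$, of the real form \eqref{Xsylvy}, with the stated sizes — in particular the factor $\mu$ left over in the denominator upgrades $F^{(1)}_{12}$ of size $\mu\e$ to $x_{21}\sim r_{21}(\e,\mu^2)$. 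By continuity $\tJ_2 E^{(1)}(0,\e)X(0,\e)-X(0,\e)\tJ_2 G^{(1)}(0,\e)=0$, consistent with the fact that at $\mu=0$ there is nothing to decouple.

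Finally I would read off the sizes of $F^{(2)}$, $E^{(2)}$, $G^{(2)}$. With $X$ as above, the homological relation cancels the $n\le1$ contributions and lets the odd iterated brackets be re-summed through $\mathrm{ad}^{2k+1}_{S^{(1)}}\mathcal D=-\mathrm{ad}^{2k}_{S^{(1)}}\mathcal R$, so the block-off-diagonal part of $\tL^{(2)}_{\mu,\e}$ equals $\sum_{m\ge2,\ m\text{ even}}\big(\tfrac1{m!}-\tfrac1{(m+1)!}\big)\mathrm{ad}^m_{S^{(1)}}\mathcal R$, a convergent series (as $S^{(1)}$ is small, of the size of $X$) dominated by $\tfrac13\mathrm{ad}^2_{S^{(1)}}\mathcal R$; expanding this leading term as a bilinear expression in the explicit leading entries of $X$ and of $\tJ_2 F^{(1)}$ and carrying out the four $2\times2$ products yields the entrywise bounds \eqref{Bsylvy} for $F^{(2)}$. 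The entrywise bookkeeping is essential here: the crude estimate $\|F^{(2)}\|\le C\|X\|^2\|F^{(1)}\|$ overestimates some entries, because the large entry $x_{21}\sim\e$ of $X$ enters each $2\times2$ product only linearly and each product then carries an extra power of $\mu$. Likewise the block-diagonal correction to $\tL^{(2)}_{\mu,\e}$ is a convergent series whose terms are at least linear in both $X$ and $\tJ_2 F^{(1)}$, hence negligible against the leading terms of $\tJ_2 E^{(1)},\tJ_2 G^{(1)}$; thus $E^{(2)},G^{(2)}$ retain the expansions \eqref{BinG1}--\eqref{BinG2} of $E,G$, which completes the proof.
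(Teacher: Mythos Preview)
Your proposal is correct and follows essentially the same route as the paper: set up the homological (Sylvester) equation $D_1X-XD_0=-\tJ_2 F^{(1)}$, rewrite it as a $4\times4$ real linear system whose determinant is $\mu^2(1+r)$, extract an analytic solution $X$ with the sizes \eqref{Xsylvy}, and then estimate the Lie remainder to obtain \eqref{Bsylvy} and the unchanged expansions of $E^{(2)},G^{(2)}$. The only cosmetic differences are that the paper computes the explicit inverse $\mathcal A^{-1}=\mu^{-1}\cdot(\text{bounded})$ rather than invoking Cramer's rule on the minors, and organizes the Lie expansion with an integral remainder rather than the resummed series $\sum_{m\ge2,\,m\text{ even}}(\tfrac1{m!}-\tfrac1{(m+1)!})\mathrm{ad}^m_{S^{(1)}}\mathcal R$; the entrywise bookkeeping for $\tilde E,\tilde G$ and $F^{(2)}$ is carried out in the paper in the same spirit you describe.
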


\begin{rmk}
The new matrix $ \tL_{\mu,\e}^{(2)} $ 
in  \eqref{sylvydec} is still analytic  in $ (\mu, \e)$, as $ \tL_{\mu,\e}^{(1)}$.
This is not obvious a priori, since the spectrum of the matrices 
$ \tJ_2 E^{(1)} $ and $ \tJ_2 G^{(1)} $ is shrinking to zero as $ (\mu, \e) \to  0 $. 
\end{rmk}

The rest of the section is devoted to the proof of Lemma \ref{decoupling2}.
We denote for simplicity $ S = S^{(1)} $. 

The matrix  $\text{exp}(S)$ is symplectic and reversibility preserving
because the matrix $ S $ in \eqref{formaS} is Hamiltonian and 
reversibility preserving, cfr. Lemma \ref{lem:S.conj}. 
Note that $ S $  is reversibility preserving  since $X$ 
has  the form  \eqref{Xsylvy}. 

We now expand in Lie series 
the Hamiltonian and reversible matrix $ \tL_{\mu,\e}^{(2)} 
= \exp (S)\tL_{\mu,\e}^{(1)} \exp (-S) $. 

We split $\tL_{\mu,\e}^{(1)}$ 
into its $2\times 2$-diagonal and off-diagonal Hamiltonian and reversible matrices
\begin{align}
& \qquad  \qquad \qquad  \qquad  \qquad \qquad \tL_{\mu,\e}^{(1)} = D^{(1)} + R^{(1)}  \, , \notag \\
& 
D^{(1)} :=\begin{pmatrix} D_1 & 0 \\ 0 & D_0 \end{pmatrix} =  \begin{pmatrix} \tJ_2 E^{(1)} & 0 \\ 0 & \tJ_2 G^{(1)}  \end{pmatrix}, \quad 
R^{(1)} := \begin{pmatrix}  0 & \tJ_2 F^{(1)} \\ \tJ_2 [F^{(1)}]^* & 0 \end{pmatrix} \, . \label{LDR}
\end{align} 
In order to construct a transformation which  eliminates the main part of the 
off-diagonal part $ R^{(1)}  $,  we   conjugate  $\tL_{\mu,\e}^{(1)}$ by 
a symplectic  matrix $\exp(S)$ generated as the flow of a Hamiltonian matrix $S $ 
with  the same form  of 
$  R^{(1)} $.
By a  Lie expansion we obtain
\begin{align}
\label{lieexpansion}
& \tL_{\mu,\e}^{(2)} 
 = \exp(S)\tL_{\mu,\e}^{(1)} \exp(-S) \notag \\
 &  = D^{(1)} +\lie{S}{D^{(1)}}+ \frac12 [S, [S, D^{(1)}]] + 
 R^{(1)}  + [S, R^{(1)}]  \\
 & + 
\frac12 \int_0^1 (1-\tau)^2 \exp(\tau S)  \text{ad}_S^3( D^{(1)} )  \exp(-\tau S) \, \de \tau 
+ \int_0^1 (1-\tau) \, \exp(\tau S) \, \text{ad}_S^2( R^{(1)} ) \, \exp(-\tau S) \, \de \tau \notag 
\end{align} 
where $\text{ad}_A(B) := [A,B] := AB - BA $ denotes the commutator 
between linear operators $ A, B $.

We look for a $ 4 \times 4 $ matrix $S$ as in \eqref{formaS} which solves
the homological equation
$$  
R^{(1)}  +\lie{S}{ D^{(1)} } = 0  
$$
which, recalling \eqref{LDR}, amounts to eliminate the off-diagonal part  
\begin{equation}\label{homoesp}
\begin{pmatrix} 0 & \tJ_2F^{(1)}+\tJ_2\Sigma D_0
- D_1\tJ_2\Sigma \\ 
\tJ_2{[F^{(1)}]}^*+\tJ_2\Sigma^*D_1-D_0\tJ_2\Sigma^* & 0 \end{pmatrix} =0 \, .
\end{equation}
Note that the equation $  \tJ_2F^{(1)}+\tJ_2\Sigma D_0 - D_1\tJ_2\Sigma = 0 $ implies 
also  $  \tJ_2{[F^{(1)}]}^*+\tJ_2\Sigma^*D_1-D_0\tJ_2\Sigma^*  = 0 $ and viceversa. 
Thus, writing  $ \Sigma =\tJ_2  X  $, namely $ X = - \tJ_2  \Sigma $,  
the equation \eqref{homoesp} is equivalent to solve the  ``Sylvester" equation 
\begin{equation}\label{Sylvestereq}
D_1 X - X D_0 = - \tJ_2F^{(1)}   \, .
\end{equation}
Recalling \eqref{LDR}, \eqref{Xsylvy} and \eqref{splitEFG},
it  amounts  to solve the 
 $4\times 4$ real linear system 
\begin{align}\label{Sylvymat}
\footnotesize 
\underbrace{ \begin{pmatrix}  G_{12}^{(1)} - E_{12}^{(1)} &  G_{11}^{(1)} & E_{22}^{(1)} & 0 \\   
G_{22}^{(1)} & G_{12}^{(1)} - E_{12}^{(1)} & 0 & - E_{22}^{(1)} \\
E_{11}^{(1)} & 0 & G_{12}^{(1)} - E_{12}^{(1)}  & -G_{11}^{(1)} \\
 0 &  - E_{11}^{(1)} & -G_{22}^{(1)}  &  G_{12}^{(1)} - E_{12}^{(1)}
 \end{pmatrix}}_{=: {\cal A} }
 \underbrace{ \begin{pmatrix} x_{11} \\ x_{12} \\ x_{21} \\ x_{22} \end{pmatrix}}_{ =: \vec x}
  =
  \underbrace{
    \begin{pmatrix} 
 -F_{21}  \\  F_{22} \\ - F_{11} \\  F_{12}
 \end{pmatrix}
 }_{=: \vec f}.
\end{align}
Recall that, by \eqref{BinH},  $F_{11}=0$.

We solve this system using the following result, verified by a direct calculus.
\begin{lem}\label{LemmaSylvy}
The determinant of the matrix 
\begin{equation}\label{formA}
A := \begin{pmatrix}  a &  b & c & 0 \\  d & a & 0 & - c \\
e & 0 & a  & -b \\
 0 & - e & -d  &  a
 \end{pmatrix} 
\end{equation}
where $ a,b,c, d , e  $ are real numbers,   is 
\begin{equation}\label{Sylvydet}
 \det A = a^4 -2 a^2 (b d + c e)+(b d - c e)^2 \, .
\end{equation}
If $ \det A \neq 0 $ then $ A  $ is invertible and 
\begin{align}\label{Sylvyinv}
  A^{-1}   =  
\footnotesize{\frac{1}{ \det A} \left(
\begin{array}{cccc}
  \! a \left(a^2-b d - c e\right) &  \!  b \left(-a^2+b d - c e\right) & 
-c \left(a^2+b d - 
 c e\right) & \! - 2 a b c \\
  \! d \left(-a^2+b d -  c e\right) &  \! a \left(a^2-b d - c e\right) & 2 a c d & 
   \! - c \left(-a^2-b d + c e\right) \\
  \! - e \left(a^2+b d -  c e\right) &  \! 2 a b e & a \left(a^2-b d - c e\right) & 
 \!  b \left(a^2-b d + c e\right) \\
 \!  - 2 a d e &  \! - e \left(-a^2-b d + c e\right) & d \left(a^2-b d + c e\right) &
  \!  a \left(a^2-b d - c e\right)  
\end{array}
\right)}  \, . 
\end{align}
\end{lem}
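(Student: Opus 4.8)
The statement is elementary linear algebra, so the plan is to carry out the computation in a form that keeps the combinatorics under control. The structural observation is that $A$ in \eqref{formA} has a transparent $2\times2$ block form: setting
\[
\sigma := \begin{psmallmatrix} 1 & 0 \\ 0 & -1 \end{psmallmatrix} , \qquad P := \begin{psmallmatrix} a & b \\ d & a \end{psmallmatrix} ,
\]
one reads off directly from \eqref{formA} that
\[
A = \begin{pmatrix} P & c\,\sigma \\ e\,\sigma & \sigma P \sigma \end{pmatrix} ,
\]
so that all four blocks are built from the single $2\times2$ matrix $P$ together with $\sigma$ (which satisfies $\sigma^2 = \uno_2$, $\det\sigma = -1$).

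For the determinant, I would argue first on the open dense set $\{a^2 \neq bd\}$, where $P$ is invertible, and use the Schur complement formula:
\[
\det A = \det P \cdot \det\!\big(\sigma P \sigma - (e\sigma) P^{-1}(c\,\sigma)\big) = \det P \cdot \det\!\big(P - ce\,P^{-1}\big) = \det\!\big(P^2 - ce\,\uno_2\big) .
\]
Cayley--Hamilton for the $2\times2$ matrix $P$ gives $P^2 = 2aP - (a^2-bd)\uno_2$, so $P^2 - ce\,\uno_2 = 2aP - (a^2-bd+ce)\uno_2$; combining this with the elementary identity $\det(\alpha P + \beta\uno_2) = \alpha^2\det P + \alpha\beta\,\mathrm{tr}\,P + \beta^2$ for $\alpha = 2a$, $\beta = -(a^2-bd+ce)$ and expanding produces exactly $a^4 - 2a^2(bd+ce) + (bd-ce)^2$. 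Since both sides of \eqref{Sylvydet} are polynomials in $a,b,c,d,e$ agreeing on a dense set, the identity holds everywhere.

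For the inverse I would avoid inverting from scratch and instead verify the closed form directly. Let $C$ denote the $4\times4$ matrix displayed between the large parentheses on the right-hand side of \eqref{Sylvyinv} (so that the claim is $A^{-1} = C/\det A$, i.e.\ $C = \mathrm{adj}(A)$). Then it suffices to check the single polynomial identity $A\,C = (\det A)\,\uno_4$, with $\det A$ given by \eqref{Sylvydet}; this is division-free, so no degenerate case needs separate treatment, and once it is established, dividing by $\det A$ proves \eqref{Sylvyinv} whenever $\det A \neq 0$. The identity unpacks into sixteen scalar polynomial equalities --- the off-diagonal entries of $AC$ must vanish, the diagonal ones must reproduce $\det A$ --- and the bookkeeping can be cut down substantially by exploiting two manifest symmetries of $A$ that \eqref{Sylvyinv} visibly respects: conjugation by $\mathrm{diag}(1,-1,1,-1)$, which implements $(b,d)\mapsto(-b,-d)$ and leaves $a,c,e$ fixed, and transposition, which implements the exchanges $b\leftrightarrow d$, $c\leftrightarrow e$. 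Alternatively, one may push the block computation above one step further, inserting $\big(P^2 - ce\,\uno_2\big)^{-1} = (\det A)^{-1}\,\mathrm{adj}\!\big(2aP - (a^2-bd+ce)\uno_2\big)$ into the block-inversion formula for $A$ and re-expanding the four resulting blocks in $a,b,c,d,e$. Either way the work is routine; the only genuine difficulty is clerical, namely controlling the many sign patterns among the sixteen entries of $A^{-1}$, which is precisely what the block reformulation and the symmetry reductions are designed to tame.
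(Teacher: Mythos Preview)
Your proposal is correct. The paper itself offers no argument beyond the single sentence ``verified by a direct calculus,'' so there is nothing substantive to compare against; your block-matrix reformulation $A=\begin{psmallmatrix}P & c\sigma\\ e\sigma & \sigma P\sigma\end{psmallmatrix}$, the Schur--complement reduction to $\det(P^2-ce\,\uno_2)$, and the Cayley--Hamilton step are all valid and give a much cleaner route to \eqref{Sylvydet} than brute-force cofactor expansion. For \eqref{Sylvyinv}, both strategies you outline (verifying $AC=(\det A)\uno_4$ directly, or pushing the block inversion through) are sound and, as you note, the symmetries under $(b,d)\mapsto(-b,-d)$ and under the transposition $b\leftrightarrow d$, $c\leftrightarrow e$ genuinely halve the bookkeeping; this is more than the paper supplies.
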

As the Sylvester matrix $ \cal A $ in \eqref{Sylvymat} has the  form 
\eqref{formA} with (cfr. \eqref{BinG1}, \eqref{BinG2}) 
\begin{equation} \label{abcde}
\begin{aligned}
& a =  G_{12}^{(1)} - E_{12}^{(1)}  = 
- \frac{\mu}{2} \big(1 +r(\e^2, \mu \e, \mu^2)\big) \, , \quad b = G_{11}^{(1)} 
= 1 +  r(\e^3, \mu \e^2, \mu^2 \e, \mu^3)  \, , \\
&  c = 
E_{22}^{(1)} = - \frac{\mu^2}{8} \big(1 + r(\e,\mu)\big) \, , \quad d = G_{22}^{(1)}
= \mu (1 + r(\mu \e, \mu^2))\, , \quad e = E_{11}^{(1)} = r(\e^2, \mu^2) \, , 
\end{aligned}
\end{equation}
we use  \eqref{Sylvydet} 
 to compute  
\begin{equation}\label{detcalA}
\det {\cal A} =\mu^2(1+r(\mu, \e^3)) \, .
\end{equation}
Moreover, by \eqref{Sylvyinv}, we have 
\begin{align}\label{calA-1}
  {\cal A}^{-1}    =  
\footnotesize{\frac{1}{\mu} \left(
\begin{array}{cccc}
   \frac{\mu}{2}(1+ r(\e,\mu)) &  1+ r(\e,\mu) & 
\frac{\mu^2}{8} (1+ r(\e,\mu)) &  -\frac{\mu^2}{8} (1+ r(\e,\mu))  \\
 \mu (1+ r(\e,\mu))  &  \frac{\mu}{2} (1+ r(\e,\mu))  & \frac{\mu^3}{8} (1+ r(\e,\mu))  & 
  -  \frac{\mu^2}{8} (1+ r(\e,\mu))  \\
r(\e^2, \mu^2)  &  r(\e^2, \mu^2)   &  \frac{\mu}{2}(1+ r(\e,\mu))  & 
 - 1 + r(\e,\mu) \\
 \mu  r(\e^2, \mu^2)  &  r(\e^2, \mu^2)   &  - \mu (1+ r(\e,\mu))  &
 \frac{\mu}{2}(1+ r(\e,\mu)) 
\end{array}
\right)}  \, . 
\end{align}
Therefore, 
for any $\mu\neq 0$, there exists a unique solution $\vec x = {\cal A}^{-1} \vec f  $
of the linear system \eqref{Sylvymat}, namely  a unique matrix $ X $ which solves  
the Sylvester equation \eqref{Sylvestereq}.

\begin{lem}
The matrix  solution $X $ of the Sylvester equation \eqref{Sylvestereq} 
is analytic in $(\mu, \e) $ and admits an expansion as in \eqref{Xsylvy}.
\end{lem}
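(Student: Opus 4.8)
\end{lem}

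\begin{proof}
The plan is to solve the Sylvester equation \eqref{Sylvestereq} explicitly through the real linear system \eqref{Sylvymat} and then to read off both the analyticity and the expansion \eqref{Xsylvy} from the closed formula \eqref{calA-1} for $\mathcal{A}^{-1}$. Since $\mathcal{A}$ and $\vec f$ have real entries and $\det\mathcal{A}\neq 0$, the solution $\vec x=\mathcal{A}^{-1}\vec f$ has real components $x_{11},x_{12},x_{21},x_{22}$, so $X$ automatically has the reversibility-preserving shape in \eqref{Xsylvy} (and consequently $\Sigma=\tJ_2 X$ makes $S^{(1)}$ in \eqref{formaS} Hamiltonian and reversibility-preserving). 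It thus remains to prove that $\vec x$ extends analytically across $\{\mu=0\}$ and to bound its entries.

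For the analyticity I would rewrite \eqref{calA-1} as $\mathcal{A}^{-1}=\mu^{-1}\,\widetilde M(\mu,\e)$, where $\widetilde M$ is the $4\times 4$ matrix displayed there without the prefactor: all its entries are real analytic in $(\mu,\e)$ near the origin, of the sizes shown (leading terms $1$, $-1$, $\frac{\mu}{2}$, $\mu$, $\frac{\mu^2}{8}$, $r(\e^2,\mu^2)$, up to $(1+r)$ factors). The only possible singularity is the prefactor $\mu^{-1}$. The crucial point --- and the reason the preliminary step of Lemma \ref{decoupling1} was performed first --- is that the right-hand side $\vec f=(-F^{(1)}_{21},\,F^{(1)}_{22},\,-F^{(1)}_{11},\,F^{(1)}_{12})^\top$ vanishes identically on $\{\mu=0\}$: by \eqref{BinH} one has $F^{(1)}_{11}=0$, while $F^{(1)}_{12}=r_4(\mu\e,\mu^3)$, $F^{(1)}_{21}=r_6(\mu\e,\mu^3)$ and $F^{(1)}_{22}=r_7(\mu^2\e,\mu^3)$ are analytic and $\cO(\mu)$. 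Hence each component of $\vec f$ is divisible by $\mu$ in the ring of germs of analytic functions at $(0,0)$, i.e. $\vec f=\mu\,\vec g$ with $\vec g$ real analytic; dividing the defining bounds by $|\mu|$ gives moreover $\vec g=\big(\,r(\e,\mu^2),\ r(\mu\e,\mu^2),\ 0,\ r(\e,\mu^2)\,\big)^\top$. Therefore $\vec x=\mathcal{A}^{-1}\vec f=\widetilde M(\mu,\e)\,\vec g(\mu,\e)$ is a product of analytic matrix-valued functions, hence analytic on a full neighbourhood of $(0,0)$, and it coincides with $\mathcal{A}^{-1}\vec f$ for $\mu\neq 0$.

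Finally, the expansion \eqref{Xsylvy} follows by multiplying $\widetilde M\,\vec g$ row by row and retaining the leading contributions, using the sizes of the entries of $\widetilde M$ and of $\vec g$ above: for $x_{11}$ the dominant terms are $\widetilde M_{11}g_1=\cO(\mu\e,\mu^3)$ and $\widetilde M_{12}g_2=\cO(\mu\e,\mu^2)$, giving $x_{11}=r_{11}(\mu^2,\mu\e)$; for $x_{12}$ they are $\widetilde M_{21}g_1=\cO(\mu\e,\mu^3)$ and $\widetilde M_{22}g_2=\cO(\mu^2\e,\mu^3)$, giving $x_{12}=r_{12}(\mu^3,\mu\e)$; for $x_{21}$ the term $\widetilde M_{34}g_4=\cO(\e,\mu^2)$ dominates, giving $x_{21}=r_{21}(\e,\mu^2)$; and for $x_{22}$ the term $\widetilde M_{44}g_4=\cO(\mu\e,\mu^3)$ dominates, giving $x_{22}=r_{22}(\mu^3,\mu\e)$. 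The main obstacle is exactly the potential pole $\mu^{-1}$ in $\mathcal{A}^{-1}$: it is cancelled only because $\vec f$ vanishes on $\{\mu=0\}$, which in turn requires having already removed the entry $F_{11}=r_3(\e^3,\dots)$ --- the one component of the coupling block that does \emph{not} vanish at $\mu=0$ --- in Lemma \ref{decoupling1}. Once this cancellation is in place, the rest is routine bookkeeping of the explicit formulas.
\end{proof}
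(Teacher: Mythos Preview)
Your proof is correct and follows essentially the same approach as the paper's own proof: compute $\vec x=\mathcal{A}^{-1}\vec f$ via \eqref{calA-1} and the expansions \eqref{BinH}, observe that the potential $\mu^{-1}$ pole cancels because $F^{(1)}$ vanishes at $\mu=0$, and read off the sizes of the entries. You are simply more explicit than the paper---spelling out the factorisation $\vec f=\mu\,\vec g$ and the row-by-row bookkeeping, and emphasising the role of Lemma~\ref{decoupling1} in ensuring $F^{(1)}_{11}=0$---but the underlying argument is identical.
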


\begin{proof}
The expansion    \eqref{Xsylvy} of the coefficients
$ x_{ij} = [{\cal A}^{-1} \vec f]_{ij} $  
follows, for any $\mu \neq 0 $ small,  by 
\eqref{calA-1} 
and the expansions of $ F_{ij} $ in \eqref{BinH}.
In particular each $x_{ij}$ admits an analytic extension at $\mu = 0$ and the resulting matrix $X$   still solves 
\eqref{Sylvestereq} at $\mu = 0$ (note that, for $\mu = 0$, one has  $F^{(1)}= 0$
and the Sylvester equation does not have a unique solution).
\end{proof}
 Since the matrix $ S $ solves the homological equation $\lie{S}{ D^{(1)} }+  R^{(1)} =0$ we deduce 
 by \eqref{lieexpansion} that 
\begin{equation}
\label{Lie2}
\tL_{\mu,\e}^{(2)}
 =   D^{(1)}  +\frac12\lie{S}{ R^{(1)} }+
\frac12 \int_0^1 (1-\tau^2) \, \exp(\tau S) \, \text{ad}_S^2( R^{(1)} ) \, \exp(-\tau S) \de \tau \, . 
\end{equation}
 The matrix $\frac12 \lie{S}{ R^{(1)} }$ is, by \eqref{formaS}, 
\eqref{LDR},   the block-diagonal Hamiltonian and reversible matrix
\begin{equation}\label{Lieeq2}
\frac12 \lie{S}{ R^{(1)} } = \begin{pmatrix}  \frac12 \tJ_2 ( \Sigma \tJ_2 [F^{(1)}]^*- F^{(1)} \tJ_2 \Sigma^*) & 0 \\ 0 & \!\! \!\! \!\!  \frac12 \tJ_2 ( \Sigma^* \tJ_2 F^{(1)}- [F^{(1)}]^* \tJ_2 \Sigma) \end{pmatrix} = \begin{pmatrix} \tJ_2 \tilde E & 0 \\ 0 &\tJ_2 \tilde G \end{pmatrix},
\end{equation}
where, since $ \Sigma = \tJ_2 X $,  
\begin{equation}\label{EGtilde}
\tilde E := \textup{Sym} \big( \tJ_2 X \tJ_2  [F^{(1)}]^* \big) 
\, , \qquad 
\tilde G :=  \textup{Sym} \big(  X^* F^{(1)} \big) \, , 
\end{equation}
denoting $ \textup{Sym}(A) := \frac12 (A+ A^* )$. 
\begin{lem}\label{lem:EGtilde}
The  self-adjoint and reversibility-preserving matrices 
 $ \tilde E, \tilde G  $ in \eqref{EGtilde} have the form
\begin{equation}\label{tilde.E.G}
\tilde E = \begin{pmatrix}
 r_1(\mu\e^2,\mu^3\e, \mu^5)  &  \!\!  \!\!\im r_2(\mu^2\e^2,\mu^3\e,\mu^5) \\ 
- \im r_2(\mu^2\e^2,\mu^3\e,\mu^5) & \!\!  \!\! r_5(\mu^2\e^2,\mu^4\e,\mu^5) \end{pmatrix},  
 \tilde G =
  \begin{pmatrix} 
r_8(\mu \e^2, \mu^3 \epsilon, \mu^5)  & \!\!  \!\!  \im  r_9(\mu^3 \epsilon, \mu^2 \epsilon^2, \mu^5)
  \\
\im  r_9(\mu^3 \epsilon, \mu^2 \epsilon^2, \mu^5) & 
\!\!  \!\!  r_{10}(\mu^4\e, \mu^2\e^2,\mu^6) 
  \end{pmatrix}.
\end{equation}
\end{lem}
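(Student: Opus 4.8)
The statement is proved by combining one structural observation with a direct computation of $2\times 2$ matrix products. The structural point guarantees that $\tilde E,\tilde G$ have the displayed \emph{shape}, and the computation provides the \emph{sizes} of their entries.

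\textbf{Step 1 (algebraic shape).} Recall that $\tJ_2$ anti-commutes with $\rho_2$, i.e. $\rho_2\tJ_2=-\tJ_2\rho_2$, whereas by Lemma \ref{decoupling1} both the matrix $X$ in \eqref{Xsylvy} and the matrix $F^{(1)}$ in \eqref{BinH} are reversibility-preserving, i.e. they have real diagonal entries and purely imaginary off-diagonal entries. The set of reversibility-preserving $2\times 2$ matrices is closed under matrix multiplication, under taking adjoints, and under the map $A\mapsto \tJ_2 A\tJ_2$ (the latter because the two factors $\tJ_2$ each pick up a sign against $\rho_2$). Hence $\tJ_2 X\tJ_2 [F^{(1)}]^{*}$ and $X^{*}F^{(1)}$ are reversibility-preserving, and therefore so are the \emph{self-adjoint} matrices $\tilde E=\mathbf{Sym}\big(\tJ_2 X\tJ_2 [F^{(1)}]^{*}\big)$ and $\tilde G=\mathbf{Sym}\big(X^{*}F^{(1)}\big)$. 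A $2\times 2$ matrix that is simultaneously self-adjoint and reversibility-preserving necessarily has the form $\begin{pmatrix} a & \im b \\ -\im b & d\end{pmatrix}$ with $a,b,d\in\bR$, which is exactly the shape appearing in \eqref{tilde.E.G}. It remains to estimate the three independent real entries of each matrix.

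\textbf{Step 2 (explicit bilinear expressions).} Writing $X=\begin{pmatrix} x_{11} & \im x_{12} \\ \im x_{21} & x_{22}\end{pmatrix}$ and $F^{(1)}=\begin{pmatrix} 0 & \im F_{12} \\ \im F_{21} & F_{22}\end{pmatrix}$ with real $x_{ij},F_{ij}$ (using $F^{(1)}_{11}=0$ from Lemma \ref{decoupling1}), a direct multiplication gives
\[
X^{*}F^{(1)}=\begin{pmatrix} x_{21}F_{21} & \im(x_{11}F_{12}-x_{21}F_{22}) \\ \im\, x_{22}F_{21} & x_{12}F_{12}+x_{22}F_{22}\end{pmatrix},
\qquad
\tJ_2 X\tJ_2 [F^{(1)}]^{*}=\begin{pmatrix} x_{21}F_{12} & \im(x_{21}F_{22}+x_{22}F_{21}) \\ \im\, x_{11}F_{12} & x_{12}F_{21}-x_{11}F_{22}\end{pmatrix},
\]
so that, taking symmetric parts, $\tilde E_{11}=x_{21}F_{12}$, $\tilde E_{22}=x_{12}F_{21}-x_{11}F_{22}$, $\tilde E_{12}=\tfrac{\im}{2}\big(x_{21}F_{22}+x_{22}F_{21}-x_{11}F_{12}\big)$, and $\tilde G_{11}=x_{21}F_{21}$, $\tilde G_{22}=x_{12}F_{12}+x_{22}F_{22}$, $\tilde G_{12}=\tfrac{\im}{2}\big(x_{11}F_{12}-x_{21}F_{22}-x_{22}F_{21}\big)$.

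\textbf{Step 3 (size bounds).} Insert the bounds $x_{11}=r(\mu^2,\mu\e)$, $x_{12}=r(\mu^3,\mu\e)$, $x_{21}=r(\e,\mu^2)$, $x_{22}=r(\mu^3,\mu\e)$ from \eqref{Xsylvy}, together with $F_{12}=r(\mu\e,\mu^3)$, $F_{21}=r(\mu\e,\mu^3)$, $F_{22}=r(\mu^2\e,\mu^3)$ from \eqref{BinH}, into the six expressions above, expand each product of two $r(\cdot)$'s into monomials $\mu^m\e^n$, and discard every monomial that is dominated (for $\mu,\e$ small) by one of the others, i.e. $\mu^m\e^n$ is absorbed into $\mu^{m'}\e^{n'}$ whenever $m\ge m'$ and $n\ge n'$. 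This yields, e.g., $\tilde G_{11}=r(\e,\mu^2)\,r(\mu\e,\mu^3)=r_8(\mu\e^2,\mu^3\e,\mu^5)$, $\tilde E_{11}=r(\e,\mu^2)\,r(\mu\e,\mu^3)=r_1(\mu\e^2,\mu^3\e,\mu^5)$, and similarly the remaining four entries match exactly the orders stated in \eqref{tilde.E.G}.

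\textbf{Main obstacle.} There is no genuine difficulty here beyond careful bookkeeping of the monomial orders; the only two points that must not be overlooked are the algebraic closure statement of Step 1 (which is what forces $\tilde E,\tilde G$ to be of the precise self-adjoint, reversibility-preserving form, with no ``wrong-parity'' entries appearing) and the vanishing $F^{(1)}_{11}=0$ from Lemma \ref{decoupling1}, which is exactly what removes the terms $x_{11}F_{11}$ that would otherwise enter $\tilde G_{11}$, $\tilde E_{12}$, $\tilde G_{12}$ and spoil the orders.
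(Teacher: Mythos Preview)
Your proof is correct and follows essentially the same approach as the paper: a direct computation of the matrix products $\tJ_2 X\tJ_2[F^{(1)}]^*$ and $X^*F^{(1)}$ using $F^{(1)}_{11}=0$, followed by symmetrization and insertion of the orders from \eqref{Xsylvy} and \eqref{BinH}. The only difference is that you make the structural argument of Step~1 (closure of reversibility-preserving matrices) explicit, whereas the paper leaves this implicit and simply reads off the form from the computed products.
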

\begin{proof}
For simplicity set $F=F^{(1)}$.  By   \eqref{Xsylvy}, \eqref{BinH} and since $F_{11} = 0$ (cfr. \eqref{BinH}), 
one has 
\begin{align*}
\tJ_2 X \tJ_2  F^* &= 
 \begin{pmatrix}   x_{21}F_{12} &  \im ( x_{22}F_{21} + x_{21}F_{22}) 
 \\ 
 \im x_{11}F_{12} & x_{12}F_{21}- x_{11}F_{22}  \end{pmatrix} 
= 
\begin{pmatrix} 
 r(\mu\e^2,\mu^3\e, \mu^5) &  \im  r(\mu^2\e^2,\mu^3\e,\mu^5) \\ 
 \im  r(\mu^2\e^2,\mu^3\e,\mu^5) &  r(\mu^2\e^2,\mu^4\e,\mu^5)  \end{pmatrix} 
\end{align*}
and, adding its symmetric (cfr. \eqref{EGtilde}), 
the expansion of $\tilde E$ in \eqref{tilde.E.G} follows. 
For $\tilde G$ one has
\begin{align*}
 X^*   F &=  \begin{pmatrix}
 x_{21}F_{21} &  \im ( x_{11}F_{12} - x_{21}F_{22}) \\ \im  x_{22}F_{21} & x_{22}F_{22}+ x_{12}F_{12}  \end{pmatrix} 
= \begin{pmatrix}  r(\mu \e^2, \mu^3 \epsilon, \mu^5) &  \im r(\mu^3 \epsilon, \mu^2 \epsilon^2, \mu^5)   \\
\im r(\mu^4 \epsilon, \mu^2 \epsilon^2, \mu^6)  & r(\mu^4 \epsilon,
\mu^2 \e^2,  \mu^6)   
\end{pmatrix} 
\end{align*}
and the expansion of $\tilde G $ in \eqref{tilde.E.G} follows
by symmetrizing. 
\end{proof}

We now show that the last term in \eqref{Lie2} is  very small.

\begin{lem}\label{lem:series}
The $ 4 \times 4 $  Hamiltonian and reversibility  matrix 
\begin{equation}\label{series}
\frac12 \int_0^1 (1-\tau^2) \, \exp(\tau S) \, \textup{ad}_S^2( R^{(1)} ) \, \exp(-\tau S) \, \de \tau
= \begin{pmatrix}
\tJ_2 \widehat E & \tJ_2 F^{(2)}\\
\tJ_2 [ F^{(2)}]^* & \tJ_2 \widehat G
\end{pmatrix}
\end{equation}
where the $ 2 \times 2 $ self-adjoint and reversible  matrices  $\widehat E = \footnotesize\begin{pmatrix} \widehat E_{11} & \im \widehat E_{12} \\ -\im \widehat E_{12} & \widehat E_{22}\end{pmatrix}$, $ \widehat G  = \footnotesize \begin{pmatrix} \widehat G_{11} & \im \widehat G_{12} \\ -\im \widehat G_{12} & \widehat G_{22}\end{pmatrix}$ have   entries 
\begin{equation}\label{E2G2.0}
\widehat E_{ij} \ , \widehat G_{ij}   =  \mu^2 r(\e^3, \mu \e^2, \mu^3 \e, \mu^5 ) \, , \quad 
i,j = 1,2 \, , 
\end{equation}
and   the $2\times 2$ reversible matrix $ F^{(2)}$ admits an expansion as in 
\eqref{Bsylvy}.
\end{lem}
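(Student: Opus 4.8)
Recall from \eqref{formaS} that $S:=S^{(1)}=\tJ_4\begin{psmallmatrix}0 & \Sigma\\ \Sigma^* & 0\end{psmallmatrix}$ with $\Sigma=\tJ_2 X$, and that by \eqref{Xsylvy} every entry of $X$ is $\cO(\e,\mu)$, so $\|S\|=\cO(\e,\mu)$ (operator norm on $\bC^4$); consequently $\exp(\pm\tau S)$, $\tau\in[0,1]$, are uniformly bounded, analytic in $(\mu,\e)$, and, by Lemma \ref{lem:S.conj}, symplectic and reversibility-preserving. Here $R^{(1)}$ is Hamiltonian and reversible while $S$ is Hamiltonian and reversibility-preserving, i.e.\ $\rho_4 S=S\rho_4$ and $\rho_4 R^{(1)}=-R^{(1)}\rho_4$. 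Using $[\tJ_4 A,\tJ_4 B]=\tJ_4(A\tJ_4 B-B\tJ_4 A)$, with $A\tJ_4 B-B\tJ_4 A$ self-adjoint when $A=A^*$, $B=B^*$, one sees that each iterated bracket $\text{ad}_S^{j}(R^{(1)})$ is Hamiltonian and (since $S$ commutes and $R^{(1)}$ anti-commutes with $\rho_4$) reversible. Conjugating a Hamiltonian reversible matrix by the symplectic, reversibility-preserving matrix $\exp(\pm\tau S)$ keeps it Hamiltonian and reversible, and so does integration in $\tau$; hence the left-hand side of \eqref{series} is a Hamiltonian reversible $4\times4$ matrix, i.e.\ $\tJ_4\tB$ with $\tB=\tB^*$ reversibility-preserving. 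For such a matrix the block shape \eqref{series} is automatic, with $\widehat E,\widehat G$ self-adjoint and reversibility-preserving (hence of the displayed real-entry form) and $F^{(2)}$ reversibility-preserving. This settles everything in Lemma \ref{lem:series} except the size bounds.

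\textbf{Block splitting and divisibility by $\mu$.}
Both $S$ and $R^{(1)}$ are block-off-diagonal for the $2+2$ decomposition, so $[S,R^{(1)}]$ is block-diagonal and $M:=\text{ad}_S^2(R^{(1)})$ is block-off-diagonal. Splitting $\exp(\pm\tau S)=\mathcal C_\tau\pm\mathcal D_\tau$ into its even (block-diagonal) part $\mathcal C_\tau$ and odd (block-off-diagonal) part $\mathcal D_\tau$, the integrand equals $(\mathcal C_\tau M\mathcal C_\tau-\mathcal D_\tau M\mathcal D_\tau)+(\mathcal D_\tau M\mathcal C_\tau-\mathcal C_\tau M\mathcal D_\tau)$, the first summand block-off-diagonal and the second block-diagonal; to leading order they are $M$ and $\tau[S,M]=\tau\,\text{ad}_S^3(R^{(1)})$, the remaining terms carrying extra factors $\cO(\|S\|^2)$. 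Next, each entry of $F^{(1)}$ in \eqref{BinH} is an $r(\cdot)$-function vanishing identically at $\mu=0$, hence divisible by $\mu$, so $R^{(1)}=\mu\,\check R^{(1)}$ with $\check R^{(1)}$ analytic, block-off-diagonal, $\|\check R^{(1)}\|=\cO(\e,\mu)$. The key point is that $M$ carries a \emph{second} power of $\mu$: at $\mu=0$ the matrix $S$ is supported only on the slots carrying $x_{21}$ and $\check R^{(1)}$ only on the slots carrying $F^{(1)}_{12}/\mu$ and $F^{(1)}_{21}/\mu$; a short index computation then shows that $[S,\check R^{(1)}]\big|_{\mu=0}$ is supported in the $(2,1)$ and $(4,3)$ positions only, and commuting once more with $S\big|_{\mu=0}$ annihilates it, so $\text{ad}_S^2(\check R^{(1)})\big|_{\mu=0}=0$. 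Therefore $M=\mu^2 G_2$ with $G_2$ analytic and block-off-diagonal.

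\textbf{Conclusion; the main obstacle.}
Inserting $M=\mu^2 G_2$ into the Lie expansion, the block-off-diagonal part of \eqref{series} equals $c\,\mu^2 G_2$ plus corrections of relative size $\cO(\|S\|^2)$, where $c=\tfrac12\int_0^1(1-\tau^2)\,\de\tau$; tracking, entry by entry, which components of $S$ (from \eqref{Xsylvy}) and of $\check R^{(1)}$ (from \eqref{BinH}) feed each component of $G_2$, and imposing the real/imaginary parity forced by reversibility, reproduces exactly the expansion \eqref{Bsylvy} of $F^{(2)}$. Likewise the block-diagonal part equals $c'\,\text{ad}_S^3(R^{(1)})=c'\mu^2[S,G_2]$ plus smaller terms, which yields $\widehat E_{ij},\widehat G_{ij}=\mu^2 r(\e^3,\mu\e^2,\mu^3\e,\mu^5)$, i.e.\ \eqref{E2G2.0}; the tail integrals appearing in \eqref{lieexpansion}--\eqref{Lie2} are handled by the same bookkeeping with additional factors $\|S\|=\cO(\e,\mu)$ and absorbed into the remainders. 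The one genuinely delicate step is the $\mu^2$-divisibility of $\text{ad}_S^2(R^{(1)})$: crude operator-norm bounds give only $\mu\cdot\cO(\e^3)$, and the extra $\mu$ — together with the precise exponents in \eqref{Bsylvy}--\eqref{E2G2.0} — rests on the explicit positions of the non-$\mu$-divisible entries of $S$, which are dictated by the Sylvester solution $X=\cA^{-1}\vec f$ (cf.\ \eqref{calA-1}), and on the normalization $F^{(1)}_{11}=0$ achieved in the first block-decoupling step. Everything else is a finite, though lengthy, bookkeeping of $(\mu,\e)$-monomials propagating through a handful of commutators, constrained throughout by the Hamiltonian, reversible and block structure.
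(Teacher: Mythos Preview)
Your structural argument and your $\mu^2$-divisibility claim are both correct: the support computation at $\mu=0$ showing $\text{ad}_S^2(\check R^{(1)})\big|_{\mu=0}=0$ checks out, and it is a clean conceptual explanation of why the extra factor of $\mu$ appears beyond the naive $\|S\|^2\|R^{(1)}\|$ estimate. However, your route differs from the paper's, and the step you label ``lengthy bookkeeping'' is precisely the one the paper carries out explicitly and economically.

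The paper does not argue $\mu^2$-divisibility abstractly. Instead it uses that $[S,R^{(1)}]$ has already been computed in \eqref{Lieeq2}--\eqref{EGtilde} with the explicit block-diagonal entries $\tJ_2\tilde E$, $\tJ_2\tilde G$ of Lemma \ref{lem:EGtilde}; one more bracket gives the closed formula $\text{ad}_S^2(R^{(1)})=\begin{psmallmatrix}0 & \tJ_2\tilde F\\ \tJ_2\tilde F^* & 0\end{psmallmatrix}$ with $\tilde F=2(\Sigma\tJ_2\tilde G-\tilde E\tJ_2\Sigma)$. Plugging the expansions \eqref{tilde.E.G} of $\tilde E,\tilde G$ and \eqref{Xsylvy} of $X$ into this product, entry by entry, yields \eqref{Bsylvy} directly (and the $\mu^2$ prefactor falls out of the computation rather than being argued in advance). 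The remaining conjugation by $\exp(\pm\tau S)$ is then dispatched in one line as a $(1+\cO(\mu,\e))$ perturbation, giving \eqref{E2G2.0} for free. Your approach would need to redo an equivalent entrywise computation from scratch on $G_2=M/\mu^2$, whereas the paper recycles Lemma \ref{lem:EGtilde}; that reuse is what makes the paper's proof short. Your version is not wrong, but the glossed-over bookkeeping is the actual content of the lemma, and the paper's formula $\tilde F=2(\Sigma\tJ_2\tilde G-\tilde E\tJ_2\Sigma)$ is the efficient way to do it.
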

\begin{proof}
Since $S $ and $  R^{(1)}  $ are Hamiltonian and reversibility-preserving
then $ \textup{ad}_S  R^{(1)}  = [S, R^{(1)} ] $ is Hamiltonian   and reversibility-preserving as well.
Thus 
each $ \exp(\tau S) \, \textup{ad}_S^2( R^{(1)} ) \, \exp(-\tau S)$ 
 is Hamiltonian   and reversibility-preserving, and formula  \eqref{series} holds. 
In order to estimate  its entries 
we first compute $\textup{ad}_S^2( R^{(1)} )$. 
Using the form of  $ S $ in \eqref{formaS} and  $[S,  R^{(1)} ]$ in \eqref{Lieeq2} one gets
\begin{equation}\label{tildeF}
\textup{ad}_S^2(R^{(1)})  =  \begin{pmatrix} 0 & \tJ_2\tilde F \\ \tJ_2 \tilde F^* & 0\end{pmatrix}\qquad \text{where} \qquad \tilde F:= 
2\left(  \Sigma \tJ_2 \tilde G - \tilde E \tJ_2 \Sigma \right)
\end{equation}
and  $\tilde E$, $\tilde G$ are defined in  \eqref{EGtilde}.
In order to estimate  $\tilde F$,  
we write $ \footnotesize  \tilde G = \begin{pmatrix} \tilde G_{11} & \im \tilde G_{12} \\ -\im \tilde G_{12} & \tilde G_{22}\end{pmatrix}$, 
$ \footnotesize  \tilde E = \begin{pmatrix} \tilde E_{11} & \im \tilde E_{12} \\ -\im \tilde E_{12} & \tilde E_{22}\end{pmatrix}$ and, by 
\eqref{tilde.E.G}, \eqref{Xsylvy} and $\Sigma = \tJ_2 X$, we obtain 
$$
\Sigma \tJ_2 \tilde G = \begin{pmatrix}
 x_{21} \tilde G_{12} - x_{22} \tilde G_{11} 
 & \!\! \! \!  \im ( x_{21} \tilde G_{22} - x_{22} \tilde G_{12}) \\
 \im ( x_{11} \tilde G_{12} + x_{12} \tilde G_{11}) & \!\! \!\! 
- x_{11} \tilde G_{22} - x_{12} \tilde G_{12}
\end{pmatrix}
=
\begin{pmatrix}
   r(\mu^2 \e^3, \mu^3 \epsilon^2, \mu^5\epsilon,  \mu^7) 
& \!\! \! \! \im r( \mu^2\e^3, \mu^4 \epsilon^2, \mu^5 \epsilon,  \mu^7) \\
\im r( \mu^2\e^3,  \mu^4 \epsilon^2, \mu^5 \epsilon, \mu^7) &
\!\! \! \!  r(\mu^3\e^3, \mu^4 \e^2, \mu^6 \epsilon,   \mu^8)
\end{pmatrix} \, , 
$$
$$
\tilde E \tJ_2 \Sigma = 
\begin{pmatrix}
\tilde E_{12} x_{21} - \tilde E_{11} x_{11}
 &  \!\! \! \!  - \im (\tilde E_{11} x_{12} + \tilde E_{12} x_{22}) \\
\im (\tilde E_{12} x_{11} - \tilde E_{22} x_{21}) 
& \!\! \! \!  -\tilde E_{12} x_{12} - \tilde E_{22} x_{22}
\end{pmatrix}
=
\begin{pmatrix}
r(\mu^2 \epsilon^3, \mu^3 \e^2, \mu^5\e,\mu^7)
 &
 \!\! \! \!   \im r(\mu^2 \epsilon^3, \mu^4 \e^2, \mu^6 \e, \mu^8) \\
 \im r(\mu^2 \epsilon^3, \mu^4 \e^2, \mu^5 \epsilon, \mu^7) 
 & \!\! \! \!  r(\mu^3 \epsilon^3, \mu^4\e^2, \mu^6 \epsilon, \mu^8)
\end{pmatrix} \, . 
$$
Thus  the matrix $\tilde F$ in \eqref{tildeF} has an expansion as in \eqref{Bsylvy}.
Then, for any $ \tau \in [0,1]$, the matrix 
$\exp(\tau S) \, \textup{ad}_S^2( R^{(1)} ) \, \exp(-\tau S) =  \textup{ad}_S^2( R^{(1)} ) (1 + \cO(\mu,\e))$. 
In particular the matrix $F^{(2)}$ in \eqref{series} has the same expansion of $\tilde F$, whereas the matrices $\widehat E$, $\widehat G$ have entries at least 
as in \eqref{E2G2.0}.
\end{proof}

\begin{proof}[Proof of Lemma \ref{decoupling2}.]
It follows by Lemmata  \ref{lem:EGtilde} and 
\ref{lem:series}. The matrix  $E^{(2)} := E^{(1)} + \tilde E + \widehat{ E}$ has the same expansion of $E^{(1)}$ in \eqref{BinG1}.
The same holds for $G^{(2)}$.
\end{proof}

\subsection{Complete block-decoupling and proof of the main results}\label{section34}

We now  block-diagonalize the  $ 4\times 4$ Hamiltonian and reversible 
matrix $\tL_{\mu,\e}^{(2)}$    in \eqref{sylvydec}. 
First we split it 
into its $2\times 2$-diagonal and off-diagonal Hamiltonian and reversible matrices
\begin{align}
& \qquad  \qquad \qquad  \qquad  \qquad \qquad \tL_{\mu,\e}^{(2)} = D^{(2)} + R^{(2)} \, , 
\notag \\
& 
D^{(2)}:=\begin{pmatrix} D_1^{(2)} & 0 \\ 0 & D_0^{(2)} \end{pmatrix} =  \begin{pmatrix} \tJ_2 E^{(2)} & 0 \\ 0 & \tJ_2 G^{(2)}  \end{pmatrix}, \quad 
R^{(2)}:= \begin{pmatrix}  0 & \tJ_2 F^{(2)} \\ \tJ_2 [F^{(2)}]^* & 0 \end{pmatrix} . \label{LDR2}
\end{align}

\begin{lem}\label{ultimate}
There exist a  $4\times 4$ reversibility-preserving Hamiltonian  matrix $S^{(2)}:=S^{(2)}(\mu,\e)$ of the form \eqref{formaS}, analytic in $(\mu, \e)$, of size $\cO(\e^3, \mu \e^2, \mu^3\e,\mu^5)$, and a $4\times 4$ block-diagonal reversible Hamiltonian matrix $P:=P(\mu,\e)$, analytic in $(\mu, \e)$, of size $\mu^2\cO(\e^4,\mu^4\e^3,\mu^6\e^2,\mu^8\e,\mu^{10}) $,   such that 
\begin{equation}\label{ultdec}
\tL_{\mu,\e}^{(3)}:= 
\exp(\mu S^{(2)}) \, \tL_{\mu,\e}^{(2)} \,  \exp(-\mu S^{(2)}) = D^{(2)}+P \ . 
\end{equation}
In particular
\begin{equation}
\label{L3.fin}
\tL_{\mu,\e}^{(3)}  = 
\begin{pmatrix}
\tJ_2 E^{(3)} & 0 \\
0 & \tJ_2 G^{(3)} 
\end{pmatrix}
\end{equation}
where $E^{(3)}$ and $G^{(3)}$ are selfadjoint and reversibility-preserving matrices 
of the form   \eqref{BinG1}-\eqref{BinG2}.
\end{lem}
\begin{proof}
We set for brevity $ S = S^{(2)} $. 
The equation \eqref{ultdec} is equivalent
to  the system
\begin{equation}\label{equazionisplittate}
\begin{cases}  \Pi_{D}\big( e^{\mu S} \big(D^{(2)}+R^{(2)}   \big) e^{-\mu S} \big ) - D^{(2)}  = P   \\
\Pi_{\off}\big( e^{\mu S} \big(D^{(2)}+R^{(2)}   \big) e^{-\mu S}\big)  = 0 \, , 
\end{cases}
\end{equation}
where $\Pi_D$ is the projector onto the block-diagonal matrices and $\Pi_\off$ onto 
the block-off-diagonal ones.
The second equation  in \eqref{equazionisplittate} is equivalent, by a Lie expansion, 
and since $ [S, R^{(2)}] $ is block-diagonal,  to 
\begin{equation}\label{nonlinhomo}
R^{(2)} + \mu \lie{S}{D^{(2)}} + \mu^2 \underbrace{\Pi_\off \int_0^1 (1-\tau) e^{\mu\tau S} \text{ad}_S^2\big(D^{(2)}+R^{(2)} \big)e^{-\mu\tau S} \de \tau}_{=: \mathcal{R}(S)} = 0 \, . 
\end{equation}
 The ``nonlinear homological equation" \eqref{nonlinhomo}, i.e. 
$ [S,D^{(2)}] = -\frac1\mu R^{(2)} -\mu \mathcal{R}(S)$, 
is equivalent to solve the $4\times 4$ real linear system
\begin{equation}\label{sistAx2}
{\cal A} \vec{x} =  \vec{f}(\mu,\e,\vec{x}) \, ,\quad \vec{f}(\mu,\e,\vec{x}) = \mu \vec{v}(\mu,\e)+\mu^2 \vec{g}(\mu,\e,\vec{x})
\end{equation}
associated, as in \eqref{Sylvymat}, to \eqref{nonlinhomo}. 
The vector $ \mu \vec{v}(\mu,\e) $ is associated with $ - \frac{1}{\mu} R^{(2)} $
with $R^{(2)}  $ in \eqref{LDR2}.
The vector $ \mu^2 \vec{g}(\mu,\e,\vec{x})  $ is associated with the matrix
$  -\mu \mathcal{R}(S) $, 
which is a Hamiltonian and reversible block-off-diagonal 
matrix (i.e of the form \eqref{LDR}), of size 
$\mathcal{R}(S)=\cO(\mu)$ since $\Pi_\off \text{ad}^2_S(D^{(2)}) = 0$.
The function $ \vec{g}(\mu,\e,\vec{x})  $ is quadratic in $ \vec{x} $. 
In view of \eqref{Bsylvy} one has 
\begin{equation}\label{sizev}
\mu^2 \vec{v}(\mu,\e):= (-F^{(2)}_{21},F^{(2)}_{22},-F^{(2)}_{11},F^{(2)}_{12})^\top, \quad F^{(2)}_{ij} = \mu^2 r(\e^3,\mu\e^2,\mu^3\e,\mu^5) \, .
\end{equation}
System \eqref{sistAx2}  is equivalent to $ \vec{x}  = {\cal A}^{-1}  \vec{f}(\mu,\e,\vec{x}) $ and,
writing  ${\cal A}^{-1} = \frac1\mu {\cal B} (\mu,\e) $ (cfr. \eqref{calA-1}), to 
$$
\vec{x} = {\cal B}(\mu,\e) \vec{v}(\mu,\e) + \mu {\cal B}(\mu,\e) \vec{g}(\mu, \e, \vec{x}) \, . 
$$
By the implicit function theorem
this equation admits a unique small solution $\vec{x}=\vec{x}(\mu,\e)$, analytic in 
$ (\mu, \e ) $,  
 with size $\cO (\e^3,\mu\e^2,\mu^3\e,\mu^5) $ as  $ \vec{v} $ in \eqref{sizev}.
 The claimed estimate of $ P $ follows by the 
the first equation of \eqref{equazionisplittate} and the estimate for 
$ S$ and of $ R^{(2)} $ obtained  by \eqref{Bsylvy}.  
\end{proof}
 
 \noindent
{\sc Proof of Theorems \ref{TeoremoneFinale} and \ref{thm:simpler}. }
By Lemma \ref{ultimate} and recalling  \eqref{calL}
the operator $ \cL_{\mu,\e} : \mathcal{V}_{\mu,\e} \to  \mathcal{V}_{\mu,\e} $ 
is represented by the $4\times 4$ Hamiltonian and reversible matrix 
$$
\im \mu + \exp(\mu S^{(2)})\tL_{\mu,\e}^{(2)} \exp(-\mu S^{(2)}) = \im \mu + 
\begin{pmatrix} \tJ_2E^{(3)} & 0 \\ 0 &  \tJ_2G^{(3)} \end{pmatrix} =: \begin{pmatrix} \mathtt{U} & 0 \\ 0 & \mathtt{S} \end{pmatrix} \, , 
$$ 
where the matrices $E^{(3)}$ and $G^{(3)}$ expand 
as in \eqref{BinG1}-\eqref{BinG2}.
Consequently the  matrices $\mathtt{U}$ and $\mathtt{S}$ 
have an expansion as in \eqref{UU}, \eqref{S}.
Theorem \ref{TeoremoneFinale} is proved. 
The unstable eigenvalues in
 Theorem \ref{thm:simpler} arise from the block $\mathtt{U}$. Its bottom-left entry vanishes for $
\frac{\mu^2}{8} (1+r'_1(\mu,\e)) = \e^2 (1+r''_1(\mu,\e)) $, which, by taking square roots, amounts to solve $ \mu = 2\sqrt{2} \e (1+r(\mu,\e)) $. 
By the implicit function theorem, it admits a unique analytic solution $\underline{\mu}(\e) = 2\sqrt{2} \e(1+  r(\e)). $
The proof of Theorem \ref{thm:simpler} is complete.

\appendix

\section{Proof of Lemma \ref{expansion1} }\label{ProofExpansion}
We  provide the expansion of the basis $f_k^\pm(\mu,\e) = U_{\mu,\e}f_k^\pm $, $k=0,1$, in \eqref{basisF}, where $f_k^\pm$ defined in \eqref{funperturbed}  belong to the subspace $\mathcal{V}_{0,0}:=\text{Rg}(P_{0,0})$. We first Taylor-expand the transformation operators $U_{\mu,\e}$ defined in \eqref{OperatorU}.  We denote  $\pa_\e$ with an  apex and  $\pa_\mu$ with a dot. 
\begin{lem}\label{lem:U.expansion}
The first jets of $U_{\mu,\e}P_{0,0}$ are 
 \begin{align}
  U_{0,0}P_{0,0}&=P_{0,0} \, , \quad U_{0,0}'P_{0,0}=P_{0,0}'P_{0,0} \, , \quad \dot U_{0,0}P_{0,0}=\dot P_{0,0}P_{0,0} \, , \label{Ufirstorder}\\
\dot U_{0,0}'P_{0,0}&=
\big(\dot P_{0,0}'- \frac12 P_{0,0}\dot P_{0,0}' \big)P_{0,0} \, , \label{Umix} 
 \end{align}
where
\begin{align}\label{Pdereps}
 P_{0,0}' &= \frac{1}{2\pi\im} \oint_\Gamma ({\sL}_{0,0}-\lambda)^{-1} {\sL}_{0,0}' ({\sL}_{0,0}-\lambda)^{-1} \de\lambda \, ,  \\ 
\dot P_{0,0}  \label{Pdermu} &= \frac{1}{2\pi\im} \oint_\Gamma ({\sL}_{0,0}-\lambda)^{-1} \dot {\sL}_{0,0} ({\sL}_{0,0}-\lambda)^{-1} \de\lambda \, ,
\end{align}
and
\begin{subequations}
\begin{align}
\dot P_{0,0}' &= -\frac{1}{2\pi\im} \oint_\Gamma ({\sL}_{0,0}-\lambda)^{-1} \dot {\sL}_{0,0} ({\sL}_{0,0}-\lambda)^{-1}  {\sL}_{0,0}' ({\sL}_{0,0}-\lambda)^{-1} \de\lambda  \label{Pmisto1}\\
&\qquad   -\frac{1}{2\pi\im} \oint_\Gamma ({\sL}_{0,0}-\lambda)^{-1} {\sL}_{0,0}' ({\sL}_{0,0}-\lambda)^{-1} \dot {\sL}_{0,0} ({\sL}_{0,0}-\lambda)^{-1} \de\lambda \label{Pmisto2} \\ 
&\qquad   + \frac{1}{2\pi\im} \oint_\Gamma ({\sL}_{0,0}-\lambda)^{-1} \dot {\sL}_{0,0}' ({\sL}_{0,0}-\lambda)^{-1}  \de\lambda \label{Pmisto3} \, .
\end{align}
\end{subequations}
The operators ${\sL}_{0,0}'$ and $\dot {\sL}_{0,0}$ are
\begin{equation}
{\sL}_{0,0}' = \begin{bmatrix} \pa_x \circ p_1(x) & 0 \\ -a_1(x) & p_1(x)\circ \pa_x \end{bmatrix}, \quad \dot {\sL}_{0,0} = \begin{bmatrix} 0 & \sgn(D)+\Pi_0 \\ 0 & 0 \end{bmatrix}, \label{cLfirstorder}
\end{equation}
with $a_1(x)=p_1(x)=-2\cos(x)$, cfr. \eqref{SN1}-\eqref{SN2}.
The operator $\dot {\sL}_{0,0}'$ is
\begin{equation}\label{cLmisto}
\dot {\sL}_{0,0}' = \begin{bmatrix} \im p_1(x) & 0 \\ 0 & \im p_1(x)\end{bmatrix}\,.
\end{equation}
\end{lem}
\begin{proof}
 By \eqref{OperatorU} and \eqref{rootexp} one has the Taylor expansion  in $\cL(Y)$
$$
   U_{\mu,\e}P_{0,0}  = P_{\mu,\e}P_{0,0} + \frac{1}{2}(P_{\mu,\e}-P_{0,0})^2P_{\mu,\e}P_{0,0} +\cO(P_{\mu,\e}-P_{0,0})^4   \, ,
  $$
  where  $\cO(P_{\mu,\e}-P_{0,0})^4 = \cO(\e^4,\e^3\mu,\e^2\mu^2,\e\mu^3,\mu^4) \in \cL(Y)$.
Consequently one derives \eqref{Ufirstorder}, \eqref{Umix},
using also the identity
$\dot P_{0,0} P_{0,0}' P_{0,0} + P_{0,0}' \dot P_{0,0} P_{0,0} = - P_{0,0} \dot P_{0,0}' P_{0,0}$, 
which follows  differentiating $P_{\mu,\e}^2 = P_{\mu,\e}$.  
Differentiating  \eqref{Pproj}  one gets  \eqref{Pdereps}--\eqref{Pmisto3}. Formulas
\eqref{cLfirstorder}-\eqref{cLmisto} follow by \eqref{calL2}.
\end{proof}
By the previous lemma we have the Taylor expansion
\begin{equation}\label{ordinibase}
f_k^\sigma(\mu,\e) = f_k^\sigma + \e P_{0,0}' f_k^\sigma +\mu \dot P_{0,0} f_k^\sigma + \mu\e  \big(\dot P_{0,0}'- \frac12 P_{0,0}\dot P_{0,0}' \big) f_k^\sigma + \cO(\mu^2,\e^2) \, .
\end{equation}
In order to compute the vectors
 $P_{0,0}' f_k^\sigma$ and $\dot P_{0,0} f_k^\sigma$ using 
 \eqref{Pdereps} and \eqref{Pdermu}, it is useful to know the action of  $({\sL}_{0,0} - \lambda)^{-1}$ on the vectors 
\begin{equation}
\label{fksigma}
f_k^+:=\vet{\cos(kx)}{\sin(kx)},
\quad f_k^- :=\vet{-\sin(kx)}{\cos(kx)},
\quad f_{-k}^+ :=\vet{\cos(kx)}{-\sin(kx)},
\quad
f_{-k}^- :=\vet{\sin(kx)}{\cos(kx)} , \quad k \in \bN \, .  
\end{equation}
\begin{lem}\label{lem:VUW}
The space $ H^1(\bT) $ decomposes as 
$
H^1(\bT) =  \cV_{0,0} \oplus \cU \oplus \cW_{H^1} $, with $\cW_{H^1}:= \overline{\bigoplus\limits_{k=2}^\infty \cW_k}^{H^1}\!\!\!\!\!\!\!
$, where the subspaces $\cV_{0,0}, \cU $ and $ \cW_k $, defined below, are 
invariant  under   ${\sL}_{0,0} $ and  the following properties hold:
\begin{itemize}
\item[(i)] $ \cV_{0,0} = \text{span} \{ f^+_1, f^-_1, f^+_0, f^-_0\}$  is the generalized kernel of ${\sL}_{0,0}$. For any $ \lambda \neq 0 $ the operator 
$ {\sL}_{0,0}-\lambda :  \cV_{0,0} \to \cV_{0,0} $ is invertible and  
 \begin{align}\label{primainversione1}
& ({\sL}_{0,0}-\lambda)^{-1}f_1^+ = -\frac1\lambda f_1^+ \, ,
\quad 
({\sL}_{0,0}-\lambda)^{-1}f_1^- = -\frac1\lambda f_1^-,
\quad  ({\sL}_{0,0}-\lambda)^{-1}f_0^- = -\frac1\lambda f_0^- \, ,  \\
& \label{primainversione2}
({\sL}_{0,0}-\lambda)^{-1}f_0^+ = -\frac1\lambda f_0^+ + \frac{1}{\lambda^2} f_0^- \, .
\end{align} 
\item[(ii)] $\cU := \text{span}\left\{ f_{-1}^+, f_{-1}^-  \right\}$.   For any 
$ \lambda \neq \pm 2 \im $ the operator 
$ {\sL}_{0,0}-\lambda :  \cU \to \cU $ is invertible and
\begin{equation}
\label{primainversione3}
 ({\sL}_{0,0}-\lambda)^{-1} f_{-1}^+ = \frac{1}{\lambda^2+4}\left(-\lambda f_{-1}^+ + 2 f_{-1}^-\right), \quad ({\sL}_{0,0}-\lambda)^{-1} f_{-1}^- = \frac{1}{\lambda^2+4}\left(-2 f_{-1}^+ - \lambda f_{-1}^-\right) \, .
\end{equation}
\item[(iii)] 
Each
subspace $\cW_k:= \text{span}\left\{f_k^+, \ f_k^-, f_{-k}^+, \ f_{-k}^- \right\}$ is  invariant under $ {\sL}_{0,0} $.  Let $\cW_{L^2}:=\overline{\bigoplus\limits_{k=2}^\infty \cW_k}^{L^2}\!\!\!\!\!\!$. For any
$|\lambda| < \frac12$, the operator 
$ {\sL}_{0,0}-\lambda :  \cW_{H^1} \to \cW_{L^2} $ is invertible and, 
 for any $f \in \cW_{L^2} $, 
\begin{equation}
\label{primainversione4}
 ({\sL}_{0,0}-\lambda)^{-1} f  =  (\pa_x^2 + |D|)^{-1} \begin{bmatrix} \partial_x & - |D| \\ 1 & \partial_x\end{bmatrix} f + \lambda \varphi_f(\lambda, x) \, ,
\end{equation}
for some analytic  function  $\lambda \mapsto \varphi_f(\lambda, \cdot) \in H^1(\bT, \bC^2)$.
\end{itemize}
\end{lem}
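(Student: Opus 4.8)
The plan is to exploit that the constant–coefficient operator $ {\cal L}_{0,0}=\begin{bmatrix}\pa_x & |D|\\ -1 & \pa_x\end{bmatrix}$ is a Fourier multiplier, hence leaves invariant every two–dimensional subspace $ X_m:=\{v\,e^{\im m x}\colon v\in\bC^2\}$, $ m\in\bZ $, and $ H^1(\bT)=\overline{\bigoplus_{m\in\bZ}X_m}^{H^1}$. On $ X_m $ the operator acts by the matrix $ \widehat{\cal L}_{0,0}(m)=\begin{bmatrix}\im m & |m|\\ -1 & \im m\end{bmatrix}$, with eigenvalues $ \lambda_m^{\pm}=\im(m\mp\sqrt{|m|})$. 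First I would record the elementary spectral picture of these $ 2\times 2 $ matrices: for $ |m|\ge 2 $ both eigenvalues are nonzero, with $ |\lambda_m^{\pm}|\ge 2-\sqrt 2 $; for $ m=\pm 1 $ exactly one eigenvalue is $ 0 $ (the other $ \pm 2\im $), and $ X_{\pm1}=X_{\pm1}^{0}\oplus X_{\pm1}^{\sharp}$ splits into the kernel line and its complementary eigenline, with $ \binom{1}{\mp\im}$ spanning $ X_{\pm1}^{0}$ and $ \binom{1}{\pm\im}$ spanning $ X_{\pm1}^{\sharp}$; for $ m=0 $ the symbol is the nilpotent $ \begin{psmallmatrix}0&0\\-1&0\end{psmallmatrix}$, so $ X_0 $ is a two–dimensional generalized kernel.

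Next I would identify the three subspaces in the statement as a regrouping of the $ X_m $: $ \cV_{0,0}=X_1^{0}\oplus X_0\oplus X_{-1}^{0}$, $ \cU=X_1^{\sharp}\oplus X_{-1}^{\sharp}$ and $ \cW=\overline{\bigoplus_{|m|\ge 2}X_m}^{H^1}$ with $ \cW_k=X_k\oplus X_{-k}$. Passing to the real vectors \eqref{fksigma}, one has $ f_k^{+}\mp\im f_k^{-}=e^{\pm\im kx}\binom{1}{\mp\im}$, $ f_{-k}^{+}\pm\im f_{-k}^{-}=e^{\pm\im kx}\binom{1}{\pm\im}$, while $ f_0^{+}=\binom{1}{0},\,f_0^{-}=\binom{0}{1}$ span $ X_0 $; this shows $ \{f_1^{\pm},f_0^{\pm}\}$ spans $ \cV_{0,0}$, $ \{f_{-1}^{\pm}\}$ spans $ \cU $, $ \{f_k^{\pm},f_{-k}^{\pm}\}$ spans $ \cW_k $, and that the three sums form a direct–sum decomposition of $ H^1(\bT)$. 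Invariance of $ \cV_{0,0},\cU,\cW $ under $ {\cal L}_{0,0}$ is then automatic, $ \cV_{0,0}$ being exactly the generalized kernel because $ \lambda_m^{\pm}$ vanishes only for the selected pieces.

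For (i) and (ii) only finite–dimensional linear algebra remains. A one–line computation gives $ {\cal L}_{0,0}f_1^{+}={\cal L}_{0,0}f_1^{-}={\cal L}_{0,0}f_0^{-}=0 $ and $ {\cal L}_{0,0}f_0^{+}=-f_0^{-}$, so in the ordered basis $ (f_1^{+},f_1^{-},f_0^{+},f_0^{-})$ the operator $ {\cal L}_{0,0}\vert_{\cV_{0,0}}$ is the nilpotent matrix $ N $ whose only nonzero entry is a $ -1 $ coupling $ f_0^{+}$ to $ f_0^{-}$; since $ N^{2}=0 $, for $ \lambda\ne 0 $ one has $ ({\cal L}_{0,0}-\lambda)^{-1}\vert_{\cV_{0,0}}=-\tfrac1\lambda\uno-\tfrac1{\lambda^{2}}N $, which gives \eqref{primainversione1}–\eqref{primainversione2}. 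Likewise $ {\cal L}_{0,0}f_{-1}^{+}=-2f_{-1}^{-}$, $ {\cal L}_{0,0}f_{-1}^{-}=2f_{-1}^{+}$, so $ {\cal L}_{0,0}\vert_{\cU}$ is represented by $ \begin{psmallmatrix}0&2\\-2&0\end{psmallmatrix}$ with eigenvalues $ \pm 2\im $; inverting $ {\cal L}_{0,0}-\lambda $ for $ \lambda\ne\pm 2\im $ yields \eqref{primainversione3}.

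For (iii) I would first check, mode by mode for $ |m|\ge 2 $ (where $ -m^{2}+|m|\ne 0 $), the symbol identity $ \widehat{\cal L}_{0,0}(m)^{-1}=\tfrac{1}{-m^{2}+|m|}\begin{psmallmatrix}\im m & -|m|\\ 1 & \im m\end{psmallmatrix}$, which is precisely the statement that $ {\cal L}_{0,0}\vert_{\cW}$ is invertible with $ {\cal L}_{0,0}^{-1}=(\pa_x^{2}+|D|)^{-1}\begin{psmallmatrix}\pa_x & -|D|\\ 1 & \pa_x\end{psmallmatrix}$; a direct estimate of these $ 2\times 2 $ inverses (largest near $ |m|=2 $, decaying like $ 1/|m|$) gives $ \|{\cal L}_{0,0}^{-1}\|_{\cL(\cW)}<2 $, so $ \uno-\lambda\,{\cal L}_{0,0}^{-1}$ is invertible on $ \cW $ for $ |\lambda|<\tfrac12 $. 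Then the resolvent identity $ ({\cal L}_{0,0}-\lambda)^{-1}={\cal L}_{0,0}^{-1}\bigl(\uno-\lambda\,{\cal L}_{0,0}^{-1}\bigr)^{-1}={\cal L}_{0,0}^{-1}+\lambda\,{\cal L}_{0,0}^{-2}\bigl(\uno-\lambda\,{\cal L}_{0,0}^{-1}\bigr)^{-1}$ on $ \cW $ gives \eqref{primainversione4} with $ \varphi_f(\lambda,\cdot):={\cal L}_{0,0}^{-2}\bigl(\uno-\lambda\,{\cal L}_{0,0}^{-1}\bigr)^{-1}f=\sum_{n\ge 0}\lambda^{n}{\cal L}_{0,0}^{-(n+2)}f $, an absolutely convergent power series in $ H^{1}(\bT,\bC^{2})$, hence analytic in $ \lambda $ for $ |\lambda|<\tfrac12 $. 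I expect the only genuinely fiddly points to be bookkeeping: verifying that the regrouping of Fourier eigenspaces is a true partition, so the three bases together span $ H^{1}(\bT)$, and, in (iii), making the uniform–in–$ m $ control of $ \widehat{\cal L}_{0,0}(m)^{-1}$ precise enough to justify the Neumann series and the analyticity of $ \varphi_f $. There is no conceptual obstacle; everything reduces to the mode–by–mode diagonalization of the constant–coefficient operator $ {\cal L}_{0,0}$.
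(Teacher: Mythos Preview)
Your proposal is correct and follows essentially the same approach as the paper: exploit that ${\cal L}_{0,0}$ is a Fourier multiplier, verify the decomposition and invariance by inspection, compute the resolvent on $\cV_{0,0}$ and $\cU$ via the elementary relations ${\cal L}_{0,0}f_1^{\pm}={\cal L}_{0,0}f_0^{-}=0$, ${\cal L}_{0,0}f_0^{+}=-f_0^{-}$, ${\cal L}_{0,0}f_{-1}^{\pm}=\mp 2f_{-1}^{\mp}$, and on $\cW$ write ${\cal L}_{0,0}^{-1}=(\pa_x^{2}+|D|)^{-1}\begin{psmallmatrix}\pa_x & -|D|\\ 1 & \pa_x\end{psmallmatrix}$ and expand $({\cal L}_{0,0}-\lambda)^{-1}$ by Neumann series for $|\lambda|<\tfrac12$. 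Your explicit splitting $X_{\pm1}=X_{\pm1}^{0}\oplus X_{\pm1}^{\sharp}$ is slightly more detailed than the paper's ``by inspection'', but the argument is the same, and your $\varphi_f$ coincides with the paper's after reindexing.
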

\begin{proof}
By inspection the spaces $\cV_{0,0}$, $\cU$ and $ \cW_k$ are invariant under $ {\sL}_{0,0}$ 
and, by Fourier series, they decompose $H^1(\bT, \bC^2)$. \\
$(i)$ Formulas  \eqref{primainversione1}-\eqref{primainversione2} follow using that 
$f_1^+, f_1^-, f_0^-$ are in the kernel of ${\sL}_{0,0}$, and ${\sL}_{0,0}f_0^+ =-f_0^- $.\\
$(ii)$ Formula \eqref{primainversione3} follows using that  ${\sL}_{0,0} f^+_{-1} = -2 f^{-}_{-1}$ and  ${\sL}_{0,0} f^-_{-1} = 2 f^{+}_{-1}$.\\
$(iii)$ Let $\cW := \cW_{H^1}$.  The operator 
$ \restr{({\sL}_{0,0}-\lambda\uno)}{\cW} $ is invertible for any $ \lambda \notin \{ \pm \im 
\sqrt{|k|} \pm \im k, k \geq 2, k \in {\mathbb N}  \}$ and 
$ \footnotesize   (\restr{{\sL}_{0,0}}{\cW})^{-1} = \left( \pa_x^2 + |D|\right)^{-1} \begin{bmatrix} \pa_x & -|D| \\ 1 & \pa_x\end{bmatrix}_{|\cW}  $.
In particular, by Neumann series, for any  $ \lambda $ such that 
 $ |\lambda | \| (\restr{{\sL}_{0,0}}{\cW})^{-1}\|_{\cL(\cW_{L^2},H^1(\bT))} < 1 $,  e.g. for any $ |\lambda | < 1/ 2 $, 
$$
  (\restr{{\sL}_{0,0}}{\cW}-\lambda)^{-1} = 
(\restr{{\sL}_{0,0}}{\cW})^{-1}  
\big( \uno - \lambda (\restr{{\sL}_{0,0}}{\cW})^{-1} \big)^{-1} =  
   (\restr{{\sL}_{0,0}}{\cW})^{-1} \sum_{k \geq 0} ((\restr{{\sL}_{0,0}}{\cW})^{-1}\lambda)^k 
  \, .
$$
 Formula 
  \eqref{primainversione4} follows  with 
 $\varphi_f(\lambda, x):= (\restr{{\sL}_{0,0}}{\cW})^{-1} 
 \sum_{k \geq 1} \lambda^{k-1} [(\restr{{\sL}_{0,0}}{\cW})^{-1}]^k f $.
\end{proof}
We shall also use the following formulas, obtained  by 
\eqref{cLfirstorder} and \eqref{funperturbed}:
\begin{equation}\label{derivoeps}
\begin{aligned}
&{\sL}_{0,0}'f_1^+ = 2\vet{\sin(2x)}{0} \, , \quad  
{\sL}_{0,0}'f_1^- = 2\vet{\cos(2x)}{0} \, ,  \quad
{\sL}_{0,0}'f_0^+ = 2\vet{\sin(x)}{\cos(x)} \, , \quad
{\sL}_{0,0}'f_0^- = 0 \, , \\
& \dot{\sL}_{0,0}f_1^+ = -\im \vet{\cos(x)}{0}\, , \quad \dot{\sL}_{0,0}f_1^-= \im \vet{\sin(x)}{0} \, , \quad 
\dot{\sL}_{0,0}f_0^+ = 0, \quad
 \dot{\sL}_{0,0}f_0^- =  f_0^+ \, .
\end{aligned}
\end{equation}
We  finally compute $P_{0,0}' f_k^\sigma$ and $\dot P_{0,0}f_k^\sigma$.
\begin{lem}
One has
\begin{equation}\label{tuttederivate}
\begin{aligned}
 &P_{0,0}'f^+_1 =\vet{2\cos(2x)}{\sin(2x)} \, ,\ \ \ 
  P_{0,0}'f^-_1 =\vet{-2\sin(2x)}{\cos(2x)} \, , \ \ \ 
   P_{0,0}'f^+_0 = f^+_{-1} \, , \ \ \  
   P_{0,0}'f^-_0 =0 \, ,   \\
 &\dot P_{0,0} f_1^+ = \frac{\im}{4} f^{-}_{-1} \, ,
 \quad
  \dot P_{0,0} f_1^- = \frac{\im}{4} f^+_{-1}\, ,\quad \dot P_{0,0} f_0^+=0 \, ,\quad  
  \dot P_{0,0} f_0^-=0 \, .
\end{aligned}
\end{equation}
\end{lem}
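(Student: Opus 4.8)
The plan is to evaluate the two Cauchy integrals \eqref{Pdereps} and \eqref{Pdermu}, applied to a fixed vector $f_k^\sigma$, by reducing each of them to a residue computation at $\lambda=0$, which is the only singularity of the integrand enclosed by $\Gamma$. First I would apply the \emph{inner} resolvent $({\cal L}_{0,0}-\lambda)^{-1}$ to $f_k^\sigma$. Since $f_1^\pm, f_0^\pm\in\cV_{0,0}$, formulas \eqref{primainversione1}--\eqref{primainversione2} give $({\cal L}_{0,0}-\lambda)^{-1}f_1^\pm=-\lambda^{-1}f_1^\pm$, $({\cal L}_{0,0}-\lambda)^{-1}f_0^-=-\lambda^{-1}f_0^-$ and $({\cal L}_{0,0}-\lambda)^{-1}f_0^+=-\lambda^{-1}f_0^++\lambda^{-2}f_0^-$, so the inner step produces vectors in $\cV_{0,0}$ with a pole of order $1$ or $2$ in $\lambda$. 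Next I would apply ${\cal L}_{0,0}'$ (resp. $\dot{\cal L}_{0,0}$) using the explicit formulas \eqref{derivoeps}: one reads off that ${\cal L}_{0,0}'f_1^\pm\in\cW_2$, ${\cal L}_{0,0}'f_0^+=2f_{-1}^-\in\cU$, ${\cal L}_{0,0}'f_0^-=0$, while $\dot{\cal L}_{0,0}f_1^\pm$ is a frequency-$1$ vector that splits as a combination of $f_1^\pm\in\cV_{0,0}$ and $f_{-1}^\pm\in\cU$, $\dot{\cal L}_{0,0}f_0^+=0$, and $\dot{\cal L}_{0,0}f_0^-=f_0^+\in\cV_{0,0}$. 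In every case the resulting vector lies in one of the three ${\cal L}_{0,0}$-invariant subspaces $\cV_{0,0}$, $\cU$, $\cW$ of Lemma \ref{lem:VUW}.

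Then I would apply the \emph{outer} resolvent using the subspace-adapted formulas of Lemma \ref{lem:VUW}: \eqref{primainversione1}--\eqref{primainversione2} on the $\cV_{0,0}$-component, \eqref{primainversione3} on the $\cU$-component (which contributes a factor $(\lambda^2+4)^{-1}$, holomorphic at $\lambda=0$), and \eqref{primainversione4} on the $\cW$-component (whose $\lambda\,\varphi_f(\lambda,\cdot)$ remainder is holomorphic near $0$). In each case the integrand becomes a rational-in-$\lambda$ vector-valued function whose only pole inside $\Gamma$ is at $\lambda=0$, so the residue theorem kills all pieces holomorphic at $0$ and extracts exactly the stated vectors. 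For example, for $P_{0,0}'f_1^+$ the integrand is $-\tfrac2\lambda\big(\vet{-\cos(2x)}{-\tfrac12\sin(2x)}+\lambda\,\varphi(\lambda,x)\big)$, with residue $\vet{2\cos(2x)}{\sin(2x)}$ at $0$; for $P_{0,0}'f_0^+$ the relevant term $\tfrac{4}{\lambda(\lambda^2+4)}f_{-1}^+$ has residue $f_{-1}^+$; and for $\dot P_{0,0}f_1^+$ the term $\tfrac{\im}{\lambda(\lambda^2+4)}f_{-1}^-$ has residue $\tfrac{\im}{4}f_{-1}^-$. The remaining identities, including the nontrivial ones $P_{0,0}'f_1^-$ and $\dot P_{0,0}f_1^-$, follow by the same three-step procedure; the vanishing ones ($P_{0,0}'f_0^-=0$, $\dot P_{0,0}f_0^+=0$, $\dot P_{0,0}f_0^-=0$) hold because after the three steps the integrand is either identically zero or a pure power $\lambda^{-m}$ with $m\ge 2$, whose residue vanishes.

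The only mildly delicate point is the bookkeeping: one must split each intermediate vector correctly along $\cV_{0,0}\oplus\cU\oplus\cW$ before applying the outer resolvent, and be careful that a $1/\lambda^2$ factor coming from $f_0^+$, once multiplied by another $1/\lambda$, still contributes \emph{zero} residue (only the coefficient of $1/\lambda$ survives), whereas a factor $1/(\lambda(\lambda^2+4))$ does produce $\tfrac14$ times the vector. There is no genuine obstacle here — it is a finite, fully explicit computation — so the main task is simply to organize the eight cases cleanly, which the decomposition of Lemma \ref{lem:VUW} is precisely designed to make transparent.
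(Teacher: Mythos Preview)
Your proposal is correct and follows essentially the same route as the paper: apply the inner resolvent via \eqref{primainversione1}--\eqref{primainversione2}, then ${\cal L}_{0,0}'$ or $\dot{\cal L}_{0,0}$ via \eqref{derivoeps}, decompose along $\cV_{0,0}\oplus\cU\oplus\cW$ using Lemma~\ref{lem:VUW}, apply the outer resolvent, and extract the residue at $\lambda=0$. The paper carries out exactly these steps, and your identification of the key residues (e.g.\ $\tfrac{4}{\lambda(\lambda^2+4)}\to 1$, $\tfrac{\im}{\lambda(\lambda^2+4)}\to\tfrac{\im}{4}$, and pure $\lambda^{-m}$ with $m\ge 2$ giving zero) matches the paper's computations.
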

\begin{proof}
We first compute $P_{0,0}'f_1^+$. By \eqref{Pdereps}, \eqref{primainversione1}  and \eqref{derivoeps}  we deduce
$$
P_{0,0}'f_1^+ = -\frac{1}{2\pi\im} \oint_\Gamma \frac{1}{\lambda}({\sL}_{0,0}-\lambda)^{-1} \vet{2\sin(2x)}{0} \de\lambda \,  .
$$
We note that   $ \footnotesize  \vet{2\sin(2x)}{0} $ belongs to $ \cW$, being equal to $ f_{-2}^- - f_2^- $ (recall \eqref{fksigma}). 
By  \eqref{primainversione4}  there is an analytic function $\lambda \mapsto \varphi(\lambda, \cdot) \in H^1(\bT, \bC^2)$ so that  
$$
P_{0,0}'f_1^+ = -\frac{1}{2\pi\im} \oint_\Gamma \frac{1}{\lambda} \Big( \vet{-2\cos(2x)}{-\sin(2x)}  + \lambda \varphi(\lambda) \Big) \, \de\lambda = \vet{2\cos(2x)}{\sin(2x)} \, ,
$$
using the  residue Theorem.
Similarly one computes $P_{0,0}'f_1^-$. 
 By \eqref{Pdereps}, \eqref{primainversione1}  and \eqref{derivoeps}, one has $P_{0,0}'f_0^-=0$. 
Next we compute $P_{0,0}'f_0^+$. 
 By \eqref{Pdereps}, \eqref{primainversione1},  \eqref{primainversione2}  and \eqref{derivoeps} we get
$$
P_{0,0}'f_0^+  
= -\frac{2}{2\pi\im} \oint_\Gamma \frac{1}{\lambda}({\sL}_{0,0}-\lambda)^{-1}  f^{-}_{-1} \de\lambda 
\stackrel{ \eqref{primainversione3}}{=} 
-\frac{1}{2\pi\im} \oint_\Gamma \Big(-\frac{4 }{\lambda(\lambda^2+4)}f_{-1}^+- \frac{2}{\lambda^2+4} f_{-1}^- \Big)  \de\lambda  =f^+_{-1} 
  \, ,  
$$
where in the last step we used the residue theorem.
We compute now $\dot P_{0,0} f^+_1$. 
First we have
$ \dot P_{0,0}f_1^+   =\ \frac{\im}{2\pi\im} \oint_\Gamma \frac{1}{\lambda}({\sL}_{0,0}-\lambda)^{-1} \footnotesize \vet{\cos(x)}{0} \de\lambda  $
and then, writing $ \footnotesize  \vet{\cos(x)}{0} =\frac{1}{2} (f_1^+ + f_{-1}^+ )$ and using \eqref{primainversione3}, we conclude
$$
\dot P_{0,0} f_1^+
=  \frac{\im}{2}\frac{1}{2\pi\im} \oint_\Gamma \Big(  - \frac{1}{\lambda^2} f^+_1 - \frac{1}{\lambda^2+4}f_{-1}^+ +\frac{2}{\lambda (\lambda^2+4)}  f_{-1}^- \Big) \de\lambda
 = \frac{\im}{4} f_{-1}^-  
$$
using again the residue theorem. The computations of $\dot P_{0,0}f^-_1$, $\dot P_{0,0} f_0^+$, $\dot P_{0,0} f_0^-$ are analogous.
\end{proof}
So far we have obtained the linear terms of the expansions  \eqref{exf41}, \eqref{exf42}, \eqref{exf43}, \eqref{exf44}. 
We now provide further information about the expansion of the basis at $\mu=0$. 
 \begin{lem}
 The basis $\{f_k^\sigma(0,\e), \, k = 0,1 , \sigma = \pm\}$ is real.
 For any $\e $  it results $f_0^-(0,\e) \equiv f_0^- $. The property \eqref{nonzeroaverage} holds.
 \end{lem}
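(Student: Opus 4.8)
The plan is to obtain all three assertions from the structural properties of the Kato transformation operator $U_{0,\e}$ (Lemmata \ref{lem:Kato1} and \ref{propPU}), from the reversible and symplectic character of the basis $\cal F$ (Lemma \ref{base1symp}), and from the single external input \eqref{genespace}, namely that $\cL_{0,\e}\vet{0}{1}=0$ for all small $\e$. Reality is immediate: the unperturbed vectors in \eqref{funperturbed} are real-valued and, by Lemma \ref{propPU}(iii), the operator $U_{0,\e}$ is real, so each $f_k^\sigma(0,\e)=U_{0,\e}f_k^\sigma$ is real.

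Next I would prove $f_0^-(0,\e)\equiv f_0^-$. The key point is that $v:=f_0^-=\vet{0}{1}$ lies in $\mathcal V_{0,0}\cap\mathcal V_{0,\e}$: it belongs to $\mathcal V_{0,0}=\mathrm{Rg}(P_{0,0})$ since it is one of the basis vectors \eqref{basestart}, and it belongs to $\mathcal V_{0,\e}=\mathrm{Rg}(P_{0,\e})$ since $\cL_{0,\e}v=0$ by \eqref{genespace}, so that the Riesz formula \eqref{Pproj} with $\Gamma$ enclosing the eigenvalue $0$ gives $P_{0,\e}v=-\frac{1}{2\pi\im}\oint_\Gamma(-\lambda)^{-1}v\,\de\lambda=v$. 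Hence $P_{0,0}v=P_{0,\e}v=v$ and $(P_{0,\e}-P_{0,0})v=0$. Inserting $v$ into the explicit formula \eqref{OperatorU}, the bracket sends $v$ to $P_{0,\e}v=v$, while the prefactor $(\uno-(P_{0,\e}-P_{0,0})^2)^{-1/2}$, being the power series \eqref{rootexp} in $(P_{0,\e}-P_{0,0})^2$, also fixes $v$ because $(P_{0,\e}-P_{0,0})^2v=0$; thus $f_0^-(0,\e)=U_{0,\e}v=v$. This is the only genuinely non-formal step, and I expect it to be the main obstacle: it rests entirely on the fact, borrowed from \cite{NS} via \eqref{genespace}, that $\vet{0}{1}$ stays an exact null vector of $\cL_{0,\e}$ for every small $\e$, an incarnation of the space-translation invariance of the water waves equations.

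Finally, for the parity and zero-average properties \eqref{nonzeroaverage} I would combine three ingredients. Since $\{f_k^\sigma(0,\e)\}$ is reversible (Lemma \ref{base1symp}), Lemma \ref{lem:f.parity} gives the pattern \eqref{reversiblebasisprop}; together with the reality above the imaginary parts drop out and $f_k^+(0,\e)=\vet{even(x)}{odd(x)}$, $f_k^-(0,\e)=\vet{odd(x)}{even(x)}$ for $k=0,1$. To identify the zero Fourier modes of the first components I would use that the basis is also symplectic (Lemma \ref{base1symp}): the relations \eqref{symplecticbasis} involving $\tf_0^-=f_0^-(0,\e)=\vet{0}{1}$, for which $\cJ\tf_0^-=\vet{1}{0}$, read $\molt{\cJ f_0^-(0,\e)}{f_0^+(0,\e)}=1$ and $\molt{\cJ f_0^-(0,\e)}{f_1^\pm(0,\e)}=0$. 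Since $\molt{\vet{1}{0}}{h}$ is, up to conjugation, the average of the first component of $h$, this forces the first component of $f_0^+(0,\e)$ to have average $1$, hence $f_0^+(0,\e)=\vet{1}{0}+\vet{even_0(x)}{odd(x)}$, and the first component of $f_1^+(0,\e)$ to have zero average, hence $f_1^+(0,\e)=\vet{even_0(x)}{odd(x)}$; the first component of $f_1^-(0,\e)$ is already odd, and no constraint is obtained on the average of its $even$ second component — which need not vanish, precisely the reason the modified basis $\mathcal G$ is later introduced (cf. Remark \ref{rem:newbasis}). This yields \eqref{nonzeroaverage}.
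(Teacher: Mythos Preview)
Your proof is correct and follows essentially the same route as the paper's own proof: reality from Lemma \ref{propPU}(iii), the identity $f_0^-(0,\e)\equiv f_0^-$ from $\cL_{0,\e}f_0^-=0$ together with the explicit formula \eqref{OperatorU} for $U_{0,\e}$, and the parity/average structure \eqref{nonzeroaverage} from reversibility, reality, and the symplectic relations \eqref{symplecticbasis} paired against $\cJ f_0^-(0,\e)=\vet{1}{0}$. The only minor difference is presentational: the paper computes $P_{0,\e}f_0^-$ directly via the residue theorem rather than first observing $(P_{0,\e}-P_{0,0})f_0^-=0$ and then unwinding the power series prefactor, but the content is identical.
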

 \begin{proof}
 The reality of the basis $f_k^\sigma(0,\e)$ is a consequence of Lemma \ref{propPU}-$(iii)$.  
 Since,  recalling \eqref{calL2}, 
 $ {\sL}_{0,\e} f_0^{-} =  0 $ for any $ \e $ (cfr. \eqref{genespace}), 
 we deduce $({\sL}_{0,\e}-\lambda)^{-1}f_0^-= -\frac{1}{\lambda}f_0^- $ and then, using also   the residue theorem, 
$$
  P_{0,\e}f_0^-= - \frac{1}{2\pi \im} \oint_\Gamma ({\sL}_{0,\e}-\lambda)^{-1}f_0^- \de\lambda =   f_0^- \ . 
$$
In particular $P_{0,\e}f_0^- = P_{0,0} f_0^-$, for any 
$  \e$ and  we get, by \eqref{OperatorU},  
$f_0^-(0,\e) = U_{0,\e} f_0^- = f_0^-$, for any  $ \e$. 

Let us prove property \eqref{nonzeroaverage}. 
In view of \eqref{reversiblebasisprop} and since the basis is real,  we know that
$ \footnotesize  f_k^+ (0,\e) =\vet{even(x)}{odd(x)} $, $ \footnotesize  f_k^- (0,\e) =\vet{odd(x)}{even(x)} $,
 for any  $ k=0,1 $. 
  By Lemma \ref{base1symp} the basis $\{f_k^\sigma(0,\e)\}$ is symplectic (cfr. \eqref{symplecticbasis}) and, since 
  $\cJ f_0^-(0,\e)  = \cJ f_0^- = \footnotesize \vet{1}{0}$, for any $\epsilon $, we get  
$$
 0 = \molt{\cJ f_0^-(0,\e)}{f_1^+(0,\e)} = \Big( \vet{1}{0}, f_1^+(0,\e) \Big) \, , \quad 
 1 = \big( \cJ f_0^-(0,\e), f_0^+(0,\e) \big) = \Big( \vet{1}{0}, f_0^+(0,\e) \Big) \, . 
$$
Thus the first component of both $f_1^+(0,\e)$ and $f_0^+(0,\e)- \footnotesize \vet{1}{0}$ has zero average,
 proving 
 \eqref{nonzeroaverage}.
 \end{proof}

We now provide further information about the expansion of the basis at $\e=0$. 
\begin{lem}
For any small $\mu$, we have $f_0^+(\mu,0) \equiv f_0^+ $
and $f_0^-(\mu,0) \equiv f_0^- $. Moreover the vectors $f_1^+(\mu,0)$ and $f_1^-(\mu,0)$ have both components with zero space average.
\end{lem}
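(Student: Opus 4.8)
The idea is to exploit that at $\e=0$ the operator $\mathcal{L}_{\mu,0}$ in \eqref{calL2} has constant coefficients, hence is a Fourier multiplier matrix operator, block--diagonal with respect to $H^1(\bT)=\bigoplus_{k\in\bZ}e^{\im kx}\bC^2$. First I would identify the zeroth Fourier block: since $p_0=a_0=0$, by \eqref{|D+mu|} one has $\mathcal{L}_{\mu,0}=\begin{bmatrix}\pa_x & |D|+\mu(\sgn(D)+\Pi_0)\\ -1 & \pa_x\end{bmatrix}$, and because $|D|$ and $\sgn(D)$ annihilate constants while $\Pi_0$ fixes them, on $e^{\im 0x}\bC^2=\mathrm{span}\{f_0^+,f_0^-\}$ the operator $\mathcal{L}_{\mu,0}$ is represented by the matrix $M_\mu:=\begin{psmallmatrix}0&\mu\\-1&0\end{psmallmatrix}$, i.e. $\mathcal{L}_{\mu,0}f_0^+=-f_0^-$ and $\mathcal{L}_{\mu,0}f_0^-=\mu f_0^+$. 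The eigenvalues of $M_\mu$ are $\pm\im\sqrt\mu$, which lie inside $\Gamma$ for $\mu<\mu_0$ small; since $(\mathcal{L}_{\mu,0}-\lambda)^{-1}$ is block--diagonal, the projector $P_{\mu,0}$ in \eqref{Pproj} restricts on the zeroth block to $-\frac{1}{2\pi\im}\oint_\Gamma(M_\mu-\lambda)^{-1}\de\lambda$, the Riesz projector of $M_\mu$ onto its full spectrum $\{\pm\im\sqrt\mu\}$, hence the identity of $\bC^2$. Therefore $P_{\mu,0}f_0^\pm=f_0^\pm=P_{0,0}f_0^\pm$.

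Next I would use the elementary fact that the Kato transformation $U_{\mu,\e}$ fixes every vector in $\mathrm{Rg}(P_{\mu,\e})\cap\mathrm{Rg}(P_{0,0})$: if $P_{\mu,\e}v=v=P_{0,0}v$ then $(P_{\mu,\e}-P_{0,0})v=0$, so by \eqref{OperatorU}, \eqref{rootexp} one gets $[P_{\mu,\e}P_{0,0}+(\uno-P_{\mu,\e})(\uno-P_{0,0})]v=P_{\mu,\e}v=v$ and $\big(\uno-(P_{\mu,\e}-P_{0,0})^2\big)^{-1/2}v=v$, whence $U_{\mu,\e}v=v$. Applying this at $\e=0$ with $v=f_0^\pm$ (legitimate by the previous paragraph) gives $f_0^\pm(\mu,0)=U_{\mu,0}f_0^\pm=f_0^\pm$, which is the first assertion; in particular $f_0^+(\mu,0)=\vet10$ and $f_0^-(\mu,0)=\vet01$.

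For the second assertion I would invoke Lemma \ref{base1symp}: the basis $\{f_k^\sigma(\mu,0)\}$ is symplectic, so by \eqref{symplecticbasis} one has $\big(\cJ f_1^\sigma(\mu,0),f_0^{\sigma'}(\mu,0)\big)=0$ for all $\sigma,\sigma'=\pm$. Writing $f_1^+(\mu,0)=\vet{u_1}{u_2}$ and using $\cJ\vet{u_1}{u_2}=\vet{u_2}{-u_1}$, the values $f_0^\pm(\mu,0)$ just found, and the scalar product \eqref{scalar}, the two relations with $\sigma=+$ read $\frac{1}{2\pi}\int_\bT u_2\,\de x=0$ and $\frac{1}{2\pi}\int_\bT u_1\,\de x=0$; hence both components of $f_1^+(\mu,0)$ have zero space average, and the identical computation for $f_1^-(\mu,0)$ completes the proof. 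The only mildly delicate point is the first step, namely making sure that the part $\sigma'(\mathcal{L}_{\mu,0})$ of the spectrum enclosed by $\Gamma$ consists exactly of the two eigenvalues $\pm\im\sqrt\mu$ coming from the zeroth block together with two eigenvalues coming, via \eqref{omeghino}, from the $k=\pm1$ blocks, so that $P_{\mu,0}$ is indeed the full identity on $e^{\im 0x}\bC^2$; this is guaranteed by Lemma \ref{lem:Kato1}, and everything else is direct computation.
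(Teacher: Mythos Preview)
Your proof is correct and follows essentially the same approach as the paper: identify that $\mathrm{span}\{f_0^+,f_0^-\}$ is invariant under ${\cal L}_{\mu,0}$ with eigenvalues $\pm\im\sqrt\mu$ inside $\Gamma$, deduce $P_{\mu,0}f_0^\pm=f_0^\pm$ and hence $U_{\mu,0}f_0^\pm=f_0^\pm$, then use the symplectic orthogonality with $f_0^\pm(\mu,0)$ to force zero average on both components of $f_1^\pm(\mu,0)$. The only difference is cosmetic: you spell out explicitly why $U_{\mu,0}$ fixes any $v$ with $P_{\mu,0}v=P_{0,0}v=v$ (the paper just asserts it), and in the last step you apply $\cJ$ to $f_1^\sigma$ rather than to $f_0^\pm$, which is equivalent since $(\cJ a,b)=-\overline{(\cJ b,a)}$.
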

\begin{proof}
The operator ${\sL}_{\mu,0} =  
\footnotesize 
\begin{bmatrix} \pa_x & |D+\mu|\\ -1 & \pa_x \end{bmatrix}$  leaves invariant  the subspace $\mathcal{Z}:=\text{span}\{f_0^+,\,f_0^-\}$ since
$ {\sL}_{\mu,0} f_0^+ = -f_0^-  $ and $ {\sL}_{\mu,0} f_0^- = \mu f_0^+ $. 
The operator $\restr{{\sL}_{\mu,0}}{\mathcal{Z}}$ has the two eigenvalues $\pm\im \sqrt{\mu}$, which, for small $\mu$, lie inside the loop $\Gamma$ around $0$ in \eqref{Pproj}.  
Then, by \eqref{dec.spectrum},  we have 
 $\mathcal{Z} \subseteq {\cV}_{\mu,0} = \text{Rg}(P_{\mu,0}) $ and 
$$
P_{\mu,0} f_0^\pm = f_0^\pm, \quad f_0^\pm(\mu,0) = U_{\mu,0} f_0^\pm = f_0^\pm, \text{ for any }\mu\text{ small} \, .
$$
The basis $\{f_k^\sigma(\mu,0) \}$ is symplectic. Then, since  $ \footnotesize 
\cJ f_0^+ = \vet{0}{-1} $ and  $\footnotesize \cJ f_0^- = \vet{1}{0} $, we have 
$$
 0 = \molt{\cJ f_0^+(\mu,0)}{f_1^\sigma(\mu,0)} = 
 \Big({\footnotesize \vet0{-1}}, f_1^\sigma(\mu,0) \Big) \, , \ \   
 0 = \Big( \cJ f_0^-(\mu,0), f_1^\sigma(\mu,0) \Big) = 
 \Big({\footnotesize \vet10}, f_1^\sigma(\mu,0) \Big) \, , 
$$
namely both the components of $f_1^\pm(\mu,0)$ have zero average.
\end{proof}
We finally consider  the $\mu\e$ term in the expansion \eqref{ordinibase} of the vectors $f_k^\sigma(\mu,\e) $, $ k = 0,1 $, $ \sigma = \pm $. 
\begin{lem}
The derivatives
$ (\pa_{\mu} \pa_\e f_k^\sigma)(0,0) = 
 \left(\dot P_{0,0}'- \frac12 P_{0,0}\dot P_{0,0}' \right)f_k^\sigma $
 satisfy 
 \begin{equation}\label{struttura2}
\begin{aligned}
&  (\pa_{\mu} \pa_\e f_1^+)(0,0)  = \im \vet{odd(x)}{even(x)},  \qquad 
(\pa_{\mu} \pa_\e f_1^-)(0,0) - = \im \vet{even(x)}{odd(x)},\\
& (\pa_{\mu} \pa_\e f_0^+)(0,0) = \im \vet{odd(x)}{even_0(x)},
\qquad 
(\pa_{\mu} \pa_\e f_0^-)(0,0) = \frac{1}{2} \vet{\sin(x)}{\cos(x)}
+ \im   \vet{even_0(x)}{odd(x)}  \ . 
\end{aligned}
\end{equation}
\end{lem}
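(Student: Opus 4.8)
The plan is to read everything off the Taylor expansion \eqref{ordinibase}: its $\mu\e$--coefficient is precisely $(\partial_\mu\partial_\e f_k^\sigma)(0,0)=\big(\dot P_{0,0}'-\tfrac12 P_{0,0}\dot P_{0,0}'\big)f_k^\sigma$, so it suffices to compute, for each of the four vectors $f_k^\sigma$ in \eqref{fksigma}, the vector $\dot P_{0,0}'f_k^\sigma$ — given by the three contour integrals \eqref{Pmisto1}--\eqref{Pmisto3} — together with its $\cV_{0,0}$--component $P_{0,0}\dot P_{0,0}'f_k^\sigma$. The latter is immediate once $\dot P_{0,0}'f_k^\sigma$ has been written as a finite linear combination of the modes $f_j^\pm,f_{-j}^\pm$ of \eqref{fksigma}: by Lemma \ref{lem:VUW} one has $H^1(\bT)=\cV_{0,0}\oplus\cU\oplus\cW$, with $P_{0,0}$ the identity on $\cV_{0,0}=\text{span}\{f_1^\pm,f_0^\pm\}$ and zero on $\cU=\text{span}\{f_{-1}^\pm\}$ and on $\cW$, so $P_{0,0}$ simply retains the coefficients of $f_1^\pm$ and $f_0^\pm$.

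For each of \eqref{Pmisto1}--\eqref{Pmisto3} I would propagate $f_k^\sigma$ through the operator composition from right to left. The innermost resolvent acts on $f_k^\sigma\in\cV_{0,0}$ and is given by \eqref{primainversione1}--\eqref{primainversione2} (a factor $-\tfrac1\lambda$, plus the extra term $\tfrac1{\lambda^2}f_0^-$ when $f_k^\sigma=f_0^+$); one then applies ${\cal L}_{0,0}'$ or $\dot{\cal L}_{0,0}$ using the explicit images in \eqref{derivoeps}, or $\dot{\cal L}_{0,0}'=\im p_1(x)\,\uno=-2\im\cos(x)\,\uno$ from \eqref{cLmisto}, each of which multiplies by a trigonometric polynomial and hence keeps the vector inside $\text{span}\{f_j^\pm,f_{-j}^\pm:\ j\le3\}$; one then applies the next resolvent via \eqref{primainversione1}--\eqref{primainversione4}. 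Since $\Gamma$ may be taken as a circle about $0$ of radius less than $\tfrac12$, it encloses none of the exceptional points $\pm2\im$ (on $\cU$) or $\pm\im(\sqrt j\pm j)$, $j\ge2$ (on $\cW$), and $({\cal L}_{0,0}-\lambda)^{-1}$ is \emph{analytic} at $\lambda=0$ on $\cU$ and on $\cW$ by \eqref{primainversione3}--\eqref{primainversione4}; thus each integrand is meromorphic inside $\Gamma$ with a single pole at $\lambda=0$, of order at most three (coming from the $-\tfrac1\lambda$ and $\tfrac1{\lambda^2}$ factors), and each integral equals its residue at $\lambda=0$, i.e.\ the $\lambda^{-1}$--coefficient. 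Summing the three residues produces $\dot P_{0,0}'f_k^\sigma$, and subtracting half of its $\cV_{0,0}$--projection gives the four formulas in \eqref{struttura2}.

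I would organize and cross-check the computation by parity. As $U_{\mu,\e}$ is reversibility preserving (Lemma \ref{propPU}), $\bro f_k^\sigma(\mu,\e)=\sigma f_k^\sigma(\mu,\e)$ for all $(\mu,\e)$, so by Lemma \ref{lem:f.parity} the vector $(\partial_\mu\partial_\e f_1^+)(0,0)$ must be of the form $\vet{even(x)+\im odd(x)}{odd(x)+\im even(x)}$ and $(\partial_\mu\partial_\e f_k^-)(0,0)$ of the form $\vet{odd(x)+\im even(x)}{even(x)+\im odd(x)}$; moreover, differentiating the symplectic identities \eqref{symplecticbasis} in $\mu\e$ at $(0,0)$ yields scalar relations tying the second order terms to products of the first order ones in \eqref{tuttederivate}, which the answer must satisfy. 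Finally, every operator entering \eqref{Pmisto1}--\eqref{Pmisto3} is real except the ``purely imaginary'' operators $\dot{\cal L}_{0,0}$ and $\dot{\cal L}_{0,0}'$, and the only way a \emph{real} term can survive in $\dot P_{0,0}'f_k^\sigma$ is through the zero mode part $\Pi_0$ of $\dot{\cal L}_{0,0}=\begin{psmallmatrix}0&\sgn(D)+\Pi_0\\0&0\end{psmallmatrix}$, which is active precisely because $\dot{\cal L}_{0,0}f_0^-=f_0^+$ involves the constant mode; this explains why all entries of \eqref{struttura2} are purely imaginary except for the single real term $\tfrac12\vet{\sin(x)}{\cos(x)}$ in $(\partial_\mu\partial_\e f_0^-)(0,0)$. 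The main obstacle is the residue bookkeeping through these triple compositions: one must keep the conventions $\partial_\e=(\cdot)'$, $\partial_\mu=\dot{(\cdot)}$ and the coefficient $\tfrac12$ of \eqref{Umix} straight, handle the second and third order poles at $\lambda=0$ correctly whenever an $f_0^+$ appears at an intermediate stage, and project the mode $\pm1$ part of $\dot P_{0,0}'f_k^\sigma$ onto $\cV_{0,0}$ (the span of $f_1^\pm$) rather than onto $\cU$ (the span of $f_{-1}^\pm$), which lives on the same Fourier frequencies.
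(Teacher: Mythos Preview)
Your outline is correct in spirit and follows the same route as the paper: compute $\dot P_{0,0}'f_k^\sigma$ from the three contour integrals \eqref{Pmisto1}--\eqref{Pmisto3} by residues at $\lambda=0$, then apply $\mathrm{Id}-\tfrac12 P_{0,0}$. Your structural observations --- that reversibility forces the parity pattern of Lemma~\ref{lem:f.parity}, and that the only possible real contribution comes from the $\Pi_0$ piece of $\dot{\cal L}_{0,0}$ --- are exactly the mechanism the paper exploits. The paper makes this efficient by splitting $\dot{\cal L}_{0,0}=\dot{\cal L}_{0,0}^{(I)}+\dot{\cal L}_{0,0}^{(II)}$ with $\dot{\cal L}_{0,0}^{(I)}=\begin{psmallmatrix}0&\sgn(D)\\0&0\end{psmallmatrix}$ purely imaginary and $\dot{\cal L}_{0,0}^{(II)}=\begin{psmallmatrix}0&\Pi_0\\0&0\end{psmallmatrix}$ real; the $(I)$ pieces and \eqref{Pmisto3} are then purely imaginary \emph{operators}, so for $f_1^\pm$ and $f_0^+$ one only needs to check that the $(II)$ pieces vanish, which is quick. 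Note a small slip in your write-up: $\dot{\cal L}_{0,0}$ is \emph{not} purely imaginary as you first assert --- it is precisely the $\Pi_0$ part that is real, which is why it can produce the real term for $f_0^-$.

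What remains a genuine gap is the execution. Two claims in \eqref{struttura2} go beyond the parity/reality structure you invoke, and your sketch does not address them. First, for $f_0^+$ the second component is asserted to be $even_0(x)$, i.e.\ of \emph{zero average}; this does not follow from reversibility alone and requires checking that $\dot P_{0,0}'f_0^+$ has zero average (the paper verifies this term by term, using that $({\cal L}_{0,0}-\lambda)^{-1}$ and $\dot{\cal L}_{0,0}^{(I)}$ are Fourier multipliers and that the relevant $(II)$ term vanishes by a residue computation). Second, for $f_0^-$ the statement specifies the exact real part $\tfrac12\vet{\sin(x)}{\cos(x)}$; identifying that ``the real part comes from $\Pi_0$'' is not enough --- one must actually compute $\eqref{Pmisto2}^{(II)}f_0^-$ (the only nonzero real contribution), which the paper does explicitly, obtaining $\tfrac12 f_{-1}^-$, and then observe that the full $\dot P_{0,0}'f_0^-$ lies in $\cU$ so that $P_{0,0}$ annihilates it. Until those two computations are carried out your argument establishes only the coarser parity structure, not the full content of \eqref{struttura2}.
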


\begin{proof}
We decompose the Fourier multiplier operator 
$\dot{\sL}_{0,0}$ in \eqref{cLfirstorder} as
$$
\dot {\sL}_{0,0} = \dot {\sL}_{0,0}^{(I)} + \dot {\sL}_{0,0}^{(II)} \, , \qquad
\dot {\sL}_{0,0}^{(I)} := 
\begin{bmatrix} 0 & \sgn(D) \\ 0 & 0 \end{bmatrix} \, , \qquad
\dot {\sL}_{0,0}^{(II)} :=  \begin{bmatrix} 0 & \Pi_0 \\ 0 & 0 \end{bmatrix} \, ,
$$
and, accordingly, we write 
$\dot P_{0,0}' = \eqref{Pmisto1}^{(I)} + \eqref{Pmisto1}^{(II)} +
\eqref{Pmisto2}^{(I)} + \eqref{Pmisto2}^{(II)} + \eqref{Pmisto3} $ 
defining 
\begin{align}
\eqref{Pmisto1}^{(I)} &:= -\frac{1}{2\pi\im} \oint_\Gamma ({\sL}_{0,0}-\lambda)^{-1} \dot {\sL}_{0,0}^{(I)} ({\sL}_{0,0}-\lambda)^{-1}  {\sL}_{0,0}' ({\sL}_{0,0}-\lambda)^{-1} \de\lambda \, , \\  
\eqref{Pmisto1}^{(II)} &:= -\frac{1}{2\pi\im} \oint_\Gamma ({\sL}_{0,0}-\lambda)^{-1} \dot {\sL}_{0,0}^{(II)} ({\sL}_{0,0}-\lambda)^{-1}  {\sL}_{0,0}' ({\sL}_{0,0}-\lambda)^{-1} \de\lambda \, , \\  
\eqref{Pmisto2}^{(I)} &:= -\frac{1}{2\pi\im} \oint_\Gamma ({\sL}_{0,0}-\lambda)^{-1}  {\sL}_{0,0}' ({\sL}_{0,0}-\lambda)^{-1} \dot {\sL}_{0,0}^{(I)} ({\sL}_{0,0}-\lambda)^{-1}  \de\lambda \, , \\  
\eqref{Pmisto2}^{(II)} &:= -\frac{1}{2\pi\im} \oint_\Gamma ({\sL}_{0,0}-\lambda)^{-1}   {\sL}_{0,0}' ({\sL}_{0,0}-\lambda)^{-1} \dot {\sL}_{0,0}^{(II)} ({\sL}_{0,0}-\lambda)^{-1} \de\lambda \, .
\end{align}
Note that the operators $\eqref{Pmisto1}^{(I)}$,  $\eqref{Pmisto2}^{(I)}$ and  $\eqref{Pmisto3}$ are purely imaginary 
 because $\dot {\sL}_{0,0}^{(I)}$ is purely imaginary, 
  ${\sL}_{0,0}' $  in \eqref{cLfirstorder}  is real
and $\dot {\sL}_{0,0}'$ in \eqref{cLmisto} is purely imaginary 
(argue as in  Lemma \ref{propPU}-$(iii)$).
Then, applied to the real vectors $f^\sigma_k$, $k = 0,1$, $\sigma = \pm$, give purely imaginary vectors. 
\\
We first  compute  $(\pa_{\mu} \pa_\e f_1^+)(0,0)$.
Using  \eqref{primainversione1}  and \eqref{derivoeps} we get
\begin{align*}
\eqref{Pmisto1}^{(II)}f_1^+ =  \frac{2}{2\pi\im} \oint_\Gamma \frac{1}{\lambda} 
({\sL}_{0,0}-\lambda)^{-1} \dot {\sL}_{0,0}^{(II)} ({\sL}_{0,0}-\lambda)^{-1}   \vet{\sin(2x)}{0} \de\lambda  = 0
\end{align*}
because, by Lemma 
\ref{lem:VUW}, 
$ ({\sL}_{0,0}-\lambda)^{-1} \footnotesize \vet{\sin(2x)}{0} \in \cW$ and therefore it is a vector with zero average, so in the kernel of 
$\dot {\sL}_{0,0}^{(II)}$. 
In addition 
 $\eqref{Pmisto2}^{(II)}f_1^+ = 0$ since
$ \dot {\sL}_{0,0}^{(II)} ({\sL}_{0,0}-\lambda)^{-1} f_1^+ = 0$.
All together
$\dot P_{0,0}' f_1^+ $ is a purely imaginary vector. 
Since $P_{0,0}$ is a real operator, also 
$ (\dot P_{0,0}'- \frac12 P_{0,0}\dot P_{0,0}' )  f_1^+ $
is purely imaginary, and 
  Lemma
\ref{lem:f.parity} implies that  $(\pa_{\mu} \pa_\e f_1^+)(0,0) $ has the claimed structure
in \eqref{struttura2}.
In the same way one proves the structure for 
 $(\pa_{\mu} \pa_\e f_1^-)(0,0)$.

Next we prove that $(\pa_{\mu} \pa_\e f_0^+)(0,0)$,
in addition to being purely imaginary,  has zero average.
We have, by \eqref{primainversione2}  and \eqref{derivoeps}
\begin{align*}
\eqref{Pmisto1}^{(I)} f_0^+:= \frac{2}{2\pi\im} \oint_\Gamma ({\sL}_{0,0}-\lambda)^{-1} \dot {\sL}_{0,0}^{(I)} ({\sL}_{0,0}-\lambda)^{-1} \frac{1}{\lambda}   \vet{\sin(x)}{\cos(x)} \, \de\lambda
\end{align*}
and since the operators $({\sL}_{0,0}-\lambda)^{-1}$  and $\dot {\sL}_{0,0}^{(I)}$ are  both Fourier multipliers, hence they preserve the absence of average of the vectors, then  $\eqref{Pmisto1}^{(I)} f_0^+$ has zero average.
In addition
$\eqref{Pmisto1}^{(II)} f_0^+ = 0$ 
as  $\dot {\sL}_{0,0}^{(II)} ({\sL}_{0,0}-\lambda)^{-1} \footnotesize  \vet{\sin(x)}{\cos(x)}  = 0$.
Next     $ \eqref{Pmisto2}^{(I)} f_0^+ = 0$ since 
$\dot {\sL}_{0,0}^{(I)} f_0^\pm = 0$, cfr. \eqref{def:segno}. 
Using  also that $ \dot {\sL}_{0,0}^{(II)} f_0^+ = 0$
and $ \dot {\sL}_{0,0}^{(II)} f_0^- = f_0^+ $, 
\begin{align*}
\eqref{Pmisto2}^{(II)} f_0^+
& \stackrel{\eqref{primainversione2}}{=} -\frac{1}{2\pi\im} \oint_\Gamma ({\sL}_{0,0}-\lambda)^{-1}  {\sL}_{0,0}' ({\sL}_{0,0}-\lambda)^{-1} \frac{1}{\lambda^2} f_0^+    \de\lambda   \\
& \stackrel{\eqref{primainversione2}, \eqref{derivoeps}}{=} 
   \frac{2}{2\pi\im} \oint_\Gamma 
\frac{1}{\lambda^3}   
   ({\sL}_{0,0}-\lambda)^{-1}   \vet{\sin(x)}{\cos(x)}    \de\lambda = 0 
\end{align*}
using \eqref{primainversione3} and the residue theorem. 
Finally, by  \eqref{primainversione2} and \eqref{cLmisto} where $ p_1 (x) = - 2 \cos (x) $, 
$$
\eqref{Pmisto3} f_0^+ = 
- \frac{\im 2}{2\pi\im} \oint_\Gamma ({\sL}_{0,0}-\lambda)^{-1}
  \Big( 
- \frac{1}{\lambda} \vet{ \cos (x)}{0} + \frac{1}{\lambda^2} \vet{0}{ \cos (x)} \Big) \, 
  \de\lambda  
$$
is a vector with zero average. 
We conclude that $\dot P_{0,0}' f_0^+$ is an imaginary vector with zero average, as well as  $(\pa_\mu \pa_\e f_0^+)(0,0)$ since $P_{0,0}$ sends zero average functions in zero average functions. 
Finally,  by Lemma \ref{lem:f.parity},  $(\pa_\mu \pa_\e f_0^+)(0,0)$ has the claimed structure in \eqref{struttura2}.

We  finally consider $(\pa_{\mu} \pa_\e f_0^-)(0,0)$.
By \eqref{primainversione1} and ${\sL}_{0,0}'f_0^-=0$ (cfr. \eqref{derivoeps}), it results
$$
\eqref{Pmisto1}^{(M)}f_0^- = - \frac{1}{2\pi\im} \oint_\Gamma \frac{({\sL}_{0,0}-\lambda)^{-1}}{\lambda} \dot {\sL}_{0,0}^{(M)} ({\sL}_{0,0}-\lambda)^{-1} {\sL}_{0,0}' f_0^- \de\lambda=0 \, , 
\quad M = I, II \ . 
$$
Next by \eqref{primainversione1} and 
$\dot {\sL}_{0,0}^{(I)} f_0^- = 0$ we get
$\eqref{Pmisto2}^{(I)}f_0^- = 0$.
Then,   since $  \dot {\sL}_{0,0}^{(II)} f_0^- = f_0^+$,
\begin{align*}
\eqref{Pmisto2}^{(II)}f_0^- &\stackrel{\eqref{primainversione1}-\eqref{primainversione2}}{=}   
 \frac{1}{2\pi\im} \oint_\Gamma \frac{({\sL}_{0,0}-\lambda)^{-1}}{\lambda} \ {\sL}_{0,0}'  \Big(-\frac1{\lambda}f_0^+ + \frac{1}{\lambda^2}
f_0^-  \Big)  \de\lambda \\
& \stackrel{\eqref{derivoeps},\eqref{primainversione3}}{=} 
-\frac{2}{2\pi\im} \oint_\Gamma \frac{1}{\lambda^2}  \frac{1}{\lambda^2+4} (-2f_{-1}^+ -\lambda f_{-1}^-)  \de\lambda = \frac12 f_{-1}^-= \frac12 \vet{\sin(x)}{\cos(x)} \, ,
 \end{align*}
 which is the only real term of $ (\pa_\mu \pa_\e  f_0^-)(0,0)$ in \eqref{struttura2}.
 Finally by \eqref{primainversione1} and \eqref{cLmisto}
 \begin{align*}
\eqref{Pmisto3} f_0^- = 
\frac{2 \im  }{2\pi\im} \oint_\Gamma ({\sL}_{0,0}-\lambda)^{-1}
\frac{1}{\lambda} 
 \vet{0}{ \cos (x)}
  \de\lambda \  = - \frac{\im}{2}\vet{\cos(x)}{-\sin(x)} 
\end{align*}
by \eqref{primainversione1}, \eqref{primainversione3} and the residue theorem.
In conclusion
 $\dot P_{0,0}' f_0^- =  \frac12\footnotesize \vet{\sin(x)}{\cos(x)} - \frac{ \im}{2} \vet{\cos(x)}{-\sin(x)}  \in \cU $ and, since $P_{0,0}\vert_{\cU} = 0$, we find that 
  $\left(\dot P_{0,0}'- \frac12 P_{0,0}\dot P_{0,0}' \right)  f_0^- = 
   \frac12 \footnotesize\vet{\sin(x)}{\cos(x)}  - \frac{ \im}{2} \vet{\cos(x)}{\sin(x)} $.
\end{proof}
   
   This
  completes the proof of Lemma \ref{expansion1}.

 \begin{footnotesize}

 \end{footnotesize}

\begin{thebibliography}{99}
 
 \bibitem{Ak}
B. Akers,
\emph{Modulational instabilities of periodic traveling waves in deep water}.
Phys. D 300, 26-33, 2015. 


 	\bibitem{Nic}
B. Akers and D. Nicholls. 
\emph{Spectral stability of deep two-dimensional gravity water waves: repeated eigenvalues}.
SIAM J.  App. Math.,  72(2): 689--711, 2012.
 	
 	\bibitem{Arn}
V.I. Arnold. 
{\it The complex Lagrangian Grassmanian}, Func. Anal. Appl. 34 208-210, 2000.

				
			\bibitem{BBHM} P. Baldi, M. Berti, E. Haus, R. Montalto, 
 		\emph{Time quasi-periodic gravity water waves
 			in finite depth}. Inv. Math. 214 (2): 739--911, 2018.
 				
\bibitem{toda}
D. Bambusi  and A. Maspero.
\emph{ Birkhoff coordinates for the Toda Lattice in the limit of infinitely many particles with an application to FPU.}
 J. Funct. Anal., 270(5): 1818--1887, 2016.	
 				
\bibitem{Benjamin} T.  Benjamin. 
{\it Instability of periodic wave trains in nonlinear dispersive systems}. Proceedings of the Royal Society of London, A, Vol. 299, No. 1456, 
 pp. 59-75, 1967.
	
			\bibitem{BF}
			T.  Benjamin  and J. Feir.
 		\emph{The disintegration of wave trains on deep water}, Part 1. Theory. J. Fluid Mech. 27(3): 417-430, 1967.
 				
	
 		\bibitem{BFM}
 		M. Berti, L. Franzoi  and A. Maspero.
 		\emph{Traveling quasi-periodic water waves with constant vorticity},  Archive for Rational Mechanics, 240: 99--202, 2021. 
 	
		\bibitem{BFM2}
 		M. Berti, L. Franzoi  and A.  Maspero.
 		\emph{Pure gravity traveling quasi-periodic water waves with constant vorticity}, 
		\texttt{arXiv:2101.12006}, 2021, to appear on Communications in Pure and Applied Mathematics.


\bibitem{BMV}  M. Berti,  A. Maspero  and P.  Ventura.
	\emph{ On the analyticity of the Dirichlet-Neumann operator and Stokes waves}, to \texttt{arXiv:2201.04675}, to  appear on Atti Accad. Naz. Lincei Rend. Lincei Mat. Appl.
 	
	 		\bibitem{BM} 
 		M. Berti  and R. Montalto.
 		\emph{Quasi-periodic standing wave solutions of gravity-capillary water waves}, 
 		Volume 263, MEMO 1273, Memoires AMS, ISSN 0065-9266, 2020.
 		
 		
		
 		\bibitem{BrM}
T.	Bridges   and A. Mielke. 
	\emph{ A proof of the Benjamin-Feir instability.} Arch. Rational Mech. Anal. 133(2): 145--198, 1995.

\bibitem{BHJ}
J. Bronski,  V. Hur  and M.  Johnson.
\emph{ Modulational Instability in Equations of KdV Type.} In: Tobisch E. (eds) New Approaches to Nonlinear Waves. Lecture Notes in Physics, vol 908. Springer,  2016.

 		\bibitem{BJ} 
	J. Bronski  and M. Johnson.
		\emph{The modulational instability for a generalized Korteweg-de Vries equation.} Arch. Ration. Mech. Anal. 197(2): 357--400, 2010.
		
		\bibitem{BuT}
		B. Buffoni and J. Toland.
		\emph{Analytic Theory of Global Bifurcation}. Princeton University Press, 2016.
		
		

\bibitem{ChS}
G. Chen and Q. Su.
\emph{Nonlinear modulational instabililty of the Stokes waves in 2d full water waves.}
	arXiv:2012.15071.


\bibitem{CS}
W. Craig and C. Sulem. 
\emph{Numerical simulation of gravity waves.}
J. Comput. Phys., 108(1): 73--83, 1993.

	\bibitem{CDT}
	R. Creedon, B. Deconinck, O. Trichtchenko. 
 {\it High-Frequency Instabilities of Stokes Waves}. 
Journal of Fluid Mechanics, 937, A24. doi:10.1017/jfm.2021.1119, 2022. 


 		 \bibitem{DO} 
		 B. Deconinck  and K. Oliveras.
		 \emph{ The instability of periodic surface gravity waves.}
		  J. Fluid Mech., 675: 141--167, 2011.
 	
\bibitem{DU}
B. Deconinck and   J. Upsal.
\emph{The Orbital Stability of Elliptic Solutions of the Focusing Nonlinear Schr\"odinger Equation.}
SIAM J.  Math.  Anal.,  
52(1):  1--41, 2020.

\bibitem{EM}
W.N. Everitt and L. Markus. 
{\it Complex symplectic geometry with applications to ordinary differential operators}, 
Trans. Amer. Math. Soc. 351 4905-4945 (1999).


\bibitem{FG} R. Feola and F. Giuliani.
 {\it Quasi-periodic traveling waves on an infinitely deep fluid under gravity}. 	arXiv:2005.08280, to appear on Memoirs of the American Mathematical Society. 


\bibitem{FMMX} H. Fa{\ss}bender, S. Mackey,  N. Mackey and H. Xu.
{\it Hamiltonian square roots of skew-Hamiltonian matrices}.
Linear Algebra and its Applications, 287(1): 125--159, 1999.

		\bibitem{GH} 
		T. Gallay  and M. Haragus.
		\emph{ Stability of small periodic waves for the nonlinear Schr\"odinger equation.}
		 J. Differential Equations, 234: 544--581, 2007.

		\bibitem{HK}
		M. Haragus  and T.  Kapitula.
		\emph{On the spectra of periodic waves for infinite-dimensional Hamiltonian systems}. Phys. D, 237: 2649--2671, 2008.
 	
	\bibitem{HJ} 
	V. Hur and M. Johnson. 
	\emph{Modulational instability in the Whitham equation for water waves.} Stud. Appl. Math. 134(1): 120--143, 2015.
	

	
\bibitem{HP} 
V. Hur and A. Pandey. 
\emph{Modulational instability in nonlinear nonlocal equations of regularized long wave type.} Phys. D, 325: 98--112, 2016.

\bibitem{HY}
V. Hur and  Z. Yang.
\emph{Unstable Stokes waves}. arXiv:2010.10766.

\bibitem{IK}
G. Iooss and P. Kirrmann. 
{\it Capillary gravity waves on the free surface of an inviscid fluid of infinite depth }, 
Arch. Rat. Mech. Anal. 136 1-19, 1996.


\bibitem{Li}
M. J. Lighthill,
{\it Contribution to the theory of waves in nonlinear dispersive systems},
IMA Journal of Applied Mathematics, 1, 3, 269-306, 1965.


\bibitem{JLL}
J. Jin, S. Liao and Z. Lin. 
\emph{Nonlinear modulational instability of dispersive PDE models.} Arch. Ration. Mech.
Anal. 231(3): 1487-–1530, 2019.



\bibitem{J} 
M. Johnson. 
\emph{Stability of small periodic waves in fractional KdV type equations.} SIAM J. Math. Anal. 45: 2529--3228, 2013.
	
 
\bibitem{K}
T.	Kappeler. 
	\emph{Fibration of the phase space for the Korteweg-de Vries equation.} Annales de l'institut Fourier 41(3): 539--575,  1991.
 
 
 		\bibitem{Kato}
	T. Kato. 
	\emph{Perturbation theory for linear operators.}
Springer-Verlag 
1966.
 	
	\bibitem{KDZ}
	A. O. Korotkevich, A. I. Dyachenko and V. E. Zakharov,
{\it Numerical simulation of surface waves instability on a homogeneous grid},
Physica D: Nonlinear Phenomena,
Volumes 321-322, 51-66, 2016. 
	
 	\bibitem{KP}
 	S. Kuksin and  G. Perelman.
 	\emph{Vey theorem in infinite dimensions and its application to KdV}.
Discrete Cont. Dyn. Syst. 27(1):1--24, 2010.

\bibitem{LBJM}
K. Leisman, J.  Bronski, M.  Johnson,  and
R. Marangell.
\emph{
 Stability of Traveling Wave Solutions of Nonlinear Dispersive Equations of NLS Type.}
 \newblock{ Arch. Rational Mech. Anal.}, 240: 927-969, 2021.


 \bibitem{LC}
T. 		Levi-Civita.
 		\newblock {\it D\'etermination rigoureuse des ondes permanentes d' ampleur finie,} 
 		\newblock { Math. Ann.} 93: 264-314, 1925.
 		
\bibitem{Lewy}
H. Lewy.
 	 		\newblock {\it A note on harmonic functions and a hydrodynamical application,}
 	 		\newblock{Proc. Amer. Math. Soc.},  3: 111--113, 1952.	
 		
 		
 	\bibitem{Ma_tame}	
 		A. Maspero.
 		\emph{Tame majorant analyticity for the Birkhoff map of the defocusing Nonlinear Schr\"odinger equation on the circle.} Nonlinearity, 31(5): 1981--2030, 2018.
 		
 		\bibitem{Nek}
A. 		Nekrasov.  {\it On steady waves}.  Izv. Ivanovo-Voznesenk. Politekhn. 3, 
 		1921.
 	
 		\bibitem{NR}
D. 	Nicholls and F. Reitich. {\it On analyticity of travelling water waves},
Proc. R. Soc. A, 461: 1283-130, 2005. 
	
		\bibitem{NS} 
		H. Nguyen and W. Strauss.
		\emph{Proof of modulational instability of Stokes waves in deep water}. To appear in Comm.  Pure Appl.  Math., 2020.
 	
	\bibitem{Olver}
	P.J. Olver. 
	{\it Hamiltonian perturbation theory and water waves}, 
	Cont. Math., Amer. Math. Society 28 231-249, 1984.
			
		\bibitem{RT} 
		F. Rousset and N. Tzvetkov. 
		{\it Transverse instability of the line solitary water-waves}. Invent. Math. 184: 257-388, 2011.
		
		\bibitem{SHCH}
		 H. Segur, D. Henderson, J. Carter and J. Hammack. 
		 \emph{Stabilizing the Benjamin-Feir instability}.
		  J. Fluid Mech. 539: 229--271, 2005.
		
			
 		\bibitem{stokes}
G. Stokes. {\it On the theory of oscillatory waves}.
 		Trans. Cambridge Phil. Soc. 8: 	441--455, 1847.
 		
 		
 		\bibitem{Struik}
 	D. 	Struik. 
 	 {\it D\'etermination rigoureuse des ondes irrotationelles p\'eriodiques dans un canal \'a profondeur
 			finie}.  Math. Ann. 95: 595--634, 1926.
 		
%
%
 		
 		\bibitem{Wh}
		G.B. Whitham.   {\it Linear and Nonlinear Waves}. J. Wiley-Sons, 
		New York, 1974.
 		
 		



\bibitem{Z0}
V. Zakharov. {\it The instability of waves in nonlinear dispersive media},
J. Exp.Teor.Phys. 24 (4), 740-744, 1967.


 		\bibitem{Zak1} 
 		V.  Zakharov.
 		{\it Stability of periodic waves of finite amplitude on the surface of a deep fluid}. Zhurnal
 		Prikladnoi Mekhaniki i Teckhnicheskoi Fiziki 9(2): 86--94, 1969.

\bibitem{ZK}
V. Zakharov and V.  Kharitonov. {\it Instability of monochromatic waves on the surface of a liquid of arbitrary depth}. J Appl Mech Tech Phys 11, 747-751, 1970.


\bibitem{ZO}
V. Zakharov and L. Ostrovsky. 
\emph{Modulation instability: the beginning.} 
Phys. D,  238(5): 540--548, 2009.

 	 		
 	\end{thebibliography}
\end{document}